\documentclass{amsart}
\pagestyle{plain}
\setlength{\parskip}{0in}
\setlength{\textwidth}{6.8in}
\setlength{\topmargin}{-.5in}
\setlength{\textheight}{9.3in}
\setlength{\parindent}{.25in}
\setlength{\oddsidemargin}{-.7cm}
\setlength{\evensidemargin}{-.7cm}

\setlength{\parindent}{.2in}

\usepackage{amsthm,mathtools}
\usepackage[all]{xy}

\theoremstyle{plain}
\theoremstyle{definition}
\newtheorem{theorem}{Theorem}[section]
\newtheorem{lemma}[theorem]{Lemma}
\newtheorem{proposition}[theorem]{Proposition}
\newtheorem{corollary}[theorem]{Corollary}

\newtheorem{definition}[theorem]{Definition}
\newtheorem{example}[theorem]{Example}

\newtheorem{remark}[theorem]{Remark}

\usepackage{amssymb,amsmath,tabularx,graphicx,float,placeins}
\usepackage{tikz}
\usepackage{mathrsfs, amstext, amsfonts, graphicx, stmaryrd, bbm, mathabx, array}
\usetikzlibrary[calc,intersections,through,backgrounds,arrows,decorations.pathmorphing]

\def\Bbf{\mathbf{B}}

\def\Acal{\mathcal{A}}

\def\Fcal{\mathcal{F}}

\def\Tcal{\mathcal{T}}

\def\ds{\displaystyle}

\def\ora{\overrightarrow}
\def\ov{\overline}

\def\pr{\prime}

\def\ra{\rightarrow}

\def\setm{\setminus}

\def\wtil{\widetilde}

\DeclareMathOperator{\Bic}{Bic}  %Biclosed sets
  %Cambrian lattice
  %Cambrian lattice
\DeclareMathOperator{\Con}{Con}  %Set of all lattice congruences
\DeclareMathOperator{\con}{con}  %lattice congruence induced by edge-contraction
\DeclareMathOperator{\Cor}{Cor}  %Set of corners
\DeclareMathOperator{\Cov}{Cov}  %Covering relations
\DeclareMathOperator{\Endpt}{Endpt} %endpoints of set of colored segments
\DeclareMathOperator{\id}{id}  %identity
  %initial vertex
  %green
\DeclareMathOperator{\JI}{JI}
\DeclareMathOperator{\Kr}{Kr}  %Kreweras complement
  %link
\DeclareMathOperator{\MI}{MI}
  %non-crossing partition lattice
\DeclareMathOperator{\NCP}{NCP}  %non-crossing partition lattice
  %red
  %red-green trees
\DeclareMathOperator{\Seg}{Seg}  %non-crossing partition lattice
  %stellar subdivision
  %terminal vertex
  %transpose

\begin{document}

\title{Oriented flip graphs and noncrossing tree partitions}
\author{Alexander Garver}
\author{Thomas McConville}

\thanks{The first author was supported by a Research Training Group, RTG grant DMS-1148634.}

\begin{abstract}

The purpose of this paper is to understand the lattice properties of posets of torsion pairs in the module category of representation-finite gentle algebras called tiling algebras, as introduced by Coelho Simoes--Parsons. We present a combinatorial model for torsion pairs in the module category of tiling algebras using polyogonal subdivisions (equivalently, partial triangulations) of a convex polygon. We combine this model with lattice-theoretic techniques to classify 2-term simple-minded collections in bounded derived categories of tiling algebras. As a consequence, we obtain a characterization of \textbf{c}-matrices for any quiver mutation-equivalent to a type $A$ Dynkin quiver. 

Our model is developed using the tree that is dual to a given polygonal subdivision. Given such a tree, we introduce a simplicial complex of noncrossing geodesics supported by the tree which we call the noncrossing complex. The facets of the noncrossing complex have the structure of an oriented flip graph. Special cases of these oriented flip graphs include the Tamari order, type $A$ Cambrian orders, and oriented exchange graphs of quivers mutation-equivalent to a type $A$ Dynkin quiver. We prove that the oriented flip graph is a polygonal, congruence-uniform lattice. To do so, we express the oriented flip graph as a lattice quotient of a lattice of biclosed sets.

The facets of the noncrossing complex have an alternate ordering known as the shard intersection order. We prove that this shard intersection order is isomorphic to a lattice of noncrossing tree partitions. The oriented flip graph inherits a cyclic action from its congruence-uniform structure. On noncrossing tree partitions, this cyclic action generalizes the classical Kreweras complementation on noncrossing set partitions. We show that the data of a noncrossing tree partition and its Kreweras complement is equivalent to a 2-term simple-minded collection of the associated tiling algebra.
\end{abstract}

\maketitle

\tableofcontents

\section{Introduction}

Triangulations of marked surfaces provide an incredibly useful model for studying the combinatorics and representation theory related to cluster algebras \cite{fomin2008cluster}. The arcs on a surface are in bijection with the cluster variables, the triangulations are in bijection with clusters, and moving between two triangulations by flipping an arc corresponds to performing a single mutation on the corresponding clusters. In addition, compatibility of two cluster variables means their corresponding arcs are noncrossing. On the representation theory side, the additive categorification of cluster algebras \cite{bmrrt, ccs1} has been described using marked surfaces (see \cite{ccs1, brustle2011cluster, qiu2013cluster}).% ?? something about laminations/coefficients so these can be related to smcs later in intro ??

More recent work (see \cite{by14} and the many references therein) has shown that exchange graphs of many cluster algebras can be modeled using many different representation theoretic objects related to certain Jacobian algebras $\Lambda$ \cite{dwz1}. In particular, the poset of functorially finite torsion pairs in the module category of $\Lambda$ and the poset of 2-term simple-minded collections in the bounded derived category of $\Lambda$ are isomorphic to the oriented exchange graph \cite{bdp} of the cluster algebra defined by the quiver of $\Lambda$. We remark that any simple-minded collection in the bounded derived category of $\Lambda$ can be realized as the set of simple objects in the heart of a bounded $t$-structure on the bounded derived category of $\Lambda$ obtained via Happel--Reiten--Smal{\o} tilting \cite{happel1996tilting}, as is shown in \cite{koenig2014silting}.

The connection between these geometric and representation theoretic objects can be seen as follows. Let $(\textbf{S}, \textbf{M})$ be a marked surface, and add to it a certain \textbf{lamination} $\mathcal{L}$ (see \cite{fomin2012cluster}). By choosing a triangulation $\Delta$ of the laminated marked surface, one computes the \textbf{shear coordinates} to obtain the \textbf{c}-matrix $C$ of the cluster corresponding to $\Delta$. The row vectors of $C$ are the \textit{signed} dimension vectors of the objects in a 2-term simple-minded collection. In this way, \textbf{c}-matrices of the cluster algebra associated to $(\textbf{S}, \textbf{M}, \mathcal{L})$ are in bijection with the 2-term simple-minded collections of the Jacobian algebra $\Lambda$ associated with $(\textbf{S}, \textbf{M}, \mathcal{L})$. The torsion pair corresponding to $C$ has torsion part (resp. torsion-free part) generated (resp. cogenerated) by indecomposable $\Lambda$-modules $M$ with $\underline{\text{dim}}(M)$ (resp. $-\underline{\text{dim}}(M)$) a row vector of $C$ (see \cite{by14} for a proof that these maps are bijections when $\Lambda$ is a finite dimensional Jacobian algebra).

It is natural to ask if the correspondences just described make sense when one considers polygonal subdivisions (equivalently, partial triangulations) of a marked surface. The goal of this paper is to address this question when the surface is a disk with marked points on its boundary. We obtain analogous isomorphisms between \textbf{oriented flip graphs} of such polygonal subdivisions and posets of torsion pairs and 2-term simple-minded collections, using lattice theory and the combinatorics of string modules. As there is not a known cluster structure on polygonal subdivisions, we do not know whether all of our results have cluster theoretic interpretations. %but we do obtain rich connections between geometric and representation theoretic objects and the lattice theory associated with these objects.%We show that a choice of polygonal subdivision will define an oriented exchange graph of compatible polygonal subdivisions. This poset will be isomorphic to the poset of torsion pairs of a finite dimension algebra $\Lambda$ and to the poset of 2-term simple-minded collections of $\Lambda$. In addition, we provide combinatorial classifications of these objects.

%and exchange graphs of surface triangulations

  %The There are even explicit formulas for the cluster variables given by \textbf{snake graphs}

%From the point of view of representation theory, ..

%While we do not discuss cluster algebras directly in this work, we hope that the model we present here may shed new light on 

%Wide subcategories were used in \cite{ingalls2009noncrossing}

\subsection{Overview} {The purpose of this work is to understand the combinatorics and representation theory associated with lattices of polygonal subdivisions of a convex polygon. Given such a polygonal subdivision, one naturally associates to it a finite dimensional algebra $\Lambda$, which we will refer to as a \textbf{tiling algebra} \cite{simoes2016endomorphism}. Our aim in this paper is to: \begin{itemize}\item provide a combinatorial model for the torsion pairs in the module category of $\Lambda$ and \item classify 2-term simple-minded collections in the bounded derived category of $\Lambda$. \end{itemize}}

Tiling algebras are a class of representation-finite gentle algebras that were very recently introduced in \cite{simoes2016endomorphism}. These algebras also form a subclass of the {algebras of partial triangulations} introduced in \cite{demonet2016algebras}. The class of tiling algebras contains nice families of algebras including {Jacobian algebras} \cite{dwz1} of type $A$ and {$m$-cluster-tilted algebras} \cite{murphy2010derived} of type $A$, both of which naturally arise in the study of cluster algebras \cite{fomin2002cluster} and in the additive categorification of cluster algebras \cite{bmrrt, ccs1}.

We refer to the lattices of polygonal subdivisions we study as \textbf{oriented flip graphs} (see Definition~\ref{orflipgraph}). Special cases of these posets include the Tamari order, type $A$ Cambrian lattices \cite{ReadingCamb}, oriented exchange graphs of type $A$ cluster algebras \cite{bdp}, and the Stokes poset of quadrangulations defined by Chapoton \cite{chapoton:stokes}. 

Rather than directly studying polygonal subdivisions, it turns out to be more convenient to formulate our theory in terms of trees that are dual to polygonal subdivisions of a polygon. That is, our work begins with the initial data of a tree $T$ embedded in a disk so that its leaves lie on the boundary and its other vertices lie in the interior of the disk. This data gives rise to a simplicial complex of \textbf{noncrossing} sets of \textbf{arcs} on this tree that we call the \textbf{reduced noncrossing complex} (see Section~\ref{Sec:noncrossingcomplex} for the precise definitions of these notions). The combinatorics of the facets of this pure, thin simplcial complex (see Corollary~\ref{Cor_pure_thin}) allow us to define our oriented flip graphs, which we denote by $\overrightarrow{FG}(T)$.

Our first main combinatorial result (Theorem~\ref{thm_eta_phi_main}), which sets the stage for the rest of the paper, is that these oriented flip graphs are \textbf{congruence-uniform lattices}. The Tamari order is a standard example of a congruence-uniform lattice \cite{geyer:intervallverdopplung}; see also \cite{caspard.poly-barbut:tamari}, \cite{ReadingCamb}. Nathan Reading gave a proof of congruence-uniformity of the Tamari order by proving that the weak order on permutations is congruence-uniform and applying the lattice quotient map from the weak order to the Tamari order defined by Bj\"orner and Wachs in \cite{bjorner.wachs:shellable_2}. To prove our congruence-uniformity result, we take a similar approach. We define a congruence-uniform lattice of \textbf{biclosed sets} of $T$, denoted $\text{Bic}(T)$, and identify the oriented flip graph $\overrightarrow{FG}(T)$ with a lattice quotient of $\text{Bic}(T)$. This method was applied to some other Tamari-like lattices in \cite{garver2015lattice},\cite{mcconville:grassmann}. The technique of studying a lattice by realizing it as a quotient lattice is not new, see for example \cite{pilaud:brick_lattice}, \cite{pons:lattice}.

Congruence-uniform lattices admit an alternate poset structure called the \textbf{shard intersection order} \cite{ReadingPAB}. For example, the shard intersection order of the Tamari lattice is the lattice of noncrossing set partitions \cite{reading:noncrossing}. We introduce a new family of objects called \textbf{noncrossing tree partitions} of $T$, and identify the shard intersection order of $\overrightarrow{FG}(T)$ with the lattice of noncrossing tree partitions of $T$, denoted NCP$(T)$ (Theorem~\ref{Thm:rhocircPsi}).

\subsection{Organization and main results} In Section~\ref{Sec:oreg}, we recall the definition of \textbf{oriented exchange graphs}, which are defined by the initial data of a quiver. When the quiver is in the \textbf{mutation-class} of a type $A$ Dynkin quiver, its oriented exchange graph is isomorphic to an oriented flip graph (see Theorem~\ref{orexandorflip}). In Sections~\ref{subsec_lattices} and \ref{subsec_cu_lattices}, we review the lattice theory that we will use to obtain many of our results.

Our main combinatorial and lattice-theoretic results appear Sections~\ref{Sec:noncrossingcomplex}, \ref{Sec:Sub_and_Quot}, \ref{Sec:NCP}, and \ref{Sec:Poly_Sub}. In Section~\ref{Sec:noncrossingcomplex}, we introduce the noncrossing complex and reduced noncrossing complex of arcs on a tree. We then develop the combinatorics of these complexes, which is an important part of the definition of oriented flip graphs. In Section~\ref{Sec:Sub_and_Quot}, we introduce the lattice of biclosed sets of $T$ and we show how the oriented flip graph $\overrightarrow{FG}(T)$ is both a sublattice and quotient lattice of $\text{Bic}(T)$. 

In Section~\ref{Sec:NCP}, we introduce {noncrossing tree partitions} of $T$, which generalize the classical noncrossing set partitions. We show that, as in the classical case, noncrossing tree partitions form a lattice NCP$(T)$ under refinement. Furthermore, we show that NCP$(T)$ is isomorphic to the shard intersection order of the oriented flip graph of $T$ (Theorem~\ref{Thm:rhocircPsi}). In Section~\ref{Sec:Poly_Sub}, we show that the top element of $\overrightarrow{FG}(T)$ is obtained by \textit{rotating} arcs in the bottom element of $\overrightarrow{FG}(T)$ (see Theorem~\ref{univtagrotation}). This result recovers one of Br\"ustle and Qiu (see \cite{brustle2015tagged}) in the case where the surface is a disk without punctures. 

In Sections~\ref{Sec:FinDimAlg} and \ref{Sec:SMCs}, we interpret the combinatorics of oriented flip graphs and noncrossing tree partitions in terms of the representation theory of the {tiling algebra} $\Lambda_T$ defined by $T$. In Section~\ref{subsec:torsf}, we show that the lattice of torsion-free classes (resp. torsion classes) of $\Lambda_T$ ordered by inclusion (resp. reverse inclusion) is isomorphic to $\overrightarrow{FG}(T)$ (see Theorem~\ref{Thm:TorsfOrFlipIso}). To obtain this result, we make use of the lattice quotient description of $\overrightarrow{FG}(T)$ from Section~\ref{Sec:Sub_and_Quot} and the classification of extensions between indecomposable $\Lambda_T$-modules found in Section~\ref{Sec:stringalgtree}. In Section~\ref{Subsec:NCP_and_wide}, we show that the poset of noncrossing tree partitions of $T$ is isomorphic to the poset of \textbf{wide subcategories} of $\Lambda_T$-mod. Wide subcategories have already been used in \cite{ingalls2009noncrossing} to model the lattice of noncrossing partitions associated with a Dynkin quiver.

In Section~\ref{Sec:SMCs}, we show that the data of a noncrossing tree partition and its \textbf{Kreweras complement} is equivalent to a \textbf{2-term simple-minded collection} of objects in the bounded derived category of $\Lambda_T$ (see Theorem~\ref{Thm:ncpsmcbijection}). This theorem relies on the description of extensions between indecomposable $\Lambda_T$-modules found in Section~\ref{Sec:stringalgtree} and on a combinatorial description of the operation of \textbf{left-} and \textbf{right-mutation} on simple-minded collections found in Section~\ref{Sec:MutSMC} (see Lemma~\ref{Cone_Lemma}).

We conclude the paper with a classification of \textbf{c}-matrices of quivers defined by triangulations of polygons (see Theorem~\ref{Thm:cmatclassif}). This classification is similar to the classification obtained in \cite{st13} for acyclic quivers and to the classification found in \cite{garver2015combinatorics} for type $A$ Dynkin quivers.

{\bf Acknowledgements.~}
Alexander Garver thanks Kiyoshi Igusa, Gregg Musiker, Ralf Schiffler, Sibylle Schroll, and Hugh Thomas for helpful conversations. The authors thank Emily Barnard for useful discussions. They also thank Hugh Thomas for noticing the connection between noncrossing tree partitions and wide subcategories.

\section{Preliminaries}

\subsection{Oriented exchange graphs}\label{Sec:oreg} The oriented flip graphs that we will introduce in Section~\ref{Sec:noncrossingcomplex} generalize a certain subclass of oriented exchange graphs of quivers, which are important objects in representation theory of finite dimensional algebras. We present the definition of oriented exchange graphs to motivate the introduction of the former.

A \textbf{quiver} $Q$ is a directed graph. In other words, $Q$ is a 4-tuple $(Q_0,Q_1,s,t)$, where $Q_0 = [m] := \{1,2, \ldots, m\}$ is a set of \textbf{vertices}, $Q_1$ is a set of \textbf{arrows}, and two functions $s, t:Q_1 \to Q_0$ defined so that for every $\alpha \in Q_1$, we have $s(\alpha) \xrightarrow{\alpha} t(\alpha)$. An \textbf{ice quiver} is a pair $(Q,F)$ with $Q$ a quiver and $F \subset Q_0$ a set of \textbf{frozen vertices} with the restriction that any $i,j \in F$ have no arrows of $Q$ connecting them. By convention, we assume $Q_0\backslash F = [n]$ and $F = [n+1,m] := \{n+1, n+2, \ldots, m\}.$ Any quiver $Q$ is regarded as an ice quiver by setting $Q = (Q, \emptyset)$.

If a given ice quiver $(Q,F)$ has no loops or 2-cycles, we can define a local transformation of $(Q,F)$ called \textbf{mutation}. The {\bf mutation} of an ice quiver $(Q,F)$ at a nonfrozen vertex $k$, denoted $\mu_k$, produces a new ice quiver $(\mu_kQ,F)$ by the three step process:

(1) For every $2$-path $i \to k \to j$ in $Q$, adjoin a new arrow $i \to j$.

(2) Delete any $2$-cycles created during the first steps.

(3) Reverse the direction of all arrows incident to $k$ in $Q$.

\noindent We show an example of mutation below with the nonfrozen (resp. frozen) vertices in black (resp. blue).

\vspace{-.1in}
\[
\begin{array}{c c c c c c c c c}
\raisebox{-.4in}{$(Q,F)$} & \raisebox{-.4in}{=} & {\begin{xy} 0;<1pt,0pt>:<0pt,-1pt>:: 
(0,30) *+{1} ="0",
(40,0) *+{2} ="1",
(80,30) *+{3} ="2",
(40,60) *+{\textcolor{blue}{4}} ="3",
"0", {\ar@<-.5ex>"1"},
"0", {\ar@<.5ex>"1"},
%"0", {\ar"1"},
"1", {\ar"2"},
"3", {\ar"2"},
\end{xy}} & \raisebox{-.4in}{$\stackrel{\mu_2}{\longmapsto}$} & {\begin{xy} 0;<1pt,0pt>:<0pt,-1pt>:: 
(0,30) *+{1} ="0",
(40,0) *+{2} ="1",
(80,30) *+{3} ="2",
(40,60) *+{\textcolor{blue}{4}} ="3",
%"0", {\ar"1"},
"2", {\ar"1"},
"0", {\ar@<-.5ex>"2"},
"0", {\ar@<.5ex>"2"},
"1", {\ar@<-.5ex>"0"},
"1", {\ar@<.5ex>"0"},
"3", {\ar"2"},
\end{xy}} & \raisebox{-.4in}{=} & \raisebox{-.4in}{$(\mu_2Q,F)$}
\end{array}
\]

The information of an ice quiver can be equivalently described by its (skew-symmetric) \textbf{exchange matrix}. Given $(Q,F),$ we define $B = B_{(Q,F)} = (b_{ij}) \in \mathbb{Z}^{n\times m} := \{n \times m \text{ integer matrices}\}$ by $b_{ij} := \#\{i \stackrel{\alpha}{\to} j \in Q_1\} - \#\{j \stackrel{\alpha}{\to} i \in Q_1\}.$ Furthermore, ice quiver mutation can equivalently be defined  as \textbf{matrix mutation} of the corresponding exchange matrix. Given an exchange matrix $B \in \mathbb{Z}^{n\times m}$, the \textbf{mutation} of $B$ at $k \in [n]$, also denoted $\mu_k$, produces a new exchange matrix $\mu_k(B) = (b^\prime_{ij})$ with entries
\[
b^\prime_{ij} := \left\{\begin{array}{ccl}
-b_{ij} & : & \text{if $i=k$ or $j=k$} \\
b_{ij} + \frac{|b_{ik}|b_{kj}+ b_{ik}|b_{kj}|}{2} & : & \text{otherwise.}
\end{array}\right.
\]
For example, the mutation of the ice quiver above (here $m=4$ and $n=3$) translates into the following matrix mutation. Note that mutation of matrices and of ice quivers is an involution (i.e. $\mu_k\mu_k(B) = B$). 
\[
\begin{array}{c c c c c c c c c c}
B_{(Q,F)} & = & \left[\begin{array}{c c c | r}
0 & 2 & 0 & 0 \\
-2 & 0 & 1 & 0\\
0 & -1 & 0 & -1\\
\end{array}\right]
& \stackrel{\mu_2}{\longmapsto} &
\left[\begin{array}{c c c | r}
0 & -2 & 2 & 0 \\
2 & 0 & -1 & 0\\
-2 & 1 & 0 & -1\\
\end{array}\right] 
& = & B_{(\mu_2Q,F)}.
\end{array}
\]

Let Mut($(Q,F)$) denote the collection of ice quivers obtainable from $(Q,F)$ by finitely many mutations where such ice quivers are considered up to an isomorphism of quivers that fixes the frozen vertices. We refer to Mut($(Q,F)$) as the \textbf{mutation-class} of $Q$. Such an isomorphism is equivalent to a simultaneous permutation of the rows and first $n$ columns of the corresponding exchange matrices.

{Given a quiver $Q$, we define its \textbf{framed} quiver to be the ice quiver $\widehat{Q}$  where $\widehat{Q}_0 := Q_0 \sqcup [n+1, 2n]$, $F = [n+1, 2n]$, and $\widehat{Q}_1 := Q_1 \sqcup \{i \to n+i: i \in [n]\}$.}  We define  the \textbf{exchange graph} of $\widehat{Q}$, denoted $EG(\widehat{Q})$, to be the (a priori, infinite) graph whose vertices are elements of Mut$(\widehat{Q})$  and two vertices are connected by an edge if the corresponding quivers differ by a single mutation.

The exchange graph of $\widehat{Q}$ has natural acyclic orientation using the notion of $\textbf{c}$-vectors. We refer to this directed graph as the \textbf{oriented exchange graph} of $Q$, denoted $\overrightarrow{EG}(\widehat{Q})$. Given $\widehat{Q}$, we say that $C = C_R$ is a \textbf{c}-\textbf{matrix} of $Q$ if there exists $R \in EG(\widehat{Q})$ such that $C$ is the $n\times n$ submatrix of $B_R = (b_{ij})_{i \in [n], j \in [2n]}$ containing its last $n$ columns. That is, $C = (b_{ij})_{i \in [n], j \in [n+1,2n]}$. We let \textbf{c}-mat($Q$) $:= \{C_R: R \in EG(\widehat{Q})\}$. A row vector of a \textbf{c}-matrix, $\textbf{c}_i$, is known as a \textbf{c}-\textbf{vector}. Since a \textbf{c}-matrix $C$ is only defined up to a permutations of its rows, $C$ can be regarded simply as a \textit{set} of \textbf{c}-vectors.

The celebrated theorem of Derksen, Weyman, and Zelevinsky \cite[Theorem 1.7]{dwz10}, known as the {sign-coherence} of $\textbf{c}$-vectors, states that for any $R \in EG(\widehat{Q})$ and $i \in [n]$ the \textbf{c}-vector $\textbf{c}_i$ is a nonzero element of $\mathbb{Z}_{\ge 0}^n$ or $\mathbb{Z}_{\le0}^n$. If $\textbf{c}_i \in \mathbb{Z}_{\ge 0}^n$ (resp. $\textbf{c}_i \in \mathbb{Z}_{\le 0}^n$) we say it is \textbf{positive} (resp. \textbf{negative}). It turns out that for any quiver $Q$ one has $\textbf{c}\text{-vec(}Q\text{)} := \{\text{\textbf{c}-vectors of $Q$}\} = \textbf{c}\text{-vec(}Q\text{)}^+ \sqcup -\textbf{c}\text{-vec(}Q\text{)}^+$ where $\textbf{c}\text{-vec(}Q\text{)}^+ := \{\text{positive } \textbf{c}\text{-vectors of } Q\}$.

\begin{figure}[t]
$$\begin{array}{cc}
 \raisebox{.05in}{\includegraphics[scale=1]{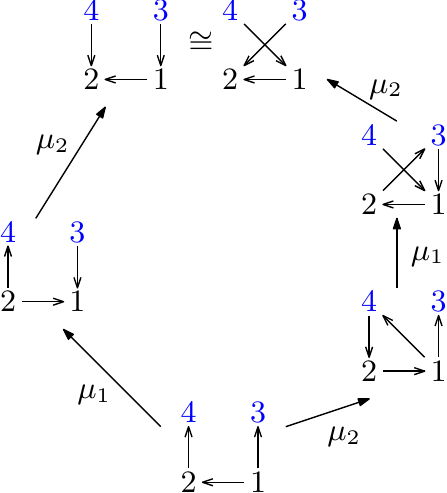}} & \raisebox{.1in}{\includegraphics[scale=.9]{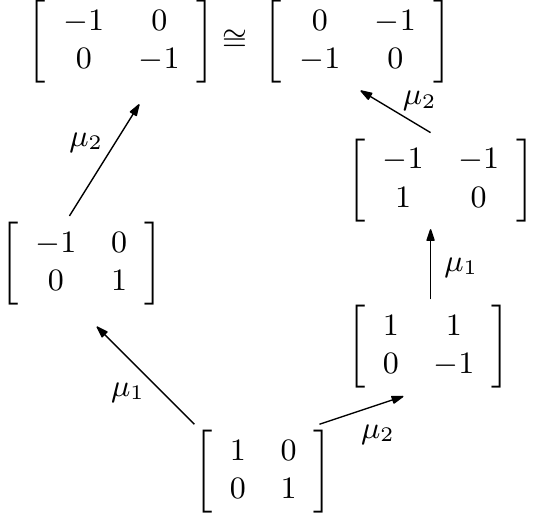}}
\end{array}$$ 
\vspace{-.3in}
\caption{The oriented exchange graph of of $Q=2\leftarrow 1$ and the corresponding $\textbf{c}$-matrices.}
\label{oregA2_1}
\end{figure}

\begin{definition}\cite{bdp}
The \textbf{oriented exchange graph} of a quiver $Q$, denoted $\overrightarrow{EG}(\widehat{Q})$, is the directed graph whose underlying unoriented graph is $EG(\widehat{Q})$ with directed edges $(R^1,F) \longrightarrow (\mu_kR^1,F)$ if and only if $\textbf{c}_k$ is positive in $C_{R^1}$. In Figure~\ref{oregA2_1}, we show $\overrightarrow{EG}(\widehat{Q})$ and we also show all of the \textbf{c}-matrices in $\textbf{c}\text{-mat(}Q\text{)}$ where $Q = 2 \leftarrow 1$.
\end{definition}

\subsection{Lattices}\label{subsec_lattices}

We will see that many properties of oriented flip graphs can be deduced from their lattice structure. In this section, we review some background on lattice theory, following \cite{ReadingPAB}. Unless stated otherwise, we assume that all lattices considered are finite.

Given a poset $(P,\leq)$, the \textbf{dual} poset $(P^*,\leq^*)$ has the same underlying set with $x\leq^* y$ if and only if $y\leq x$. A \textbf{chain} in $P$ is a totally ordered subposet of $P$. A chain $x_0<\cdots<x_N$ is \textbf{saturated} if there does not exist $y\in P$ such that $x_{i-1}<y<x_i$ for some $i$. A saturated chain is \textbf{maximal} if $x_0$ is a minimal element of $P$ and $x_N$ is a maximal element of $P$.

A \textbf{lattice} is a poset for which any two elements $x,y$ have a least upper bound $x\vee y$ called the \textbf{join} and a greatest lower bound $x\wedge y$ called the \textbf{meet}. Any finite lattice has a lower and an upper bound, denoted $\hat{0}$ and $\hat{1}$, respectively. Unless stated otherwise, we will assume that our lattices are finite. An element $j$ is \textbf{join-irreducible} if $j\neq\hat{0}$ and whenever $j=x\vee y$ either $j=x$ or $j=y$ holds. \textbf{Meet-irreducible} elements are defined dually. Let $\JI(L)$ and $\MI(L)$ be the sets of join-irreducibles and meet-irreducibles of $L$, respectively.

For $A\subseteq L$, the expression $\bigvee A$ is \textbf{irredundant} if there does not exist a proper subset $A^{\pr}\subsetneq A$ such that $\bigvee A^{\pr}=\bigvee A$. Given $A,B\subseteq JI(L)$ such that $\bigvee A$ and $\bigvee B$ are irredundant and $\bigvee A=\bigvee B$, we set $A\preceq B$ if for $a\in A$ there exists $b\in B$ with $a\leq b$. If $x\in L$ and $A\subseteq\JI(L)$ such that $x=\bigvee A$ is irredundant, we say $x=\bigvee A$ is a \textbf{canonical join-representation} for $x$ if $A\preceq B$ for any other irrendundant join-representation $x=\bigvee B,\ B\subseteq\JI(L)$. Dually, one may define \textbf{canonical meet-representations}.

\begin{figure}
\centering
\includegraphics{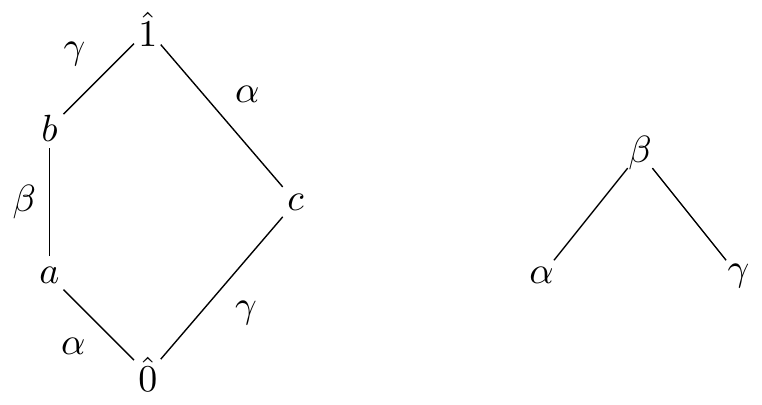}
\caption{\label{fig_culabel}(left) A lattice with a CU-labeling (right) a poset of labels}
\end{figure}

In Figure \ref{fig_culabel}, we define a lattice with 5 elements. The set of join-irreducibles is $\{a,b,c\}$. The top element $\hat{1}$ has two irredundant expressions as a join of join-irreducibles, namely $a\vee c=\hat{1}$ and $b\vee c=\hat{1}$. Since $\{a,c\}\leq\{b,c\}$, the expression $a\vee c=\hat{1}$ is the canonical join-representation for $\hat{1}$.

A lattice $L$ is \textbf{meet-semidistributive} if for any three elements $x,y,z\in L$, $x\wedge z=y\wedge z$ implies $(x\vee y)\wedge z=x\wedge z$. A lattice $L$ is \textbf{semidistributive} if both $L$ and $L^*$ are meet-semidistributive. It is known that a lattice is semidistributive if and only if it has canonical join-representations and canonical meet-representations for each of its elements.

A \textbf{lattice congruence} $\Theta$ is an equivalence relation such that if $x\equiv y\mod\Theta$ then $x\wedge z\equiv y\wedge y\mod\Theta$ and $x\vee z\equiv y\vee z\mod\Theta$ for all $x,y,z\in L$. If $\Theta$ is a lattice congruence of $L$, the set of equivalence classes $L/\Theta$ inherits a lattice structure from $L$. Namely, $[x]\vee[y]=[x\vee y]$ and $[x]\wedge[y]=[x\wedge y]$ for $x,y\in L$. The lattice $L/\Theta$ is called a \textbf{lattice quotient} of $L$, and the natural map $L\ra L/\Theta$ is a \textbf{lattice quotient map}. Although lattice quotients are easiest to describe in algebraic terms, it is often more useful to give the following order-theoretic definition.

\begin{lemma}\label{lem_lattice_congruence_0}
An equivalence relation $\Theta$ on a finite lattice $L$ is a lattice congruence if
\begin{enumerate}
\item every equivalence class of $\Theta$ is a closed interval of $L$, and
\item the maps $x\mapsto\min[x]_{\Theta}$ and $x\mapsto\max[x]_{\Theta}$ are order-preserving.
\end{enumerate}
\end{lemma}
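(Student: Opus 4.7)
The plan is to verify both congruence properties: if $x \equiv y \pmod{\Theta}$ then $x \wedge z \equiv y \wedge z$ and $x \vee z \equiv y \vee z$ for every $z \in L$. The two hypotheses are self-dual (equivalence classes being closed intervals is preserved by reversing the order, and the two projections $x \mapsto \min[x]_\Theta$ and $x \mapsto \max[x]_\Theta$ swap roles), so passing to $L^*$ reduces the meet case to the join case; I focus on joins. Write $x^- = \min[x]_\Theta$ and $x^+ = \max[x]_\Theta$, so by hypothesis (1), $[x]_\Theta = [x^-, x^+]$, and therefore $x \equiv y$ if and only if $x^- = y^-$, if and only if $x^+ = y^+$. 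I will repeatedly use the elementary facts $x^- \leq x \leq x^+$ together with the order-preservation of $\pi_\downarrow : x \mapsto x^-$ and $\pi_\uparrow : x \mapsto x^+$ from hypothesis (2).

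First, I would reduce to the case of comparable elements. If $x \equiv y$, then both $x$ and $y$ lie in the interval $[x^-, x^+]$, which is closed under meets and joins, so $x \vee y$ lies there as well, giving $x \equiv x \vee y \equiv y$. By transitivity of equivalence modulo $\Theta$, it therefore suffices to prove that $a \leq b$ and $a \equiv b$ imply $a \vee z \equiv b \vee z$ for all $z \in L$.

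The central step is to show $b \vee z \leq (a \vee z)^+$, for then $a \vee z \leq b \vee z \leq (a \vee z)^+$ forces $b \vee z$ into the equivalence class $[(a \vee z)^-, (a \vee z)^+]$ of $a \vee z$. Order-preservation of $\pi_\uparrow$ applied to $a \leq a \vee z$ gives $a^+ \leq (a \vee z)^+$. Since $a \equiv b$ gives $b \leq a^+$, we deduce $b \leq (a \vee z)^+$. Moreover, $z \leq a \vee z \leq (a \vee z)^+$ by definition of $\pi_\uparrow$. Taking the join of these inequalities yields $b \vee z \leq (a \vee z)^+$ as desired.

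The main obstacle I anticipate is recognizing that order-preservation of $\pi_\downarrow$ alone is not enough: from $a \vee z \leq b \vee z$ one only extracts $(a \vee z)^- \leq (b \vee z)^-$, which falls short of equivalence. The crucial idea is to use the upper projection $\pi_\uparrow$ to trap $b \vee z$ inside the class of $a \vee z$ from above. The dual argument for meets is formally identical after swapping $\pi_\uparrow \leftrightarrow \pi_\downarrow$ and $\vee \leftrightarrow \wedge$: from $a \leq b$ with $a \equiv b$, one verifies $(b \wedge z)^- \leq a \wedge z$ via $(b \wedge z)^- \leq b^- = a^- \leq a$ and $(b \wedge z)^- \leq z$, placing $a \wedge z$ in the class of $b \wedge z$.
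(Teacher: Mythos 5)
Your proof is correct. The paper does not include a proof of this lemma (it cites the literature and then moves on to the variant Lemma~\ref{lem_lattice_congruence}), so there is nothing to compare against; your argument is the standard one, and the two key steps — reducing to the comparable case $a \le b$, $a \equiv b$, and then trapping $b \vee z$ in $[(a\vee z)^-, (a\vee z)^+]$ via order-preservation of the upper projection — are exactly what the lemma is designed to exploit.
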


Lemma \ref{lem_lattice_congruence_0} has been proven several times in the literature. For our purposes, it is more convenient to use the following modification; see \cite[Lemma 3.1]{garver2015lattice} or \cite[Lemma 4.2]{dermenjain.hohlweg.pilaud:facial_weak}.

\begin{lemma}\label{lem_lattice_congruence}
Let $L$ be a lattice with idempotent, order-preserving maps $\pi_{\downarrow}:L\ra L,\ \pi^{\uparrow}:L\ra L$. If for $x\in L$
\begin{enumerate}
\item $\pi_{\downarrow}(x)\leq x\leq \pi^{\uparrow}(x)$,
\item $\pi_{\downarrow}(\pi^{\uparrow}(x))=\pi_{\downarrow}(x)$,
\item $\pi^{\uparrow}(\pi_{\downarrow}(x))=\pi^{\uparrow}(x)$,
\end{enumerate}
then the equivalence relation $x\equiv y\mod\Theta$ if $\pi_{\downarrow}(x)=\pi_{\downarrow}(y)$ is a lattice congruence.
\end{lemma}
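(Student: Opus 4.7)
The plan is to reduce the lemma directly to the order-theoretic criterion of Lemma~\ref{lem_lattice_congruence_0}. For this, the key identification I would establish is that every $\Theta$-class has the form
\[ [x]_{\Theta} \;=\; [\pi_{\downarrow}(x),\pi^{\uparrow}(x)], \]
a closed interval of $L$. Once this is in place, the minimum and maximum of $[x]_{\Theta}$ are $\pi_{\downarrow}(x)$ and $\pi^{\uparrow}(x)$ respectively, which are order-preserving by hypothesis, and the two conditions of Lemma~\ref{lem_lattice_congruence_0} are met.

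First I would verify that equivalence is captured symmetrically by the two maps: if $\pi_{\downarrow}(x)=\pi_{\downarrow}(y)$, then applying $\pi^{\uparrow}$ to both sides and using hypothesis (3) gives $\pi^{\uparrow}(x)=\pi^{\uparrow}(\pi_{\downarrow}(x))=\pi^{\uparrow}(\pi_{\downarrow}(y))=\pi^{\uparrow}(y)$. (The converse implication uses (2) symmetrically.) So $\Theta$-equivalent elements share both $\pi_{\downarrow}$- and $\pi^{\uparrow}$-values.

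Next I would prove the two inclusions for the class. For the forward inclusion, if $y\equiv x\bmod\Theta$ then by hypothesis (1) and the previous step, $\pi_{\downarrow}(x)=\pi_{\downarrow}(y)\leq y\leq \pi^{\uparrow}(y)=\pi^{\uparrow}(x)$, so $y\in[\pi_{\downarrow}(x),\pi^{\uparrow}(x)]$. For the reverse inclusion, suppose $\pi_{\downarrow}(x)\leq y\leq \pi^{\uparrow}(x)$. Applying the order-preserving map $\pi_{\downarrow}$ gives $\pi_{\downarrow}(\pi_{\downarrow}(x))\leq \pi_{\downarrow}(y)\leq \pi_{\downarrow}(\pi^{\uparrow}(x))$. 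By idempotence of $\pi_{\downarrow}$ the left side equals $\pi_{\downarrow}(x)$, and by hypothesis (2) the right side also equals $\pi_{\downarrow}(x)$, forcing $\pi_{\downarrow}(y)=\pi_{\downarrow}(x)$, i.e. $y\equiv x\bmod\Theta$.

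With the interval description established, Lemma~\ref{lem_lattice_congruence_0}(1) holds, and Lemma~\ref{lem_lattice_congruence_0}(2) is immediate since $\min[x]_{\Theta}=\pi_{\downarrow}(x)$ and $\max[x]_{\Theta}=\pi^{\uparrow}(x)$ are assumed order-preserving. This concludes the proof. There is no real obstacle here: the content of the lemma is the interval description of the classes, and each of the three hypotheses (1), (2), (3) is used precisely once in obtaining it, with idempotence supplying the remaining endpoint equality.
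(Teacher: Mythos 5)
Your proof is correct and follows the route the paper itself signals by stating Lemma~\ref{lem_lattice_congruence_0} immediately before: show each $\Theta$-class is the closed interval $[\pi_{\downarrow}(x),\pi^{\uparrow}(x)]$ and then invoke the order-theoretic criterion. The paper does not print a proof of this lemma (it cites the sources it was adapted from), but your argument is complete and is the natural one; in particular you correctly use hypothesis (3) together with idempotence to get $\pi^{\uparrow}(x)=\pi^{\uparrow}(y)$ from $\pi_{\downarrow}(x)=\pi_{\downarrow}(y)$, and hypothesis (2) together with idempotence to squeeze $\pi_{\downarrow}(y)=\pi_{\downarrow}(x)$ for any $y$ in the interval.
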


Given $x,y$ in a poset $P$, we say $y$ \textbf{covers} $x$, denoted $x\lessdot y$, if $x<y$ and there does not exist $z\in P$ such that $x<z<y$. We let $\Cov(P)$ denote the set of all covering relations of $P$. If $P$ is finite, then the partial order on $P$ is the transitive closure of its covering relations. In a finite lattice $L$, if $j\in\JI(L)$, then $j$ covers a unique element $j_*$. Dually, if $m\in\MI(L)$, then $m$ is covered by a unique element $m^*$. It should be clear from context whether $m^*$ is an element of the dual lattice $L^*$ or is the unique element covering a meet-irreducible $m$. We describe the behavior of covering relations under lattice quotients in Lemma~\ref{lem_cong_elementary}. A proof of this lemma may be found in Section 1-5 of \cite{ReadingPAB}.

\begin{lemma}\label{lem_cong_elementary}
Let $L$ be a lattice with a lattice congruence $\Theta$.
\begin{enumerate}
\item The interval $[[x]_{\Theta},[y]_{\Theta}]$ in $L/\Theta$ is isomorphic to the quotient interval $[x,y]/\Theta$.
\item If $(x,y)\in\Cov(L)$, then either $[x]_{\Theta}=[y]_{\Theta}$ or $([x]_{\Theta},[y]_{\Theta})\in\Cov(L/\Theta)$.
\item If $x=\max[x]_{\Theta}$, then for each $[y]_{\Theta}$ with $([x]_{\Theta},[y]_{\Theta})\in\Cov(L/\Theta)$ there exists a unique $y^{\pr}\in[y]_{\Theta}$ with $(x,y^{\pr})\in\Cov(L)$.
\item If $y=\min[y]_{\Theta}$, then for each $[x]_{\Theta}$ with $([x]_{\Theta},[y]_{\Theta})\in\Cov(L/\Theta)$ there exists a unique $x^{\pr}\in[x]_{\Theta}$ with $(x^{\pr},y)\in\Cov(L)$.
\end{enumerate}
\end{lemma}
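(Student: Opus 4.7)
The proof plan is to leverage the min/max selectors $\pi_{\downarrow}(x)=\min[x]_{\Theta}$ and $\pi^{\uparrow}(x)=\max[x]_{\Theta}$ of Lemma~\ref{lem_lattice_congruence} together with the elementary fact (recalled in Lemma~\ref{lem_lattice_congruence_0}) that every $\Theta$-class is a closed interval. The central technical observation, which feeds into all four parts, is that if $x\leq z$ and $z\equiv z^{\pr}\bmod\Theta$ then $z\wedge(x\vee z^{\pr})$ and $x\vee(z\wedge z^{\pr})$ lie in the same class as $z$, so by intersecting and joining with the endpoints one can always replace a representative of a given class by one lying in a prescribed interval.

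For part (1), I would define the comparison map $\varphi:[x,y]/\Theta\ra[[x]_{\Theta},[y]_{\Theta}]$ by $[z]_{\Theta}\mapsto[z]_{\Theta}$ (where on the left side the class is taken inside $[x,y]$, on the right inside $L$). It is clearly order-preserving and injective. For surjectivity, given $[w]_{\Theta}\in[[x]_{\Theta},[y]_{\Theta}]$, set $z=(w\vee x)\wedge y$; since $\Theta$ respects meets and joins and since $w\vee x\equiv w\vee x$, $\max[w]_{\Theta}\vee x\equiv w\vee x\bmod\Theta$, etc., one checks $z\equiv w$ and $x\leq z\leq y$. Order-reflection follows from the same replacement argument. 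Part (2) is then immediate: an element $[z]_{\Theta}$ strictly between $[x]_{\Theta}$ and $[y]_{\Theta}$ in $L/\Theta$ would, by part (1), be represented by some $z^{\pr}\in[x,y]$ with $[x]_{\Theta}\neq[z^{\pr}]_{\Theta}\neq[y]_{\Theta}$, forcing $x<z^{\pr}<y$ and contradicting $x\lessdot y$.

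For part (3), assume $x=\max[x]_{\Theta}$ and $([x]_{\Theta},[y]_{\Theta})\in\Cov(L/\Theta)$. Set $y^{\pr}:=x\vee\pi_{\downarrow}(y)$. Then $[y^{\pr}]_{\Theta}=[x]_{\Theta}\vee[y]_{\Theta}=[y]_{\Theta}$, so $y^{\pr}\neq x$ (the classes differ) and hence $x<y^{\pr}$. To see $x\lessdot y^{\pr}$, suppose $x<z<y^{\pr}$. By the covering in $L/\Theta$ we must have $[z]_{\Theta}\in\{[x]_{\Theta},[y]_{\Theta}\}$. The first case forces $z\leq\max[x]_{\Theta}=x$, a contradiction. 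In the second case $z\geq\pi_{\downarrow}(y)$, so $z\geq x\vee\pi_{\downarrow}(y)=y^{\pr}$, again a contradiction. Uniqueness: if $y^{\pr},y^{\prpr}\in[y]_{\Theta}$ both cover $x$, then $y^{\pr}\wedge y^{\prpr}\equiv y\bmod\Theta$ so $y^{\pr}\wedge y^{\prpr}\neq x$, giving $x<y^{\pr}\wedge y^{\prpr}\leq y^{\pr}$, which forces $y^{\pr}\wedge y^{\prpr}=y^{\pr}$ and symmetrically $=y^{\prpr}$. Part (4) is the order-dual, using $x^{\pr}=y\wedge\pi^{\uparrow}(x)$.

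The argument is almost entirely formal once one has the selectors $\pi_{\downarrow},\pi^{\uparrow}$ at hand; the only spot that requires some care is verifying that the representative $z=(w\vee x)\wedge y$ in part (1) is genuinely $\Theta$-equivalent to $w$, which is where the compatibility of $\Theta$ with both meets and joins is used decisively. This is the main (mild) obstacle; the rest follows by direct covering-relation chases.
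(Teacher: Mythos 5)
The paper itself does not give a proof of Lemma~\ref{lem_cong_elementary}: it only cites Section~1-5 of \cite{ReadingPAB}, so there is no internal argument for direct comparison. Your proof is essentially the standard one, and it is correct.

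A few remarks. Your appeal to the ``selectors'' $\pi_{\downarrow},\pi^{\uparrow}$ is really an appeal to the converse of Lemma~\ref{lem_lattice_congruence_0} (that for a finite lattice congruence the classes are closed intervals and the min/max maps are order-preserving) rather than to Lemma~\ref{lem_lattice_congruence}, which gives a sufficient criterion going in the other direction; this is worth stating since you use both ``classes are intervals'' and ``classes are sublattices'' repeatedly. The surjectivity step in part~(1) is the right move --- take $z=(w\vee x)\wedge y$ --- but the justification you sketch (``since $w\vee x\equiv w\vee x$, $\max[w]_{\Theta}\vee x\equiv w\vee x$, etc.'') is garbled. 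The clean verification is: $[x]_{\Theta}\leq[w]_{\Theta}$ gives $[w\vee x]_{\Theta}=[w]_{\Theta}$, and then $[w]_{\Theta}\leq[y]_{\Theta}$ gives $[(w\vee x)\wedge y]_{\Theta}=[w]_{\Theta}\wedge[y]_{\Theta}=[w]_{\Theta}$, while $x\leq z\leq y$ holds because $x\leq y$. With this fixed, parts (2)--(4) go through exactly as you write them: part~(2) is an immediate corollary of~(1), and in part~(3) the candidate $y^{\pr}=x\vee\pi_{\downarrow}(y)$ together with the case analysis on $[z]_{\Theta}\in\{[x]_{\Theta},[y]_{\Theta}\}$ and the meet-of-two-covers uniqueness argument is exactly right; part~(4) is its dual. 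No gaps.
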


The set of lattice congruences $\Con(L)$ of a lattice $L$ is partially ordered by refinement. The top element of $\Con(L)$ is the congruence that identifies all of the elements of $L$, whereas the bottom element does not identify any elements of $L$. It is known that $\Con(L)$ is a distributive lattice. By Birkhoff's representation theorem for distributive lattices, $\Con(L)$ is isomorphic to the poset of order-ideals of $\JI(\Con(L))$, where the set of join-irreducibles is viewed as a subposet of $\Con(L)$.

Given $x\lessdot y$ in $L$, let $\con(x,y)$ denote the most refined lattice congruence for which $x\equiv y$. These congruences are join-irreducible, and if $L$ is finite, then every join-irreducible lattice congruence is of the form $\con(j_*,j)$ for some $j\in\JI(L)$ \cite[Theorem 2.30]{freese1995free}. Consequently, there is a natural surjective map of sets $\JI(L)\ra\JI(\Con(L))$ given by $j\mapsto\con(j_*,j)$. Dually, there is a natural surjection $\MI(L)\ra\MI(\Con(L))$ given by $m\mapsto\con(m,m^*)$. If both maps are bijections, then we say $L$ is \textbf{congruence-uniform} (or \textbf{bounded}). Congruence-uniform lattices are the topic of the next section.

\subsection{Congruence-uniform lattices}\label{subsec_cu_lattices}
Given a subset $I$ of a poset $P$, let $P_{\leq I}=\{x\in P:\ (\exists y\in I)\ x\leq y\}$.  If $I$ is a closed interval of a poset $P$, the \textbf{doubling} $P[I]$ of $P$ at $I$ is the induced subposet of $P\times 2$ consisting of the elements in $P_{\leq I}\times\{0\}\sqcup((P\setm P_{\leq I})\cup I)\times\{1\}$. Some doublings are shown in Figure \ref{fig_doubling}. Day proved that a lattice is congruence-uniform if and only if it may be constructed from a 1-element lattice by a sequence of interval doublings \cite{day:congruence}.

\begin{figure}
\centering
\includegraphics{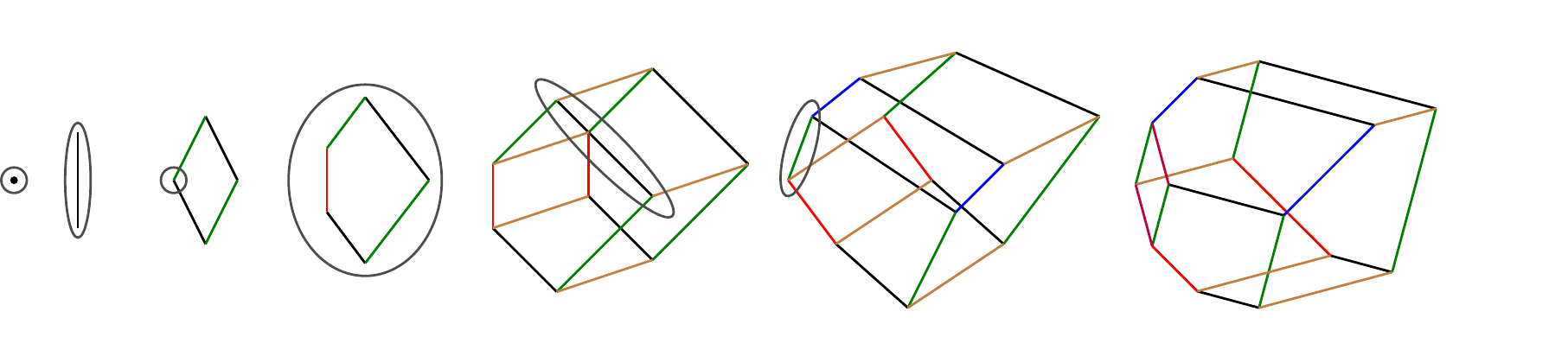}
\caption{\label{fig_doubling}A sequence of interval doublings}
\end{figure}

Let $L$ be a lattice and $P$ a poset. A function $\lambda:\Cov(L)\ra P$ is a \textbf{CN-labeling} of $L$ if $L$ and its dual $L^*$ satisfy the following condition (see \cite{reading:lattice}): For elements $x,y,z\in L$ with $z\lessdot x,\ z\lessdot y$, and maximal chains $C_1,C_2$ in $[z,x\vee y]$ with $x\in C_1$ and $y\in C_2$,
\begin{list}{}{}
\item[(CN1)] the elements $x^{\pr}\in C_1,\ y^{\pr}\in C_2$ such that $x^{\pr}\lessdot x\vee y$ and $y^{\pr}\lessdot x\vee y$ satisfy
$$\lambda(x^{\pr},x\vee y)=\lambda(z,y),\ \lambda(y^{\pr},x\vee y)=\lambda(z,x);$$
\item[(CN2)] if $(u,v)\in\Cov(C_1)$ with $z<u,\ v<x\vee y$ then $\lambda(z,x)<\lambda(u,v)$ and $\lambda(z,y)<\lambda(u,v)$;
\item[(CN3)] the labels on $\Cov(C_1)$ are all distinct.
\end{list}

A lattice is \textbf{congruence-normal} if it has a CN-labeling. Alternatively, a lattice is congruence-normal if it may be constructed from a 1-element lattice by a doubling sequence of \textbf{order-convex sets}; see \cite{reading:lattice}.

\begin{lemma}\label{lem_cn_cong}
Let $L$ be a congruence-normal lattice with CN-labeling $\lambda:\Cov(L)\ra P$.
\begin{enumerate}
\item Let $\Theta$ be a lattice congruence of $L$. Define an edge-labeling $\tilde{\lambda}:\Cov(L/\Theta)\ra P$ by $\tilde{\lambda}([x]_{\Theta},[y]_{\Theta})=\lambda(x,y)$ whenever $(x,y)\in\Cov(L)$ and $x\nequiv y\mod\Theta$. This labeling is well-defined and is a CN-labeling of $L/\Theta$.
\item The restriction of a CN-labeling to an interval $[x,y]$ is a CN-labeling of $[x,y]$.
\end{enumerate}
\end{lemma}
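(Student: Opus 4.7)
Part (2) is essentially immediate: if $[x,y]\subseteq L$ is an interval, then covers, joins, and meets computed within $[x,y]$ agree with those in $L$, so the restriction $\lambda|_{\Cov([x,y])}$ inherits each of (CN1)--(CN3) directly from $\lambda$, and one dispatches this in a sentence. The substance lies in Part (1), and the plan has two stages: first establish well-definedness of $\tilde\lambda$, then verify the three axioms.

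For well-definedness, suppose $(x_1,y_1),(x_2,y_2)\in\Cov(L)$ both project to the same cover in $L/\Theta$, so $x_1\equiv x_2$, $y_1\equiv y_2$, and $x_i\not\equiv y_i$. I would fix a canonical representative by setting $y_\star=\min[y_1]_\Theta$ and letting $x_\star\in[x_1]_\Theta$ be the unique element with $(x_\star,y_\star)\in\Cov(L)$ (which exists by Lemma~\ref{lem_cong_elementary}(4)), and then show $\lambda(x_i,y_i)=\lambda(x_\star,y_\star)$ by induction along a path in the Hasse diagram of $[x_1]_\Theta$ from $x_i$ to $x_\star$; such a path exists because equivalence classes are closed intervals and hence connected sublattices, by Lemma~\ref{lem_lattice_congruence_0}. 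At each inductive step a diamond in $L$ is formed from the current within-class cover together with the adjacent ``crossing'' cover to $[y_1]_\Theta$, and (CN1), together with the observation that the top cover of this diamond still projects to $([x_1]_\Theta,[y_1]_\Theta)$ in $L/\Theta$, transfers the label across.

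Once well-definedness is in hand, the three axioms for $\tilde\lambda$ follow by a lift-and-project argument: given a configuration in $L/\Theta$ (two covers out of an element for (CN1), a diamond top for (CN2), or a maximal chain for (CN3)), lift to a corresponding configuration in $L$ using parts (2)--(4) of Lemma~\ref{lem_cong_elementary}, apply the axiom for $\lambda$, and project back; well-definedness identifies the projected labels with $\tilde\lambda$-values, and distinctness in (CN3) persists since distinct labels in $L$ remain distinct after projection. The main obstacle is well-definedness: making the induction go through requires checking that the top cover of each diamond again projects to $([x_1]_\Theta,[y_1]_\Theta)$, so that (CN1) applies to a cover whose label has already been inductively identified. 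Once this structural point is settled, the axiomatic verification is routine.
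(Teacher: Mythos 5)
The paper states this lemma without proof, so I am assessing your argument on its own terms. Part~(2) is indeed immediate, and the lift-and-project scheme for verifying (CN1)--(CN3) once well-definedness is known is the right idea (it uses Lemma~\ref{lem_cong_elementary}(2)--(4) exactly as you say).

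The real issue is in the well-definedness step, and the obstacle you flag is not the dangerous one. Checking that the ``top cover of the diamond projects to $([x_1]_\Theta,[y_1]_\Theta)$'' is automatic: if $z\lessdot z'$ is a within-class cover and $z\lessdot w$ crosses, then $z'\equiv z$ and $z'\vee w\equiv z\vee w=w$, so the relevant covering pair always sits over $([x_1]_\Theta,[y_1]_\Theta)$. What is \emph{not} automatic is that $\{z,z',w,z'\vee w\}$ is a diamond at all, i.e.\ that $z'\lessdot z'\vee w$. In a congruence-normal lattice $[z,z'\vee w]$ is just some polygon, and (CN1) only identifies the label on the \emph{top} cover of the maximal chain $C_1$ through $z'$. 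If that chain has length $\geq 3$, the unique crossing cover on $C_1$ may occur at an interior step, where (CN2) tells you its label is strictly \emph{larger} than $\lambda(z,w)$, not equal to it. (Already in the weak order on $S_3$ with $\Theta=\con(e,s_1)$, taking $z=e$, $z'=s_1$, $w=s_2$ gives $z'\vee w=w_0$ with $s_1\not\lessdot w_0$, so the configuration is a hexagon rather than a diamond.) A second, quieter gap: your induction walks an arbitrary Hasse path inside $[x_1]_\Theta$, but not every vertex of $[x_1]_\Theta$ admits a cover crossing into $[y_1]_\Theta$, so the ``adjacent crossing cover'' you invoke at each step need not exist.

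To close these gaps one needs a genuinely different mechanism. The standard route is to induct on a doubling sequence $L_0,L_1,\ldots,L_n=L$ by order-convex sets: each $\Theta$ either contracts the freshly-introduced label (in which case $L/\Theta$ descends from $L_{n-1}/\Theta'$ and the induced labeling agrees with $\tilde\lambda'$) or it does not (in which case $L/\Theta\cong (L_{n-1}/\Theta')[\bar I]$ and the new label survives). Well-definedness and the axioms then propagate through the doubling. Alternatively, one can make your chain-walking rigorous, but only by invoking (CN2)/(CN3) to pin down where the crossing cover sits on each maximal chain, which is precisely the step your sketch elides.
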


We say $\lambda:\Cov(L)\ra P$ is a \textbf{CU-labeling} if it is a CN-labeling, and
\begin{list}{}{}
\item[(CU1)] $\lambda(j_*,j)\neq\lambda(j_*^{\pr},j^{\pr})$ for $j,j^{\pr}\in\JI(L),\ j\neq j^{\pr}$, and
\item[(CU2)] $\lambda(m,m^*)\neq\lambda(m^{\pr},m^{\pr *})$ for $m,m^{\pr}\in\MI(L),\ m\neq m^{\pr}$.
\end{list}

For example, the colors on the edges of Figure \ref{fig_doubling} form a CU-labeling, where the color set is ordered $s\leq t$ if color $s$ appears before $t$ in the sequence of doublings.

In \cite{reading:lattice}, Reading characterized congruence-normal lattices as those lattices that admit a CN-labeling. From his proof, it is straight-forward to show that a lattice is congruence-uniform if and only if it admits a CU-labeling.

\begin{proposition}
A lattice is congruence-uniform if and only if it admits a CU-labeling.
\end{proposition}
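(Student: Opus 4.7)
The plan is to leverage Reading's characterization in \cite{reading:lattice} that $L$ is congruence-normal if and only if $L$ admits a CN-labeling. Granted this, it suffices to show that the extra axioms (CU1) and (CU2) correspond exactly to the injectivity of the two natural surjections $\JI(L)\ra\JI(\Con(L))$ and $\MI(L)\ra\MI(\Con(L))$ recalled in Section~\ref{subsec_lattices}; together, these injectivities are the definition of congruence-uniformity.

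The crux is the following auxiliary claim: for any CN-labeling $\lambda:\Cov(L)\ra P$ and any two covers $(x,y),(x^{\pr},y^{\pr})\in\Cov(L)$, if $\con(x,y)=\con(x^{\pr},y^{\pr})$ then $\lambda(x,y)=\lambda(x^{\pr},y^{\pr})$. Morally this holds because the forcing relation on cover edges is generated by the polygon moves of axiom (CN1), which exchange two sides of a polygon $[z,x\vee y]$ carrying equal labels; iterating such moves shows that the CN-label is constant on each forcing-class of covers, and hence on each join-irreducible congruence.

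For the forward direction, assume $L$ is congruence-uniform. Then $L$ is congruence-normal, and one may take the canonical labeling $\lambda^{\text{can}}(x,y):=\con(x,y)$ with label poset $\JI(\Con(L))$; this is a CN-labeling by Reading's construction. Axiom (CU1) for $\lambda^{\text{can}}$ asserts that distinct $j,j^{\pr}\in\JI(L)$ give distinct join-irreducible congruences $\con(j_*,j)\neq\con(j_*^{\pr},j^{\pr})$, which is precisely injectivity of $\JI(L)\ra\JI(\Con(L))$; axiom (CU2) gives the dual injectivity.

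For the backward direction, suppose $\lambda:\Cov(L)\ra P$ is a CU-labeling. Since it is a CN-labeling, $L$ is congruence-normal. If there existed distinct $j,j^{\pr}\in\JI(L)$ with $\con(j_*,j)=\con(j_*^{\pr},j^{\pr})$, the auxiliary claim would force $\lambda(j_*,j)=\lambda(j_*^{\pr},j^{\pr})$, contradicting (CU1); hence the surjection $\JI(L)\ra\JI(\Con(L))$ is injective. Dually, (CU2) forces $\MI(L)\ra\MI(\Con(L))$ to be injective. Therefore $L$ is congruence-uniform. The main obstacle is justifying the auxiliary claim: while intuitive from (CN1) alone, a careful proof requires translating the abstract forcing relation between cover relations into a concrete sequence of polygon moves, which is essentially the content of the polygon-based analysis of congruence-normal lattices in \cite{reading:lattice}.
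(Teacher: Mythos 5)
Your approach matches what the paper intends: the authors do not actually prove this proposition, deferring instead to Reading's proof that a finite lattice is congruence-normal if and only if it admits a CN-labeling and asserting that the congruence-uniform case is ``straightforward'' from there. You have correctly identified that (CU1) and (CU2), applied to the canonical labeling $\lambda^{\text{can}}(x,y)=\con(x,y)$, are literally the injectivity of the two surjections $\JI(L)\ra\JI(\Con(L))$ and $\MI(L)\ra\MI(\Con(L))$, which is the definition of congruence-uniformity; so the equivalence reduces to your auxiliary claim in the backward direction. Two cautions on that claim. First, the implication goes only one way: if $\con(x,y)=\con(x^{\pr},y^{\pr})$ then $\lambda(x,y)=\lambda(x^{\pr},y^{\pr})$. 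The converse genuinely fails for general CN-labelings -- the constant labeling of a 3-element chain is trivially CN (there are no polygons to constrain it) yet identifies covers generating distinct congruences -- so it is important that your argument only invokes the direction you stated. Second, the claim, while true, is not an immediate consequence of (CN1) alone; it rests on the fact (from Reading's doubling analysis) that in a congruence-normal lattice the forcing-equivalence on $\Cov(L)$ is generated by the polygon perspectivities appearing in (CN1), so that a CN-labeling is constant on forcing classes. You correctly flag this as the content outsourced to \cite{reading:lattice}. Note also that in the forward direction, the verification that $\lambda^{\text{can}}$ is a CN-labeling of a congruence-normal lattice is itself part of Reading's theorem, not an automatic consequence of the definitions. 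Since the paper makes the same wholesale appeal to \cite{reading:lattice} without writing out these details, your reduction is a faithful expansion of what the authors gesture at, and the gap you honestly acknowledge is exactly the gap the paper leaves as well.
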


If $x\lessdot y$ and $w\lessdot z$, then covers $(x,y)$ and $(w,z)$ are \textbf{associates} if either $y\wedge w=x$ and $y\vee w=z$ or $x\wedge z=w$ and $x\vee z=y$. Such a notion is useful for lattice congruences. Namely, if $(x,y)$ and $(w,z)$ are associates and $\Theta$ is a lattice congruence, then $x\equiv y\mod\Theta$ if and only if $w\equiv z\mod\Theta$.

For an element $x$, let $\lambda_{\downarrow}(x)=\{\lambda(y,x):\ y\in L, y\lessdot x\}$.  Dually, let $\lambda^{\uparrow}(x)=\{\lambda(x,y):\ y\in L, x\lessdot y\}$.

\begin{lemma}\label{lem_associates}
Let $L$ be a congruence-uniform lattice with CU-labeling $\lambda:\Cov(L)\ra P$. For any $s\in P$, if $j$ is a minimal element with the property $s\in\lambda_{\downarrow}(j)$, then $j$ is a join-irreducible. Moreover, if $(x,y)\in\Cov(L)$ such that $\lambda(x,y)=s$, then $(j_*,j)$ and $(x,y)$ are associates. Conversely, if $(j_*,j)$ and $(x,y)$ are associates, then they have the same label. Dually, if $m$ is a maximal element with the property $s\in\lambda^{\uparrow}(m)$, then $m$ is meet-irreducible, and the cover $(m,m^*)$ is associates with every other cover with the label $s$.
\end{lemma}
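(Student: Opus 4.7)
The plan is to exploit axioms (CN1)--(CN3) and their duals, together with the standard chain-label lemma for CN-labeled lattices: in any interval $[a,b]$, every maximal chain carries the same multiset of labels, and those labels are distinct. This lemma is derivable by induction from the axioms, and its refinement --- an iterated version of (CN1) --- says that if $u, v \in [a,b]$ satisfy $u \wedge v = a$ and $u \vee v = b$, then the multiset of labels on $[a,u]$ agrees with that on $[v,b]$. Establishing this iterated-(CN1) lemma rigorously is the main technical step; the rest of the proof is a careful comparison of labels across suitable intervals to detect forbidden repetitions.

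For the join-irreducibility of $j$, I would argue by contradiction. Suppose $j$ covers two distinct elements $j'$ and $j''$ with $\lambda(j', j) = s$. Applying the dual of (CN1) to the interval $[j' \wedge j'', j]$ produces an element $c$ covering $j' \wedge j''$, lying on a maximal chain through $j''$, with $\lambda(j' \wedge j'', c) = \lambda(j', j) = s$. But $c \leq j'' < j$, so $s \in \lambda_{\downarrow}(c)$ with $c < j$, contradicting the minimality of $j$.

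For the associates claim, let $(x,y) \in \Cov(L)$ with $\lambda(x,y) = s$; assume $(x,y) \neq (j_*, j)$. First, I would show $j \leq y$. The case $y < j$ is ruled out by minimality, and $y = j$ forces $x = j_*$ by join-irreducibility. If $y$ and $j$ were incomparable, then $j \wedge y \leq j_*$, so $(j_*, j)$ sits inside $[j \wedge y, j]$ with label $s$; the iterated-(CN1) lemma applied to $[j \wedge y, j \vee y]$ with $u = j$, $v = y$ transports this $s$ to a label on $[y, j \vee y]$. Prepending the cover $(x,y)$ to any maximal chain in $[y, j \vee y]$ realizing $s$ would then produce a maximal chain in $[x, j \vee y]$ carrying $s$ twice, contradicting distinctness. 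Next, I would rule out $j \leq x$: otherwise the chain $j_* \lessdot j \leq \cdots \leq x \lessdot y$ in $[j_*, y]$ would carry $s$ twice. With $j \leq y$ and $j \not\leq x$, we obtain $j \vee x = y$ (since $x \lessdot y$), and $j \wedge x \leq j_*$. Applying iterated (CN1) to $[j \wedge x, y]$ with $u = j$, $v = x$ then forces the labels on $[j \wedge x, j]$ to equal $\{s\}$, so $[j \wedge x, j]$ has length one and $j \wedge x = j_*$. Thus $(j_*, j)$ and $(x,y)$ are associates via Case A of the definition.

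The converse and the dual statement are comparatively quick. If $(j_*, j)$ and $(x,y)$ are associates via Case A, applying iterated (CN1) to $[j_*, y]$ with $u = j$, $v = x$ forces the label multisets on $[j_*, j]$ and $[x,y]$ to agree, giving $\lambda(x,y) = s$; Case B is handled symmetrically on $[x, j]$. Finally, the dual statement for meet-irreducibles follows by applying the arguments above to the dual lattice $L^*$, which inherits a CU-labeling from $\lambda$.
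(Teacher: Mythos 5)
Your plan rests on two auxiliary claims---the ``chain-label lemma'' (all maximal chains in an interval carry the same multiset of distinct labels) and its ``iterated-(CN1)'' refinement---and both are \emph{false} for CU-labeled lattices; the Jordan--H\"older intuition does not transfer because these lattices need not be modular. Take the hexagon lattice, $\hat{0}\lessdot p\lessdot r\lessdot\hat{1}$ and $\hat{0}\lessdot q\lessdot t\lessdot\hat{1}$, with labeling $\lambda(\hat{0},p)=\lambda(t,\hat{1})=a$, $\lambda(\hat{0},q)=\lambda(r,\hat{1})=b$, $\lambda(p,r)=c$, $\lambda(q,t)=d$, and label poset generated by $a,b<c,d$. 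One checks directly that this satisfies (CN1)--(CN3) and (CU1)--(CU2). Yet the two maximal chains from $\hat{0}$ to $\hat{1}$ carry label multisets $\{a,b,c\}$ and $\{a,b,d\}$, which differ; and with $u=p$, $v=q$, the intervals $[\hat{0},p]$ and $[q,\hat{1}]$ even have different lengths, so their label multisets cannot agree. The core steps of your argument---transporting $s$ in the incomparability case, forcing $j\wedge x=j_*$, and the converse---all invoke the iterated-(CN1) claim, so the gap is genuine and cannot be closed by ``establishing the lemma rigorously'': the lemma is not a theorem.

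The paper's proof stays purely local and avoids the issue. For the associates claim it applies dual-(CN1) once to $[x\wedge z,y]$, where $z$ is any second coatom of $y$, producing a cover $(x\wedge z,x_1)$ with the same label $s$ and $x_1\leq z<y$. A short meet/join computation (using $x\lessdot y$) shows $(x,y)$ and $(x\wedge z,x_1)$ are associates, and this descending perspectivity composes transitively: if $x\wedge y_1=x_1$, $x\vee y_1=y$, and $x_1\wedge y_2=x_2$, $x_1\vee y_2=y_1$, then also $x\wedge y_2=x_2$ and $x\vee y_2=y$. Iterating the descent therefore keeps every intermediate cover associates with $(x,y)$, and the process terminates at a join-irreducible, which (CU1) forces to equal $j$. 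To repair your argument, replace the global chain comparisons with single applications of (CN1) or its dual and track the associate relation explicitly at each step, as the paper does.
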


\begin{proof}
Let $s\in P$ be given, and let $j$ be minimal such that $s\in\lambda_{\downarrow}(j)$, and let $w\in L$ with $\lambda(w,j)=s$. If $j$ is not join-irreducible, then there exists some $z$ covered by $j$ distinct from $w$. By (CN1), there exists an element $w^{\pr}<j$ such that $\lambda(w\wedge z,w^{\pr})=s$, which is a contradiction to the minimality of $j$. Hence, $j$ is join-irreducible.

Let $x,y\in L$ such that $x\lessdot y$ and $\lambda(x,y)=s$. If $y$ is join-irreducible, then $y=j$ by (CU1). Otherwise, by the previous argument, $(x,y)$ is associates with some cover $(x_1,y_1)$ such that $y>y_1$. Applying this several times, we get a sequence $y>y_1>\cdots>y_N$ and covers $(x_i,y_i)$ such that $(x,y)$ is associates with $(x_i,y_i)$ for all $i$. This terminates if $y_N$ is minimal. But that forces $y_N=j$, so $(x,y)$ is associates with $(j_*,j)$.

Now let $j\in\JI(L)$ and $(x,y)\in\Cov(L)$ such that $(j_*,j)$ and $(x,y)$ are associates. If $y$ is a join-irreducible, then it is clear that $y=j$. Otherwise, we may construct a sequence $(x_i,y_i)\in\Cov(L)$ such that any two covers are associates, $\lambda(x_i,y_i)=\lambda(x,y)$ and $y_1>y_2>\cdots>y_N$ with $y_N\in\JI(L)$. Since associates pairs induce the same lattice congruence, we have $\con(j_*,j)=\con(x_N,y_N)$, so $j=y_N$.

The dual statement may be proved in a similar manner.
\end{proof}

Lemma~\ref{lem_associates} shows that a CU-labeling is essentially unique if it exists. Using the proof, one can construct the following labeling.

\begin{corollary}
If $L$ is a congruence-uniform lattice, then the edge-labeling $\lambda:\Cov(L)\ra\JI(\Con(L))$ where $\lambda(x,y)=\con(x,y)$ is a CU-labeling.
\end{corollary}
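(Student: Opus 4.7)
The plan is to show that $\lambda : \Cov(L) \to \JI(\Con(L))$ defined by $\lambda(x, y) = \con(x, y)$ is a CU-labeling of $L$ with respect to a suitable partial order on $\JI(\Con(L))$. First I note that $\lambda$ is well-defined: for any cover $(x, y) \in \Cov(L)$, the principal congruence $\con(x, y)$ is join-irreducible in $\Con(L)$.

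The key step is to compare $\lambda$ with a CU-labeling $\lambda' : \Cov(L) \to P$ whose existence is guaranteed by the preceding Proposition. By Lemma \ref{lem_associates}, two covers $(x, y)$ and $(x', y')$ share the same $\lambda'$-label if and only if both are associate to a common cover $(j_*, j)$ for some $j \in \JI(L)$. Since associate covers induce the same congruence, this implies $\con(x, y) = \con(x', y')$. Conversely, if $\con(x, y) = \con(x', y')$, then by the Lemma each cover is associate to some join-irreducible pair, and congruence-uniformity (the bijection $j \mapsto \con(j_*, j)$ from $\JI(L)$ to $\JI(\Con(L))$) forces these two pairs to coincide, so the covers share the same $\lambda'$-label. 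Hence $\lambda$ and $\lambda'$ induce the same partition of $\Cov(L)$, producing a bijection $\phi : \lambda'(\Cov(L)) \to \JI(\Con(L))$ satisfying $\phi \circ \lambda' = \lambda$.

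Conditions (CN1), (CN3), (CU1), and (CU2) involve only equalities or distinctness of labels and transfer from $\lambda'$ to $\lambda$ through $\phi$ immediately. The main obstacle is (CN2), which involves the partial order on labels; I would handle it by transporting the partial order on $P$ to $\JI(\Con(L))$ via $\phi^{-1}$, making $\phi$ an order isomorphism and rendering (CN2) for $\lambda$ an immediate consequence of (CN2) for $\lambda'$. Identifying this transported order intrinsically---for instance, as an order determined by the inclusion order on $\JI(\Con(L)) \subseteq \Con(L)$, up to convention---can be achieved by invoking the semidistributivity of $L$ (which follows from congruence-uniformity) together with the polygon structure of the intervals $[z, x \vee y]$ appearing in (CN2), or equivalently by tracking Day's interval-doubling construction of $L$, in which each doubling introduces a join-irreducible congruence whose position in the sequence matches its place in the label poset.
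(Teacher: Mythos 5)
Your argument reconstructs exactly what the paper leaves implicit: take an existing CU-labeling $\lambda'$ (guaranteed by the preceding proposition), use Lemma~\ref{lem_associates} together with the bijection $\JI(L)\to\JI(\Con(L))$ to show $\lambda'$ and $\con(\cdot,\cdot)$ have identical fibers, and then transport the label poset across the resulting bijection so that the CU axioms carry over. The final speculative remarks about characterizing the transported order intrinsically (via semidistributivity or Day's doublings) are unnecessary for the conclusion — once the order is pulled back via $\phi^{-1}$ the corollary is already established — but they do not introduce an error.
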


It is known that congruence-uniformity is preserved under lattice quotients. 

\begin{corollary}
If $L$ is a (finite) congruence-uniform lattice and $\Theta$ is a lattice congruence, then $L/\Theta$ is congruence-uniform.
\end{corollary}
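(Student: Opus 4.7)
My plan is to construct a CU-labeling of $L/\Theta$ from a given CU-labeling of $L$ and then verify the two CU axioms directly. First, I would fix a CU-labeling $\lambda:\Cov(L)\ra P$ of $L$ and invoke Lemma~\ref{lem_cn_cong}(1) to conclude that the formula $\tilde{\lambda}([x]_\Theta,[y]_\Theta)=\lambda(x,y)$, for covers $(x,y)\in\Cov(L)$ with $x\not\equiv y\pmod{\Theta}$, defines a CN-labeling of $L/\Theta$. By the characterization of congruence-uniform lattices as those admitting a CU-labeling, it suffices to check that $\tilde{\lambda}$ satisfies (CU1) and (CU2).

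The technical heart of the proof will be the following sublemma: if $K\in\JI(L/\Theta)$ and $(A,B)\in\Cov(L/\Theta)$ is an associate of $(K_*,K)$ with $B\leq K$, then $(A,B)=(K_*,K)$. I would prove this in one line: the only associate configuration compatible with $B\leq K$ is $B\wedge K_*=A$ and $B\vee K_*=K$; if $B<K$, then since $K$ is covered only by $K_*$ from below, every element strictly below $K$ is $\leq K_*$, so $B\leq K_*$ and $B\vee K_*=K_*\neq K$, a contradiction.

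Using the sublemma, I would prove (CU1) as follows. Suppose $J,J'\in\JI(L/\Theta)$ share the $\tilde{\lambda}$-label $s$, and lift $(J_*,J)$ and $(J'_*,J')$ via Lemma~\ref{lem_cong_elementary}(3) to covers $(x,y),(x',y')\in\Cov(L)$ with $\lambda(x,y)=\lambda(x',y')=s$. Lemma~\ref{lem_associates} applied in $L$ (using the construction in its proof) connects each lift to the unique join-irreducible cover $(j_*,j)$ of label $s$ through a strictly descending associate chain $(x,y)=(x_0,y_0),\dots,(x_N,y_N)=(j_*,j)$ in which $y_0>\cdots>y_N$ and consecutive pairs satisfy $x_i\wedge y_{i+1}=x_{i+1}$ and $x_i\vee y_{i+1}=y_i$. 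Since associate pairs induce the same lattice congruence, no link of the chain collapses under $\Theta$, so the projection to $L/\Theta$ is a chain of covers whose consecutive pairs satisfy the same relations, which forces $[y_{i+1}]_\Theta\leq[y_i]_\Theta$ and makes each step a ``downward'' associate in the sense of the sublemma. Starting at the join-irreducible $(J_*,J)=([x_0]_\Theta,[y_0]_\Theta)$ and inductively applying the sublemma, the projected chain is constant, which yields $([j_*]_\Theta,[j]_\Theta)=(J_*,J)$. The same reasoning from $(x',y')$ gives $([j_*]_\Theta,[j]_\Theta)=(J'_*,J')$, and hence $J=J'$. Axiom (CU2) would then follow from the exactly dual argument using meet-irreducibles and ascending chains. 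I expect the main obstacle to be the bookkeeping verifying that the projected chain really does descend via the associate configuration needed for the sublemma; once this is set up, the result follows from facts already in place in Sections~\ref{subsec_lattices} and \ref{subsec_cu_lattices}.
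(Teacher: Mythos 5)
The paper records this corollary without giving a proof, treating it as known; so your task is to supply one, and what you have written is correct. Your route — pass to the induced CN-labeling $\tilde{\lambda}$ on $L/\Theta$ via Lemma~\ref{lem_cn_cong}(1) and then verify the CU axioms directly — is exactly the kind of argument the surrounding machinery is built for, and all the steps check out. The sublemma is correct: in the case $A\wedge K=K_*$, $A\vee K=B$ one gets $B\geq K$, so $B=K$ and $A=K_*$; in the case $B\wedge K_*=A$, $B\vee K_*=K$, join-irreducibility of $K$ forces any $B<K$ to lie below the unique lower cover $K_*$, contradicting $B\vee K_*=K$. The lifting via Lemma~\ref{lem_cong_elementary}(3) preserves the label, the chain from the proof of Lemma~\ref{lem_associates} is non-contracted link by link because associate covers induce the same congruence, the associate identities are preserved by the quotient map, and the induction with the sublemma pins each projected cover to $(J_*,J)$, giving $J=[j]=J'$. (CU2) is indeed dual.

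One small streamlining you may wish to note: Lemma~\ref{lem_associates} as stated asserts that $(x,y)$ is \emph{directly} an associate of $(j_*,j)$, not merely connected to it by a chain, and its proof actually establishes this stronger fact — the specific associate form $x\wedge y_{i+1}=x_{i+1}$, $x\vee y_{i+1}=y_i$ chosen there composes, so that $x\wedge j=j_*$ and $x\vee j=y$. Since $j<y$ in $L$ gives $[j]\leq J$, you could apply your sublemma just once, to $(J_*,J)$ and $([j_*],[j])$, rather than running the induction link by link. This is a cosmetic simplification; your version is correct as written.
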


\begin{proposition}\label{prop_cu_CJR}
Let $L$ be a congruence-uniform lattice with CU-labeling $\lambda$. For $x\in L$, the canonical join-representation of $x$ is $\bigvee_D j$, where $D$ is the set of join-irreducibles such that $\lambda(j_*,j)\in\lambda_{\downarrow}(x)$. Dually, for $x\in L$, the canonical meet-representation of $x$ is $\bigwedge_U m$, where $U$ is the set of meet-irreducibles such that $\lambda(m,m^*)\in\lambda_{\uparrow}(x)$.
\end{proposition}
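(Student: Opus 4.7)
The plan is to verify that $D$ coincides with the canonical join-representation of $x$. Since congruence-uniform lattices are semidistributive (a standard consequence of Day's interval-doubling characterization), $x$ has a unique canonical join-representation consisting of one join-irreducible per lower cover of $x$; the element matched with $y\lessdot x$ is the unique minimum join-irreducible $j'$ with $j'\leq x$ and $j'\not\leq y$. By Lemma~\ref{lem_associates} and (CU1), each lower cover $(y,x)$ with label $s=\lambda(y,x)$ also determines a unique join-irreducible $j\in D$ with $\lambda(j_*,j)=s$ such that $(j_*,j)$ is associates with $(y,x)$, giving a bijection between lower covers of $x$ and $D$. The associates relation yields $j\vee y=x$ and $j\wedge y=j_*$, so in particular $j\leq x$ and $j\not\leq y$. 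It therefore suffices to show that this $j$ equals the minimum $j'$ described above.

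The main step, which I expect to be the main obstacle, is to show $j\leq j'$ for every join-irreducible $j'$ with $j'\leq x$ and $j'\not\leq y$. Such $j'$ satisfies $j'\vee y=x$ and $j'\wedge y\leq j'_*$. If $j'\wedge y=j'_*$, then $(j'_*,j')$ is associates with $(y,x)$, Lemma~\ref{lem_associates} gives $\lambda(j'_*,j')=s$, and (CU1) forces $j'=j$. Otherwise $j'\wedge y<j'_*$, which implies $j'_*\not\leq y$; I choose a join-irreducible $j''\leq j'_*$ with $j''\not\leq y$ (such a $j''$ exists because $j'_*$ is the join of the join-irreducibles it dominates, not all of which can lie below $y$) and recurse on the strictly smaller join-irreducible $j''$. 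Finiteness of $L$ guarantees termination in the first case, yielding $j\leq j'$.

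It follows that $D$ is the canonical join-representation of $x$. The dual statement for the canonical meet-representation is proved by the same reasoning applied in $L^*$, using (CU2) and the dual of Lemma~\ref{lem_associates}.
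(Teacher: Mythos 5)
Your proposal is correct, but it takes a genuinely different route from the paper's proof. The paper works directly with the definition of a canonical join-representation: it shows $\bigvee_D j = x$, then shows the join is irredundant, and finally shows $D\preceq E$ for any other irredundant $E\subseteq\JI(L)$ with $\bigvee E = x$; each step is carried out by producing a cover $(c,x)$ whose label matches that of a given $j\in D$ and then applying the associates relation together with (CN1). Your proof instead invokes a cover-indexed description of canonical join-representations in finite semidistributive lattices — that the joinands are in bijection with the lower covers $y\lessdot x$, the joinand for $y$ being the minimum join-irreducible $j'$ with $j'\leq x$ and $j'\nleq y$ — and then shows $D$ realizes this description via Lemma~\ref{lem_associates} and the clean descending recursion in your main step. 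Both arguments are sound; your recursion in particular is airtight, since each passage through case~2 produces a strictly smaller $j''$ in the same set $\{z\in\JI(L): z\leq x,\ z\nleq y\}$ and the well-founded induction must bottom out in case~1, where (CU1) pins down $j''=j$.

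The one thing to flag: the cover-by-cover characterization of canonical joinands that you lean on (existence of the minimum, its join-irreducibility, and the fact that distinct lower covers of $x$ produce distinct joinands so that the correspondence with $D$ is genuinely a bijection) is not stated anywhere in the paper, which only records that semidistributivity is equivalent to existence of canonical join- and meet-representations. That characterization is standard (it appears, e.g., in Freese--Ježek--Nation's treatment of semidistributive lattices and in the work of Barnard cited in the paper), but since it is doing the load-bearing work that the paper's proof handles directly via (CN1), you should either cite it explicitly or include a short proof. As written, the argument quietly assumes the map from lower covers of $x$ to $D$ is injective (i.e., that distinct lower covers of $x$ carry distinct labels under the CU-labeling), which deserves a sentence of justification.
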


\begin{proof}
We prove that $x=\bigvee_D j$ is a canonical join-representation of $x$. The dual statement may be proved similarly.

We first show that the equality $x=\bigvee_D j$ holds. For $j\in D$, the pair $(j_*,j)$ is associates with some cover $(c,x)$, so $j<x$. Hence, $\bigvee_D j\leq x$. If they are unequal, then there exists an element $c$ covered by $x$ for which $\bigvee_D j\leq c$. But $(c,x)$ is associates with $(j_*,j)$ for some $j\in D$, which implies $j\nleq c$. Hence, $x=\bigvee_D j$.

Now suppose $\bigvee_D j$ is redundant, and let $j_0\in D$ such that $x=\bigvee_{D\setm\{j_0\}}j$. Let $c_0$ be the element covered by $x$ with $\lambda(c_0,x)=\lambda((j_0)_*,j_0)$. Since $c_0<\bigvee_{D\setm\{j_0\}}j$, there exists $j_1\in D\setm\{j_0\}$ where $c_0\vee j_1=x$. Let $c_1$ be covered by $x$ with $\lambda(c_1,x)=\lambda((j_1)_*,j_1)$. By (CN1), there exists $c_1^{\pr}$ with $c_0\wedge c_1\lessdot c_1^{\pr}\leq c_0$ such that $\lambda(c_0\wedge c_1,c_1^{\pr})=\lambda((j_1)_*,j_1)$. But this means $j_1\leq c_1^{\pr}\leq c_0$ holds, which is a contradiction.

Now let $E\subseteq\JI(L)$ such that $x=\bigvee_E j$ is irredundant, and suppose $D\neq E$. Let $j_0\in D\setm E$, and let $c_0$ be the element covered by $x$ such that $\lambda((j_0)_*,j_0)=\lambda(c_0,x)$. Since $c_0<\bigvee_E j$, there exists $j^{\pr}\in E$ such that $j^{\pr}\nleq c_0$. Since $j^{\pr}\neq j_0$, the cover $(j^{\pr}_*,j^{\pr})$ is not associates with $(c_0,x)$. In particular, $c_0\wedge j^{\pr}<j^{\pr}_*$ holds. Let $a_0$ be an element covering $c_0\wedge j^{\pr}$ with $a_0<j^{\pr}$. Then $a_0\vee c_0=x$, so $(c_0\wedge j^{\pr},a_0)$ and $(c_0,x)$ are associates. This means $j_0\leq a_0<j^{\pr}$. Hence, $D\preceq E$, as desired.
\end{proof}

\begin{lemma}\label{lem_kreweras}
Let $L$ be a congruence-uniform lattice with CU-labeling $\lambda$.  For $x\in L$, there exists a unique element $y$ such that $\lambda^{\uparrow}(x)=\lambda_{\downarrow}(y)$.
\end{lemma}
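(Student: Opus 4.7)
The plan is to treat uniqueness and existence separately, in each case leveraging Proposition~\ref{prop_cu_CJR} and Lemma~\ref{lem_associates}.

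\emph{Uniqueness} is immediate. If $y_1, y_2 \in L$ both satisfy $\lambda_{\downarrow}(y_i) = \lambda^{\uparrow}(x)$, then by Proposition~\ref{prop_cu_CJR} each $y_i$ has canonical join-representation $\bigvee D$, where $D := \{j \in \JI(L) : \lambda(j_*, j) \in \lambda^{\uparrow}(x)\}$ depends only on $x$. Hence $y_1 = y_2$.

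For \emph{existence}, I would construct $y$ explicitly. By (CU1) together with Lemma~\ref{lem_associates}, the map $j \mapsto \lambda(j_*, j)$ is a bijection from $\JI(L)$ onto the set of labels appearing on any cover of $L$; dually, $m \mapsto \lambda(m, m^*)$ is a bijection from $\MI(L)$ onto the same label set, and for each such label $s$ the covers $((j_s)_*, j_s)$ and $(m_s, m_s^*)$ are associates. Setting $S := \lambda^{\uparrow}(x)$, I define $y := \bigvee_{s \in S} j_s$. The goal is to verify $\lambda_{\downarrow}(y) = S$. By Proposition~\ref{prop_cu_CJR}, this reduces to showing that $\{j_s\}_{s \in S}$ is the canonical join-representation of $y$, i.e.\ (a) the expression $\bigvee_{s \in S} j_s$ is irredundant, and (b) it is the $\preceq$-minimum among irredundant join-representations of $y$.

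The main obstacle will be (a). The dual of Proposition~\ref{prop_cu_CJR} supplies the irredundant canonical meet-representation $x = \bigwedge_{s \in S} m_s$. If $\bigvee_{s \in S} j_s$ were redundant at some $j_{s_0}$, so that $y = \bigvee_{s \in S \setminus \{s_0\}} j_s$, I would use the associates pairing of $((j_{s_0})_*, j_{s_0})$ with $(m_{s_0}, m_{s_0}^*)$ together with iterated applications of (CN1) to transport the identity into a redundant expression $x = \bigwedge_{s \in S \setminus \{s_0\}} m_s$, contradicting the irredundancy of the canonical meet-representation. A parallel associates-transport argument then yields (b), completing the verification that $\lambda_{\downarrow}(y) = S$. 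The difficulty throughout is the diamond-by-diamond analysis needed to push identities through the associates correspondence.
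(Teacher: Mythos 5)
Your uniqueness argument is correct and clean: if $\lambda_{\downarrow}(y_1)=\lambda_{\downarrow}(y_2)=\lambda^{\uparrow}(x)$, then Proposition~\ref{prop_cu_CJR} forces $y_1$ and $y_2$ to have the same canonical join-representation, hence $y_1=y_2$. And your candidate $y := \bigvee_{s\in S} j_s$ with $S = \lambda^{\uparrow}(x)$ is the only possible answer, by the same proposition. So the setup is sound, and the route you are taking (proving that the map $j\mapsto\lambda(j_*,j),\ m\mapsto\lambda(m,m^*)$ sends canonical meet-representations to canonical join-representations) is exactly the semidistributive generalization the paper flags in the remark after this lemma, citing Barnard. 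That is a genuinely different approach from the paper's own proof, which runs by induction on $|L|$ via Day's doubling construction: write $L\cong L'[I]$, let $\Theta$ be the congruence whose classes are the fibers of $L\to L'$, and split into cases according to whether $x=\max[x]_\Theta$.

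However, the existence half is not actually proved. Step (a), irredundancy of $\bigvee_{s\in S}j_s$, is the whole content of the lemma at this point, and the mechanism you propose — ``use the associates pairing of $((j_{s_0})_*,j_{s_0})$ with $(m_{s_0},m_{s_0}^*)$ together with iterated applications of (CN1) to transport the identity'' — is not a well-defined procedure. The associates relation and (CN1) are local statements about a single pentagon or a single pair of covering relations; there is no obvious way to push the global join identity $y=\bigvee_{s\in S\setminus\{s_0\}}j_s$ ``through'' a chain of associates into the global meet identity $x=\bigwedge_{s\in S\setminus\{s_0\}}m_s$. The elements $y$ and $x$ live in different parts of the lattice, the $j_s$ and $m_s$ are different elements, and nothing you cite carries join-identities at $y$ to meet-identities at $x$. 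You acknowledge as much when you call the diamond-by-diamond analysis ``the difficulty throughout,'' but the proposal never resolves it; as written, step (a) is an unproven claim that is at least as hard as the lemma itself. (Step (b) inherits the same problem.) The paper sidesteps all of this by induction on interval doublings, which reduces the count of upper covers of $x$ to the count of upper covers of $[x]_\Theta$ in the smaller quotient $L/\Theta$ and tracks the one new label introduced by the doubling — a concrete argument that needs none of the associates bookkeeping.
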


\begin{proof}

We prove the lemma by induction on $|L|$. If $|L|=1$, then the statement is immediate. If not, let $L^{\pr}$ be a congruence-uniform lattice with interval $I$ such that $L^{\pr}[I]\cong L$. Let $\Theta$ be the lattice congruence whose equivalence classes are the fibers of $L\ra L^{\pr}$. Let $s$ be the label in each $\Theta$-equivalence class.

For $x\in L$, if $x=\max[x]_{\Theta}$, then the upper covers of $x$ in $L$ are in correspondence with the upper covers of $[x]_{\Theta}$ in $L/\Theta$. This correspondence preserves labels. Hence, there is a unique element $[y]_{\Theta}$ in $L/\Theta$ with $\lambda_{\downarrow}([y]_{\Theta})=\lambda^{\uparrow}([x]_{\Theta})$. Taking $y$ to be the minimum element in $[y]_{\Theta}$, we have
$$\lambda_{\downarrow}(y)=\lambda_{\downarrow}([y]_{\Theta})=\lambda^{\uparrow}([x]_{\Theta})=\lambda^{\uparrow}(x).$$

By the uniqueness of $[y]_{\Theta}$, if $y$ is not unique in $L$, then there exists an element $y^{\pr}$ such that $y^{\pr}\neq\min[y^{\pr}]_{\Theta}$. But $s\in\lambda_{\downarrow}(y^{\pr})$ and $s\notin\lambda^{\uparrow}(x)$. Hence, the element $y$ is unique in $L$.

Now let $x$ be an element of $L$ such that $x\neq\max[x]_{\Theta}$. Then the upper covers of $x$ are in correspondence with upper covers of $[x]_{\Theta}$ restricted to the interval $I$ and one additional element, $\max[x]_{\Theta}$. Since $s\in\lambda^{\uparrow}(x)$, any element $y$ with $\lambda_{\downarrow}(y)=\lambda^{\uparrow}(x)$ satisfies $[y]_{\Theta}\in I$ and $y=\max[y]_{\Theta}$. Since $I$ inherits a CU-labeling from $L/\Theta$, there exists a unique element $[y]_{\Theta}$ in $I$ whose lower covers in $I$ have the same labels as the upper covers of $[x]_{\Theta}$ (restricted to $I$). Taking $y=\max[y]_{\Theta}$, we deduce that $\lambda_{\downarrow}(y)=\lambda^{\uparrow}(x)$. The uniqueness of $y$ follows from the uniqueness of $[y]_{\Theta}$.
\end{proof}

We define the \textbf{Kreweras map} $\Kr:L\ra L$ where $\Kr(x)=y$ if $x$ and $y$ are defined as in Lemma \ref{lem_kreweras}. A dual statement to Lemma \ref{lem_kreweras} shows that $\Kr$ is a bijection. A special case of this bijection was originally defined by Kreweras on the lattice of noncrossing set partitions \cite{kreweras:partitions}. Using a standard bijection between noncrossing partitions and bracketings of a word, the bijection defined by Kreweras is equivalent to the Kreweras map on the Tamari order.

Lemma \ref{lem_kreweras} may be restated using Proposition \ref{prop_cu_CJR} to define a bijection $L\ra L$ that switches canonical join-representations with canonical meet-representations. In these terms, this bijection can be shown to exist more generally for semidistributive lattices \cite{barnard}.

\begin{lemma}\label{lem_cu_facial_interval}
Let $L$ be a congruence-uniform lattice with CU-labeling $\lambda:\Cov(L)\ra P$. Let $[x,y]$ be an interval of $L$ for which $y=\bigvee_{i=1}^la_i$ for some elements $a_1,\ldots,a_l$ that cover $x$. Then there exist elements $c_1,\ldots,c_l$ covered by $y$ such that $x=\bigwedge_{i=1}^lc_i$ and $\lambda(x,a_i)=\lambda(c_i,y)$ for all $i$.
\end{lemma}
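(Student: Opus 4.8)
The plan is to prove the statement by induction on $|L|$, mirroring the structure of the proof of Lemma~\ref{lem_kreweras}, since the present lemma can be viewed as a ``local'' or interval refinement of the Kreweras correspondence. First I would dispose of the base case $|L|=1$ trivially. For the inductive step, write $L\cong L^{\pr}[I]$ for a congruence-uniform lattice $L^{\pr}$ with order-convex (indeed interval) set $I$, using Day's theorem together with Lemma~\ref{lem_associates}; let $\Theta$ be the congruence whose classes are the fibers of $L\ra L^{\pr}$, and let $s$ be the label attached to every cover of the form $(\min[z]_\Theta,\max[z]_\Theta)$ for a doubled class $[z]_\Theta$.

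Next I would separate into cases according to where the hypothesis $y=\bigvee_{i=1}^l a_i$ with $a_i\gtrdot x$ sits relative to $\Theta$. If the entire interval $[x,y]$ survives in the quotient (equivalently, no cover $(x,a_i)$ is contracted by $\Theta$, so in particular $x=\max[x]_\Theta$ or $y=\min[y]_\Theta$ can be arranged), then the covers of $x$ in $[x,y]$ correspond label-preservingly to covers of $[x]_\Theta$ in $[[x]_\Theta,[y]_\Theta]$, we apply the inductive hypothesis in $L^{\pr}$ (or in $L/\Theta$, using Lemma~\ref{lem_cn_cong}) to get $\overline{c}_1,\dots,\overline{c}_l$ covered by $[y]_\Theta$ with $[x]_\Theta=\bigwedge\overline{c}_i$ and matching labels, and then lift each $\overline{c}_i$ to the appropriate cover $c_i\lessdot y$ in $L$ via Lemma~\ref{lem_cong_elementary}(4) (taking $y=\min[y]_\Theta$), checking that $\bigwedge c_i = x$ because the meet is computed the same way before and after the doubling on the relevant part of $L$. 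If instead $[x,y]$ meets the doubled class $I$ nontrivially, then one of the $a_i$ realizes the label $s$ and the remaining combinatorics happen inside $I$, which again inherits a CU-labeling from $L/\Theta$; here I would apply the inductive hypothesis inside $I$ and adjoin the extra cover carrying the label $s$, exactly as in the corresponding case of Lemma~\ref{lem_kreweras}.

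An alternative, possibly cleaner route avoids induction and argues directly from the CU-labeling: by Proposition~\ref{prop_cu_CJR} the hypothesis $y=\bigvee a_i$ with $a_i\gtrdot x$ forces $\{a_1,\dots,a_l\}$ to be (contained in) the canonical join-representation of $y$ in the interval $[x,y]$, so $\lambda(x,a_i)$ are $l$ distinct labels in $\lambda_\downarrow^{[x,y]}(y)$; dually the canonical meet-representation of $x$ in $[x,y]$ has the form $x=\bigwedge c_i$ with $c_i\lessdot y$ and $\lambda(c_i,y)$ ranging over $\lambda^\uparrow_{[x,y]}(x)$. One then invokes Lemma~\ref{lem_kreweras} applied to the interval $[x,y]$ (a CU-labeled lattice by Lemma~\ref{lem_cn_cong}(2)): $\mathrm{Kr}_{[x,y]}(x)$ is the unique element whose down-labels equal the up-labels of $x$, and the hypothesis pins down $\mathrm{Kr}_{[x,y]}(x)=y$, whence $\lambda^\uparrow_{[x,y]}(x)=\lambda_\downarrow^{[x,y]}(y)=\{\lambda(x,a_1),\dots,\lambda(x,a_l)\}$, giving the label match $\lambda(x,a_i)=\lambda(c_i,y)$ after reindexing. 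The main obstacle in either approach is bookkeeping the identification of labels of covers in the sublattice interval $[x,y]$ with labels in $L$ (and in the quotient/doubled pieces) — i.e. making sure Lemma~\ref{lem_cn_cong}(2), Lemma~\ref{lem_associates}, and Lemma~\ref{lem_cong_elementary} are applied to the right covers so that no label is accidentally duplicated or lost; once that is set up, the meet identity $x=\bigwedge c_i$ and the equality of label multisets fall out of the canonical join/meet-representation machinery.
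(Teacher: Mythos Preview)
Your second (``alternative'') route is essentially the paper's proof. The paper restricts to the interval $[x,y]$ (which inherits a CU-labeling), sets $x=\hat{0}$, $y=\hat{1}$, and then computes $\Kr(\hat{0})$ via Proposition~\ref{prop_cu_CJR}: the canonical join-representation of $\Kr(\hat{0})$ is the join of all atoms, so $\Kr(\hat{0})=\hat{1}$ by the hypothesis, whence $\lambda_\downarrow(\hat{1})=\lambda^\uparrow(\hat{0})$ and the canonical meet-representation of $\hat{0}$ supplies the $c_i$.

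One small logical wobble: you write that the hypothesis forces $\{a_1,\dots,a_l\}$ to be \emph{contained in} the canonical join-representation of $y$. The containment actually goes the other way: since each element of the canonical join-representation $D$ lies below some $a_i$ (by $D\preceq\{a_i\}$) and the $a_i$ are atoms, one gets $D\subseteq\{a_i\}$; then irredundancy of $D$ together with $\bigvee_D j=\hat{1}=\bigvee a_i$ forces equality. The paper sidesteps this by first showing $\Kr(\hat{0})$ equals the join of \emph{all} atoms, then using $\{a_i\}\subseteq A$ and irredundancy of $A$ to conclude $\{a_i\}=A$. Either order works, but computing $\Kr(\hat{0})$ first is cleaner and avoids having to argue about irredundancy of the given family $\{a_i\}$ directly. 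Your inductive doubling approach would likely also succeed but is considerably more bookkeeping for no gain.
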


\begin{proof}
Since the restriction of a CU-labeling to an interval $[x,y]$ is a CU-labeling of $[x,y]$, we may assume $x=\hat{0},\ y=\hat{1}$. Let $U$ be the set of meet-irreducibles $m$ such that $\lambda(m,m^*)\in\lambda^{\uparrow}(\hat{0})$. Then $\hat{0}=\bigwedge_Um$ is a canonical meet-representation. Then $\Kr(\hat{0})=\bigvee_U\kappa(m)$ is a canonical join-representation. But $\{\kappa(m):\ m\in U\}$ is the set of atoms of $L$, so
$$\hat{1}=\Kr(\hat{0})=\bigvee_U\kappa(m)=\bigvee_Aj$$
where $A$ is the set of atoms of $L$. As this is the canonical join-representation of $\hat{1}$, we must have $A=\{a_1,\ldots,a_l\}$, and there exist $c_1,\ldots,c_l$ covered by $y$ with $\lambda(\hat{0},a_i)=\lambda(c_i,\hat{1})$ for all $i$. As each $c_i$ is meet-irreducible, we have $\kappa(c_i)=a_i$ for all $i$. Hence, $x=\bigwedge_{i=1}^lc_i$.
\end{proof}

Given a congruence-uniform lattice $L$, the shard intersection order can be defined from the labeling $\lambda:\Cov(L)\ra S$ as follows. For $x\in L$, let $y_1,\ldots,y_k$ be the set of elements in $L$ such that $(y_i,x)\in\Cov(L)$. Define
$$\psi(x)=\{\lambda(w,z):\ \bigwedge_{i=1}^ky_i\leq w\lessdot z\leq x\}.$$
The \textbf{shard intersection order} $\Psi(L)$ is the collection of sets $\psi(x)$ for $x\in L$, ordered by inclusion. The shard intersection order was defined at this level of generality by Nathan Reading following Theorem 1-7.24 in \cite{ReadingPAB}.

The poset $\Psi(L)$ derives its name from a related construction on hyperplane arrangements. If $\Acal$ is a real, central, simplicial hyperplane arrangement, then the poset of regions with respect to any choice of fundamental chamber is a semidistributive lattice. Each hyperplane is divided into several cones, called \textbf{shards}. The \textbf{shard intersection order} is the poset of intersections of shards, ordered by reverse inclusion. When the poset of regions is a congruence-uniform lattice, the resulting poset is isomorphic to $\Psi(L)$. However, while any shard intersection order coming from a congruence-uniform poset of regions is a lattice, this does not hold for arbitrary congruence-uniform lattices.

\section{The noncrossing complex}\label{Sec:noncrossingcomplex}

In this section, we introduce the noncrossing complex of arcs on a tree. This simplicial complex gives rise to a pure, thin simplicial complex that we refer to as the reduced noncrossing complex. We use the facets of the reduced noncrossing complex to define our main object of study, the oriented flip graph of a tree.

A \textbf{tree} is a finite, connected acyclic graph. Any tree may be embedded in a disk $D^2$ in such a way that a vertex is on the boundary if and only if it is a leaf. Unless specified otherwise, we will assume that any tree comes equipped with such an embedding. We will refer to non-leaf vertices of a tree as \textbf{interior vertices}. We assume that any interior vertex of a tree has degree at least 3. We say two trees $T$ and $T^{\pr}$ to be \textbf{equivalent} if there is an isotopy between the spaces $D^2\backslash T$ and $D^2\backslash T^{\pr}$. 

A tree $T$ embedded in $D^2$ determines a collection of 2-dimensional regions in $D^2$ that we will refer to as \textbf{faces}. A \textbf{corner} of a tree is a pair $(v,F)$ consisting of an interior vertex $v$ and a 2-dimensional face $F$ containing $v$. We let $\Cor(T)$ denote the set of corners of $T$. The embedding that accompanies $T$ also endows each interior vertex with a cyclic ordering. Given two corners $(u,F), (u,G) \in \text{Cor}(T)$, we say that $(u,G)$ is \textbf{immediately clockwise} (resp. \textbf{immediately counterclockwise}) from $(u,F)$ if $F\cap G \neq \emptyset$ and $G$ is clockwise (resp. counterclockwise) from $F$ according to the cyclic ordering at $u$.

An \textbf{acyclic path} (or \textbf{chordless path}) supported by a tree $T$ is a sequence $(v_0,\ldots,v_t)$ of vertices of $T$ such that $v_i$ and $v_j$ are adjacent if and only if $|i-j|=1$. We typically identify acyclic paths with their underlying vertex sets; that is, we do not distinguish between acyclic paths of the form $(v_0,\ldots,v_t)$ and $(v_t,\ldots,v_0)$. We will refer to $v_0$ and $v_t$ as the \textbf{endpoints} of the acyclic path $(v_0,\ldots, v_t)$. Note that an acyclic path is determined by its endpoints, and thus we can write $[v_0,v_t] = (v_0, \ldots, v_t)$. As an acyclic path $(v_0,\ldots,v_t)$ defines a subgraph of $T$ (namely, the induced subgraph on the vertices $v_0, \ldots, v_t$), it makes sense to refer to an \textbf{edge} of $(v_0,\ldots,v_t)$. Additionally, if $(v_0, \ldots, v_t)$ and $(v_t, \ldots, v_{t+s})$ are acyclic paths that agree only at $v_t$ and where $[v_0, v_{t+s}]$ is an acyclic path, we define their \textbf{composition} as $[v_0,v_t] \circ [v_t, v_{t+s}] := [v_0,v_{t+s}]$.

An \textbf{arc} $p = (v_0, \ldots, v_t)$ is an acyclic path whose endpoints are distinct leaves and any two edges $(v_{i-1},v_i)$ and $(v_i,v_{i+1})$ are incident to a common face. We say $p$ \textbf{traverses a corner} or \textbf{contains a corner} $(v,F)$ if $v = v_i$ for for some $i = 0,1, \ldots, t$ and $F$ is the face that is incident to both $(v_{i-1},v_i)$ and $(v_i, v_{i+1})$. Since an arc $p$ divides $D^2$ into two components, it determines two disjoint subsets of the set of faces of $T$ that we will call \textbf{regions}. We let $\text{Reg}(p,F)$ denote the region defined by $p$ that contains the face $F$. %Observe that if $\text{Reg}(p,F)$ and $\text{Reg}(p,G)$ are the two regions defined by $p$, then $\{\text{Reg}(p,F), \text{Reg}(p,G)\}$ is a set partition of the set of faces of $T$.

A \textbf{segment} is an acyclic path consisting of at least two vertices and with the same incidence condition that is required of arcs, but whose endpoints are \emph{not} leaves. Observe that interior vertices of $T$ are not considered to be segments. Since trees have unique geodesics between any two vertices, if the endpoints of a segment or arc are $v,w$, we may denote it by $[v,w]$.

\begin{example}
Let $T$ denote the tree shown in Figure~\ref{arcEx2} and let $p = (7,10,11,12,5)$ be the arc of $T$ shown in blue. The arc $p$ contains the corners $(10, F_2)$, $(11, F_5)$, and $(12, F_5).$ The two regions defined by $p$ are $\text{Reg}(p,F_1) = \{F_1, F_2, F_3, F_6, F_7, F_8\}$ and $\text{Reg}(p,F_4) = \{F_4, F_5\}$. 
\end{example}

\begin{definition}
We say that two arcs $p = (v_0,\ldots, v_{t_1}), q = (w_0, \ldots, w_{t_2})$ are \textbf{crossing} along a segment $s = (u_0, \ldots, u_r)$ if 

$\begin{array}{rl}
i) & \text{each vertex of $s$ appears in $p$ and in $q$ and}\\
ii) & \text{if $\text{R}_p$ and $\text{R}_q$ are regions defined by $p$ and $q$, respectively, then $\text{R}_p \not\subset \text{R}_q$ and $\text{R}_q \not \subset \text{R}_p.$}\\
%ii) & \text{any two curves in $[p]$ and $[q]$ intersect in $D^2$. REPHRASE}
\end{array}$

\noindent We say they are \textbf{noncrossing} otherwise. The \textbf{noncrossing complex} $\Delta^{NC}(T)$ is defined to be the abstract simplicial complex whose simplices are pairwise noncrossing collections of arcs supported by the tree $T$.
\end{definition}

\begin{example}
Let $T$ denote the tree shown in Figure~\ref{arcEx2}. Let $p = (7,10,11,12,5)$ and $q = (6, 10, 11, 9, 1)$ denote the arcs of $T$ shown in blue and red, respectively. The arcs $p$ and $q$ cross along the segment $s = (10,11)$ shown in purple. 

\end{example}
\begin{figure}[h]
\includegraphics[scale=1]{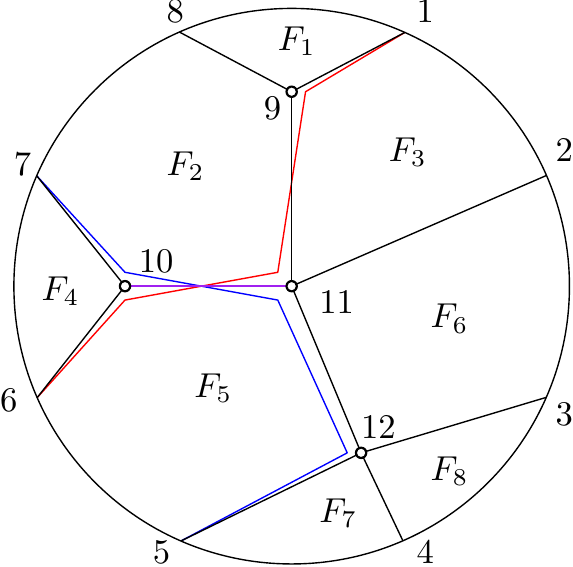}
\caption{}
\label{arcEx2}
\end{figure}

\begin{example}\label{Example_assoc}
If every internal vertex of $T$ has degree $3$, then $\wtil{\Delta}^{NC}(T)$ is isomorphic to the dual associahedron. By this identification, our notion of performing a flip on a facet of the reduced noncrossing complex of $T$ translates into the well-known operation of performing a \textbf{diagonal flip} on the corresponding triangulation (see Figure~\ref{A2associahedron}).
\end{example}

\begin{figure}[h]
\includegraphics[scale=1]{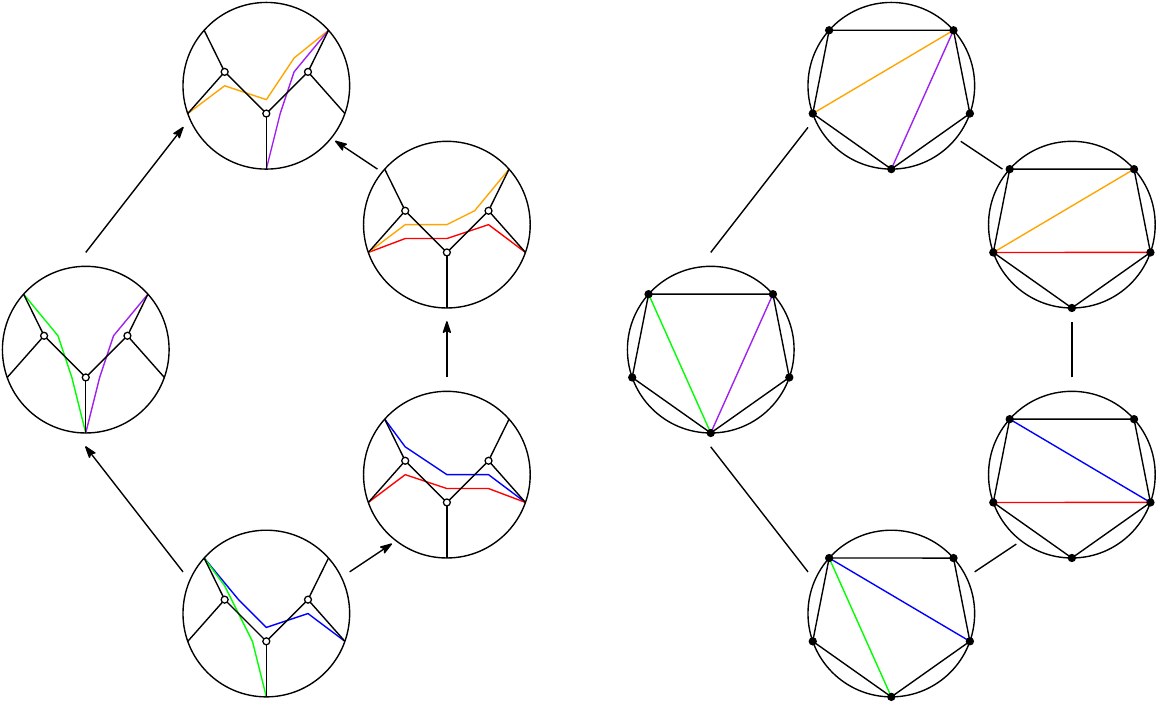}
\caption{The oriented flip graph and the triangulations corresponding to each facet of the reduced noncrossing complex.}
\label{A2associahedron}
\end{figure}

If $p$ is an arc whose vertices all lie on a common face, then $p$ is noncrossing with every arc supported by $T$. We call such an arc a \textbf{boundary arc}. Observe that boundary arcs are exactly those arcs that define a region consisting of a single face. This implies that the faces of $T$ are in bijection with boundary arcs of $T$. Using this fact, at times we will refer to the boundary arc corresponding to a given face. The \textbf{reduced noncrossing complex} $\wtil{\Delta}^{NC}(T)$ is the abstract simplicial complex consisting of the faces of $\Delta^{NC}(T)$ containing no boundary arcs.

We now introduce a partial ordering on arcs that contain a particular corner of $T$. This partial ordering enables us to understand the the combinatorial structure of the noncrossing complex and the reduced noncrossing complex of $T$. Let $\Fcal$ be a face of $\Delta^{NC}(T)$ and let $(v,F)$ be a corner that is contained in at least one arc of $\mathcal{F}$. The arcs of $\mathcal{F}$ that contain $(v,F)$ are partially ordered in the following way: $p \le_{(v,F)} q$ if and only if $\text{Reg}(p,F) \subset \text{Reg}(q,F)$.

\begin{lemma}\label{arc_order_lemma}
If $\mathcal{F}$ is a face of $\Delta^{NC}(T)$ and $(v,F)$ is a corner contained in at least one arc of $\mathcal{F}$, then the partially ordered set $(\{p \in \mathcal{F}: \text{ $p$ contains $(v,F)$}\}, \le_{(v,F)})$ is a linearly ordered set. 
\end{lemma}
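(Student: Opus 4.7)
The plan is to reduce the claim to a direct application of the definition of crossing. The starting observation is that if $p$ and $q$ both contain the corner $(v,F)$, then at the vertex $v$ they must each use the two edges of $T$ incident to both $v$ and $F$; these two edges are completely determined by $(v,F)$. Hence $p$ and $q$ share a common sub-path through $v$ whose interior edges form a segment $s$ that satisfies condition (i) of the definition of crossing. Since $\mathcal{F}$ is a face of $\Delta^{NC}(T)$, the arcs $p$ and $q$ are pairwise noncrossing, so condition (ii) of the crossing definition must fail for $s$; reading (ii) as a universal statement over choices of regions, this failure produces some region $R_p$ of $p$ and some region $R_q$ of $q$ for which either $R_p \subseteq R_q$ or $R_q \subseteq R_p$.

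The remaining work is to upgrade this comparability to the specific pair $\text{Reg}(p,F), \text{Reg}(q,F)$. For this I will rule out the other three complementary pairings with two simple geometric observations. First, $F$ itself lies in $\text{Reg}(p,F) \cap \text{Reg}(q,F)$, which immediately excludes both $\text{Reg}(p,F) \subseteq \text{Reg}(q,F)^c$ and $\text{Reg}(q,F) \subseteq \text{Reg}(p,F)^c$. Second, letting $F^*$ denote either of the two faces adjacent to $F$ across a shared edge of $F$ at $v$, the fact that this edge belongs to both $p$ and $q$ forces $F^*$ into $\text{Reg}(p,F)^c \cap \text{Reg}(q,F)^c$, which rules out $\text{Reg}(p,F)^c \subseteq \text{Reg}(q,F)$ and $\text{Reg}(q,F)^c \subseteq \text{Reg}(p,F)$. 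Combining these exclusions with the comparability obtained from noncrossing, one of $\text{Reg}(p,F) \subseteq \text{Reg}(q,F)$ or $\text{Reg}(q,F) \subseteq \text{Reg}(p,F)$ must hold, which is exactly the desired relation $p \le_{(v,F)} q$ or $q \le_{(v,F)} p$.

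The main subtlety I anticipate lies in the second step above: the definition of "crossing along a segment" must be read so that, for two arcs that do share a segment, the noncrossing condition is equivalent to the existence of a comparable pair of complementary regions, rather than to a more restrictive geometric statement. This interpretation matches the intuitive picture in which two noncrossing arcs sharing a segment have their branches off the segment lying on nested rather than interlacing sides, and under this reading the argument above is a purely set-theoretic exclusion. If instead the definition is understood locally in terms of branch-side interlacing at the endpoints of the maximal common segment containing $v$, I would fall back on a case analysis: the two edges by which $p$ and $q$ continue past each endpoint of their maximal common segment are forced, by the face-adjacency condition on consecutive edges of an arc, to lie on opposite cyclic sides of the segment edge, and a short check of the four possible side-assignments shows that the two interlacing cases produce incomparable regions (hence a crossing) while the two nested cases yield the containment of $F$-regions.
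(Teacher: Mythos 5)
Your proof is correct and follows essentially the same route as the paper's (very terse) argument: noncrossing of $p,q$ forces some pair of complementary regions to be comparable, and the memberships $F\in\text{Reg}(p,F)\cap\text{Reg}(q,F)$ together with $F^*\in\text{Reg}(p,F)^c\cap\text{Reg}(q,F)^c$ pin that comparability down to $\text{Reg}(p,F)$ and $\text{Reg}(q,F)$. Your explicit use of $F^*$ to exclude the containments $\text{Reg}(p,F)^c\subseteq\text{Reg}(q,F)$ (and its mirror) is a genuine elimination the paper's one-line proof leaves implicit, so your version is more careful rather than different.
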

\begin{proof}
Since any pair $p_1, p_2 \in \{p \in \mathcal{F}: \text{ $p$ contains $(v,F)$}\}$ are noncrossing and since each $p_i$ defines a region that contains $F$, one has that $p_1 \le_{(v,F)} p_2$ or $p_2 \le_{(v,F)} p_1.$ Thus $(\{p \in \mathcal{F}: \text{ $p$ contains $(v,F)$}\}, \le_{(v,F)})$ is a linearly ordered set.
\end{proof}

It follows from Lemma~\ref{arc_order_lemma} that the partially order set $(\{p \in \mathcal{F}: \text{ $p$ contains $(v,F)$}\}, \le_{(v,F)})$ has a unique maximal element, which we will denote by $p(v,F).$ We say that an arc $p$ of $\mathcal{F}$ is \textbf{marked} at $(v,F)$ if $p = p(v,F).$

The following proposition enables us to show that the simplicial complex $\widetilde{\Delta}^{NC}(T)$ is a \textbf{pure} (i.e. any two facets have the same cardinality) and \textbf{thin} (i.e. every codimension 1 simplex is a face of exactly two facets) in Corollary~\ref{Cor_pure_thin}. 

\begin{proposition}\label{nccomplexprop}
Let $\Fcal$ be a face of $\Delta^{NC}(T)$, let $p\in\Fcal$, and let $\text{Reg}_1, \text{Reg}_2$ denote the regions defined by $p$.
\begin{enumerate}
\item The arc $p$ is marked at some corner of $T$.
\item In $p$ is not a boundary arc, then $p$ is marked at a corner in $\text{Reg}_1$ and at a corner in $\text{Reg}_2$.
\item Assume that $p$ is marked at two distinct corners $(v,F), (w,G) \in \text{Cor}(T)$ and that $F$ and $G$ belong to the same region defined by $p$. Then there exists an arc $p^\prime \not \in \mathcal{F}$ that contains $(v,F)$ and $(w,G^\prime)$ where $G^\prime \neq G$ and where $\mathcal{F}\cup \{p^\prime\} \in \Delta^{NC}(T).$ 
\item If $\Fcal$ is a facet and $p\in\Fcal$ is not a boundary arc, then there exists a unique arc $q\notin\Fcal$ such that $(\Fcal\backslash\{p\})\cup\{q\}$ is a facet. Moreover, if $p$ is marked at two distinct corners $(v,F), (u,G) \in \text{Cor}(T)$, then $[v,u]$ is the unique longest segment along which $p$ and $q$ cross.
\end{enumerate}
\end{proposition}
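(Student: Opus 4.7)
The plan is to prove (1) and (2) together by contradiction, and then deduce (3) and (4) from them.

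For (1) and (2), I would assume for contradiction that $p \in \mathcal{F}$ is not marked at any corner $(v, F)$ with $F$ in a fixed region $R$ of $p$. Then for each corner $(v_i, F_i)$ of $p$ with $F_i \in R$, there exists an arc $q_i \in \mathcal{F}$ with $p <_{(v_i, F_i)} q_i$. Since both arcs contain $(v_i, F_i)$, they share the two $p$-edges at $v_i$; hence $q_i$ and $p$ agree on a maximal common subpath $s_i \subseteq p$ with $v_i$ in its interior, and the dominance $\text{Reg}(q_i, F_i) \supsetneq R$ forces $q_i$ to diverge into the opposite region of $p$ at each non-leaf endpoint of $s_i$. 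I would then choose two extremal dominating arcs $q_a, q_b$ whose shared segments with $p$ together cover every $R$-corner and reach the opposite leaf-ends of $p$: the arcs $q_a$ and $q_b$ then share a common subpath of $p$ at whose endpoints both diverge into the opposite region, so their regions on that side cannot be nested, contradicting that $\mathcal{F}$ is a simplex of $\Delta^{NC}(T)$. If instead a single $s_i$ already covers every $R$-corner, one verifies $s_i = p$, which forces $q_i = p$, again a contradiction.

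For (3), I construct $p'$ explicitly. Since $p$ contains both $v$ and $w$, the $T$-geodesic $[v, w]$ is a subpath of $p$. I define $p'$ to coincide with $p$ from the leaf endpoint of $p$ on the $v$-side (away from $w$) up through $w$, but at $w$ to use the corner $(w, G')$, where $G'$ is the unique face on the opposite side of the $p$-edge into $w$ from $G$. From $w$, the arc $p'$ continues deterministically into the region of $p$ opposite $R_1$ until it reaches a leaf. For $\mathcal{F} \cup \{p'\} \in \Delta^{NC}(T)$: any $q \in \mathcal{F}$ sharing a segment with $p'$ either shares it in the portion where $p' = p$ (forcing $q$ to cross $p$, a contradiction), or in the ``new'' portion of $p'$; in the latter case the shared segment forces $q$ to traverse $(v, F)$ or $(w, G)$ with region strictly larger than $\text{Reg}(p, F)$ or $\text{Reg}(p, G)$, contradicting the markedness of $p$ at these corners.

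For (4), apply (2) to find marked corners $(v, F)$ and $(u, G)$ of $p$ in opposite regions. Define $q$ analogously to the $p'$ of (3), but flipping at both endpoints of the subpath $[v, u] \subseteq p$ by using corners on the sides opposite $F$ at $v$ and opposite $G$ at $u$. Existence and non-crossing with $\mathcal{F} \setminus \{p\}$ proceed as in (3), and $q \notin \mathcal{F}$ since $q$ crosses $p$ along $[v, u]$. For uniqueness, any other arc $q'$ with $(\mathcal{F} \setminus \{p\}) \cup \{q'\}$ a facet must be noncrossing with every element of $\mathcal{F} \setminus \{p\}$ and must cross $p$; the markedness of $p$ at $(v, F)$ and $(u, G)$ then forces $q'$ to use the same flipped corners at $v$ and $u$ as $q$, and the deterministic extension outside $[v, u]$ forces $q' = q$. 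The longest-segment claim follows from the construction: $[v, u]$ is by design the portion of $p$ on which $q$ chooses the opposite sides at both endpoints, so it is the maximal common subpath along which $p$ and $q$ cross. The main obstacle is in (1)--(2), where verifying that two extremal dominating arcs must cross requires careful bookkeeping of how their divergences align at the endpoints of their common subpath; in (3) and (4), the technical point is translating the markedness of $p$ into the required local non-crossing property for the constructed arcs.
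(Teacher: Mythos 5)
Your proof takes a genuinely different route in parts (1)--(2), and that route has a gap. The paper's argument is local: pick \emph{any} corner $(v,F)$ of $p$; if $p$ is not marked there, take the cover $q$ of $p$ in the linear order at $(v,F)$, pick a separation vertex $w$ of $p$ and $q$, consider $p'=p(w,G)$ where $(w,G)$ is $p$'s corner at $w$, and use noncrossing of $p'$ and $q$ to squeeze $p \le_{(v,F)} p' < q$, forcing $p'=p$. Since $G$ lands in $\text{Reg}(p,F)$, this simultaneously proves (2). You instead suppose $p$ is unmarked at \emph{every} $R$-corner and try to produce a crossing pair among the dominating arcs. The claim that one can pick just two ``extremal'' dominating arcs $q_a,q_b$ whose overlap segments $s_a,s_b$ jointly cover every $R$-corner and reach both leaf-ends of $p$ is unjustified: if $p$ has several $R$-corners and each dominating arc agrees with $p$ only on a short window around its own corner, no two of the $s_i$ need suffice, and adjacent $s_i$'s may not even overlap --- in which case the two arcs, both bulging to the non-$R$ side over disjoint windows, need not cross. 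You flag this as ``the main obstacle''; it is a genuine hole, and the paper's local squeeze sidesteps it entirely.

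For part (3), your $p'$ is not well-defined: ``continues deterministically into the region of $p$ opposite $R_1$ until it reaches a leaf'' does not select a continuation in a branching tree. The paper defines $p'$ by splicing the $p$-subpath $[v_0,w]$ with the tail $[w,w_1]$ of the arc $q := p(w,G') \in \mathcal{F}$ marked at $(w,G')$ (and, separately, takes $p'$ to be the boundary arc of $G'$ when no arc of $\mathcal{F}$ contains $(w,G')$). That choice is precisely what makes the noncrossing check close: any crossing of $p'$ with some $q' \in \mathcal{F}$ must occur along a segment lying inside $[v_0,w]$ or inside $[w,w_1]$, and in either case $q'$ crosses an arc of $\mathcal{F}$ (namely $p$ or $q$), a contradiction. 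Your argument for the ``new'' portion --- that a crossing there forces $q'$ to traverse $(v,F)$ or $(w,G)$ with a larger region --- does not hold: a crossing in the new portion of $p'$ need not go near $v$ or $w$ at all.

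For part (4), your idea (flip the corners at both marked endpoints of $[v,u]$) is the right one, but the construction and uniqueness are both elided. The paper proves a separate lemma (Lemma~\ref{fliplemma}) showing that in $\mathcal{F}\setminus\{p\}$ the arcs $p_1 = p(v_1,F_1)$ and $p_2 = p(v_2,F_2)$ become marked at the corners $(v_2,G_2)$ and $(v_1,G_1)$ immediately clockwise from $(v_2,F_2)$ and $(v_1,F_1)$, and uses this to define $q$ by gluing the tails of $p_1,p_2$ along $[v_1,v_2]$. Uniqueness is settled by another lemma (Lemma~\ref{agreefromboundary}) showing that any candidate $p'$ must agree with $p$ or with $q$ from $[v_1,v_2]$ all the way out to the leaves, and that argument requires introducing a marked arc $a = p(x,H)$ at an intermediate separation vertex $x$ and deriving a crossing with $p'$. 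Your ``the markedness of $p$ forces $q'$ to use the same flipped corners'' and ``the deterministic extension forces $q'=q$'' compress both nontrivial steps into assertions.
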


\begin{proof}
(1) Let $(v,F) \in \text{Cor}(T)$ be a corner contained in $p$. If $p=p(v,F)$, then we are done. Otherwise, let $q \in \mathcal{F}$ be the arc containing $(v,F)$  such that $p \lessdot_{(v,F)} q.$ Let $w$ be an interior vertex at which $p$ and $q$ separate, let $(w,G)$ be the corner traversed by $p$ at $w$, and let $p^\prime = p(w,G) \in \mathcal{F}$. Since $p^{\pr}$ and $q$ are noncrossing and $p\leq_{(w,G)}p^{\pr}$, $p^{\pr}$ must contain the corner $(v,F)$ and $G \in \text{Reg}(p, F)$. Now this implies $p \le_{(v,F)} p^\prime$ so $p \le_{(v,F)} p^\prime <_{(v,F)} q$. Thus $p = p^\prime$. %But, $p\leq_{(v,F)}p^{\pr}<_{(v,F)}q$, so we have equality $p=p^{\pr}$. 

(2) In the proof of (1), we showed that if $p$ contains a corner $(w_i,G_i)$ with $G_i \in \text{Reg}_i$, then there exists a corner $(v_i,F_i)$ with $F_i \in \text{Reg}_i$ such that $p = p(v_i, F_i)$. If $p$ is not a boundary arc, then it contains such a corner $(w_i,G_i)$ with $G_i \in \text{Reg}_i$ for $i = 1, 2.$ 

(3) Assume that $p$ contains two distinct corners $(v,F), (w,G) \in \text{Cor}(T)$ where $p = p(v,F)$ and $p = p(w,G)$ and where $F$ and $G$ belong to the same region defined by $p$. Let $G^{\prime}$ be the face containing $w$ such that $G\cap G^{\pr}$ is an edge of the segment $[v,w]$. We can assume that at least one arc of $\mathcal{F}$ contains $(w,G^\prime) \in \text{Cor}(T)$, otherwise define $p^\prime$ to be the boundary arc corresponding to $G^\prime$ and we obtain that $\mathcal{F}\cup \{p^\prime\} \in \Delta^{NC}(T).$ 

Let $q := p(w,G^{\pr}) \in \mathcal{F}.$ The arc $p$ is expressible as the composition $p=[v_0,v]\circ[v,w]\circ[w,w_0]$.  Similarly, $q$ is the composition $q=[v_1,w]\circ[w,w_1]$ where $[w,w_1]$ and $p$ do not agree along any edges. Let $p^{\prime}$ be the arc $p^{\prime} := [v_0,w]\circ[w,w_1]$. Clearly, $p^\prime$ and $p$ do not cross. 

Next, we show that $\mathcal{F} \cup \{p^\prime\} \in \Delta^{NC}(T).$ Let $q^\prime \in \mathcal{F}$ and suppose that $q^\prime$ and $p^\prime$ cross along a segment $s.$ It is enough to assume that $s$ is contained in either $[v_0,w]$ or $[w,w_1]$. If $s$ is contained in $[v_0,w]$, then since $p$ and $p^\prime$ agree along $[v_0,w]$ we have that $q^\prime$ and $p$ cross along $s$, a contradiction. Similarly, $q^\prime$ and $p^\prime$ cannot cross along a segment $s$ contained in $[w,w_1]$. We conclude that $\mathcal{F} \cup \{p^\prime\} \in \Delta^{NC}(T).$

(4) By $(2)$, there exist distinct corners $(v_1, F_1), (v_2, F_2) \in \text{Cor}(T)$ contained in $p$ where $F_i \in \text{Reg}_i$ and such that $p = p(v_i, F_i)$ for $i = 1, 2$. Let $p_1$ and $p_2$ be arcs of $\mathcal{F}\backslash\{p\} \in \Delta^{NC}(T)$ where $p_1$ and $p_2$ are marked at $p(v_1,F_1)$ and $p(v_2,F_2)$, respectively, with respect to the other arcs of $\mathcal{F}\backslash\{p\}$. Since $\mathcal{F}$ is a facet, it contains each boundary arc. As $p$ is not a boundary arc, there does exist the desired arcs $p_1$ and $p_2$ in $\mathcal{F}\backslash\{p\}$. %The arcs $p_1$ and $p_2$ exist because $\mathcal{F}\backslash\{p\}$ contains every boundary arc since $\mathcal{F}$ is a facet. %Let $[v_1,v_2]$ 

\begin{figure}[h]
$$\begin{array}{cccccccccccccc}
\includegraphics[scale=2]{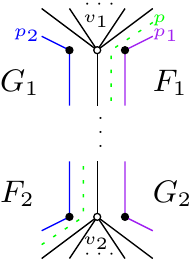} & & \includegraphics[scale=2]{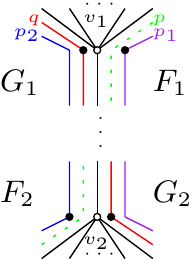}\\
(a) & & (b)
\end{array}$$
\caption{In (a), we show part of the face $\mathcal{F}\backslash\{p\}$ and we indicate corners at which $p_1$ and $p_2$ are marked by black dots. In (b), we show the part of the face $\mathcal{F}\backslash\{p\}\cup \{q\}$ and we indicate corners at which $p_1, p_2,$ and $q$ are marked by black dots. In (a) and (b), we indicate where the arc $p$ appeared before it was removed.}
\label{flipconfig}
\end{figure}

\begin{lemma}\label{fliplemma} In the face $\mathcal{F}\backslash\{p\}$, $p_1 = p(v_2,G_2)$ and $p_2 = p(v_1,G_1)$ where $G_i$ is the unique face of the tree $T$ such that $(v_i, G_i)$ is immediately clockwise from $(v_i, F_i)$ (see Figure~\ref{flipconfig}). 
\end{lemma}

\begin{proof}[Proof of Lemma~\ref{fliplemma}]
We show that $p_1 = p(v_2,G_2)$ and the proof that $p_2 = p(v_1,G_1)$ is similar. Write $p_1 = s_1\circ [v_1,w_1]$ and $p(v_2, G_2) = s_2 \circ [v_2,w_2]$ where $s_1, [v_1,w_1], s_2,$ and $[v_2,w_2]$ are acyclic paths of $T$, $w_1$ and $w_2$ are leaf vertices of $T$, and where we require that $[v_1,w_1]$ and $[v_2,w_2]$ each contain part of the segment $[v_1,v_2]$.

Now consider the arc $p^\prime := s_1\circ [v_1,v_2] \circ s_2$. Since $p_1$ (resp. $p(v_2,G_2)$) does not cross any arcs of $\mathcal{F}$ along $s_1$ (resp. $s_2$), the same is true for $p^\prime$. Similarly, $p$ does not cross any arcs of $\mathcal{F}$ along $[v_1, v_2]$ so the same is true for $p^\prime$. As $\mathcal{F}$ is a facet of $\Delta^{NC}(T)$, we have that $p^\prime \in \mathcal{F}$. Now it is clear that $p^\prime = p(v_1, F_1)$ and $p^\prime = p(v_2, G_2)$, and the result follows. \end{proof}

Next, let $p_1 = s_1 \circ [v_2,w_1]$ and let $p_2 = [w_2,v_1] \circ s_2$ for some acyclic paths $s_1$ and $s_2$ and some leaf vertices $w_1$ and $w_2$ of $T$. Define $q := [w_2,v_1] \circ [v_1,v_2] \circ [v_2, w_1] \neq p.$ By Lemma~\ref{fliplemma} and the proof of Proposition~\ref{nccomplexprop} $(3)$, we have that $(\mathcal{F}\backslash\{p\})\cup \{q\} \in \Delta^{NC}(T).$ Furthermore, it is clear that $q = p(v_1, G_1) = p(v_2, G_2)$ in $(\mathcal{F}\backslash \{p\}) \cup \{q\}$ and that $[v_1,v_2]$ is the unique longest segment along which $p$ and $q$ cross.

Next, we show that $\mathcal{F}$ and $(\mathcal{F}\backslash\{p\})\cup \{q\}$ are the unique faces of $\Delta^{NC}(T)$ that contain $\mathcal{F}\backslash\{p\}$. Note that from this it also follows that $(\mathcal{F}\backslash\{p\})\cup \{q\}$ is a facet of $\Delta^{NC}(T)$. Suppose there exists an arc $p^\prime \not \in \mathcal{F}\backslash \{p\}$ such that $(\mathcal{F}\backslash \{p\}) \cup \{p^\prime\}$ is a facet. Then $p^\prime = p(v_2, F_2) = p(v_1, F_1)$ or $p^\prime = p(v_2, G_2) = p(v_1, G_1)$, otherwise by combining Proposition~\ref{nccomplexprop} (3) and Lemma~\ref{fliplemma} we have that $(\mathcal{F}\backslash \{p\}) \cup \{p^\prime\}$ is not a facet. In particular, we obtain that $p^\prime$ contains the segment $[v_1,v_2]$. The following lemma shows that if $p^\prime = p(v_2, F_2) = p(v_1, F_1)$ (resp. $p^\prime = p(v_2, G_2) = p(v_1, G_1)$), then $p^\prime = p$ (resp. $p^\prime = q$). This establishes the uniqueness of $p$ and $q$.

\begin{lemma}\label{agreefromboundary}
Let $p = [u_2,v_1] \circ [v_1,v_2] \circ [v_2, u_1]$. 
\begin{itemize}
\item[i)] If $p^\prime$ contains the corner $(v_2, F_2)$, then $p^\prime$ and $p$ agree along $[u_1,v_2]\circ [v_2,v_1].$
\item[ii)] If $p^\prime$ contains the corner $(v_1, F_1)$, then $p^\prime$ and $p$ agree along $[u_2,v_1]\circ [v_1,v_2].$
\item[iii)] If $p^\prime$ contains the corner $(v_2, G_2)$, then $p^\prime$ and $q$ agree along $[w_1,v_2]\circ [v_2,v_1].$
\item[iv)] If $p^\prime$ contains the corner $(v_1, G_1)$, then $p^\prime$ and $q$ agree along $[w_2,v_1]\circ [v_1,v_2].$
\end{itemize}
\end{lemma}
\begin{proof}[Proof of Lemma~\ref{agreefromboundary}]
We prove part $i)$, and the proofs of the other parts are analogous. Suppose there exists an interior vertex $x \in [u_1,v_2]$ where $p$ and $p^\prime$ separate. Let $(x,H) \in \text{Cor}(T)$ be the corner contained in $p$. Since $p = p(v_1, F_1) = p(v_2, F_2)$ in $\mathcal{F}$ and since $\mathcal{F}$ is a facet, there exists an arc $a \in \mathcal{F}\backslash\{p\}$ where $a = p(x,H)$ in $\mathcal{F}$. There are two cases: $H \in \text{Reg}(p, F_2)$ or $H \in \text{Reg}(p, F_1)$ (see Figure~\ref{separateconfig}).

Without loss of generality, we assume $H \in \text{Reg}(p, F_2)$. If $a$ contains $(v_2,F_2)$, then $\text{Reg}(p,F_2) = \text{Reg}(p,H) \subsetneq \text{Reg}(a, H) = \text{Reg}(a, F_2)$, contradicting that $p = p(v_2, F_2)$ in $\mathcal{F}$. Thus $a$ does not contain $(v_2,F_2)$. This implies that there exists $y \in [x,v_2]$ such that $p$ and $a$ separate at $y$. Since $a$ and $p$ are noncrossing and since $p <_{(x,H)} a$, any edge of $a$ that is not an edge of $p$ is only incident to faces in $\text{Reg}(p, F_1)$. We conclude that $p^\prime$ and $a$ cross along $[x,y]$, a contradiction.
\end{proof}\end{proof}

\begin{figure}[h]
$$\begin{array}{cccccccccc}
\includegraphics[scale=2]{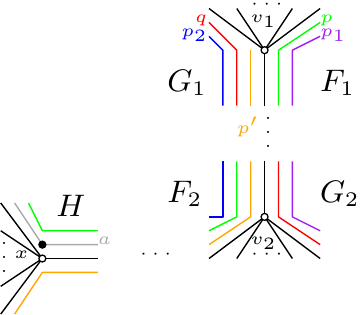} & & \includegraphics[scale=2]{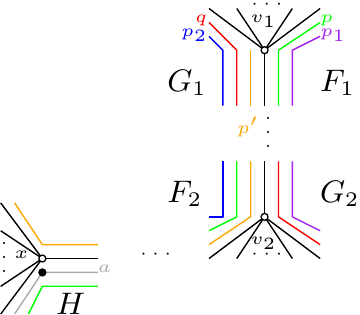}\\
(a) & & (b)
\end{array}$$
\caption{The configuration of arcs in the setting of Lemma 3.8 $i)$. Note that in this situation we do not know if $p^\prime$ contains $(v_1,F_2)$ or $(v_1,F_1)$, which is why it appears to terminate at $v_1$ in $(a)$ and $(b)$. The arc $a = p(x,H)$ has the property that $H \in \text{Reg}(p,F_2)$ or $H \in \text{Reg}(p,F_1)$. We indicate that $a$ is marked at corner $(x,H)$ by marking it with a black dot in $(a)$ and $(b)$.}
\label{separateconfig}
\end{figure}

In the proof of Proposition~\ref{nccomplexprop} (4), we explained how for a given facet $\mathcal{F} \in {\Delta}^{NC}(T)$ and a given arc $p \in \mathcal{F}$ that is not a boundary arc there is a unique way to produce another facet of ${\Delta}^{NC}(T)$. To summarize our construction, suppose that $p = [u_1,u]\circ [u,v] \circ [v,v_1]$ in a facet $\mathcal{F}$ is a nonboundary arc of $T$ where $p = p(u,F)$ and $p = p(v,G)$ are the unique corners of $T$ where $p$ is maximal. Then there is a unique nonboundary arc $q$ such that $(\mathcal{F}\backslash\{p\}) \cup \{q\}$ is a facet of $\Delta^{NC}(T)$. The arc $q = [u_2,u] \circ [u,v] \circ [v,v_2]$ for some leaf vertices $u_2$ and $v_2$ so that $q = p(u,F^\prime)$ and $q = p(v,G^\prime)$ where the vertices of $F\cap F^\prime$ and $G\cap G^\prime$ are contained in both $p$ and $q$. 

\begin{example}
Figure~\ref{A1orflipgraph} shows an example of the construction in Proposition~\ref{nccomplexprop} (4) for the tree $T$ (the tree depicted in black). A black dot appears in an arc if it is the largest arc containing the corresponding corner in that facet. The boundary arcs of $T$ are $(1,5,2), (1,5,4), (2,6,3), \text{ and } (3,4,6)$. These appear in gold. Flipping the green arc produces the red arc.
\end{example}

\begin{figure}[h]
\includegraphics[scale=1.2]{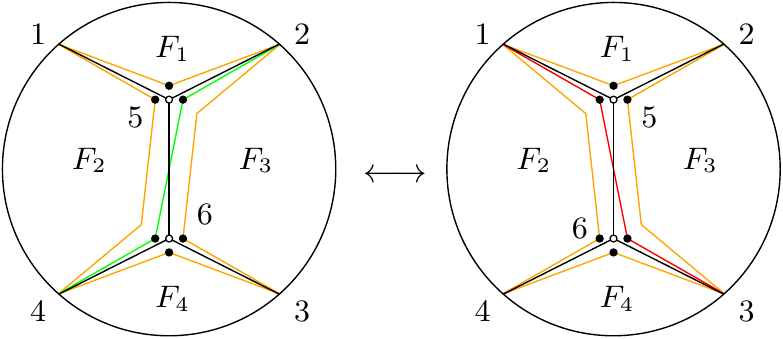}
\caption{The two facets of $\Delta^{NC}(T)$.}
\label{A1orflipgraph}
\end{figure}

\begin{corollary}\label{Cor_pure_thin}
The simplicial complex $\wtil{\Delta}^{NC}(T)$ is pure and thin. %$\wtil{\Delta}^{NC}(T)$ is a pure, thin simplicial complex.
\end{corollary}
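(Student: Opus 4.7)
The plan is to deduce both properties from Proposition~\ref{nccomplexprop}, especially part~(4). Since every boundary arc is noncrossing with every arc of $T$, each facet of $\Delta^{NC}(T)$ must contain the full set of boundary arcs. Removing these gives a cardinality-preserving bijection between facets of $\Delta^{NC}(T)$ and facets of $\widetilde{\Delta}^{NC}(T)$, so the two complexes are pure and thin simultaneously, and it suffices to prove the two properties for $\Delta^{NC}(T)$.

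Thinness is then immediate from Proposition~\ref{nccomplexprop}(4). If $\sigma$ is a face of codimension one in a facet $\mathcal{F}$, writing $\mathcal{F}\setminus\sigma=\{p\}$, the arc $p$ is necessarily non-boundary (boundary arcs survive in every facet). Part~(4) produces a unique non-boundary arc $q\neq p$ such that $(\mathcal{F}\setminus\{p\})\cup\{q\}$ is a second facet containing $\sigma$, and the uniqueness of $q$ prevents a third facet from containing $\sigma$.

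For purity, the main obstacle is to show that all facets of $\Delta^{NC}(T)$ have the same cardinality. The approach I would take is to establish connectivity of the \emph{flip graph} on facets, whose edges are the flips produced by Proposition~\ref{nccomplexprop}(4); since each flip preserves $|\mathcal{F}|$, connectivity yields purity. To prove connectivity, I would fix a canonical facet $\mathcal{F}_0$---for example, the one built greedily in a preselected linear order on arcs---and argue by induction on $|\mathcal{F}\triangle\mathcal{F}_0|$ that any other facet $\mathcal{F}$ can be flipped toward $\mathcal{F}_0$. The inductive step picks an extremal $p\in\mathcal{F}\setminus\mathcal{F}_0$ and flips it, using the explicit flip description from parts~(1)--(4) together with Lemma~\ref{agreefromboundary} to ensure that the resulting arc lies in $\mathcal{F}_0$ or at least shrinks the symmetric difference. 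Verifying this reduction, by tracking how the regions $\mathrm{Reg}(p,F)$ and $\mathrm{Reg}(q,F)$ interact with the arcs of $\mathcal{F}_0$, is the delicate part.
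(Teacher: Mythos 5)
Your reduction from $\wtil{\Delta}^{NC}(T)$ to $\Delta^{NC}(T)$ and your thinness argument match the paper exactly: boundary arcs sit in every facet, so the two complexes are pure/thin simultaneously, and thinness is then immediate from Proposition~\ref{nccomplexprop}(4).

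For purity, however, you take a genuinely different route, and that route has a real gap. You propose to prove that the flip graph is connected and then infer that all facets have equal cardinality because a flip is cardinality-preserving. Two problems. First, the connectivity argument is only sketched; the inductive step you describe requires showing that some $p\in\mathcal{F}\setminus\mathcal{F}_0$ flips to an arc $q\in\mathcal{F}_0$ (if $q$ merely leaves $\mathcal{F}$, the symmetric difference does not shrink), and nothing in Proposition~\ref{nccomplexprop} or Lemma~\ref{agreefromboundary} guarantees this without substantial further work. Second, you have inverted the logical order relative to the paper: flip-graph connectivity is a stronger statement than purity, and the paper establishes purity first precisely because it has a much shorter, self-contained argument. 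The paper's purity proof is a double-count of corners: in a facet $\mathcal{F}$, every corner $(v,F)$ has a unique arc marked at it; by Proposition~\ref{nccomplexprop}(1) and (3) each boundary arc is marked at exactly one corner, and by (2) and (3) each non-boundary arc is marked at exactly two corners (one per region). Summing over corners gives $\#\Cor(T)=\#\{\text{faces of }T\}+2\#\{\text{non-boundary arcs in }\mathcal{F}\}$, so the number of non-boundary arcs is the fixed quantity $\tfrac12(\#\Cor(T)-\#\{\text{faces of }T\})$, independent of $\mathcal{F}$. You should replace your connectivity sketch with this counting; it uses only the parts of Proposition~\ref{nccomplexprop} you already invoke.
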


\begin{proof}
Any facet $\mathcal{F}$ of $\Delta^{NC}(T)$ has $\#\mathcal{F} = \#\{\text{nonboundary arcs of $\mathcal{F}$}\} + \#\{\text{boundary arcs of $\mathcal{F}$}\}.$ Note that $\#\{\text{boundary arcs of $\mathcal{F}$}\} = \#\{\text{faces of $T$}\}$. Thus to show $\wtil{\Delta}^{NC}(T)$ is pure, it is enough to prove that $\Delta^{NC}(T)$ is pure. 

Assume $\mathcal{F} \in \Delta^{NC}(T)$ is a facet. Each corner of $T$ is contained in a boundary arc of $\mathcal{F}$, and thus each corner of $T$ has a unique maximal arc containing it. Since $\mathcal{F}$ is a facet, by Proposition~\ref{nccomplexprop} (1), each boundary arc is maximal at exactly one corner of $T$. Similarly, since $\mathcal{F}$ is a facet, by Proposition~\ref{nccomplexprop} (2), each nonboundary arc of $T$ is maximal at exactly two corners of $T$. This implies that $$\begin{array}{rcl}
\#\text{Cor}(T) & = & \#\{\text{boundary arcs in $\mathcal{F}$}\} + 2\#\{\text{nonboundary arcs in $\mathcal{F}$}\}\\
& = & \#\{\text{faces of $T$}\} + 2\#\{\text{nonboundary arcs in $\mathcal{F}$}\}.
\end{array}$$

\noindent Thus $\#\{\text{nonboundary arcs in $\mathcal{F}$}\} = \frac{1}{2}\left(\#\text{Cor}(T) - \#\{\text{faces of $T$}\}\right)$. As the latter number is independent of $\mathcal{F}$, we have that $\Delta^{NC}(T)$ is pure and thus so is $\wtil{\Delta}^{NC}(T).$ 

The simplicial complex $\wtil{\Delta}^{NC}(T)$ is thin because the move between facets of $\Delta^{NC}(T)$ described in Proposition~\ref{nccomplexprop} (4) only involves nonboundary arcs.  \end{proof}

%Oriented flips.

We refer to the operation $\mathcal{F} \longmapsto (\mathcal{F}\backslash\{p\}) \cup \{q\}$ sending facet $\mathcal{F}$ of $\widetilde{\Delta}^{NC}(T)$ to a new facet of $\widetilde{\Delta}^{NC}(T)$ as a \textbf{flip} of $\mathcal{F}$ at $p$ (see Figure~\ref{A1orflipgraph}) and denote it by $\mu_p$. We define the \textbf{flip graph} of $T$, denoted $FG(T)$, to be the graph whose vertices are facets of $\widetilde{\Delta}^{NC}(T)$ and such that two vertices are connected by an edge if and only if the corresponding facets can be obtained from each other by a single flip. 

We now define the following object, which is fundamental to our work in this paper.

\begin{definition}\label{orflipgraph}
Let $\mathcal{F}_1, \mathcal{F}_2 \in {FG}(T)$ and assume that $\mathcal{F}_1$ and $\mathcal{F}_2$ are connected by an edge in $FG(T)$. Let $\mathcal{F}_2 = \mu_p\mathcal{F}_1$ where $\mathcal{F}_2 = \mathcal{F}_1\backslash\{p\} \cup \{q\}$. If $p = p(u,F) = p(v,G)$ and $q = p(u,F^\prime) = p(v,G^\prime)$, we orient the edge connecting $\mathcal{F}_1$ and $\mathcal{F}_2$ so that $\mathcal{F}_1 \longrightarrow \mathcal{F}_2$ if the corner $(u,F^\prime)$ (resp. $(v,G^\prime)$) is immediately clockwise from the corner $(u,F)$ (resp. $(v,G)$) about vertex $u$ (resp. $v$). Otherwise, we orient the edge so that $\mathcal{F}_2 \longrightarrow \mathcal{F}_1$. We refer to the resulting directed graph as the \textbf{oriented flip graph} of $T$ and denote it by $\overrightarrow{FG}(T).$ 

Additionally, any edge of $\overrightarrow{FG}(T)$ connecting $\mathcal{F}$ and $\mu_p\mathcal{F}$ is naturally labeled by the segment determined by the marked corners of $p$ in $\mathcal{F}$ (or in $\mu_p\mathcal{F}$).
\end{definition}

\begin{example}\label{orflipgraphex1}
In Figure~\ref{3dimorflipgraph}, we show the oriented flip graph (without edge labels) of the tree $T$ from Figure~\ref{arcEx2}. 
\end{example}

\begin{figure}[h]
\includegraphics[scale=1]{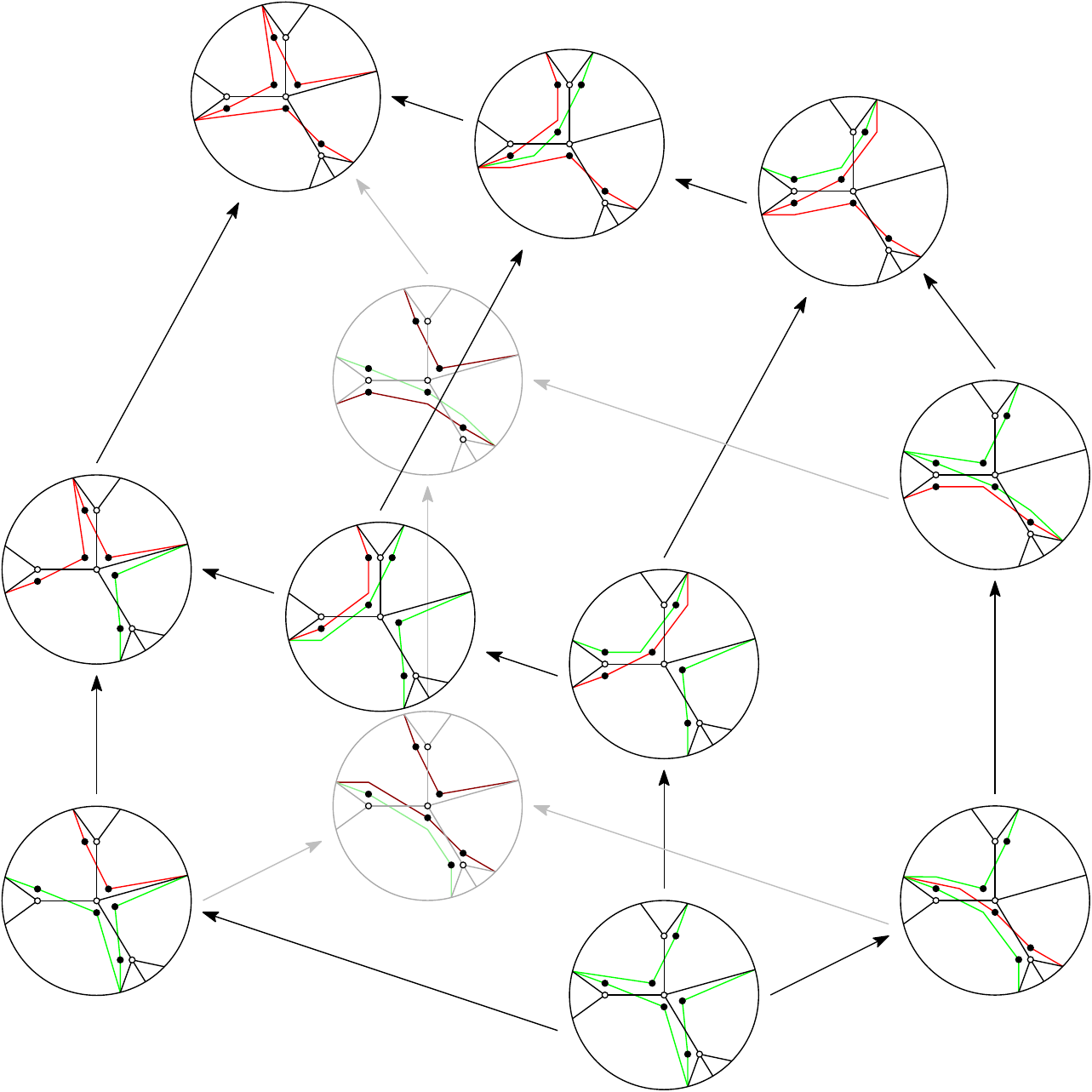}
\caption{An example of an oriented flip graph.}
\label{3dimorflipgraph}
\end{figure}

\section{Sublattice and quotient lattice description of the oriented flip graph}\label{Sec:Sub_and_Quot}

In this section, we identify the oriented flip graph $\ora{FG}(T)$ as both a sublattice and quotient lattice of another lattice. In Section~\ref{subsec_biclosed} we define a closure operator on segments, and introduce a poset of biclosed sets of segments, denoted $\Bic(T)$. It was shown in \cite{garver2015lattice} that $\Bic(T)$ is a congruence-uniform lattice. We define a distinguished lattice congruence $\Theta$ on $\Bic(T)$.

In Section~\ref{subsec_map}, we define maps $\eta:\Bic(T)\ra\ora{FG}(T)$ and $\phi:\ora{FG}(T)\ra\Bic(T)$. The map $\eta$ is a surjective lattice map such that $\eta(X)=\eta(Y)$ exactly when $X\equiv Y\mod\Theta$. The map $\phi$ is a lattice map such that $\eta\circ\phi$ is the identity on $\ora{FG}(T)$. Since congruence-uniformity and polygonality are preserved by lattice quotient maps, we deduce that $\ora{FG}(T)$ is a congruence-uniform and polygonal lattice.

\subsection{Biclosed collections of segments}\label{subsec_biclosed}

Let $\Seg(T)$ be the set of segments supported by a tree $T$. For $X\subseteq\Seg(T)$, we say $X$ is \textbf{closed} if for segments $s,t\in\Seg(T)$, if $s,t\in X$ and $s\circ t\in\Seg(T)$ then $s\circ t\in X$. If $X$ is any subset of $\Seg(T)$, its \textbf{closure} $\ov{X}$ is the smallest closed set containing $X$. Say $X$ is \textbf{biclosed} if $X$ and $\Seg(T)\backslash X$ are both closed. For example, the collection of red segments in the left part of Figure \ref{fig_eta} is biclosed. We let $\Bic(T)$ denote the poset of biclosed subsets of $\Seg(T)$, ordered by inclusion.

Let $Q$ be the graph whose vertices are the edges between interior vertices of $T$, where $e$ and $e^{\pr}$ are adjacent in $Q$ if they meet at a corner $(v,F)$. Later, we will give $Q$ an orientation and view it as a quiver. An \textbf{acyclic path} (or \textbf{chordless path}) of $Q$ is a sequence of vertices $(v_0,\ldots,v_t)$ such that $v_i$ and $v_j$ are adjacent if and only if $|i-j|=1$. We view acyclic paths as undirected, so they are determined by the set of vertices they visit.

%Replacing $T$ with $Q$, the set of segments of $T$ form some of the acyclic paths of $Q$.

A segment of $T$ is naturally regarded as an acyclic path of $Q$. The set of segments of $T$ thus forms some of the acyclic paths of $Q$.  In Theorem 5.4 of \cite{garver2015lattice}, we proved that the set of biclosed subsets of acyclic paths of $Q$ under inclusion forms a congruence-uniform, semidistributive, and polygonal lattice. By a minor modification of the proof, this can be shown to hold for biclosed subsets of any order ideal of acyclic paths, where paths are ordered by inclusion. As $\Seg(T)$ is naturally regarded as an order ideal of acyclic paths of $Q$, we deduce the following result.

\begin{theorem}\label{thm_biclosed_main}
The poset $\Bic(T)$ is a semidistributive, congruence-uniform, and polygonal lattice. Furthermore:
\begin{enumerate}
\item\label{thm_biclosed_main_ss} For $X,Y\in\Bic(T)$, if $X\subsetneq Y$ then there exists $y\in Y\setm X$ with $X\cup\{y\}\in\Bic(T)$.
\item\label{thm_biclosed_main_join} For $W,X,Y\in\Bic(T)$ with $W\subseteq X\cap Y$, the set $W\cup\ov{(X\cup Y)\setm W}$ is biclosed.
\item\label{thm_biclosed_main_cn} The edge-labeling $\lambda:\Cov(\Bic(T))\ra\Seg(T)$ where $\lambda(X,Y)=s$ if $Y\setm X=\{s\}$ is a CN-labeling.
\end{enumerate}
\end{theorem}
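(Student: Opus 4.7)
The plan is to reduce to Theorem~5.4 of \cite{garver2015lattice}, which establishes the analogous statement for $\Bic$ of the set $\mathcal{P}$ of all acyclic paths of $Q$, and then to derive the three enumerated consequences. The sentence preceding the theorem statement indicates the essential reduction: $\Seg(T)$ is an order ideal in $\mathcal{P}$ under sub-path inclusion. I would first verify this explicitly: a sub-path of a segment is again a segment because the defining local conditions (non-leaf endpoints, interior vertices, consecutive edges sharing a face) are preserved under restriction. A direct consequence is that whenever $s,t\in\Seg(T)$ satisfy $s\circ t\in\mathcal{P}$, one has $s\circ t\in\Seg(T)$: the endpoints of $s\circ t$ are endpoints of $s$ or $t$ (hence non-leaves), and the shared-face condition at the junction is exactly the condition that the two edges are adjacent in $Q$. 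Thus the closure operator on $\Seg(T)$ coincides with the restriction of the closure operator on $\mathcal{P}$.

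Next I would revisit each step of the proof of Theorem~5.4 of \cite{garver2015lattice} and check that it adapts to the order-ideal setting. The constructions of joins, meets, covers, and polygons in that proof are expressed through explicit formulas involving the closure operator, and never invoke elements of $\mathcal{P}\setm\Seg(T)$ when the starting data lies entirely in $\Seg(T)$. Hence the entire proof transcribes verbatim with $\mathcal{P}$ replaced by $\Seg(T)$, yielding semidistributivity, congruence-uniformity, and polygonality of $\Bic(T)$. A byproduct of this adaptation is the description of cover relations: $X\lessdot Y$ in $\Bic(T)$ iff $Y\setm X$ is a single segment.

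With this in hand, property~(\ref{thm_biclosed_main_ss}) follows from weak atomicity of finite lattices: given $X\subsetneq Y$ biclosed, there exists a cover $X\lessdot Z\leq Y$, and the cover description forces $Z=X\cup\{y\}$ for a single segment $y\in Y\setm X$. Property~(\ref{thm_biclosed_main_join}) is a direct computation: $W\cup\ov{(X\cup Y)\setm W}$ is closed by construction, and its complement is closed by using the biclosedness of $X$ and $Y$ to exclude compositions that would straddle the boundary of $W$ in a contradictory way. Property~(\ref{thm_biclosed_main_cn}) reduces to verifying the CN-axioms for the segment labeling $\lambda(X,Y)=s$: (CN3) is immediate because distinct covers introduce distinct segments, and (CN1) and (CN2) follow from the explicit polygon structure produced in the proof of polygonality.

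The main obstacle throughout is polygonality itself, which also underlies (CN1) and (CN2). When two segments $s_1,s_2$ are each added to a biclosed set $Z$ yielding distinct covers, the join $(Z\cup\{s_1\})\vee(Z\cup\{s_2\})$ is reached by iteratively adjoining compositions of $s_1$ and $s_2$ that lie in $\Seg(T)$. Verifying biclosedness at every intermediate stage, bounding the length of this chain so the resulting interval is a finite polygon, and matching labels on opposite sides of the polygon constitute the technical heart of the argument; the order-ideal hypothesis on $\Seg(T)$ is essential throughout to keep intermediate compositions within $\Seg(T)$.
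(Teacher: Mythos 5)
Your reduction to Theorem~5.4 of \cite{garver2015lattice} via the order-ideal observation is exactly what the paper does (the paper describes it as ``a minor modification of the proof''), and the three enumerated properties are indeed inherited from that theorem as you describe. One small remark: your intermediate claim that $s,t\in\Seg(T)$ and $s\circ t\in\mathcal{P}$ force $s\circ t\in\Seg(T)$ is true, but the justification you give (non-leaf endpoints plus the face condition at the junction) is incomplete---it omits checking that the concatenated walk is a simple path in $T$, which requires the tree property (two acyclic paths in a tree sharing their junction vertex and no edges can share no other vertex); this stronger claim is, however, more than what is needed, since the order-ideal observation alone already yields that the intrinsic closure on $\Seg(T)$ agrees with the inherited closure $X\mapsto\ov{X}\cap\Seg(T)$ by a short induction on the length of a composition.
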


A lattice of biclosed sets of segments is given in Figure \ref{fig_bicfig} (see also the upper lattice in Figure 7 in \cite{garver2015lattice}). To simplify the figure, we only show the edges of the tree connecting two interior vertices in Figure \ref{fig_bicfig}.  The Hasse diagram of this lattice is the skeleton of a zonotope with 26 vertices. Although one can find examples where $\text{Bic}(T)$ is isomorphic to the weak order on permutations, Figure \ref{fig_bicfig} shows this is not true for all trees $T$.

\begin{figure}
\centering
\includegraphics{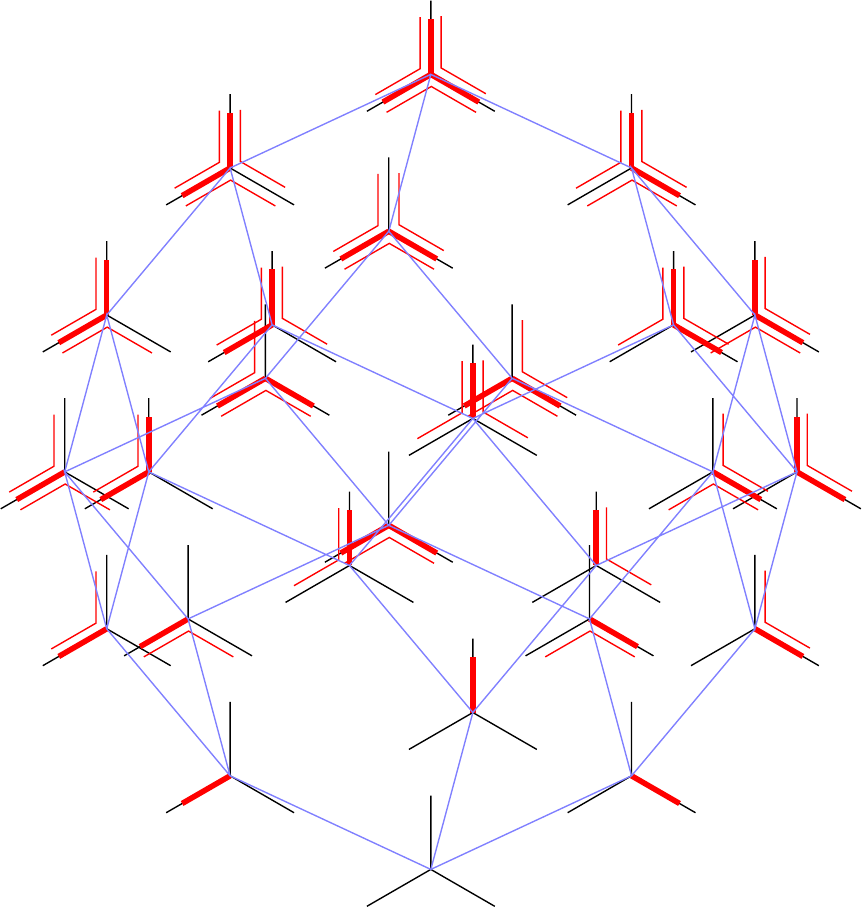}
\caption{\label{fig_bicfig}A lattice of biclosed sets of segments.}
\end{figure}

Any subset $S^{\pr}$ of a closure space $S$ inherits a closure operator $X\mapsto(\ov{X}\cap S^{\pr})$. In general, biclosed subsets of $S^{\pr}$ may not be biclosed as subsets of $S$. For spaces of segments, some intervals of $\Bic(T)$ are isomorphic to $\Bic(S^{\pr})$ for some subset $S^{\pr}$ of segments. We state this precisely as the following proposition.

\begin{proposition}\label{prop_biclosed_facial_interval}
Let $W \subset \text{Seg}(T)$ be a biclosed set of segments, and let $s_1,\ldots,s_k\in\Seg(T)\setm W$ such that $W\cup\{s_i\}$ is biclosed for all $i$. Let $(B_1,\ldots,B_l)$ be the finest partition on $\{s_1,\ldots,s_k\}$ such that if $s_i\circ s_j$ is a segment then $s_i$ and $s_j$ lie in the same block. Then the interval $[W,W\cup\ov{\{s_1,\ldots,s_k\}}]$ is isomorphic to $\Bic(\ov{B_1})\times\cdots\times\Bic(\ov{B_l})$.
\end{proposition}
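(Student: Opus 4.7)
The plan is to construct an explicit isomorphism $\Phi:[W,W\cup\ov{\{s_1,\ldots,s_k\}}]\to\Bic(\ov{B_1})\times\cdots\times\Bic(\ov{B_l})$ by $\Phi(X)=(X\cap\ov{B_1},\ldots,X\cap\ov{B_l})$, with candidate inverse $(Y_1,\ldots,Y_l)\mapsto W\cup Y_1\cup\cdots\cup Y_l$. Both maps are manifestly order-preserving, so the heart of the proof is showing they are well-defined and mutually inverse.

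Before doing so I would record three preliminary facts. First, $W\cap\ov{\{s_1,\ldots,s_k\}}=\emptyset$: any $t$ in the intersection would have $t=s_{i_1}\circ s_{i_2}\circ\cdots\circ s_{i_p}$, and iteratively applying co-closedness of $W$ to $t=s_{i_1}\circ(\text{rest})$, then to the rest, and so on (each time the leading factor lies outside $W$), eventually forces some $s_{i_l}$ into $W$. Second (\emph{absorption}), if $a\in W$, $b\in\ov{\{s_1,\ldots,s_k\}}$, and $a\circ b\in\Seg(T)$, then $a\circ b\in W$; this follows by writing $b=s_{j_1}\circ\cdots\circ s_{j_q}$ and iteratively invoking closedness of $W\cup\{s_{j_r}\}$ to place $a\circ s_{j_1}$, then $(a\circ s_{j_1})\circ s_{j_2}$, and so on into $W$. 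Third (\emph{block separation}), $\ov{\{s_1,\ldots,s_k\}}=\ov{B_1}\sqcup\cdots\sqcup\ov{B_l}$, and no segment decomposes as $t_1\circ t_2$ with $t_1\in\ov{B_j}$, $t_2\in\ov{B_{j'}}$, $j\neq j'$: the last atomic piece of $t_1$ and the first atomic piece of $t_2$ would compose to a subsegment of $t_1\circ t_2$, forcing them into the same block. Disjointness $\ov{B_j}\cap\ov{B_{j'}}=\emptyset$ follows by comparing two atomic decompositions of a hypothetical common element at their first differing break point, and using biclosedness of $W\cup\{s_i\}$ for the relevant atomic piece together with the first preliminary fact.

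With these in hand I would verify both directions of $\Phi$. That $X\cap\ov{B_j}$ is biclosed in the induced closure space on $\ov{B_j}$ follows directly: closedness passes to the intersection since $\ov{B_j}$ is closed, and co-closedness in $\ov{B_j}$ only concerns decompositions with both parts in $\ov{B_j}$, which is weaker than co-closedness of $X$ in $\Seg(T)$. For the inverse, set $X=W\cup Y_1\cup\cdots\cup Y_l$ given biclosed $Y_j\subseteq\ov{B_j}$. Closedness of $X$ is a case check on $a\circ b$: the same-piece cases are immediate, $a\in Y_j$, $b\in Y_{j'}$ with $j\neq j'$ is ruled out by block separation, and the mixed case $a\in W$, $b\in Y_j$ is handled by absorption. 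For co-closedness, given $a\circ b=c\in X$: if $c\in W$ invoke co-closedness of $W$; if $c\in Y_j$, fix a decomposition $c=s_{i_1}\circ\cdots\circ s_{i_p}$ in $\ov{B_j}$ and consider the split vertex $v$ of $c=a\circ b$. If $v$ coincides with an atomic break point then $a,b\in\ov{B_j}$ and we invoke co-closedness of $Y_j$; if $v$ is interior to some $s_{i_m}$, biclosedness of $W\cup\{s_{i_m}\}$ places one of its two halves into $W$, and absorption then places the corresponding side of $c=a\circ b$ into $W\subseteq X$.

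The main obstacle will be the disjointness $\ov{B_j}\cap\ov{B_{j'}}=\emptyset$ in the third preliminary fact, which requires careful combinatorial comparison of two atomic decompositions of the same segment. The co-closedness verification for $\Phi^{-1}$ is the second-hardest step, but its case analysis is controlled by the position of the split vertex within the atomic decomposition of $c\in\ov{B_j}$. Once the preliminaries and well-definedness are in place, $\Phi\circ\Phi^{-1}=\id$ follows from the disjointness in block separation, $\Phi^{-1}\circ\Phi=\id$ follows from $W\cap\ov{\{s_1,\ldots,s_k\}}=\emptyset$, and order-preservation is immediate from the definitions.
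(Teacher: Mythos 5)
Your proposal is correct and shares the paper's global architecture---prove the disjointness statements, observe that restriction preserves the biclosed property, and verify that the candidate inverse actually lands in $\Bic(T)$---but it handles the final and hardest step, showing that $X = W\cup Y_1\cup\cdots\cup Y_l$ is biclosed, by a genuinely different argument. The paper argues by minimal counterexample: it chooses $(X_1,\ldots,X_l)$ minimal such that $W\cup\bigcup_i X_i$ fails to be biclosed, invokes Theorem~\ref{thm_biclosed_main}(\ref{thm_biclosed_main_ss}) (single-step inclusion ordering on $\Bic(\ov{B_j})$) to peel a single segment $s$ off one component, and derives a contradiction from biclosedness of $(W\cup X)\setm\{s\}$ together with a case split on how a bad composition interacts with $s$. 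You instead isolate an \emph{absorption} lemma (if $a\in W$, $b\in\ov{\{s_1,\ldots,s_k\}}$ and $a\circ b\in\Seg(T)$ then $a\circ b\in W$, proved by iterating closedness of $W\cup\{s_{j_r}\}$ along an atomic decomposition of $b$) and a \emph{block-separation} lemma (no segment factors across two distinct $\ov{B_j}$'s), and then check closedness and co-closedness of $X$ directly by a case analysis on where the split vertex of a composition falls relative to a fixed atomic decomposition. Your route is constructive and avoids the appeal to single-step inclusion; the absorption lemma in particular is a clean reusable fact not surfaced in the paper's proof. The trade-off is that you carry a slightly heavier load of preliminaries, but each is short and the case analysis is well organized. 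Two small bookkeeping notes: $\Phi^{-1}\circ\Phi=\id$ needs only $\ov{\{s_1,\ldots,s_k\}}=\bigsqcup_j\ov{B_j}$ together with $W\subseteq X$, while it is $\Phi\circ\Phi^{-1}=\id$ that uses $W\cap\ov{B_j}=\emptyset$; and in the disjointness argument for $\ov{B_j}\cap\ov{B_{j'}}$, the contradiction already follows from biclosedness of $W\cup\{s_{a_1}\}$ and $W\cup\{s_{b_1}\}$ alone, without needing preliminary fact~1. Neither affects correctness.
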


\begin{proof}
We first prove that the sets $W,\ov{B_1},\ldots,\ov{B_l}$ are all disjoint. Suppose $W\cap\ov{B_i}$ is nonempty for some $i$, and let $t\in W\cap\ov{B_i}$ be of minimum length. Since $s_j\notin W$ for all $j$, $t$ must be a concatenation $t_1\circ t_2$ of elements of $\ov{B_i}$. By minimality, $t_1$ and $t_2$ are not in $W$. But $W$ is co-closed, a contradiction.

Now suppose there are two blocks, say $B_1,B_2$, such that $\ov{B_1}\cap\ov{B_2}$ contains an element $t$. Then $t$ is the concatenation of some elements of $B_1$ and of some elements of $B_2$. Relabeling if necessary, let $s_i\in B_i,\ t_i\in\ov{B_i}$ for $i=1,2$ such that $s_1\circ t_1=t=s_2\circ t_2$. Then either $s_1$ is a subsegment of $s_2$ or vice versa. Without loss of generality, we assume $s_1\subsetneq s_2$. Let $s^{\pr}$ be the segment such that $s_1\circ s^{\pr}=s_2$. Since $W\cup\{s_1\}$ is closed, $s^{\pr}$ must not be in $W$. But $s^{\pr}$ is in $W$ since $W\cup\{s_2\}$ is co-closed. Hence, we have shown that the closures of the blocks are disjoint.

Since the biclosed property is preserved under restriction, the map $X\mapsto(X\cap\ov{B_1},\ldots,X\cap\ov{B_l})$ from $[W,W\cup\ov{\{s_1,\ldots,s_k\}}$ to $\Bic(\ov{B_1})\times\cdots\times\Bic(\ov{B_l})$ is well-defined. It remains to show that the inverse is also well-defined. Namely, given $(X_1,\ldots,X_l)\in\Bic(\ov{B_1})\times\cdots\times\Bic(\ov{B_l})$, we prove that $W\cup\bigcup_{i=1}^lX_i$ is biclosed in $\Seg(T)$. Suppose this does not always hold, and choose $(X_1,\ldots,X_l)$ minimal such that $W\cup\bigcup_{i=1}^lX_i$ is not biclosed. Let $X=\bigcup_{i=1}^lX_i$. Since $X\neq\emptyset$, there is some nonempty $X_j$. As $\Bic(\ov{B_j})$ is ordered by single-step inclusion, there is some $s\in X_j$ such that $X_j\setm\{s\}$ is biclosed. By the minimality assumption, $(W\cup X)\setm\{s\}$ is biclosed.

Assume $W\cup X$ is not co-closed. Then there exist segments $t,t^{\pr}$ not in $W\cup X$ such that $s=t\circ t^{\pr}$. As $X_j$ is co-closed in $\ov{B_j}$, the segment $t$ is not in $\ov{\{s_1,\ldots,s_k\}}$. Since $W\cup\{s_i\}$ is co-closed for any $i$, the segment $s$ can be factored as $s_i\circ s^{\pr}$ for some $s_i\in X_j$ and $s^{\pr}\in\ov{\{s_1,\ldots,s_k\}}$. There are two cases to consider: either $t$ is contained in $s_i$ or $s_i$ is contained in $t$.

If $t\subsetneq s_i$, then there exists a segment $t^{\pr\pr}$ with $t\circ t^{\pr\pr}=s_i$. Since $W\cup\{s_i\}$ is co-closed, $t^{\pr\pr}$ is in $W$. However, $t^{\pr\pr}\circ s^{\pr}=t^{\pr},\ s^{\pr}\in W\cup\ov{B_j}$ and $t^{\pr}\notin W\cup\ov{B_j}$. This contradicts the fact that $W\cup\ov{B_j}$ is closed.

If $s_i\subsetneq t$, then there exists a segment $t^{\pr\pr}$ with $s_i\circ t^{\pr\pr}=t$. Since $W\cup\{s_i\}$ is closed, $t^{\pr\pr}$ is not in $W$. However, $t^{\pr\pr}\circ t^{\pr}=s^{\pr},\ s^{\pr}\in W\cup\ov{B_j}$ and $t^{\pr\pr},t^{\pr}\notin W\cup\ov{B_j}$. This contradicts the fact that $W\cup\ov{B_j}$ is co-closed.

Now assume $W\cup X$ is not closed. Then there exist segments $s^{\pr}\in W\cup X,\ t\notin W\cup X$ such that $s\circ s^{\pr}=t$. Since $X_j$ is closed and segments in blocks $B_i$ with $i\neq j$ cannot be concatenated with $s$, the segment $s^{\pr}$ is in $W$. After relabeling, we may assume $s=s_1\circ\cdots\circ s_m$ for some $m\leq k$. Since $W\cup\{s_m\}$ is closed, the segment $s_m\circ s^{\pr}$ is in $W$. Similarly, $s_i\circ\cdots\circ s_m\circ s^{\pr}$ is in $W$ for any $i$. This contradicts the assumption that $t\notin W$.
\end{proof}

We may refer to intervals of $\Bic(T)$ as in Proposition~\ref{prop_biclosed_facial_interval} as \textbf{facial intervals}.

\subsection{A lattice congruence on biclosed sets}\label{subsec_bic_congruence}

In this section, we define a lattice congruence $\Theta$ on $\Bic(T)$. The quotient lattice $\Bic(T)/\Theta$ will be shown to be isomorphic to $\ora{FG}(T)$ in Section~\ref{subsec_map}.

Let $s=(v_0,\ldots,v_l)$ be a segment, and orient the segment from $v_0$ to $v_l$. Let $C_s$ be the set of segments $(v_i,\ldots,v_j)$ such that
\begin{itemize}
\item if $i>0$ then $s$ turns right at $v_i$, and
\item if $j<l$ then $s$ turns left at $v_j$.
\end{itemize}

We note that $s$ is always in $C_s$ since the above conditions are vacuously true. Furthermore if $t\in C_s$, then $C_t\subseteq C_s$. Let $K_s$ be the set of segments $(v_i,\ldots,v_j)$ such that
\begin{itemize}
\item if $i>0$ then $s$ turns left at $v_i$, and
\item if $j<l$ then $s$ turns right at $v_j$.
\end{itemize}

The following simple statement is used frequently in later proofs, so we state it explicitly.

\begin{lemma}\label{lem_subsegment}
Let $s,t\in\Seg(T)$ such that $t\in C_s$. If $t=t_1\circ t_2$, then either $t_1\in C_s$ or $t_2\in C_s$. The same statement holds replacing $C_s$ with $K_s$.
\end{lemma}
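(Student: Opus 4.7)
The plan is to unpack the definition of $C_s$ and reduce to a short case analysis at the vertex where the decomposition takes place. Write $s=(v_0,\ldots,v_l)$, oriented from $v_0$ to $v_l$, and $t=(v_i,\ldots,v_j)$ for some $0\le i<j\le l$. By definition, membership $t\in C_s$ imposes turning conditions on $s$ only at the interior endpoints of $t$: right at $v_i$ (if $i>0$) and left at $v_j$ (if $j<l$). The composition $t=t_1\circ t_2$ occurs at some intermediate vertex $v_k$ with $i<k<j$, so automatically $0<k<l$; in particular $v_k$ is an interior vertex of $s$, and $t_1=(v_i,\ldots,v_k)$, $t_2=(v_k,\ldots,v_j)$ are both subsegments of $s$.

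The key observation is that, since $v_k$ is an interior vertex of $s$, the segment $s$ passes through a single corner at $v_k$, and therefore makes a well-defined turn at $v_k$ that is either a left turn or a right turn (and not both). I would then split into two cases. If $s$ turns left at $v_k$, then $t_1\in C_s$: the condition at $v_i$ (vacuous when $i=0$) is inherited from $t\in C_s$, and the left-turn condition at $v_k$, which is required since $k<l$, is the case hypothesis. If instead $s$ turns right at $v_k$, then symmetrically $t_2\in C_s$: the right-turn condition at $v_k$, required since $k>0$, is the case hypothesis, and the condition at $v_j$ (vacuous when $j=l$) is inherited from $t\in C_s$.

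The statement for $K_s$ follows by the same argument with the words \emph{left} and \emph{right} interchanged throughout, since the definitions of $C_s$ and $K_s$ differ only by this swap.

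I do not foresee any serious obstacle. The only point demanding care is the verification that ``turns left'' and ``turns right'' really do give an exclusive and exhaustive dichotomy at the interior vertex $v_k$; this is where the planar embedding of $T$ and the requirement that consecutive edges of a segment share a common face get used, and the rest of the argument is a direct bookkeeping check against the two clauses in the definition of $C_s$.
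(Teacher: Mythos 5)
Your proposal is correct and hinges on the same key dichotomy the paper uses: at the composition vertex the segment turns either left or right, forcing one of $t_1,t_2$ into $C_s$. The only cosmetic difference is that the paper routes the argument through the intermediate observation ``$t_1\in C_t$ or $t_2\in C_t$'' and the inclusion $C_t\subseteq C_s$ noted just before the lemma, while you unpack the case analysis directly in terms of the turn $s$ makes at $v_k$.
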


\begin{proof}
If $t=t_1\circ t_2$, then either $t_1\in C_t$ or $t_2\in C_t$. Since $C_t\subseteq C_s$, either $t_1\in C_s$ or $t_2\in C_s$. The dual statement about $K_s$ follows from the same reasoning.
\end{proof}

Given a tree $T$ embedded in a disk, we let $T^{\vee}$ be a \textbf{reflection} of $T$ (i.e. $T^{\vee}$ is the image of $T$ under a Euclidean reflection performed on $D^2$). The choice of reflection is immaterial since the noncrossing complex and oriented flip graph are invariant under rotations of $T$. The tree $T^{\vee}$ has the same set of segments and defines the same noncrossing complex as $T$. Since reflection switches left and right, $\ora{FG}(T^{\vee})$ has the opposite orientation of $\ora{FG}(T)$, and for any segment $s$, $C_{s^\vee}=K_s^\vee$. Let $\pi_{\downarrow},\pi^{\uparrow}$ be functions on $\Bic(T)$ such that for $X\in\Bic(T)$,
$$\pi_{\downarrow}(X)=\{s\in X:\ C_s\subseteq X\}$$
$$\pi^{\uparrow}(X)=\{s\in S:\ K_s\cap X\neq\emptyset\}$$

These maps are closely related to the maps labeled $\pi_{\downarrow}$ and $\pi^{\uparrow}$ in \cite{garver2015lattice}. For completeness, we prove their main properties here.

\begin{lemma}\label{lem_pi_defined}
For $X\in\Bic(T)$, both $\pi_{\downarrow}(X)$ and $\pi^{\uparrow}(X)$ are biclosed.
\end{lemma}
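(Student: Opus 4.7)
The plan is to verify that $\pi_{\downarrow}(X)$ and $\pi^{\uparrow}(X)$ are each closed and co-closed. Fix $s,t\in\Seg(T)$ with $s\circ t\in\Seg(T)$, and write $s\circ t=(v_0,\ldots,v_l)$ so that $s$ ends and $t$ begins at a common interior vertex $v_k$. The whole argument rests on a single observation: a subsegment $r=(v_i,\ldots,v_j)$ of $s\circ t$ satisfies the $C$- (respectively $K$-) conditions for $s\circ t$ exactly when, after splitting at $v_k$ if necessary, each piece satisfies the corresponding conditions for $s$ and $t$, with the only subtlety arising at endpoints that meet $v_k$.

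To show $\pi_{\downarrow}(X)$ is closed, I would take $s,t\in\pi_{\downarrow}(X)$; closedness of $X$ gives $s\circ t\in X$, and for $r\in C_{s\circ t}$ I would split on whether $j\leq k$, $i\geq k$, or $i<k<j$. In the first two cases, checking the turning conditions shows directly that $r\in C_s$ or $r\in C_t$, respectively, so $r\in X$. In the third case I factor $r=r_1\circ r_2$ at $v_k$; the turning conditions inherited from $r\in C_{s\circ t}$ place $r_1\in C_s$ and $r_2\in C_t$, both of which lie in $X$, so $r\in X$ by closedness of $X$. For co-closedness, if $s\circ t\in\pi_{\downarrow}(X)$, the turn direction of $s\circ t$ at $v_k$ determines whether $s$ or $t$ lies in $C_{s\circ t}$. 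WLOG $s\in C_{s\circ t}\subseteq X$; then I would verify $C_s\subseteq C_{s\circ t}$, the only nontrivial case being subsegments of $s$ ending at $v_k$, which is handled by the left-turn assumption at $v_k$ that placed $s$ in $C_{s\circ t}$. This yields $s\in\pi_{\downarrow}(X)$.

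The arguments for $\pi^{\uparrow}(X)$ are dual in spirit. For closedness, given $s,t\in\pi^{\uparrow}(X)$, pick witnesses $r_s\in K_s\cap X$ and $r_t\in K_t\cap X$; depending on the turn of $s\circ t$ at $v_k$, one of the two witnesses automatically lies in $K_{s\circ t}$, so $s\circ t\in\pi^{\uparrow}(X)$. For co-closedness, an element $r\in K_{s\circ t}\cap X$ either lies entirely in $s$ or $t$ (and then already lies in $K_s$ or $K_t$) or factors as $r=r_1\circ r_2$ across $v_k$ with $r_1\in K_s$ and $r_2\in K_t$; co-closedness of $X$ then places $r_1$ or $r_2$ in $X$, providing the desired witness for $s\in\pi^{\uparrow}(X)$ or $t\in\pi^{\uparrow}(X)$. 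Alternatively, one could sidestep the second half entirely via the reflection $T\mapsto T^{\vee}$, which swaps $C_s$ and $K_s$: the identity $\pi^{\uparrow}(X)^c=\pi_{\downarrow}^{T^{\vee}}(X^c)$ together with invariance of biclosedness under complementation and reflection would reduce the $\pi^{\uparrow}$ statement to the $\pi_{\downarrow}$ one.

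The main obstacle is just bookkeeping the turning conditions at the junction vertex $v_k$: the condition for membership in $C_{s\circ t}$ (or $K_{s\circ t}$) at an internal endpoint is stronger than the corresponding condition for $C_s$, $C_t$, $K_s$, or $K_t$ at an outer endpoint, so each boundary case must be carefully matched with the turn direction of $s\circ t$ at $v_k$. Once one has set up this dictionary, closedness and co-closedness each follow from a short case split, and Lemma~\ref{lem_subsegment} ensures the turning conditions behave well under restriction to subsegments whenever it is needed.
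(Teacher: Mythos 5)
Your proposal is correct and takes essentially the same approach as the paper: both arguments verify closedness and co-closedness of $\pi_{\downarrow}(X)$ directly by decomposing a subsegment of $s\circ t$ at the junction vertex and tracking how membership in $C_{s\circ t}$ (or $K_{s\circ t}$) relates to membership in $C_s$ and $C_t$, using Lemma~\ref{lem_subsegment} and the fact that $t\in C_s$ implies $C_t\subseteq C_s$; and both handle $\pi^{\uparrow}(X)$ either by the dual argument or by invoking the reflection $T^{\vee}$ and Lemma~\ref{lem_map_properties}(\ref{lem_map_properties_0}).
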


\begin{proof}
Let $s\in\pi_{\downarrow}(X)$. Then $C_s\subseteq X$. Since $C_t\subseteq C_s$ for $t\in C_s$, it follows that $C_s\subseteq\pi_{\downarrow}(X)$. If $s=t\circ u$, then either $t\in C_s$ or $u\in C_s$, so either $t\in\pi_{\downarrow}(X)$ or $u\in\pi_{\downarrow}(X)$. Hence, $\pi_{\downarrow}(X)$ is co-closed.

Let $s,t\in\pi_{\downarrow}(X)$ such that $s\circ t$ is a segment. For $u\in C_{s\circ t}$ if $u$ is a subsegment of $s$ or $t$, then $u\in C_s$ or $u\in C_t$, respectively. Otherwise, $u=u^{\pr}\circ u^{\pr\pr}$ where $u^{\pr}$ is a subsegment of $s$ and $u^{\pr\pr}$ is a subsegment of $t$. In this case $u^{\pr}\in C_s$ and $u^{\pr\pr}\in C_t$. In either case, $u\in X$ holds. Consequently $s\circ t\in\pi_{\downarrow}(X)$. Therefore, $\pi_{\downarrow}(X)$ is biclosed.

The fact that $\pi^{\uparrow}(X)$ is biclosed may be proved by a similar argument. Alternatively, it follows from the fact that $\pi_{\downarrow}(X)$ is biclosed and Lemma \ref{lem_map_properties}(\ref{lem_map_properties_0}).
\end{proof}

\begin{lemma}\label{lem_map_properties}
For $X,Y\in\Bic(T)$:
\begin{enumerate}
\item\label{lem_map_properties_0} $\pi_{\downarrow}(\Seg(T^\vee)\setm X^\vee)=\Seg(T^\vee)\setm \pi^{\uparrow}(X)^\vee$,
\item\label{lem_map_properties_1} $\pi_{\downarrow}(\pi^{\uparrow}(X))=\pi_{\downarrow}(X)$,
\item\label{lem_map_properties_2} $\pi^{\uparrow}(\pi_{\downarrow}(X))=\pi^{\uparrow}(X)$,
\item\label{lem_map_properties_3} $\pi_{\downarrow}(X)\subseteq X\subseteq\pi^{\uparrow}(X)$,
\item\label{lem_map_properties_4} $\pi_{\downarrow}(\pi_{\downarrow}(X))=\pi_{\downarrow}(X)$,
\item\label{lem_map_properties_5} $\pi^{\uparrow}(\pi^{\uparrow}(X))=\pi^{\uparrow}(X)$,
\item\label{lem_map_properties_6} if $X\subseteq Y$, then $\pi_{\downarrow}(X)\subseteq\pi_{\downarrow}(Y)$ and $\pi^{\uparrow}(X)\subseteq\pi^{\uparrow}(Y)$.
\end{enumerate}
\end{lemma}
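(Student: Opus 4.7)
The plan is to dispatch the routine items first, then prove the key composition identity (2) by an induction on segment length, and finally derive the remaining statements by duality.

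Statements (4) and (7) are immediate: $s \in C_s$ and $s \in K_s$ trivially, so $\pi_\downarrow(X) \subseteq X \subseteq \pi^\uparrow(X)$, and both $\pi_\downarrow(X)$ and $\pi^\uparrow(X)$ are defined by conditions monotone in $X$. For (1), I would just unfold both sides using the identity $C_{s^\vee} = K_s^\vee$ recorded just above the lemma: the chain
\[
s^\vee \in \pi_\downarrow\bigl(\Seg(T^\vee) \setm X^\vee\bigr) \iff K_s \cap X = \emptyset \iff s \notin \pi^\uparrow(X)
\]
gives (1) at once. For (5), I would invoke the inclusion $C_t \subseteq C_s$ for $t \in C_s$ which is stated in the excerpt: if $C_s \subseteq X$, then $C_t \subseteq C_s \subseteq X$ for every $t \in C_s$, so $C_s \subseteq \pi_\downarrow(X)$, hence $s \in \pi_\downarrow(\pi_\downarrow(X))$.

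The heart of the lemma is (2). The inclusion $\pi_\downarrow(X) \subseteq \pi_\downarrow(\pi^\uparrow(X))$ follows from (4) and (7). For the reverse, suppose $s \in \pi_\downarrow(\pi^\uparrow(X))$, so $C_s \subseteq \pi^\uparrow(X)$, and suppose for contradiction some element of $C_s$ lies outside $X$. Pick $t = (v_0,\ldots,v_l) \in C_s \setm X$ of minimal length, and pick $u = (v_i,\ldots,v_j) \in K_t \cap X$. Since $u \in X$ but $t \notin X$ we have $u \subsetneq t$, so at least one of the endpoint pieces $w_1 = (v_0,\ldots,v_i)$ and $w_2 = (v_j,\ldots,v_l)$ is a nontrivial segment. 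The decisive combinatorial observation is that the turning conditions placing $u$ in $K_t$ (namely $t$ turns left at $v_i$ when $i > 0$ and right at $v_j$ when $j < l$) are exactly what is needed to place $w_1$ and $w_2$ in $C_t \subseteq C_s$. Now, since $X$ is closed, if all of $w_1, u, w_2$ belonged to $X$ then so would $t$; hence some $w_k \notin X$, giving a strictly shorter element of $C_s \setm X$ and contradicting minimality.

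Finally, (6) and (3) follow from (5) and (2) by duality. Viewing (1) as the intertwining relation $\pi_\downarrow^{T^\vee} = \Phi \circ \pi^\uparrow_T \circ \Phi^{-1}$, where $\Phi \colon \Bic(T) \to \Bic(T^\vee)$ is the order-reversing bijection $X \mapsto \Seg(T^\vee) \setm X^\vee$, one also gets $\pi^\uparrow_{T^\vee} = \Phi \circ \pi_\downarrow^T \circ \Phi^{-1}$. Substituting into (5) applied to $T^\vee$ yields (6) on $T$, and substituting into (2) applied to $T^\vee$ yields (3) on $T$. The main obstacle is the induction in (2): one must carefully verify that the decomposition $t = w_1 \circ u \circ w_2$ actually places $w_1$ and $w_2$ in $C_s$, which hinges on the matching between the ``left-turn'' conditions defining $K_t$ and the ``right-turn'' conditions defining $C_t$ at the internal endpoints of $u$. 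Once this pairing is clean, the biclosedness of $X$ and minimality of $|t|$ close the argument, and everything else follows either directly from the definitions or via the duality (1).
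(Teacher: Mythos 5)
Your proposal is correct and takes essentially the same approach as the paper: the easy verifications for items 4, 5, and 7, the reduction of items 3 and 6 to items 2 and 5 via the reflection duality of item 1, and for item 2 a minimality-of-length argument whose engine is the observation that removing a $K_t$-piece from a segment leaves $C_t$-pieces, combined with the closedness of $X$. The only cosmetic difference is that you extremize over $C_s\setminus X$ for a fixed $s$ rather than over $\pi_{\downarrow}(\pi^{\uparrow}(X))\setminus\pi_{\downarrow}(X)$ as the paper does; both reductions succeed for the same reason.
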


\begin{proof}
Both (\ref{lem_map_properties_3}) and (\ref{lem_map_properties_6}) are clear from the definitions. (\ref{lem_map_properties_2}) and (\ref{lem_map_properties_5}) follow from (\ref{lem_map_properties_1}) and (\ref{lem_map_properties_4}) by taking the complement of the reflection of $X$ and applying (\ref{lem_map_properties_0}). It remains to prove (\ref{lem_map_properties_0}), (\ref{lem_map_properties_1}), and (\ref{lem_map_properties_4}).

For (\ref{lem_map_properties_0}), we have the following set of equalities:
\begin{align*}
\pi_{\downarrow}(\Seg(T^\vee)\setm X^\vee) &= \{s^\vee\in\Seg(T^\vee)|\ C_{s^\vee}\subseteq\Seg(T^\vee)\setm X^\vee\}\\
&=\{s^\vee\in\Seg(T^\vee)|\ K_s^\vee\subseteq\Seg(T^\vee)\setm X^\vee\}\\
&=\{s^\vee\in\Seg(T^\vee)|\ K_s\subseteq\Seg(T)\setm X\}\\
&=\Seg(T^\vee)\setm \{s^\vee\in\Seg(T^\vee)|\ K_s\cap X\neq\emptyset\}\\
&=\Seg(T^\vee)\setm \pi^{\uparrow}(X)^\vee.
\end{align*}

For (\ref{lem_map_properties_1}), the reverse inclusion is clear. Suppose $\pi_{\downarrow}(\pi^{\uparrow}(X))\neq\pi_{\downarrow}(X)$ and let $s\in\pi_{\downarrow}(\pi^{\uparrow}(X))\setm \pi_{\downarrow}(X)$ be of minimum length. Since $C_t\subseteq C_s$ for $t\in C_s$, this implies $s\in\pi^{\uparrow}(X)\setm X$. Let $u\in K_s\cap X$. Then either $s=t\circ u, s=u\circ t^{\pr}$, or $s=t\circ u\circ t^{\pr}$ holds for some segments $t,t^{\pr}\in C_s$. But this implies $s\in X$, a contradiction.

For (\ref{lem_map_properties_4}), the inclusion $\pi_{\downarrow}(\pi_{\downarrow}(X))\subseteq\pi_{\downarrow}(X)$ is clear. Let $s\in\pi_{\downarrow}(X)$. Then $C_s\subseteq X$ holds. If $t\in C_s$, then $C_t\subseteq C_s$ and $t\in\pi_{\downarrow}(X)$. Consequently, $C_s\subseteq\pi_{\downarrow}(X)$, so $s\in\pi_{\downarrow}(\pi_{\downarrow}(X))$.
\end{proof}

For the remainder of the paper, we let $\Theta$ be the equivalence relation on $\Bic(T)$ such that $X\equiv Y\mod{\Theta}$ if $\pi_{\downarrow}(X)=\pi_{\downarrow}(Y)$. Using Lemmas \ref{lem_lattice_congruence} and \ref{lem_map_properties}, we deduce the following proposition.

\begin{proposition}\label{prop_Theta}
The equivalence relation $\Theta$ is a lattice congruence on $\Bic(T)$.
\end{proposition}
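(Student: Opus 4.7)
The plan is to verify that the hypotheses of Lemma~\ref{lem_lattice_congruence} hold for the pair of maps $\pi_{\downarrow}$ and $\pi^{\uparrow}$, from which the proposition follows immediately. Essentially all of the work has already been packaged into Lemma~\ref{lem_pi_defined} and Lemma~\ref{lem_map_properties}, so the proof amounts to a bookkeeping step.

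First, I would note that by Lemma~\ref{lem_pi_defined}, both $\pi_{\downarrow}$ and $\pi^{\uparrow}$ send $\Bic(T)$ into itself, so they are genuine self-maps of the lattice. Their order-preservation is Lemma~\ref{lem_map_properties}(\ref{lem_map_properties_6}), and their idempotence is Lemma~\ref{lem_map_properties}(\ref{lem_map_properties_4}) and (\ref{lem_map_properties_5}).

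Next I would check the three numbered conditions of Lemma~\ref{lem_lattice_congruence} for an arbitrary $X \in \Bic(T)$. Condition (1), $\pi_{\downarrow}(X) \subseteq X \subseteq \pi^{\uparrow}(X)$, is exactly Lemma~\ref{lem_map_properties}(\ref{lem_map_properties_3}). Condition (2), $\pi_{\downarrow}(\pi^{\uparrow}(X)) = \pi_{\downarrow}(X)$, is Lemma~\ref{lem_map_properties}(\ref{lem_map_properties_1}). Condition (3), $\pi^{\uparrow}(\pi_{\downarrow}(X)) = \pi^{\uparrow}(X)$, is Lemma~\ref{lem_map_properties}(\ref{lem_map_properties_2}). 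Applying Lemma~\ref{lem_lattice_congruence}, the relation $X \equiv Y \mod \Theta$ defined by $\pi_{\downarrow}(X) = \pi_{\downarrow}(Y)$ is a lattice congruence on $\Bic(T)$.

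Since every ingredient is a direct citation of a previously established lemma, I do not anticipate any genuine obstacle; the only subtlety to point out is that the lemma being cited in \cite{garver2015lattice} (or the statement reproduced here as Lemma~\ref{lem_lattice_congruence}) is phrased in the order-theoretic language of interior and closure operators rather than via the algebraic definition of a lattice congruence, which is why the proof reduces to verifying (1)--(3) rather than the naive check that $\wedge$ and $\vee$ respect $\Theta$.
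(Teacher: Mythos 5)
Your proposal is correct and follows exactly the route the paper itself takes: the paper's proof of Proposition~\ref{prop_Theta} is the one-line observation that Lemmas~\ref{lem_lattice_congruence} and~\ref{lem_map_properties} combine to give the result, and you have simply spelled out which parts of Lemma~\ref{lem_map_properties} (together with Lemma~\ref{lem_pi_defined} for well-definedness) furnish each hypothesis of Lemma~\ref{lem_lattice_congruence}. No gap and no divergence from the paper's argument.
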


\subsection{Map from biclosed sets to the oriented flip graph}\label{subsec_map}

In this section, we define a surjective map $\eta:\Bic(T)\ra\ora{FG}(T)$ and prove that it is a lattice quotient map.

Let $X\in\Bic(T)$. Given a corner $(v,F)$, let $p_{(v,F)}$ be the (unique) arc supported by $T$ such that for any interior vertex $u$ of $p_{(v,F)}$ distinct from $v$, the following condition holds:
\begin{itemize}
\item Orienting $p_{(v,F)}$ from $v$ to $u$, the arc $p_{(v,F)}$ turns left at $u$ if and only if $[v,u]$ is in $X$.
\end{itemize}

In Lemmas~\ref{lem_eta_CK} and \ref{lem_eta_facet}, we prove that the this collection of arcs is a facet of the noncrossing complex. Before proving this, we set up some notation.

\begin{figure}
\centering
\includegraphics{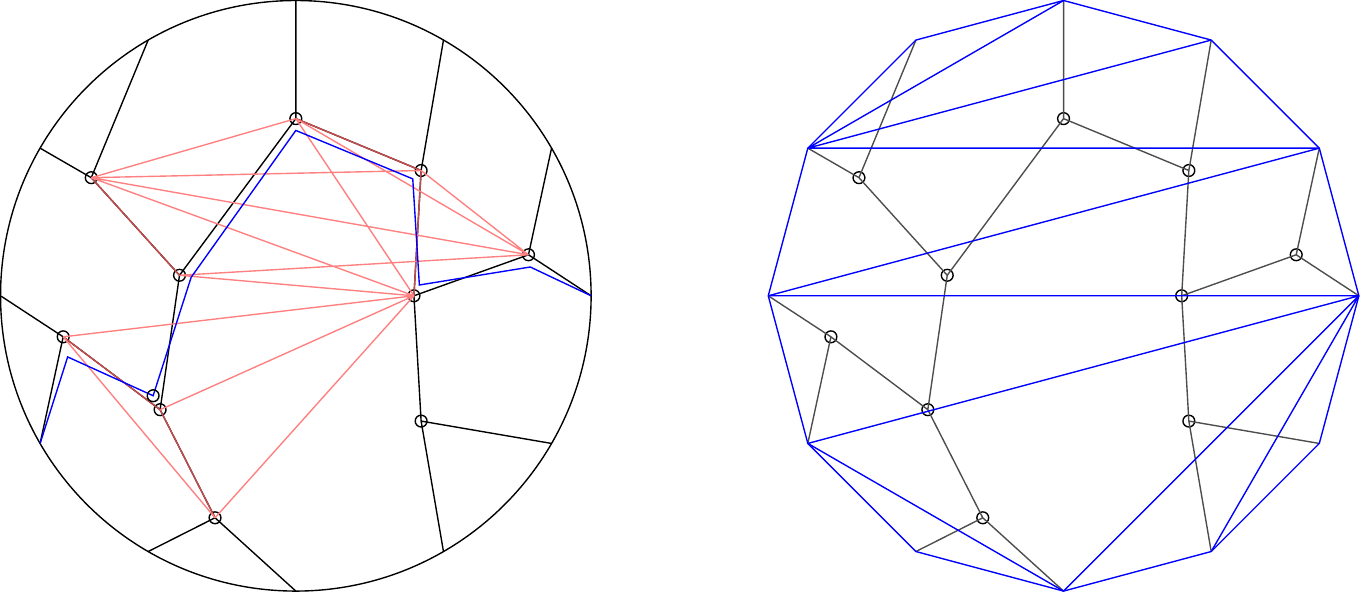}
\caption{\label{fig_eta}(left) A blue arc defined by $\eta$ at the circled corner with respect to the red biclosed set of segments; (right) The triangulation defined by $\eta$}
\end{figure}

For an arc $p=(v_0,\ldots,v_l)$ oriented from $v_0$ to $v_l$, let $C_p$ be the set of segments $(v_i,\ldots,v_j),\ 0<i<j<l$ such that
\begin{itemize}
\item $p$ turns right at $v_i$, and
\item $p$ turns left at $v_j$.
\end{itemize}

Define $K_p$ in the same way, switching the roles of left and right.

%\textcolor{red}{It is a bit overwhelming to see the next four lemmas before the theorem. Can you add some exposition about why we have the next four lemmas before the theorem?}

\begin{lemma}\label{lem_eta_CK}
Let $X$ and $\{p_{(v,F)}\}_{(v,F)}$ be defined as above. For $p\in\{p_{(v,F)}\}_{(v,F)}$, $C_p\subseteq X$ and $K_p\cap X=\emptyset$.
\end{lemma}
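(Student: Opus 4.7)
The plan is to fix notation first, then carry out a short case analysis based on the position of $v$ along $p$. Write $p = p_{(v,F)} = (v_0, \ldots, v_l)$ oriented from $v_0$ to $v_l$ (the orientation used in the definitions of $C_p$ and $K_p$), and let $v = v_k$ with $0 < k < l$. For any interior $u = v_m$ with $m \neq k$, the defining condition of $p_{(v,F)}$ reads: $[v_k, v_m] \in X$ iff $p$ turns left at $v_m$ when oriented from $v_k$ toward $v_m$. The bookkeeping observation is the comparison with the fixed orientation: the two agree when $k < m$, so ``left at $v_m$ from $v_k$'' coincides with ``left at $v_m$ from $v_0$'', while they are opposite when $k > m$, so the two notions of ``left at $v_m$'' are swapped.

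To prove $C_p \subseteq X$, I would fix $s = [v_i, v_j] \in C_p$ with $0 < i < j < l$ (so $p$ turns right at $v_i$ and left at $v_j$ in the fixed orientation) and split on where $k$ lies. If $k \le i$, the translation above gives $[v_k, v_i] \notin X$ and $[v_k, v_j] \in X$; combined with the decomposition $[v_k, v_j] = [v_k, v_i] \circ s$ (the first factor being empty when $k = i$) and the fact that $\Seg(T) \setm X$ is closed, this forces $s \in X$. Symmetrically, if $k \ge j$, the roles reverse, and co-closedness applied to $[v_k, v_i] = [v_k, v_j] \circ [v_j, v_i]$ yields $s \in X$. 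In the middle case $i < k < j$, the direction from $v_k$ to $v_i$ reverses the fixed orientation while $v_k \to v_j$ preserves it, so both sign flips land on ``left from $v_k$'', giving $[v_k, v_i], [v_k, v_j] \in X$; closedness of $X$ applied to $s = [v_i, v_k] \circ [v_k, v_j]$ now gives $s \in X$.

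For the second claim $K_p \cap X = \emptyset$, I would run the dual case analysis, with the roles of left and right (and correspondingly of closed and co-closed) swapped throughout; equivalently, one can reflect and use $T^{\vee}$: the arc $p$ realizes $p_{(v,F^{\vee})}$ in $T^{\vee}$ with respect to the biclosed set $\Seg(T^{\vee}) \setm X^{\vee}$ (cf.\ Lemma~\ref{lem_map_properties}(\ref{lem_map_properties_0})), and the segments in $K_p$ correspond to the segments in $C_{p^{\vee}}$, so the first half of the lemma applied to $T^{\vee}$ gives $K_p \cap X = \emptyset$.

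The main obstacle is purely bookkeeping: one must remember that ``left at $v_m$ from $v_k$'' flips meaning when $k > m$. Without that, the middle case $i < k < j$ appears to demand the opposite side of the closed/co-closed dichotomy from what the hypothesis provides, and similarly the decomposition choice in each of the outer cases depends on which direction the relevant segment is being read. Once the translation rule is recorded once and for all, each case reduces to a single application of biclosedness.
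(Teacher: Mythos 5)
Your proof is correct and follows essentially the same route as the paper: fix a segment $s\in C_p$, case on the position of the distinguished vertex $v$ relative to $s$, and apply closedness of $X$ when $v$ lies strictly between the endpoints of $s$ and co-closedness when it lies outside, with the endpoint case falling out for free. The paper organizes the cases by ``$v$ is an endpoint of $s$ / in the interior of $s$ / not in $s$'' and leaves the orientation bookkeeping implicit in the phrase ``$p$ extends left through both endpoints of $s$'', whereas you make the left/right flip explicit by parametrizing everything along the fixed orientation $v_0\to v_l$; this is more verbose but is the same argument. For $K_p\cap X=\emptyset$ the paper simply invokes a dual argument, while your reflection trick via $T^{\vee}$ and Lemma~\ref{lem_map_properties}(\ref{lem_map_properties_0}) is a clean equivalent shortcut.
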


\begin{proof}
Let $p=p_{(v,F)}$ for some corner $(v,F)$ of $T$. Let $s\in C_p$, and set $s=[u,w]$. We show that $s\in X$ by considering several cases on the location of $v$ relative to $s$.

If $v$ is an endpoint of $s$, then $s\in X$ by the defining rule of $p_{(v,F)}$.

If $v$ is in the interior of $s$, then $s=[u,v]\circ[v,w]$. Since $p$ extends left through both endpoints of $s$, both $[u,v]$ and $[v,w]$ are in $X$. Since $X$ is closed, this implies $s\in X$.

If $v$ is not in $s$, then there exists a segment $[v,u]$ such that $[v,u]\circ[u,w]$ is a segment of $p$. Since $p$ extends left through both endpoints of $s$, $[v,w]\in X$ but $[v,u]\notin X$. Since $X$ is co-closed, this implies $s\in X$.

The fact that $K_p\cap X=\emptyset$ follows from a dual argument.
\end{proof}

\begin{lemma}\label{lem_eta_facet}
The set $\{p_{(v,F)}\}_{(v,F)}$ is a facet of $\Delta^{NC}(T)$. Moreover, $p_{(v,F)}$ is the arc marked at the corner $(v,F)$.
\end{lemma}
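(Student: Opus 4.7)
The plan is to prove three things in sequence: (i) the collection $\{p_{(v, F)}\}$ is pairwise noncrossing; (ii) each $p_{(v, F)}$ is the marked arc at $(v, F)$ in the resulting face; and (iii) the face is a facet.

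For (i), I would argue by contradiction: suppose $p := p_{(v_1, F_1)}$ and $q := p_{(v_2, F_2)}$ cross along a segment $s = (u_0, \ldots, u_r)$, and fix orientations on $p$ and $q$ so that both traverse $s$ in the direction $u_0 \to u_r$. By the region-containment formulation of crossing, the two arcs must leave $s$ on opposite sides at both $u_0$ and $u_r$, and moreover must do so in opposite ``zigzag'' patterns; if both arcs instead stayed on fixed opposite sides of $s$ throughout, one of the regions of $q$ would be contained in one of the regions of $p$, contradicting the crossing hypothesis. Translating the side-of-$s$ data into turn directions at the endpoints of $s$, one finds $s \in C_p$ and $s \in K_q$ (after appropriate labeling). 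Lemma~\ref{lem_eta_CK} then yields $s \in X$ (from $C_p \subseteq X$) and $s \notin X$ (from $K_q \cap X = \emptyset$), the desired contradiction.

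For (ii), the arc $p_{(v, F)}$ contains the corner $(v, F)$ directly from the construction, since its two edges incident to $v$ border the face $F$. If another arc $q := p_{(w, G)}$ in the collection also contains $(v, F)$, then $q$ and $p_{(v, F)}$ share the two edges at $v$ bordering $F$ and are noncrossing by~(i). Comparing their defining turning rules at the first vertex of divergence beyond $v$ and using biclosedness of $X$, one shows that $q$ diverges into the region $\text{Reg}(p_{(v, F)}, F)$, giving $\text{Reg}(q, F) \subseteq \text{Reg}(p_{(v, F)}, F)$ and thus $q \leq_{(v, F)} p_{(v, F)}$.

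For (iii), steps (i) and (ii) together say the collection is a face of $\Delta^{NC}(T)$ in which every corner has a marked arc. Proposition~\ref{nccomplexprop}(1)--(2) then forces each boundary arc to be marked at exactly one corner and each non-boundary arc at exactly two, so $|\Cor(T)| = |\{\text{boundary arcs}\}| + 2\,|\{\text{non-boundary arcs}\}|$. This matches the facet cardinality $\tfrac{1}{2}(|\Cor(T)| + |\text{faces}(T)|)$ computed in the proof of Corollary~\ref{Cor_pure_thin}, so the collection is a facet. The most delicate step is the conversion in (i) of the topological crossing condition into the turning data at $u_0$ and $u_r$: one must rule out the parallel configuration (both arcs remaining on a fixed side of $s$) as not being a crossing under the region-containment definition, so that only the zigzag pattern qualifies as crossing and yields the $s \in C_p \cap K_q$ (or $K_p \cap C_q$) conclusion; the direction-of-divergence argument in (ii) is a secondary subtlety requiring careful tracking of the orientation-dependence of the turning rule.
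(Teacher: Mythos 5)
Your parts (i) and (ii) track the paper's proof closely: (i) is exactly the paper's argument (derive $s\in K_{p_1}\cap C_{p_2}$ after choosing sides and contradict Lemma~\ref{lem_eta_CK}), and (ii) is the same idea, though the paper phrases it more cleanly by taking $q$ to be the arc marked at $(v,F)$ in the collection and comparing turning directions at the divergence vertex $w$ via the defining rule and the containments $C_q\subseteq X$, $K_q\cap X=\emptyset$. Your version, which takes $q=p_{(w,G)}$ and tries to compare two turning rules anchored at different base vertices $v$ and $w$, needs a segment-translation step (via biclosedness) that you gesture at but do not carry out; this is workable but is the harder route.

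Part (iii) is where the argument genuinely breaks down. You assert that Proposition~\ref{nccomplexprop}(1)--(2) ``forces each boundary arc to be marked at exactly one corner and each non-boundary arc at exactly two,'' but those items give only \emph{lower} bounds (``at least one'' and ``at least two''). The matching upper bound in the proof of Corollary~\ref{Cor_pure_thin} comes from the standing hypothesis there that $\Fcal$ is already a facet (combined, implicitly, with Proposition~\ref{nccomplexprop}(3)): if an arc were marked at two corners in one region, the face could be enlarged. Invoking this ``exactly'' to \emph{prove} $\Fcal$ is a facet is therefore circular. Even with tight counts, your computation gives $|\Fcal|=\tfrac12\bigl(|\Cor(T)|+|\{\text{boundary arcs in }\Fcal\}|\bigr)$, which equals the facet cardinality $\tfrac12\bigl(|\Cor(T)|+|\text{faces}(T)|\bigr)$ only if \emph{every} boundary arc lies in $\Fcal$ --- a fact you never establish, and which is not automatic from the construction of $p_{(v,F)}$. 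The paper avoids both issues by arguing maximality directly: if $\Fcal$ were not maximal, some arc would be marked at two corners $(v,F),(v',F')$ lying in the same region; then orienting from $v$ to $v'$ and applying the turning rule of $p_{(v,F)}$ at $v'$ and of $p_{(v',F')}$ at $v$ gives $[v,v']\in X$ and $[v,v']\notin X$ simultaneously. You would need an argument of this flavor --- ruling out double-marking in a single region from the construction of the $p_{(v,F)}$, not from the facet hypothesis --- and a separate check that $\Fcal$ contains every boundary arc, to make the cardinality count go through.
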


\begin{proof}
Let $(v,F),\ (v^{\pr},F^{\pr})$ be two corners of $T$ and let $p_1=p_{(v,F)}$ and $p_2=p_{(v^{\pr},F^{\pr})}$. Suppose $p_1$ and $p_2$ cross along a segment $s$. We may assume that $p_1$ leaves each of the endpoints of $s$ to the right while $p_2$ leaves $s$ to the left. Then $s\in K_{p_1}$ and $s\in C_{p_2}$. By Lemma \ref{lem_eta_CK}, $K_{p_1}\cap X=\emptyset$ and $C_{p_2}\subseteq X$, a contradiction.

Let $\Fcal=\{p_{(v,F)}\}_{(v,F)}$, and let $(v,F)$ be a corner of $T$. Let $q\in\Fcal$ be the arc marked at $(v,F)$. If $q\neq p_{(v,F)}$, then they agree on some segment $[v,w]$ and diverge at $w$. Orient both arcs from $v$ to $w$. Then $p_{(v,F)}$ turns in the same direction at both $v$ and $w$ whereas $q$ turns in different directions.

If $q$ turns left at $v$ and right at $w$, then $[v,w]\notin X$ since $K_q\cap X=\emptyset$. As $p_{(v,F)}$ turns left at $w$, this contradicts the rule defining $p_{(v,F)}$.

If $q$ turns right at $v$ and left at $w$, then $[v,w]\in X$ since $C_q\subseteq X$. As $p_{(v,F)}$ turns right at $w$, this again contradicts the rule defining $p_{(v,F)}$.

In either case, we obtain a contradiction. Hence, $p_{(v,F)}$ is the arc marked at $(v,F)$.

It remains to show that $\Fcal$ is maximal. If not, then there exist two corners $(v,F),(v^{\pr},F^{\pr})$ such that $p_{(v,F)}=p_{(v^{\pr},F^{\pr})}$ and $R_{(v,F)}(p_{(v,F)})=R_{(v^{\pr},F^{\pr})}(p_{(v^{\pr},F^{\pr})})$. Let $p=p_{(v,F)}$. Orient $p$ from $v$ to $v^{\pr}$. Since $R_{(v,F)}(p)=R_{(v^{\pr},F^{\pr})}(p)$, $p$ turns in the same direction at $v$ and $v^{\pr}$.

If $p$ turns right at both $v$ and $v^{\pr}$, then $[v,v^{\pr}]\in X$ by the definition of $p_{(v^{\pr},F^{\pr})}$ but $[v,v^{\pr}]\notin X$ by definition of $p_{(v,F)}$. If $p$ turns left at both $v$ and $v^{\pr}$, then $[v,v^{\pr}]\in X$ by definition of $p_{(v,F)}$ but $[v,v^{\pr}]\notin X$ by definition of $p_{(v^{\pr},F^{\pr})}$. In either case, we obtain a contradiction. Hence, $\Fcal$ is a facet of $\Delta^{NC}(T)$.
\end{proof}

We let $\eta:\Bic(T)\ra\ora{FG}(T)$ be the map $\eta(X)=\{p_{(v,F)}\}_{(v,F)}$ where $X\in\Bic(T)$ and arcs $p_{(v,F)}$ are defined as above. An example of this map is given in Figure \ref{fig_eta}.

For $\Fcal\in\ora{FG}(T)$, let $\phi(\Fcal)=\ov{\bigcup_{p\in\Fcal}C_p}$.

\begin{lemma}\label{lem_eta_phi}
For $X\in\Bic(T)$ and $\Fcal\in\ora{FG}(T)$,
\begin{enumerate}
\item\label{lem_eta_phi_1} $\phi(\eta(X))=\pi_{\downarrow}(X)$, and
\item\label{lem_eta_phi_2} $\eta(\phi(\Fcal))=\Fcal$.
\end{enumerate}
\end{lemma}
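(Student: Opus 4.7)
I would prove (1) first, then use it along with direct geometric arguments to obtain (2). For the inclusion $\phi(\eta(X)) \subseteq \pi_{\downarrow}(X)$ in (1), Lemma~\ref{lem_eta_CK} gives $C_p \subseteq X$ for every $p \in \eta(X)$. Moreover, for $s \in C_p$ one checks directly from the turn conditions at the endpoints that $C_s \subseteq C_p$ (any sub-subsegment inherits the needed right/left turn data from $s \in C_p$), so $s \in \pi_{\downarrow}(X)$. Since $\pi_{\downarrow}(X)$ is biclosed by Lemma~\ref{lem_pi_defined}, in particular closed, the whole closure $\phi(\eta(X))$ lies inside $\pi_{\downarrow}(X)$.

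For the reverse inclusion $\pi_{\downarrow}(X) \subseteq \phi(\eta(X))$, given $s = (v_0, \ldots, v_l) \in \pi_{\downarrow}(X)$, the plan is to produce an arc $p \in \eta(X)$ with $s \in C_p$. I would select a base vertex $v_m \in \{v_0, \ldots, v_l\}$ and a corner $F_m$ at $v_m$ so that $p := p_{(v_m, F_m)}$ extends through $s$ and has $s$ as a right-left subsegment of $p$. The assumption $C_s \subseteq X$ supplies the needed segments for the ``if'' half of the turn rule at each intermediate vertex; the ``only if'' half is handled by choosing $v_m = v_0$ or $v_m = v_l$ according to the turn of $s$ next to that endpoint. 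When extra segments in $X$ beyond $C_s$ obstruct both endpoint choices, I would shift to an interior base vertex where the required inclusions and exclusions balance, or decompose $s = t_1 \circ t_2$ at a vertex where both pieces lie in $\pi_{\downarrow}(X)$ (identified via the turn of $s$ at the splitting vertex together with co-closedness of $X$) and apply induction on the length of $s$.

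For part (2), I would first verify that $X := \phi(\Fcal)$ is biclosed: if $t_1 \circ t_2 \in C_p$ for some $p \in \Fcal$, then by case analysis on the turn of $p$ at the shared vertex either $t_1 \in C_p$ or $t_2 \in C_p$, so $\bigcup_p C_p$ is co-closed, and an induction on closure depth propagates this to $\overline{\bigcup_p C_p}$. Then for each $p \in \Fcal$ marked at a corner $(v, F)$, I would show $p = p_{(v, F)}$ (as built from $X$) by matching turn directions vertex-by-vertex: when $p$'s turns at $v$ and $u$ are opposite (right-left or left-right), the match is immediate from $C_p \subseteq X$ or $K_p \cap X = \emptyset$ via Lemma~\ref{lem_eta_CK}; the same-turn cases require showing $[v, u] \in X$ if and only if $p$ turns left at $u$, using the marked-maximality of $p$ in $\Fcal$ together with the biclosedness of $X$. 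Combined with Lemma~\ref{lem_eta_facet}, this gives $\eta(\phi(\Fcal)) = \Fcal$.

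The main obstacle is the reverse inclusion in (1): the correct choice of base vertex $v_m$ depends on which segments beyond $C_s$ happen to lie in the particular $X$, and the naive endpoint choices often fail simultaneously. The inductive fallback requires careful case analysis on the turns of $s$ at the splitting vertex to ensure both $t_1, t_2$ remain in $\pi_{\downarrow}(X)$. The same-turn cases in part (2) are a secondary difficulty whose resolution hinges on the marked property of $p$ in $\Fcal$.
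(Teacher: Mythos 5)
Your forward inclusion in part (1) matches the paper exactly. But your self-diagnosed difficulty in the reverse inclusion is a genuine gap, and the repair you sketch is not the one that works. The paper never needs to search over interior base vertices. It fixes one endpoint $v$ of $s$, chooses $F$ to be the face \emph{on the right of $s$} at $v$, and traces $p:=p_{(v,F)}$ in the direction of $s$ until the last common vertex $v^{\prime}$. If $v^{\prime}$ is the far endpoint of $s$, then $p$ turns left there and $s\in C_p$, done. If not, one splits on the turn $s$ makes at $v^{\prime}$: if $s$ turns left at $v^{\prime}$, then $[v,v^{\prime}]\in C_s$ and the inductive hypothesis puts $[v,v^{\prime}]\in\phi(\eta(X))\subseteq X$, forcing $p$ to turn left at $v^{\prime}$ as well, contradicting that $p$ diverges there; if $s$ turns right at $v^{\prime}$, then $[v,v^{\prime}]\in C_p$ and the remaining piece $[v^{\prime},\mathrm{end}]\in C_s$ is in $\phi(\eta(X))$ by induction, so $s$ lies in the closure $\phi(\eta(X))$. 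So the ``decompose at a splitting vertex'' instinct is right, but the splitting vertex is \emph{determined} by $p_{(v,F)}$, not chosen by ad hoc balancing, and only one endpoint and one side ever need to be considered.

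Two further issues. First, your justification that $\phi(\Fcal)$ is biclosed — co-closedness of $\bigcup_p C_p$ by a subsegment case analysis plus ``induction on closure depth'' — does not go through as stated, since co-closedness of a set is not in general inherited by its closure. The correct route is Theorem~\ref{thm_biclosed_main}(\ref{thm_biclosed_main_join}): each $C_p$ is biclosed, and the closure of a union of biclosed sets is their join in $\Bic(T)$, hence biclosed. Second, in part (2) your ``vertex-by-vertex matching'' framing misidentifies the hard case. Comparing the arc $p$ built by $\eta(\phi(\Fcal))$ at $(v,F)$ with the arc $q$ of $\Fcal$ marked at $(v,F)$, the paper looks at the first divergence vertex $v^{\prime}$; at $v^{\prime}$ the turns of $p$ and $q$ are automatically opposite. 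The case ``$p$ left, $q$ right'' is handled by decomposing $[v,v^{\prime}]$ into pieces $s_i\in C_{q_i}$ and propagating a left turn along $q$ via noncrossing with the $q_i$ and markedness of $q$. The case ``$p$ right, $q$ left'' requires the less obvious move of switching to the adjacent corner $(v,F^{\prime})$, taking the arc $q^{\prime}$ marked there, and deriving $[v,v^{\prime}]\in C_{q^{\prime}}\subseteq\phi(\Fcal)$ — a contradiction. Neither case is a direct appeal to ``marked-maximality together with biclosedness of $X$''; the auxiliary arc $q^{\prime}$ at a different corner is essential and is missing from your sketch.
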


\begin{proof}
(\ref{lem_eta_phi_1}): Let $X\in\Bic(T)$ be given. Set $\Fcal=\eta(X)$. If $s\in\phi(\Fcal)$, then there exist $s_1,\ldots,s_l$ such that $s=s_1\circ\cdots\circ s_l$ and $s_i\in C_p$ for some $p\in\Fcal$. For each $i$, $C_{s_i}\subseteq C_p\subseteq X$, so $s_i\in\pi_{\downarrow}(X)$. As $\pi_{\downarrow}(X)$ is closed, this implies $s\in\pi_{\downarrow}(X)$. Hence $\phi(\eta(X))\subseteq\pi_{\downarrow}(X)$.

We prove the reverse inclusion $\pi_{\downarrow}(X)\subseteq\phi(\eta(X))$ by induction on the length. Let $s\in\pi_{\downarrow}(X)$ and assume that $t\in C_s,\ t\neq s$ implies $t\in\phi(\eta(X))$. Let $v$ be an endpoint of $s$. Orienting $s$ away from $v$, let $F$ be the face to the right of $s$. Let $p=p_{(v,F)}$ and orient $p$ in the same direction as $s$. Let $v^{\pr}$ be the last vertex along $s$ at which $s$ and $p$ meet. Let $t=[v,v^{\pr}]$. If $v^{\pr}$ is an endpoint of $s$, then $p$ must turn left at $v^{\pr}$ by definition, and $s\in C_p$. If $v^{\pr}$ is not an endpoint of $s$, we consider two cases:

(i) If $s$ turns left at $v^{\pr}$, then $t\in C_s$. By the inductive hypothesis, $t\in\phi(\eta(X))$ holds, which contradicts the definition of $p_{(v,F)}$.

(ii) If $s$ turns right at $v^{\pr}$, then $s=t\circ t^{\pr}$ and $t\in C_p$. Since $t^{\pr}\in C_s$, $t^{\pr}\in\phi(\eta(X))$. Hence $s\in\phi(\eta(X))$ holds.

(\ref{lem_eta_phi_2}): Let $\Fcal\in\ora{FG}(T)$ and set $X=\phi(\Fcal)$. Let $(v,F)$ be a corner of $T$. Let $p$ be the arc in $\eta(\phi(\Fcal))$ marked at $(v,F)$ and let $q$ be the arc in $\Fcal$ marked at $(v,F)$. We prove that $p=q$ and conclude that $\eta(\phi(\Fcal))=\Fcal$.

Suppose $p$ and $q$ diverge at some vertex $v^{\pr}$. Orient both paths from $v$ to $v^{\pr}$. Let $s=[v,v^{\pr}]$.

Assume $p$ turns left at $v^{\pr}$ and $q$ turns right at $v^{\pr}$. Then $s\in\phi(\Fcal)$, so there exist $s_1,\ldots,s_l$ such that $s=s_1\circ\cdots\circ s_l$ and $s_i\in C_{q_i}$ for some arcs $q_i\in\Fcal$. Orient each $q_i$ in the same direction as $q$. We may assume $v\in s_1$ and $v^{\pr}\in s_l$. Let $v_i$ be the first vertex of $s_i$ for each $i$. Since $q_1$ and $q$ do not cross and $q$ is marked at $(v,F)$, we conclude that both $q_1$ and $q$ turn left at $v_2$. By similar reasoning, $q_2$ and $q$ both turn left at $v_3$. By induction, $q$ turns left at $v^{\pr}$, a contradiction.

Now assume $p$ turns right at $v^{\pr}$ and $q$ turns left. Then $s\notin\phi(\Fcal)$. Since $s\notin C_q$, $q$ must turn left at $v$. Let $F^{\pr}$ be the face to the right of $q$ containing $v$ and the first edge of $s$. Let $q^{\pr}$ be the arc of $\Fcal$ marked at $(v,F^{\pr})$. Then $q^{\pr}$ and $q$ agree after $v$. Hence $s\in C_{q^{\pr}}$, a contradiction.
\end{proof}

By Lemma~\ref{lem_eta_phi}, the equivalence relation on $\Bic(T)$ induced by $\eta$ is equal to $\Theta$. That is, $X\equiv Y\mod\Theta$ holds if and only if $\eta(X)=\eta(Y)$. By Proposition~\ref{prop_Theta}, we may identify the facets of the noncrossing complex with the elements of the quotient lattice $\Bic(T)/\Theta$. It remains to show that this ordering is isomorphic to $\ora{FG}(T)$. To this end, it is enough to check that the Hasse diagram of $\Bic(T)/\Theta$ is $\ora{FG}(T)$, as in the following lemma. Recall the edge-labeling of the oriented flip graph from Definition~\ref{orflipgraph}.

%By Lemma \ref{lem_eta_phi}, the map $\eta:\Bic(T)\ra\ora{FG}(T)$ induces a quotient lattice structure on the facets of the noncrossing complex. \textcolor{red}{The previous sentence is somewhat confusing to me. Can you please rephrase it? Doesn't one also need to cite Lemma 4.10 to obtain this result?} It remains to show that the induced ordering is isomorphic to $\ora{FG}(T)$. To this end, it is enough to check that the Hasse diagram of $\Bic(T)/\Theta$ is $\ora{FG}(T)$, as in the following lemma.

\begin{lemma}\label{lem_eta_phi_order_preserving}
The Hasse diagram of $\ora{FG}(T)$ is isomorphic to that of $\Bic(T)/\Theta$. More precisely, we have the following.
\begin{enumerate}
\item\label{lem_eta_phi_order_preserving_1} Let $X\in\Bic(T)$ such that $X=\pi_{\downarrow}(X)$. If $s$ is a segment in $X$ such that $X\setm \{s\}$ is biclosed, then $\eta(X\setm \{s\})\stackrel{s}{\ra}\eta(X)$. %\textcolor{red}{Can you please say more (somewhere in the text) about when these labels get attached? Perhaps remind the reader of Lemma 2.5 and Proposition 4.6?}
\item\label{lem_eta_phi_order_preserving_2} Let $\Fcal\in\ora{FG}(T)$. If $\Fcal\setm \{p\}\cup\{\tilde{p}\}\stackrel{s}{\ra}\Fcal$ for some arcs $p,\tilde{p}$ and segment $s$, then $\phi(\Fcal)\setm \{s\}$ is biclosed and $\eta(\phi(\Fcal)\setm \{s\})=\Fcal\setm \{p\}\cup\{\tilde{p}\}$. %\textcolor{red}{The oriented flip graph as defined does not have any edge labels such as the $s$ that appears in (2) of this lemma. Can you please indicate somewhere in the text when these labels get attached to the edges of $\overrightarrow{FG}(T)$?}
\end{enumerate}
\end{lemma}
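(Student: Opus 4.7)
My plan is to prove (1) and (2) together, using the identities $\phi\circ\eta = \pi_{\downarrow}$ and $\eta\circ\phi = \id$ from Lemma~\ref{lem_eta_phi}.

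For (1), first note that $X\setm\{s\} \subseteq X = \pi_{\downarrow}(X)$ combined with the monotonicity of $\pi_{\downarrow}$ (Lemma~\ref{lem_map_properties}(\ref{lem_map_properties_6})) gives $\pi_{\downarrow}(X\setm\{s\}) \subseteq X\setm\{s\} \subsetneq \pi_{\downarrow}(X)$, so $X\setm\{s\} \not\equiv X \mod \Theta$ and hence $\Fcal' := \eta(X\setm\{s\}) \neq \eta(X) =: \Fcal$. Writing $s=[v,u]$, I would compare the two families $\{p_{(v_0,F_0)}^{X}\}$ and $\{p_{(v_0,F_0)}^{X\setm\{s\}}\}$ corner-by-corner: by the turn rule defining $p_{(v_0,F_0)}$, these arcs can disagree only at an interior vertex $u'$ of the arc with $[v_0,u']=s$, which forces $\{v_0,u'\}=\{v,u\}$. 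Analyzing the turn conditions at the endpoints of $s$ and applying Lemma~\ref{lem_eta_CK} to locate marked corners in $\Fcal$, I would identify a single distinguished arc $p\in\Fcal$ traversing $s$ and marked at two corners $(v,F),(u,G)$; all other $p_{(v_0,F_0)}^X$ coincide with their $X\setm\{s\}$ counterparts. Hence $\Fcal$ and $\Fcal'$ differ by a single flip at $p$. The orientation $\Fcal' \stackrel{s}{\ra} \Fcal$ then follows by translating the relations $s\in C_p$ (in $\Fcal$) and $s\in K_{\tilde{p}}$ (in $\Fcal'$) from Lemma~\ref{lem_eta_CK} into the clockwise convention of Definition~\ref{orflipgraph}.

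For (2), set $Y := \phi(\Fcal)$. By Lemma~\ref{lem_eta_phi}, $Y = \pi_{\downarrow}(Y)$ and $\eta(Y) = \Fcal$; since $s$ is the segment joining the marked corners of $p\in\Fcal$ we have $s\in C_p \subseteq Y$. I would verify that $Y\setm\{s\}$ is biclosed by checking closure and co-closure directly: a factorization $s = s_1\circ s_2$ with $s_1,s_2\in Y\setm\{s\}$, or an extension $s\circ t\in Y$ with $t\in Y\setm\{s\}$ (or symmetrically $t\circ s\in Y$), would each conflict with the maximality of $p$ at its marked corners in the facet $\Fcal$. Once $Y\setm\{s\}$ is biclosed, part (1) applies to give $\eta(Y\setm\{s\}) \stackrel{s}{\ra} \eta(Y) = \Fcal$, and the uniqueness of the flip at $p$ from Proposition~\ref{nccomplexprop}(4) forces $\eta(Y\setm\{s\}) = \Fcal'$.

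The main obstacle is the identification in part (1) of the single flipped arc. One must show both that exactly one arc of $\Fcal$ is altered when passing from $X$ to $X\setm\{s\}$, and that its marked corners sit precisely at the endpoints of $s$. The former requires ruling out any defining turn decision of $p_{(v_0,F_0)}^X$ that references $s$ for $v_0\notin\{v,u\}$; the latter requires matching the turn conditions at $v$ and $u$ with the biclosedness of $X\setm\{s\}$, so that $p$ really is marked at $(v,F)$ and $(u,G)$ rather than at a corner further out along the arc. Once this combinatorial step is in hand, the biclosedness verifications in part (2) follow from essentially the same analysis run in reverse, via the equivalence $\phi\circ\eta = \pi_{\downarrow}$.
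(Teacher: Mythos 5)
Your overall structure — prove (1), then reduce (2) to a biclosedness check plus (1) and the uniqueness of flips — matches the paper, but there are substantive gaps in both halves.

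In part (1), the claim that ``all other $p_{(v_0,F_0)}^X$ coincide with their $X\setm\{s\}$ counterparts'' is false. There are \emph{four} corners at which the arcs can differ, namely the two corners at each endpoint of $s$ that are incident to the edge of $s$ at that endpoint. Using the paper's labels, these are $(v,F)$, $(v,G)$, $(v^{\pr},F^{\pr})$, $(v^{\pr},G^{\pr})$. The arcs $p_{(v,G)}^X$ and $p_{(v',G')}^X$ also traverse $s$ all the way across (since $C_s\subseteq X$, $K_s\cap X=\emptyset$ force all of $p_{(v,F)}^X$, $p_{(v,G)}^X$, etc.\ to follow $s$), and at $v'$ each has its turn flipped upon removing $s$ from $X$. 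So $p_{(v,G)}^X\neq p_{(v,G)}^{X\setm\{s\}}$. The substance of the paper's proof is the bookkeeping showing how these four arcs rearrange so that, as a \emph{set}, the facet changes by exactly one arc: one proves $p_{(v,F)}^X=p_{(v',F')}^X=:p$, and that $q:=p_{(v,G)}^X$ reappears as $p_{(v',F')}^{X\setm\{s\}}$, and $q':=p_{(v',G')}^X$ reappears as $p_{(v,F)}^{X\setm\{s\}}$, while the new arc $\tilde p=p_{(v,G)}^{X\setm\{s\}}=p_{(v',G')}^{X\setm\{s\}}$ replaces $p$. Each of these equalities requires an argument (the paper does this via a separate divergence-and-contradiction analysis), and none of them is what you assert.

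In part (2), the proposed mechanism for deriving a contradiction — ``conflict with the maximality of $p$ at its marked corners'' — is not substantiated, and I don't see how to make it work. The paper's argument crucially uses the arc $\tilde p$, which you know exists because the flip $\Fcal\setm\{p\}\cup\{\tilde p\}\stackrel{s}{\ra}\Fcal$ is part of your hypothesis; it decomposes the offending segment into pieces in $C_{p_i}$ for various $p_i\in\Fcal$, tracks the turns of $\tilde p$ against these, and forces $\tilde p$ to cross some $p_j$, contradicting that $\Fcal\setm\{p\}\cup\{\tilde p\}$ is a facet of the noncrossing complex. Your plan never invokes $\tilde p$ at all. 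Separately, your statement of the co-closure check is wrong: the violation you must rule out is the existence of $t\notin Y$ with $s\circ t\in Y$ (or $t\circ s\in Y$), not $t\in Y\setm\{s\}$ — the latter is harmless.
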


\begin{proof}
(\ref{lem_eta_phi_order_preserving_1}): Let $X\in\Bic(T)$ such that $X=\pi_{\downarrow}(X)$. Let $s$ be a segment in $X$ such that $X\setm \{s\}$ is biclosed. Then $C_s\subseteq X$ and $K_s\cap X=\emptyset$. Let $v,v^{\pr}$ be the endpoints of $s$. Orient $s$ from $v$ to $v^{\pr}$. Let $F$ be the face to the right of $s$ incident to $v$ and the first edge of $s$. Let $p$ be the arc of $\eta(X)$ marked at $(v,F)$. Since $C_s\subseteq X$ and $K_s\cap X=\emptyset$, $p$ contains $s$ and turns left at $v^{\pr}$. Let $F^{\pr}$ be the face left of $s$ incident to $v^{\pr}$ and the last edge of $s$. Let $p^{\pr}$ be the arc of $\eta(X)$ marked at $(v^{\pr},F^{\pr})$. Reversing the orientation on $s$, the previous argument implies that $p^{\pr}$ contains $s$.

We claim that $p=p^{\pr}$. If not, then $p$ and $p^{\pr}$ must diverge at a vertex $v^{\pr\pr}$. Let $t=[v^{\pr},v^{\pr\pr}]$ and $u=[v,v^{\pr\pr}]$. Without loss of generality, we may assume that $s\circ t=u$. Since $p$ and $p^{\pr}$ do not cross, $p$ turns left at $v^{\pr\pr}$ and $p^{\pr}$ turns right at $v^{\pr\pr}$. Hence $u\in X$ and $t\notin X$. But then $X\setm \{s\}$ is not co-closed, a contradiction.

Let $\tilde{p}$ be the arc obtained by flipping $p$ in $\eta(X)$. Then $p$ and $\tilde{p}$ meet along $s$. We show that $\eta(X\setm \{s\})=\eta(X)\setm \{p\}\cup\{\tilde{p}\}$.

Let $G$ be the face left of $s$ containing $v$ and the first edge of $s$. Similarly, let $G^{\pr}$ be the face right of $s$ containing $v^{\pr}$ and the last edge of $s$. Let $q$ be the arc marked at $(v,G)$ in $\eta(X)$, and let $q^{\pr}$ be the arc marked at $(v^{\pr},G^{\pr})$ in $\eta(X)$.

By the definition of $\eta$, the only arcs that can be different between $\eta(X)$ and $\eta(X\setm \{s\})$ are those arcs marked at $(v,F),(v,G),(v^{\pr},F^{\pr}),$ or $(v^{\pr},G^{\pr})$. Just as we proved that $p$ is the arc in $\eta(X)$ marked at $(v,F)$ and $(v^{\pr},F^{\pr})$, a similar argument shows that $\tilde{p}$ is the arc in $\eta(X\setm \{s\})$ marked at $(v,G)$ and $(v^{\pr},G^{\pr})$.

We show that $q$ is in $\eta(X\setm \{s\})$ and is marked at $(v^{\pr},F^{\pr})$. Similarly we claim that $q^{\pr}$ is in $\eta(X\setm \{s\})$ and is marked at $(v,F)$. As these two proofs are nearly identical, we only write the first.

Let $\tilde{q}$ be the arc in $\eta(X\setm \{s\})$ marked at $(v^{\pr},F^{\pr})$, and assume $q\neq\tilde{q}$. Let $v^{\pr\pr}$ be a vertex at which $q$ and $\tilde{q}$ diverge. Orient $\tilde{q}$ from $v^{\pr}$ to $v^{\pr\pr}$. Then $\tilde{q}$ turns right at $v^{\pr\pr}$, so $v\neq v^{\pr\pr}$. Now orient $q$ from $v$ to $v^{\pr\pr}$.

If $v^{\pr\pr}$ is strictly between $v$ and $v^{\pr}$, then $q$ and $\tilde{q}$ turn in the same direction at $v^{\pr\pr}$. This is impossible since $C_s\subseteq X$ and $K_s\cap X=\emptyset$, and either $[v,v^{\pr\pr}]\in C_s$ and $[v^{\pr\pr},v^{\pr}]\in K_s$ or $[v,v^{\pr\pr}]\in K_s$ and $[v^{\pr\pr},v^{\pr}]\in C_s$.

If $v^{\pr}$ is strictly between $v$ and $v^{\pr\pr}$, then either $[v^{\pr},v^{\pr\pr}]\in X$ and $[v,v^{\pr\pr}]\notin X$ or $[v^{\pr},v^{\pr\pr}]\in X$ but $[v,v^{\pr\pr}]\notin X$. The first case implies $X\setm \{s\}$ is not co-closed, and the second case implies $X$ is not closed.

If $v$ is strictly between $v^{\pr}$ and $v^{\pr\pr}$, then we again deduce a contradiction in a similar way as the previous case. This completes the proof.

(\ref{lem_eta_phi_order_preserving_2}): Let $\Fcal\in\ora{FG}(T)$. Assume $\Fcal\setm \{p\}\cup\{\tilde{p}\}\stackrel{s}{\ra}\Fcal$ for some arcs $p,\tilde{p}$ and segment $s$. Then $s\in C_p$, so $s\in\phi(F)$.

Suppose $\phi(\Fcal)\setm \{s\}$ is not closed. Then there exist segments $t,u\in\phi(\Fcal)$ such that $t\circ u=s$. We may assume $t\in K_s$ and $u\in C_s$. Let $v,v^{\pr}$ be the endpoints of $t$. Assume $t$ and $u$ meet at $v^{\pr}$, and orient the arcs containing $t$ from $v$ to $v^{\pr}$. Let $t_1,\ldots,t_l$ be segments such that $t=t_1\circ\cdots\circ t_l$ and $t_i\in C_{p_i}$ for some arcs $p_i\in\Fcal$. We assume $v$ is in $t_1$ and $v^{\pr}$ is in $t_l$. For each $i$, let $v_i$ be the first vertex in $t_i$ with the orientation induced by $t$. Since $p_1$ and $\tilde{p}$ do not cross along $t_1$, $\tilde{p}$ must turn left at $v_2$. Similarly, $\tilde{p}$ turns left at $v_3,\ldots,v_l$. But since $t\in K_s$, $\tilde{p}$ turns right at $v^{\pr}$, so it crosses $p_l$, a contradiction. We deduce that $\phi(\Fcal)\setm \{s\}$ is closed.

Suppose $\phi(\Fcal)\setm \{s\}$ is not co-closed. Then there exist segments $t,u$ such that $t\notin\phi(\Fcal),\ u\in\phi(\Fcal)$, and $s\circ t=u$. Since $\pi_{\downarrow}(\phi(\Fcal))=\phi(\Fcal)$, we deduce that $s\in C_u$ and $t\in K_u$. Let $u_1,\ldots,u_l$ be segments with arcs $p_1,\ldots,p_l$ in $\Fcal$ such that $u_i\in C_{p_i}$ and $u=u_1\circ\cdots\circ u_l$. Orient $u$ from $u_1$ to $u_l$. By similar reasoning as before, since $\tilde{p}$ and $p_i$ do not cross along $u_i$ for each $i$, if $u_i$ is a subsegment of $s$, then $\tilde{p}$ turns left at the end of $u_i$. As $t\notin\phi(\Fcal)$, there exists some segment $u_j$ that is neither a subsegment of $s$ or $t$. Let $v^{\pr}$ be the common endpoint of $s$ and $t$, and let $v$ be the endpoint of $u_j$ contained in $s$. Since $s\in C_u$, $u_j$ turns left at $v$. Hence, $p_j$ turns right at $v$ and left at $v^{\pr}$, whereas $\tilde{p}$ turns left at $v$ and right at $v^{\pr}$. But this means $\tilde{p}$ and $p_j$ cross along $[v,v^{\pr}]$, an impossibility.

Therefore, $\phi(\Fcal)\setm \{s\}$ is biclosed. From (\ref{lem_eta_phi_order_preserving_1}), the equality $\eta(\phi(\Fcal)\setm \{s\})=\Fcal\setm \{p\}\cup\{\tilde{p}\}$ holds.
\end{proof}

\begin{theorem}\label{thm_eta_phi_main}
The maps $\eta$ and $\phi$ identify $\ora{FG}(T)$ as a quotient lattice and a sublattice of $\Bic(T)$ as follows.
\begin{enumerate}
\item The map $\eta$ is a surjective lattice map such that $\eta(X)=\eta(Y)$ if and only $X\equiv Y\mod\Theta$.
\item The map $\phi$ is an injective lattice map whose image is $\pi_{\downarrow}(\Bic(T))$.
\end{enumerate}
\end{theorem}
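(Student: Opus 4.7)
The plan is to assemble the structural results already in hand: Lemma~\ref{lem_eta_phi} (the identities $\phi\circ\eta=\pi_{\downarrow}$ and $\eta\circ\phi=\id$), Lemma~\ref{lem_eta_phi_order_preserving} (the Hasse diagrams of $\ora{FG}(T)$ and $\Bic(T)/\Theta$ coincide under $\eta$), and Proposition~\ref{prop_Theta} (that $\Theta$ is a lattice congruence on $\Bic(T)$).

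For part (1), $\eta$ is well-defined by Lemma~\ref{lem_eta_facet}, and surjective because $\eta\circ\phi=\id$. To identify its fibers with the $\Theta$-classes, apply $\phi$ to $\eta(X)=\eta(Y)$ and invoke Lemma~\ref{lem_eta_phi}(1) for one direction, and use $\eta(X)=\eta(\phi(\eta(X)))=\eta(\pi_{\downarrow}(X))$ for the other. Thus $\eta$ factors as a bijection $\bar\eta\colon\Bic(T)/\Theta\to\ora{FG}(T)$. The source is a lattice by Proposition~\ref{prop_Theta}, and Lemma~\ref{lem_eta_phi_order_preserving} shows that $\bar\eta$ and $\bar\eta^{-1}$ both send covers to covers; since a finite poset is determined by its covers, this promotes $\bar\eta$ to a poset (hence lattice) isomorphism, $\ora{FG}(T)$ acquires its lattice structure, and $\eta$ becomes a surjective lattice map with the stated fibers.

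For part (2), $\phi$ is injective by $\eta\circ\phi=\id$, and its image is $\pi_{\downarrow}(\Bic(T))$: any $\phi(\Fcal)$ equals $\pi_{\downarrow}(\phi(\Fcal))$ using $\phi=\phi\circ\eta\circ\phi$ together with Lemma~\ref{lem_eta_phi}(1), while any $X=\pi_{\downarrow}(X)$ equals $\phi(\eta(X))$. It remains to verify that $\phi$ respects joins and meets, which reduces to showing that $\pi_{\downarrow}(\Bic(T))$ is a sublattice of $\Bic(T)$; in that case the restriction of $\bar\eta$ to this subset is the inverse of $\phi$ and a lattice isomorphism onto its image.

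This sublattice claim is the main obstacle. For meets, given $X=\pi_{\downarrow}(X)$ and $Y=\pi_{\downarrow}(Y)$, I would show that $X\cap Y$ is biclosed and $\pi_{\downarrow}$-fixed, hence equal to $X\wedge Y$ inside $\Bic(T)$; the key input is Lemma~\ref{lem_subsegment} applied with $s\in C_s$: any decomposition $s=t\circ u$ of an element $s\in X\cap Y$ puts at least one of $t,u$ in $C_s$, and since $C_s$ is contained in $X$ and in $Y$ simultaneously, that factor lies in $X\cap Y$. For joins, I would use Theorem~\ref{thm_biclosed_main}(\ref{thm_biclosed_main_join}) to write $X\vee Y=\ov{X\cup Y}$, then prove $\pi_{\downarrow}(\ov{X\cup Y})=\ov{X\cup Y}$ as follows: take $s\in\ov{X\cup Y}$ with a witnessing decomposition $s=s_1\circ\cdots\circ s_m$ with $s_i\in X\cup Y$, and any $t\in C_s$; cut $t$ at the breakpoints of this decomposition, and observe that the interior fragments are complete $s_i$'s, while at each outer endpoint of $t$ lying in the interior of some $s_p$, the turn direction of $s_p$ matches that of $s$, so the end fragments lie in $C_{s_p}\subseteq X\cup Y$. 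Hence $t$ is a concatenation of elements of $X\cup Y$ and so belongs to $\ov{X\cup Y}$.
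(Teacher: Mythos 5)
Your part (1) matches the paper's argument exactly: establish that $\Theta$ is a lattice congruence, identify the fibers of $\eta$ with $\Theta$-classes via Lemma~\ref{lem_eta_phi}, and promote the cover-preserving bijection $\bar\eta$ of Lemma~\ref{lem_eta_phi_order_preserving} to a lattice isomorphism $\Bic(T)/\Theta\to\ora{FG}(T)$.

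For part (2) you take a genuinely different route. The paper never proves directly that $\pi_{\downarrow}(\Bic(T))$ is a sublattice; instead it runs the algebraic chain $\Fcal\vee\Fcal'=\eta\phi(\Fcal)\vee\eta\phi(\Fcal')=\eta(\phi(\Fcal)\vee\phi(\Fcal'))\leq\eta(\phi(\Fcal\vee\Fcal'))=\Fcal\vee\Fcal'$, concludes $\phi(\Fcal)\vee\phi(\Fcal')$ and $\phi(\Fcal\vee\Fcal')$ lie in the same $\Theta$-class, and uses minimality-in-class to force equality; for meets it shows $\phi(\Fcal)\wedge\phi(\Fcal')$ is $\pi_{\downarrow}$-fixed by appealing to the closure structure of the complement $\Seg(T)\setminus X$. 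You instead give a concrete, self-contained proof that $\pi_{\downarrow}(\Bic(T))$ is a sublattice with explicit formulas: for $X,Y$ both $\pi_{\downarrow}$-fixed, $X\wedge Y = X\cap Y$ (co-closedness of $X\cap Y$ comes from $C_s\subseteq X\cap Y$ and Lemma~\ref{lem_subsegment}, so in particular $X\cap Y$ is already biclosed, which is false for general $X,Y$) and $X\vee Y=\ov{X\cup Y}$ (which is biclosed by Theorem~\ref{thm_biclosed_main}(2), and you verify $\pi_{\downarrow}$-fixedness by cutting any $t\in C_s$ at the breakpoints of a witnessing decomposition of $s$, using that the turn directions of $s$ and its subsegment $s_p$ agree at shared interior vertices, hence the end fragments lie in $C_{s_p}\subseteq X\cup Y$). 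Both arguments are correct. The paper's is shorter and more formal; yours yields the sharper structural statement that on $\pi_{\downarrow}(\Bic(T))$ the meet is literal intersection and the join is literal closure of union, which makes the sublattice property transparent rather than a byproduct of the quotient map calculus. One small stylistic caution: the containment $C_{s_p}\subseteq X\cup Y$ should really be read as $C_{s_p}\subseteq X$ or $C_{s_p}\subseteq Y$ depending on which of $X,Y$ contains $s_p$; as written it is correct but worth stating carefully, since $C_{s_p}\subseteq X\cup Y$ does not hold for an arbitrary segment $s_p$.
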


\begin{proof}
We have already established that $\eta$ is lattice quotient map. It remains to show that $\phi$ preserves the lattice operations.

Let $\Fcal,\Fcal^{\pr}\in\ora{FG}(T)$. Since $\eta$ is a lattice map,
\begin{align*}
\Fcal\vee\Fcal^{\pr} &=\eta(\phi(\Fcal))\vee\eta(\phi(\Fcal^{\pr}))\\
&=\eta(\phi(\Fcal)\vee\phi(\Fcal^{\pr}))\\
&\leq\eta(\phi(\Fcal\vee\Fcal^{\pr}))\\
&=\Fcal\vee\Fcal^{\pr}.
\end{align*}
Hence, $\eta(\phi(\Fcal)\vee\phi(\Fcal^{\pr}))=\eta(\phi(\Fcal\vee\Fcal^{\pr}))$. Since $\phi(\Fcal\vee\Fcal^{\pr})$ is minimal in its $\Theta$-equivalence class, $\phi(\Fcal\vee\Fcal^{\pr})\leq\phi(\Fcal)\vee\phi(\Fcal^{\pr})$. Since $\phi$ is order-preserving, the reverse inequality also holds. Thus, $\phi$ preserves joins.

Since $\phi$ is order-preserving, $\phi(\Fcal\wedge\Fcal^{\pr})\leq\phi(\Fcal)\wedge\phi(\Fcal^{\pr})$ holds. Let $X=\phi(\Fcal)\wedge\phi(\Fcal^{\pr})$. Since
$$\eta(\phi(\Fcal\wedge\Fcal^{\pr}))=\Fcal\wedge\Fcal^{\pr}=\eta\phi(\Fcal)\wedge\eta\phi(\Fcal^{\pr}),$$
it suffices to show that $\pi_{\downarrow}(X)=X$.

Let $s\in X$ and $t\in C_s$. Since $\phi(\Fcal)=\pi_{\downarrow}(\phi(\Fcal))$, $C_s\subseteq\phi(\Fcal)\cap\phi(\Fcal^{\pr})$. If $t\notin X$ then there exist $u_1,\ldots,u_l\notin\phi(\Fcal)\cap\phi(\Fcal^{\pr})$ such that $t=u_1\circ\cdots\circ u_l$. But $u_i\in C_t$ for some $i$. Since $C_t\subseteq C_s$, we deduce $u_i\in\phi(\Fcal)\cap\phi(\Fcal^{\pr})$, a contradiction.
\end{proof}

By Lemma~\ref{lem_cn_cong} and Theorem~\ref{thm_biclosed_main}(\ref{thm_biclosed_main_cn}), it follows that the labeling $\Fcal^{\pr}\stackrel{s}{\ra}\Fcal$ of the covering relations of $\ora{FG}(T)$ by segments is a CN-labeling. To see that this is a CU-labeling, we observe that if there is a flip $\Fcal^{\pr}\stackrel{s}{\ra}\Fcal$, then $\eta(C_s)\leq\Fcal$. The following corollary is a consequence of Proposition~\ref{prop_cu_CJR}.

\begin{corollary}\label{cor_CJR}
The canonical join-representation of a element $\Fcal\in\ora{FG}(T)$ is
$$\Fcal=\bigvee_{\substack{s\in S\\\exists\Fcal^{\pr}\stackrel{s}{\ra}\Fcal}}\eta(C_s).$$
\end{corollary}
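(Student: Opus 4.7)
The plan is to apply Proposition~\ref{prop_cu_CJR} directly to the labeling $\lambda:\Cov(\ora{FG}(T))\to\Seg(T)$ that assigns $s$ to a cover $\Fcal^{\pr}\stackrel{s}{\ra}\Fcal$. The paragraph preceding the statement already observes that $\lambda$ is a CN-labeling and that the key inequality $\eta(C_s)\leq\Fcal$ holds whenever $\Fcal^{\pr}\stackrel{s}{\ra}\Fcal$. What remains is to upgrade this to a CU-labeling by identifying, for each segment $s$ that appears as a label, the unique join-irreducible carrying that label; the natural candidate is $\eta(C_s)$.

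I would first verify that $C_s$ is biclosed with $\pi_{\downarrow}(C_s)=C_s$, which is immediate from the containment $C_t\subseteq C_s$ for $t\in C_s$. Next, I would check that $C_s\setm\{s\}$ is also biclosed. Closedness amounts to showing that $s$ cannot be recovered as $t\circ u$ with both $t,u\in C_s\setm\{s\}$: if it could, then by the definition of $C_s$ the path $s$ would have to turn both left and right at the common endpoint of $t$ and $u$, a contradiction. Co-closedness is exactly Lemma~\ref{lem_subsegment}. Lemma~\ref{lem_eta_phi_order_preserving}(\ref{lem_eta_phi_order_preserving_1}) now furnishes a cover $\eta(C_s\setm\{s\})\stackrel{s}{\ra}\eta(C_s)$, so $s\in\lambda_{\downarrow}(\eta(C_s))$. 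Combining this with the inequality $\eta(C_s)\leq\Fcal$ for every $\Fcal$ having an incoming cover labeled $s$, we see that $\eta(C_s)$ is the \emph{minimum} element with $s\in\lambda_{\downarrow}$. By Lemma~\ref{lem_associates}, $\eta(C_s)$ is therefore a join-irreducible and is the unique join-irreducible whose defining cover carries the label $s$. This yields (CU1); a dual argument using $K_s$ (equivalently, the reflection $T^{\vee}$) and the dual half of Lemma~\ref{lem_associates} gives (CU2). Hence $\lambda$ is a CU-labeling.

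With the CU-labeling in place, Proposition~\ref{prop_cu_CJR} asserts that the canonical join-representation of $\Fcal$ is $\bigvee_{j\in D}j$, where $D$ is the set of join-irreducibles $j$ with $\lambda(j_*,j)\in\lambda_{\downarrow}(\Fcal)$. By the very definition of $\lambda$, the set $\lambda_{\downarrow}(\Fcal)$ is the set of segments $s$ for which some cover $\Fcal^{\pr}\stackrel{s}{\ra}\Fcal$ exists, and by the previous paragraph the corresponding join-irreducible is $\eta(C_s)$, which gives the stated formula. The only genuinely technical step is the identification of join-irreducibles with the elements $\eta(C_s)$; once that bijection is established the corollary is a direct translation of Proposition~\ref{prop_cu_CJR} into the language of segments and facets.
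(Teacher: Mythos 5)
Your overall strategy matches the paper's: establish that the flip labeling is a CU-labeling (from the CN property plus the observation $\eta(C_s)\leq\Fcal$) and then read off the canonical join-representation from Proposition~\ref{prop_cu_CJR}. The verification that $C_s\setm\{s\}$ is biclosed, the application of Lemma~\ref{lem_eta_phi_order_preserving}(\ref{lem_eta_phi_order_preserving_1}) to get the cover $\eta(C_s\setm\{s\})\stackrel{s}{\ra}\eta(C_s)$, and the conclusion that $\eta(C_s)$ is the unique minimum with $s\in\lambda_{\downarrow}$ are all correct, and the first half of Lemma~\ref{lem_associates} (minimality $\Rightarrow$ join-irreducibility) is safe to invoke because its proof uses only (CN1).

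However, the step ``By Lemma~\ref{lem_associates}, $\eta(C_s)$ is \emph{the unique} join-irreducible whose defining cover carries the label $s$'' is circular. Lemma~\ref{lem_associates} is stated for a lattice that \emph{already has} a CU-labeling, and the uniqueness half of its proof (``If $y$ is join-irreducible, then $y=j$ by (CU1)'') invokes (CU1) --- which is exactly what you are trying to establish. Minimality of $\eta(C_s)$ alone does not rule out a second join-irreducible $j>\eta(C_s)$ with $\lambda(j_*,j)=s$, and the CN descent argument in that lemma gets stuck whenever it reaches a join-irreducible, so it cannot by itself push down to the minimum. The gap is real but cheap to close using the quotient structure: suppose $j\in\JI(\ora{FG}(T))$ has $\lambda(j_*,j)=s$. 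You already have $\eta(C_s)\leq j$. If $\eta(C_s)<j$ then, since $j$ is join-irreducible, $\eta(C_s)\leq j_*$; applying $\phi$ (which is order-preserving and injective by Theorem~\ref{thm_eta_phi_main}) gives $C_s\subseteq\phi(j_*)$. But by Lemma~\ref{lem_eta_phi_order_preserving}(\ref{lem_eta_phi_order_preserving_2}) the set $\phi(j)\setm\{s\}$ is biclosed with $\eta(\phi(j)\setm\{s\})=j_*$, so $\phi(j_*)=\pi_{\downarrow}(\phi(j)\setm\{s\})\subseteq\phi(j)\setm\{s\}$, which omits $s$; this contradicts $s\in C_s\subseteq\phi(j_*)$. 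Hence $j=\eta(C_s)$, which is (CU1); (CU2) is the dual statement via $T^{\vee}$. With this repair your proof is complete.
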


\section{Noncrossing tree partitions}\label{Sec:NCP}

In this section, we introduce noncrossing tree partitions, which are partitions of the interior vertices of a tree embedded in a disk whose blocks are noncrossing as defined in Section~\ref{subsec_admissible_curves}. In Section~\ref{subsec_kreweras}, we define a bijection on the set of noncrossing tree partitions, which we call Kreweras complementation. The equivalence of this definition of Kreweras complementation with the lattice-theoretic definition in Section~\ref{subsec_lattices} is given in Section~\ref{subsec_redgreen_trees}. Our main result in this section is that the lattice of noncrossing tree partitions is isomorphic to the shard intersection order of $\ora{FG}(T)$, which we prove in Section~\ref{subsec_shard_intersection_order}.

\subsection{Admissible curves}\label{subsec_admissible_curves}

Fix a tree $T=(V,E)$ embedded in a disk $D^2$ with the Euclidean metric. Let $V^o$ denote the set of interior vertices of $T$. We fix a small $\epsilon>0$ such that the $\epsilon$-ball centered at any interior vertex of $T$ is contained in $D^2$, and no two such $\epsilon$-balls intersect. For each corner $(v,F)$, we fix a point $z(v,F)$ in the interior of $F$ of distance $\epsilon$ from $v$. Let
$$T_{\epsilon}=T\cup\bigcup_{v\in V^o}\{x\in D^2:\ |x-v|<\epsilon\}.$$
In words, $T_{\epsilon}$ is the embedded tree $T$ plus the open $\epsilon$-ball around each interior vertex. If $s$ is a segment of $T$, let $s_{\epsilon}$ denote the set of points on an edge of $s$ of distance at least $\epsilon$ from any interior vertex of $T$.

It will be convenient to represent segments as certain curves in the disk as follows. A \textbf{flag} is a triple $(v,e,F)$ of a vertex $v$ incident to an edge $e$, which is incident to a face $F$. Orienting $e$ away from $v$, we say a flag is \textbf{green} if $F$ is left of $e$. Otherwise, the flag is \textbf{red}. Let $(u,e,F),\ (v,e^{\pr},G)$ be two green flags such that $[u,v]$ is a segment containing the edges $e,e^{\pr}$ as in Figure \ref{fig_redgreenarcs}. A \textbf{green admissible curve} $\gamma:[0,1]\ra D^2$ for $[u,v]$ is a simple curve for which $\gamma(0)=z(u,F),\ \gamma(1)=z(v,G)$ and $\gamma([0,1])\subseteq D^2\setm (T_{\epsilon}\setm [u,v]_{\epsilon})$. Similarly, if $(u,e,F^{\pr})$ and $(v,e^{\pr},G^{\pr})$ are red flags, then a \textbf{red admissible curve} is defined the same way, with $\gamma(0)=z(u,F^{\pr}),\ \gamma(1)=z(v,G^{\pr})$. We say a segment is \textbf{green} if it is represented by a green admissible curve. Similarly, a segment is \textbf{red} if it is represented by a red admissible curve. We may also refer to an \textbf{admissible curve} for a segment without specifying a color. Such a curve may be either green or red.

If a colored segment $s$ is represented by a curve with endpoints $z(u,F)$ and $z(v,G)$, we say that $(u,F)$ and $(v,G)$ are the \textbf{endpoints} of $s$, and we write $\Endpt(s)=\{(u,F),(v,G)\}$. If $S$ is a collection of colored segments, we let $\Endpt(S)=\bigcup_{s\in S}\Endpt(s)$. We refer to corners or vertices as the endpoints of a segment at different parts of this paper. The distinction should be clear from context.

\begin{figure}
\includegraphics{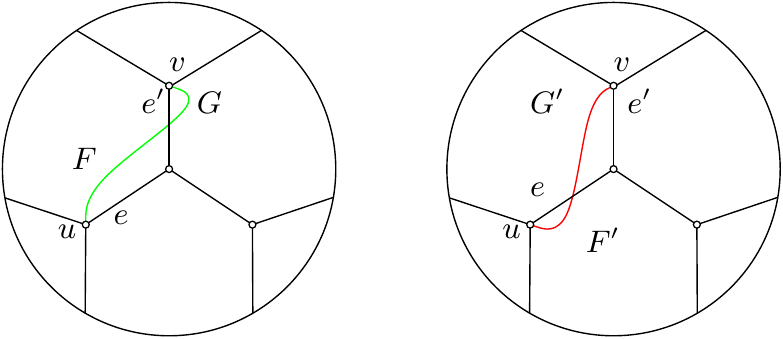}
\caption{\label{fig_redgreenarcs}A green admissible curve and a red admissible curve for the segment $[u,v]$}
\end{figure}

Given an interior vertex $v\in V^o$ incident to faces $F$ and $F^{\pr}$, let $\alpha_v^{F,F^{\pr}}:[0,1]\ra D^2$ be a simple path contained in $F\cup F^{\pr}$ with $\alpha_v^{F,F^{\pr}}(0)=z(v,F),\ \alpha_v^{F,F^{\pr}}(1)=z(v,F^{\pr})$ and $|\alpha_v^{F,F^{\pr}}(t)-v|=\epsilon$ for $t\in[0,1]$. We use the paths $\alpha_v^{F,F^{\pr}}$ to concatenate admissible curves.

Two colored segments are \textbf{noncrossing} if they admit admissible curves that do not intersect. Otherwise, they are \textbf{crossing}. We remark that if two curves share an endpoint $z(u,F)$ then they are considered to be crossing. To determine whether two colored segments $s,t$ cross, one must check whether the endpoints of $t$ lie in different connected components of $(D^2\setm (T_{\epsilon}\setm t_{\epsilon}))\setm \gamma$ for some admissible curve $\gamma$ for $s$. We will find it convenient to distinguish several cases of crossing as in the following lemma. The three cases correspond to the three columns of Figure \ref{fig_admissible_curve_intersect}.

\begin{lemma}\label{Lemma:crossingcurves}
Let $\gamma$ and $\gamma^{\pr}$ be two (left or right) admissible curves corresponding to segments $s$ and $s^{\pr}$ that meet along a common segment $t$. Let $t=[a,b]$ and orient $\gamma$ and $\gamma^{\pr}$ from $a$ to $b$. Assume that $\gamma$ and $\gamma^{\pr}$ do not share a corner. Then $\gamma$ and $\gamma^{\pr}$ are noncrossing if and only if one of the following holds:
\begin{enumerate}
\item $\gamma$ (or $\gamma^{\pr}$) does not share an endpoint with $t$, and $\gamma$ turns left (or right) at both endpoints of $t$;
\item $\gamma$ starts at $a$ and turns left (resp. right) at $b$, and $\gamma^{\pr}$ ends at $b$ and turns right (resp. left) at $a$;
\item $\gamma$ and $\gamma^{\pr}$ both start at $a$ (resp. both end at $b$) where $\gamma$ leaves $a$ (resp. $b$) to the left, and $\gamma$ turns left at $b$ (resp. $a$) or $\gamma^{\pr}$ turns right at $b$ (resp. $a$).
\end{enumerate}
If $\gamma$ and $\gamma^{\pr}$ are both left admissible or both right admissible, then the third case does not occur.
\end{lemma}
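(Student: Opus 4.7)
The plan is to carry out a local topological analysis of the admissible curves $\gamma, \gamma'$ near their common subsegment $t = [a,b]$. The key structural observation I would begin with is that since $t$ is shared by $s$ and $s'$, the two segments agree on every edge and corner of $t$, so both $\gamma$ and $\gamma'$ can be drawn so that they pass through the same sequence of corner $\alpha$-arcs at each interior vertex of $t$. This will let me arrange $\gamma$ and $\gamma'$ as parallel copies within a thin tubular neighborhood of $t_\epsilon$, so no intersection arises in the interior of $t$. Any crossing must therefore come from the configurations of the two curves in a small neighborhood of one of the endpoints $a$ or $b$.

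To formalize this, I would assign to each curve at each endpoint of $t$ a ``side datum'' (left or right of $t$, with orientation $a\to b$) indicating how that curve sits relative to $t$ at that point. If a curve passes through an endpoint, its side datum is determined by the turning direction of the associated segment there; if a curve terminates at an endpoint, the datum is instead determined by its color (green or red) together with whether the flag is at $a$ or $b$. Given the interior parallelism, $\gamma$ and $\gamma'$ will be noncrossing if and only if the agreement pattern between their side data is symmetric across the two endpoints, meaning either both data agree at $a$ and at $b$, or both disagree at $a$ and at $b$. Otherwise one curve must swap sides relative to the other somewhere inside $t$, producing a transversal intersection.

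With this criterion in hand, I will verify the three listed conditions by computing the side data in each local configuration. Case~(1) pins both side data via the turning direction of a single curve passing through both endpoints; Case~(2) staggers the termination points and prescribes turning directions that synchronize the data; Case~(3) involves two curves sharing a terminal endpoint where they must carry opposite colors (otherwise they share a corner, contradicting the hypothesis), together with turning conditions at the opposite endpoint that maintain opposite side data. The final sentence of the lemma then drops out of the Case~(3) analysis: two curves sharing a terminal endpoint leave (or arrive) on opposite sides of $t$ only if their colors at that endpoint differ, so two curves of the same color cannot realize Case~(3). The hard part will be the case analysis itself, particularly tracking how the color and turning conventions at $a$ and $b$ translate into the side data and checking that every configuration not listed in (1)--(3) is forced into mismatched data at the two endpoints; the interior parallelism, while intuitively clear, will also need a careful argument that parallel copies can in fact be drawn under the shared corner structure along $t$.
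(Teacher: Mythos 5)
The paper actually gives no formal proof of this lemma: it is stated just before Figure~\ref{fig_admissible_curve_intersect} with the remark that the three cases correspond to the three columns of the figure, and the lemma is treated as a statement one verifies by inspection of the local pictures. So there is no ``paper proof'' to match against, and the real question is whether your sketch would in fact close the gap. It would not, for a concrete reason.

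Your plan rests on the criterion ``noncrossing $\iff$ the side data agree at both of $a,b$ or disagree at both of $a,b$.'' That criterion is too coarse. Take $t$ to be a single edge $[a,b]$, with $\gamma$ the green admissible curve for a segment that turns \emph{right} at $b$ and $\gamma'$ the red admissible curve for a segment that turns \emph{left} at $b$; both curves start at $a$. At $a$ the side data are (left for $\gamma$, right for $\gamma'$), and at $b$ they are (right for $\gamma$, left for $\gamma'$), so they disagree at both endpoints --- yet the two curves are forced to cross (each crosses $t$ once, in opposite directions, like the diagonals of a rectangle), and indeed this configuration satisfies none of conditions (1)--(3). By contrast, the genuinely noncrossing configuration with both curves starting at $a$ ($\gamma$ green turning left at $b$, $\gamma'$ red turning right at $b$) has exactly the same ``disagree at both'' profile. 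So two configurations with identical agreement patterns have opposite crossing behavior; a simple left/right agreement test cannot recover the asymmetric ``or'' clause in case~(3). What actually determines crossing is whether the two curves' exit slots (their positions on the boundary of a tubular neighborhood of $t$, tracked finely enough to see nesting within a single side) interleave, and this is strictly more information than your per-curve, per-endpoint side bit.

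A secondary issue: the ``parallel copies'' step needs more than the observation that the curves pass through the same sequence of corners along $t$. Both curves are \emph{forced} to the unique corner $(w,F_w)$ at each interior vertex $w$ of $t$ (they cannot cross edges of $T$ not belonging to $s$ or $s'$), so they run through the same corridor, one nested inside the other; the content of the lemma is exactly about whether that nesting is consistent from $a$ to $b$ or reverses. Your sketch acknowledges that ``the hard part will be the case analysis itself'' and does not carry it out, and with the incorrect criterion in hand, that case analysis would not come out to conditions (1)--(3). To repair this you would need to replace the agreement-pattern test with an interleaving/nesting criterion on the boundary of the tube around $t$, and then check each of (1)--(3) against that.
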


\begin{figure}
\centering
\includegraphics{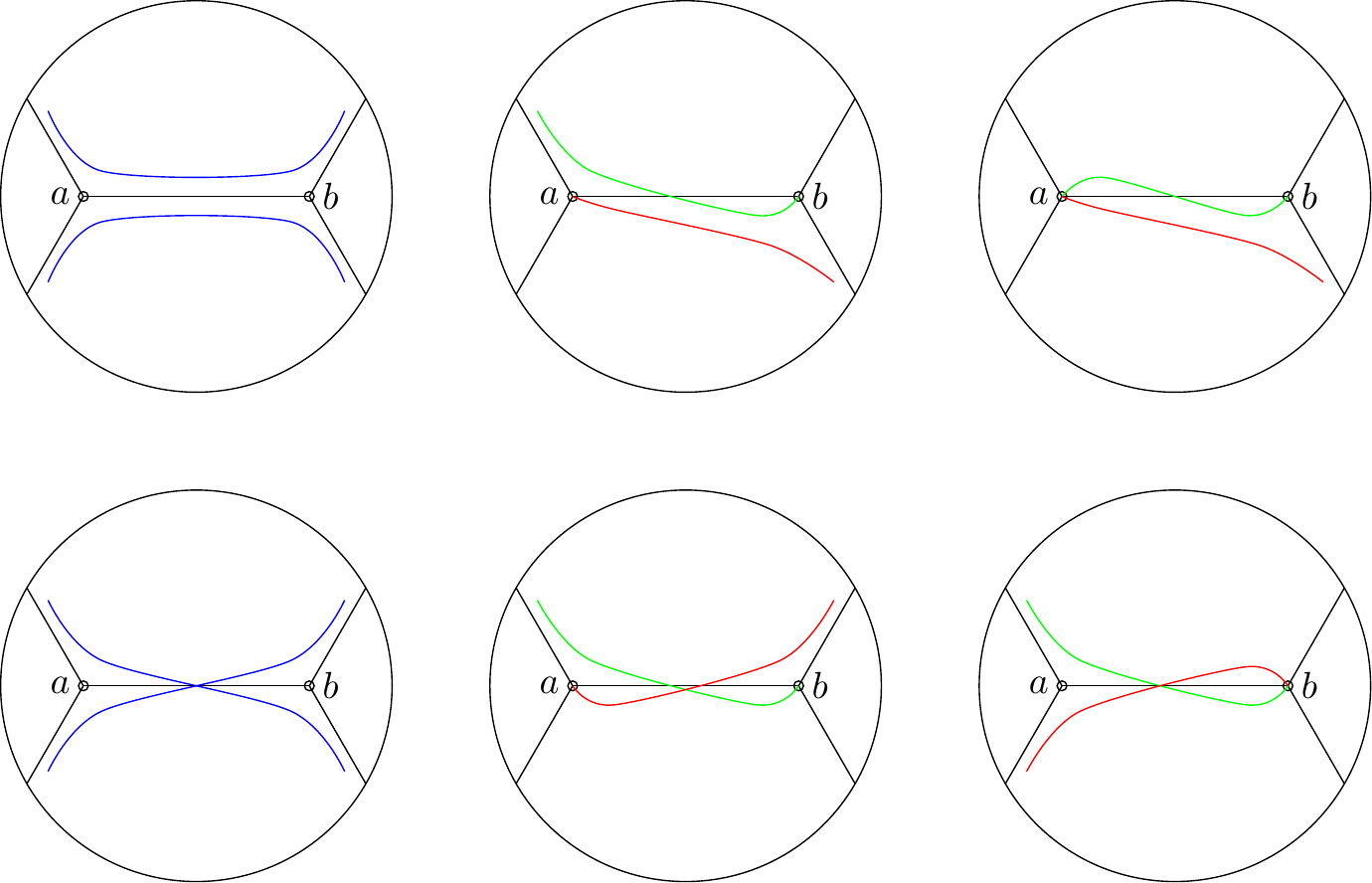}
\caption{\label{fig_admissible_curve_intersect}Several examples of crossing and noncrossing admissible curves representing segments supported by the tree.}
\end{figure}

\begin{lemma}\label{lem_crossing_condition}
If red segments $s$ and $s^{\pr}$ are noncrosssing, then $K_s\cap C_{s^{\pr}}$ is empty.
\end{lemma}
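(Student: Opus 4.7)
My plan is to argue by contrapositive. Suppose $t = [a, b] \in K_s \cap C_{s^{\pr}}$ is a common subsegment, and let $\gamma_s, \gamma_{s^{\pr}}$ be red admissible curves representing $s$ and $s^{\pr}$, oriented from $a$ to $b$ along $t$ as in the hypothesis of Lemma~\ref{Lemma:crossingcurves}. If these curves share a corner they are crossing by the convention stated when admissible curves are introduced, so I may assume they do not and hence that the lemma applies. I will verify that none of the three noncrossing conditions of the lemma can be satisfied, forcing $\gamma_s$ and $\gamma_{s^{\pr}}$ to intersect and hence $s$ and $s^{\pr}$ to cross, contradicting the hypothesis.

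The geometric content driving the case analysis is that $K_s$ and $C_{s^{\pr}}$ describe complementary configurations for how $s$ and $s^{\pr}$ extend past $a$ and $b$: both have their two extensions on opposite sides of $t$, but with the assignment of sides swapped. Precisely, at whichever of $a, b$ is interior to $s$, the condition $t \in K_s$ forces $s$ to turn left at $a$ and right at $b$ relative to the $a \to b$ direction of $t$; dually, $t \in C_{s^{\pr}}$ forces $s^{\pr}$ to turn right at $a$ and left at $b$. Therefore case (1) of Lemma~\ref{Lemma:crossingcurves}, which requires the underlying segment of a curve not sharing an endpoint with $t$ to turn in the same direction at both $a$ and $b$, fails: for either $\gamma_s$ or $\gamma_{s^{\pr}}$ with both $a$ and $b$ interior to its segment, the two turn directions at $a$ and $b$ are opposite. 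Case (3) is excluded by the closing sentence of the lemma, since both curves are right admissible.

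The main bookkeeping is case (2), which requires one curve to start at $a$ and turn at $b$ and the other to end at $b$ and turn at $a$, with specific matching turn directions. In each of the two ways one could assign these roles to $\gamma_s$ and $\gamma_{s^{\pr}}$, one of $a, b$ must be an endpoint of each segment, so only the ``interior'' part of the conditions $t \in K_s$ and $t \in C_{s^{\pr}}$ contributes; the turn directions it dictates at the relevant vertices then turn out to be exactly the reverse of what case (2) demands, so case (2) also fails. The one remaining subtle situation is when an endpoint of $t$ is simultaneously an endpoint of both $s$ and $s^{\pr}$: there the starting (or ending) corners of $\gamma_s$ and $\gamma_{s^{\pr}}$ coincide at the right face of the common first (or last) edge of $t$, so the two curves share a corner and are already crossing by convention. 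Putting these pieces together, $\gamma_s$ and $\gamma_{s^{\pr}}$ must cross in every case, which contradicts the noncrossing hypothesis and forces $K_s \cap C_{s^{\pr}} = \emptyset$.
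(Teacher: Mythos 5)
Your proof is correct and follows essentially the same strategy as the paper's: both take $t \in K_s \cap C_{s^{\pr}}$, orient from one endpoint to the other, record the forced turn behavior of $s$ and $s^{\pr}$ at each endpoint of $t$, and conclude crossing. The paper's version is terse ("in each case, the segments are crossing"), while yours makes the case analysis explicit by checking each alternative in Lemma~\ref{Lemma:crossingcurves} and handling the shared-corner degeneracies separately, but the underlying argument is the same.
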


\begin{proof}
Suppose $K_s\cap C_{s^{\pr}}$ contains an element $[u,v]$. Orient $s$ and $s^{\pr}$ from $u$ to $v$. Then $s$ either starts at $u$ or turns left at $u$, and it either ends at $v$ or turns right at $v$. On the other hand, $s^{\pr}$ either starts at $u$ or turns right at $u$, and it either ends at $v$ or turns left at $v$. In each case, the segments $s$ and $s^{\pr}$ are crossing.
\end{proof}

By a similar analysis, the same result holds for segments of different color. We note that Lemma~\ref{lem_crossing_condition_2} is asymmetric in red and green.

\begin{lemma}\label{lem_crossing_condition_2}
If a green segment $s$ and a red segment $s^{\pr}$ are noncrossing, then $K_s\cap C_{s^{\pr}}$ is empty.
\end{lemma}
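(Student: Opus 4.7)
The plan is to parallel the proof of Lemma~\ref{lem_crossing_condition}, reducing the assertion to the classification of noncrossing pairs of admissible curves given by Lemma~\ref{Lemma:crossingcurves}. The essential new feature, relative to the two-red case, is that Case~(3) of that classification---which is explicitly excluded when both curves share a color---can now occur, and this is the only substantive new work.

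I would suppose for contradiction that $[u,v]\in K_s\cap C_{s'}$ and that the green segment $s$ and the red segment $s'$ admit noncrossing admissible curves. Orienting both segments from $u$ to $v$, the memberships in $K_s$ and $C_{s'}$ record the following turn constraints at each endpoint of $t=[u,v]$ that is interior to the relevant segment: $s$ turns left at $u$ and right at $v$, while $s'$ turns right at $u$ and left at $v$. If the two chosen admissible curves share a corner then they are crossing by the convention following the definition of admissible curves; otherwise Lemma~\ref{Lemma:crossingcurves} applies with $a=u$, $b=v$, and I must derive a contradiction in each of its three cases. Cases~(1) and~(2) I would dispose of exactly as in the proof of Lemma~\ref{lem_crossing_condition}: the turn constraints above immediately defeat the ``same direction at both endpoints'' condition of Case~(1) for whichever of $s,s'$ has $u,v$ in its interior, and a short enumeration of the four sub-options of Case~(2) (two choices of which curve plays the role of $\gamma$, two signs coming from the ``resp.'') produces in each a turn prescription directly negated by either $K_s$ at $v$ or $C_{s'}$ at $u$.

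The main obstacle is Case~(3). Here both curves share an endpoint of $t$, and the distinguished curve $\gamma$ is the one that ``leaves to the left'' at the shared vertex, meaning the one lying on the left side of $t$ at that vertex in the forward orientation $u\to v$. Unpacking the definitions of green and red flags: a green admissible curve lies on the left of $t$ at its initial endpoint but on the right of $t$ at its terminal endpoint, whereas a red admissible curve does the opposite. Consequently, in Variant~1 (both curves start at $u$) the distinguished curve is $\gamma=s$, while in Variant~2 (both curves end at $v$) it is $\gamma=s'$. In Variant~1, the remaining hypothesis ``$s$ turns left at $v$ or $s'$ turns right at $v$'' is flatly contradicted by $K_s$ (which forces a right turn at $v$) or by $C_{s'}$ (which forces a left turn at $v$) whenever $v$ is interior to the respective segment, and is vacuously false otherwise; in Variant~2, the symmetric hypothesis ``$s'$ turns left at $u$ or $s$ turns right at $u$'' is refuted in the same way by $C_{s'}$ and $K_s$. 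With all three cases closed off, $s$ and $s'$ must cross, producing the desired contradiction and showing $K_s\cap C_{s'}=\emptyset$.
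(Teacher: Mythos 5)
Your proposal is correct and carries out exactly the "similar analysis" the paper alludes to after Lemma~\ref{lem_crossing_condition}: reduce to the case-by-case classification in Lemma~\ref{Lemma:crossingcurves}, with Cases (1) and (2) dispatched as in the two-red case, and Case (3) as the genuinely new content. Your identification of the distinguished "left-leaving" curve as the green one in Variant~1 and the red one in Variant~2, using the left/right asymmetry of green versus red flags at a shared endpoint, is precisely the point behind the paper's remark that the lemma is asymmetric in red and green (swapping the colors would make Case~(3) succeed rather than fail, so the swapped statement is false).
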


For $B\subseteq V^o$, let $\Seg_g(B)$ be the set of inclusion-minimal segments whose endpoints lie in $B$. We say $B$ is segment-connected if for any two elements $u,v$ of $B$, there exists a sequence $u=u_0,\ldots,u_N=v$ of elements of $B$ such that $[u_{i-1},u_i]\in\Seg(B)$ for all $i$. If $\Bbf=(B_1,\ldots,B_l)$ is a partition of $V^o$, we let $\Seg(\Bbf)=\bigcup_{i=1}^l\Seg(B_i)$. We let $\Seg_g(\Bbf)$ (resp. $\Seg_r(\Bbf)$) denote the same set of segments, all colored green (resp. red). 

A \textbf{noncrossing tree partition} $\Bbf$ is a set partition of $V^o$ such that any two segments of $\Seg_r(\Bbf)$ are noncrossing and each block of $\Bbf$ is segment-connected. {Note that we intentionally define noncrossing tree partitions using only red segments.} Let $\NCP(T)$ be the poset of noncrossing tree partitions of $T$, ordered by refinement. We give an example of $\NCP(T)$ in Figure \ref{fig_ncp_lattice} where $T$ is the tree whose biclosed sets appear in Figure~\ref{fig_bicfig}. We remark that the lattice of noncrossing tree partitions is not isomorphic to the lattice of noncrossing set partitions in this example. 

%\textcolor{red}{MAYBE SOME COMMENTS ON WHY THE NEXT FEW LEMMAS APPEAR?}

\begin{figure}
\centering
\includegraphics{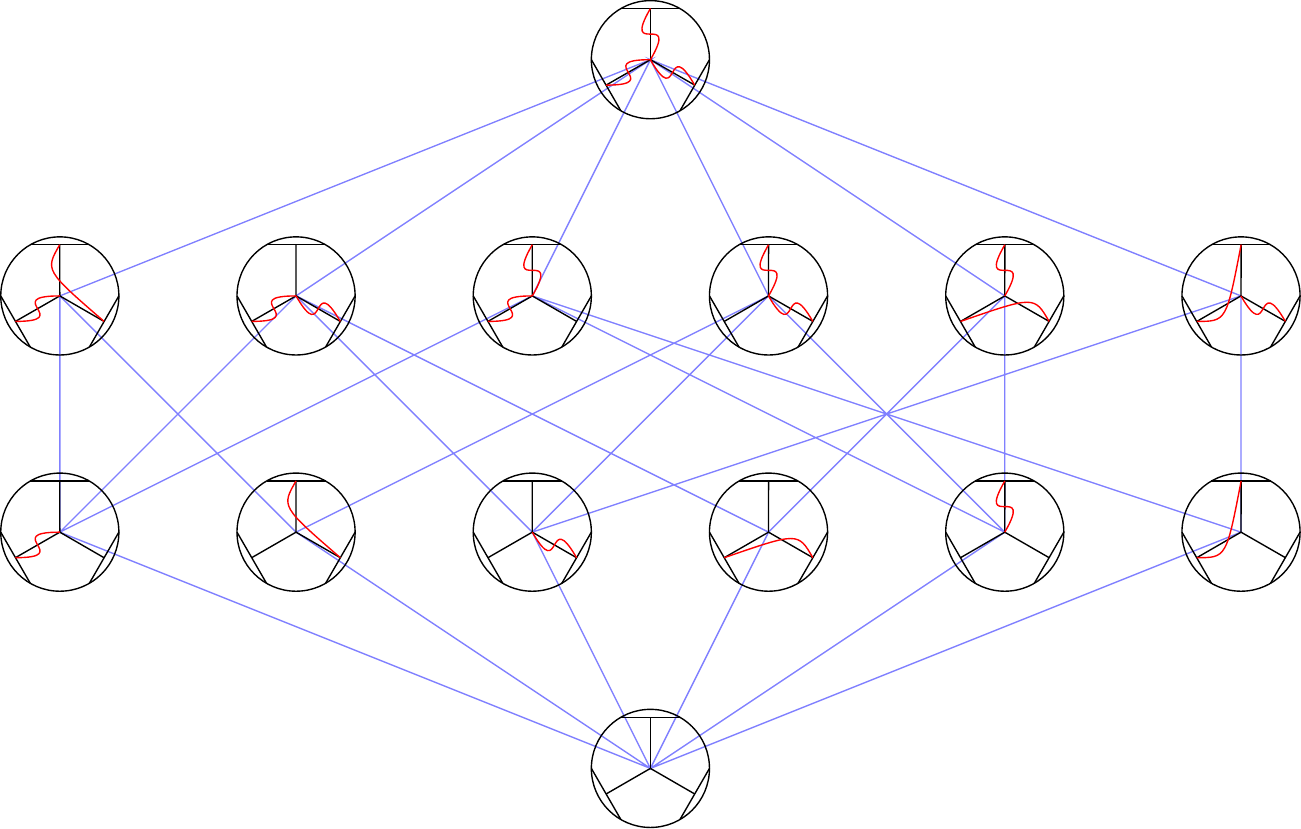}
\caption{\label{fig_ncp_lattice}A lattice of noncrossing tree partitions}
\end{figure}

We record some basic properties of noncrossing tree partitions in the rest of this section.

\begin{lemma}\label{lem_nonsegment}
Let $\Bbf$ be a noncrossing tree partition containing a block $B$. If $u,v\in B$ are distinct vertices such that $[u,v]$ is not a segment, then there exists a vertex $w\in B$ distinct from $u$ and $v$ such that $w\in[u,v]$.
\end{lemma}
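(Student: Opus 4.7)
The plan is to use segment-connectedness of $B$ to produce a chain of segments from $u$ to $v$, then invoke the noncrossing hypothesis to force the intermediate endpoints of that chain to lie on the tree path $[u,v]$.

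First, I would use segment-connectedness of $B$ to pick a sequence $u = u_0, u_1, \ldots, u_N = v$ of vertices in $B$ with each $[u_{i-1}, u_i] \in \Seg(B)$, chosen so that $N$ is minimal. Since $[u,v]$ is not a segment we have $N \geq 2$, so $u_1 \in B \setminus \{u,v\}$ exists and is the natural candidate for $w$. I will show that every $u_i$ with $1 \leq i \leq N-1$ actually lies on the tree path $[u,v]$, so that taking $w = u_1$ suffices.

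The key intermediate claim is that for every $0 \leq i \leq N-2$ the vertex $u_{i+1}$ lies on the tree path $[u_i, u_{i+2}]$. Minimality of $N$ guarantees that $[u_i, u_{i+2}]$ is not itself a segment, for otherwise $u_{i+1}$ could be deleted from the sequence. In a tree, three distinct vertices are either collinear (one lies on the path between the other two) or form a Y whose median vertex is distinct from all three. If $u_i \in [u_{i+1}, u_{i+2}]$ or $u_{i+2} \in [u_i, u_{i+1}]$, then $[u_i, u_{i+2}]$ is a subpath of one of the existing segments and, since its endpoints lie in $V^o$, is itself a segment, contradicting the non-segment conclusion. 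The case $u_{i+1} \in [u_i, u_{i+2}]$ is exactly what we want. The main obstacle is therefore ruling out the Y configuration, and this is the one place where the noncrossing hypothesis is essential.

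In the Y case with median $m$, both tree paths $[u_{i+1}, u_i]$ and $[u_{i+1}, u_{i+2}]$ leave $u_{i+1}$ along the \emph{same} initial edge, namely the edge pointing toward $m$. It follows that the red flags of the two segments $[u_i, u_{i+1}]$ and $[u_{i+1}, u_{i+2}]$ at $u_{i+1}$ coincide, so any red admissible curves representing these segments share the endpoint $z(u_{i+1}, F)$ at the common corner. By the convention in Section~\ref{subsec_admissible_curves} that admissible curves sharing an endpoint are considered crossing, this contradicts the pairwise noncrossing property of $\Seg_r(\Bbf)$. With the Y case excluded, the key claim holds for every $i$; and since $u_i \in [u_{i-1}, u_{i+1}]$ the two tree edges at $u_i$ coming from $[u_{i-1},u_i]$ and $[u_i,u_{i+1}]$ are distinct, so the concatenation $[u_0,u_1] \circ \cdots \circ [u_{N-1},u_N]$ has no backtracking and equals the unique tree path $[u,v]$. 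In particular $u_1 \in [u,v]$, and $w = u_1$ gives the desired vertex.
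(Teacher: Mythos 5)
Your proof is essentially correct and follows a different route from the paper's, but there is one step that needs repair. You assert that minimality of $N$ forces $[u_i,u_{i+2}]$ to not be a segment, ``for otherwise $u_{i+1}$ could be deleted from the sequence.'' That deletion requires $[u_i,u_{i+2}]\in\Seg(B)$, and $\Seg(B)$ consists only of the \emph{inclusion-minimal} segments with endpoints in $B$, not all of them. In exactly the configuration you are trying to establish, where $u_{i+1}$ lies strictly between $u_i$ and $u_{i+2}$, the path $[u_i,u_{i+2}]$ can perfectly well be a segment (for instance, this is automatic when $u_{i+1}$ has degree~3), while $u_{i+1}$ is a $B$-vertex strictly inside it, which puts $[u_i,u_{i+2}]$ outside $\Seg(B)$ and blocks the deletion. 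So the ``non-segment conclusion'' is false as a general assertion and should not be invoked.

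The collinear cases do not need it. If $u_i$ lay strictly inside $[u_{i+1},u_{i+2}]$, then $u_i$ would be a $B$-vertex in the interior of $[u_{i+1},u_{i+2}]$, directly contradicting the inclusion-minimality built into $\Seg(B)$; the symmetric case is identical. With that replacement the rest of your argument goes through: your Y-case analysis, showing the two red admissible curves would share the endpoint $z(u_{i+1},F)$ at the common corner, is the genuinely noncrossing-dependent step and is executed correctly. By comparison, the paper fixes an extremal vertex $w$ for which $[u,w]$ is a segment of maximum length contained in $[u,v]$ and then shows $w\in B$ from the noncrossing condition; you instead establish that the minimal segment-connected sequence traverses $[u,v]$ without detours and take $w=u_1$. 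Both rest on the same noncrossing input, but your version makes the local tree geometry of the sequence more explicit, and (once the collinear cases are handled by inclusion-minimality) is arguably more transparent than the paper's terse argument.
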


\begin{proof}
Let $w\in[u,v]$ such that $[u,w]$ is a segment of maximum length. Since $B$ is segment-connected, there exists a sequence $u=u_0,u_1,\ldots,u_l=v$ of elements of $B$ such that $[u_{i-1},u_i]$ is a segment for all $i$. We further assume that each segment $[u_{i-1},u_i]$ is in $\Seg(B)$ and that $l$ is minimal with this property.

Since $w$ is in $[u,v]$, there exists some segment $[u_{i-1},u_i]$ containing $w$ such that $u_{i-1}\in[u,w]$. Then $w\in[u_i,v]$, so the noncrossing property forces $w\in B$.
\end{proof}

\begin{lemma}\label{lem_segment_decomposition}
Let $\Bbf$ be a noncrossing tree partition. If $B$ is a block of $\Bbf$, then for any distinct vertices $u,v\in B$, there exists a sequence $u=u_0,\ldots,u_l=v$ such that $[u_{i-1},u_i]$ is in $\Seg(B)$ for all $i$ and $[u,v]=[u_0,u_1]\circ\cdots\circ[u_{l-1},u_l]$.
\end{lemma}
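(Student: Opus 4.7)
The plan is to define $W=\{w\in B:w\in[u,v]\}$, the set of elements of $B$ lying on the geodesic path from $u$ to $v$ in the tree. Because $[u,v]$ is finite, $W$ is a finite set containing both $u$ and $v$, and its elements inherit a linear order from their positions along the path; list them as $u=u_0,u_1,\ldots,u_l=v$. Since the $u_i$ appear in this order along $[u,v]$, the iterated composition $[u_0,u_1]\circ[u_1,u_2]\circ\cdots\circ[u_{l-1},u_l]$ is well-defined (each consecutive pair shares only the intermediate $u_i$, and the total is the acyclic path $[u,v]$) and equals $[u,v]$ as a path in the tree. Thus the only remaining task is to verify that each piece $[u_{i-1},u_i]$ is both a segment and inclusion-minimal among segments with endpoints in $B$, i.e., belongs to $\Seg(B)$.

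For the segment property, if some $[u_{i-1},u_i]$ were not a segment, then Lemma~\ref{lem_nonsegment} applied to $u_{i-1},u_i\in B$ would produce a vertex $w\in B$ distinct from $u_{i-1}$ and $u_i$ lying on this subpath. Such a $w$ would belong to $W$ and lie strictly between $u_{i-1}$ and $u_i$ in the linear order on $W$, contradicting their consecutivity. Hence every $[u_{i-1},u_i]$ is a segment. For the inclusion-minimality property, any proper sub-segment of $[u_{i-1},u_i]$ with both endpoints in $B$ would force some element of $B$ to lie strictly inside $[u_{i-1},u_i]$, hence strictly between $u_{i-1}$ and $u_i$ on the path $[u,v]$, again an element of $W$ strictly between consecutive entries — the same contradiction. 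Therefore $[u_{i-1},u_i]\in\Seg(B)$ for each $i$, and the decomposition has the required form.

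The argument is essentially bookkeeping once Lemma~\ref{lem_nonsegment} is in hand, which supplies the only nontrivial ingredient: any non-segment geodesic between two elements of $B$ must be subdivided by another element of $B$. I therefore do not anticipate a real obstacle; the main subtlety is simply to record that consecutivity in the ordering of $W$ rules out both potential obstructions (failing to be a segment, or failing to be inclusion-minimal) simultaneously.
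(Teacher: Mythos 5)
Your proof is correct and is essentially the paper's argument, just presented non-inductively: the paper picks $u_1$ as the closest element of $B\setminus\{u\}$ along $[u,v]$, verifies $[u,u_1]\in\Seg(B)$ via Lemma~\ref{lem_nonsegment} plus minimality, and recurses on $[u_1,v]$, which amounts exactly to extracting the ordered set $W=B\cap[u,v]$ one element at a time. Both rely on the same two-pronged contradiction (failure to be a segment, or failure to be inclusion-minimal, each forces an element of $B$ strictly between consecutive entries), so there is no substantive difference.
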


\begin{proof}
Let $B$ be a block of $\Bbf$ with at least two elements, and fix distinct vertices $u,v\in B$. We proceed by induction on the length of $[u,v]$. Among the vertices of $[u,v]$, let $u_1$ be the element of $B\setm \{u\}$ minimizing the length of $[u,u_1]$. By Lemma \ref{lem_nonsegment}, $[u,u_1]$ is a segment. By assumption, it is inclusion-minimal, so it is in $\Seg(B)$. If $u_1\neq v$, then by the inductive hypothesis, there exists a sequence $u_1,u_2,\ldots,u_l$ of elements of $B$ such that $u_l=v$ and $[u_{i-1},u_i]\in\Seg(B)$ for all $i$.
\end{proof}

\begin{lemma}\label{lem_crossing_div}
Let $b$ be a vertex in a green segment $[a,c]$, and let $[d,e]$ be some green segment that crosses $[a,c]$. Then either $[a,b]$ or $[b,c]$ crosses $[d,e]$, where $[a,b]$ and $[b,c]$ are both green.
\end{lemma}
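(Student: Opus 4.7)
My plan is to argue via case analysis on the position of $b$ relative to the maximal common subsegment $t=[a,c]\cap [d,e]$, using Lemma~\ref{Lemma:crossingcurves} as the combinatorial engine. Since $[a,c]$ and $[d,e]$ cross, $t$ must be a nontrivial segment, say $t=[p,q]$, and the noncrossing conditions (1) and (2) of Lemma~\ref{Lemma:crossingcurves} must both fail for $[a,c]$ and $[d,e]$ along $t$ (condition (3) being excluded since both are green). The goal in each case is to show that either $[a,b]$ or $[b,c]$, paired with $[d,e]$, inherits a shared subsegment on which those same conditions fail.

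In the easiest case, $b$ is not a vertex of $t$. Then $t$ lies entirely inside $[a,b]$ or $[b,c]$, say $[a,b]$, with both $p,q$ interior vertices of $[a,b]$ distinct from $b$. The labels (extension direction and turning) of $[a,b]$ at $p$ and $q$ agree with those of $[a,c]$, and $[a,b]\cap[d,e]=t$ as well, so Lemma~\ref{Lemma:crossingcurves} yields that $[a,b]$ crosses $[d,e]$ along $t$ verbatim.

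When $b$ is strictly interior to $t=[p,q]$, both sub-segments share a non-trivial portion with $[d,e]$: $[a,b]\cap[d,e]=[p,b]$ and $[b,c]\cap[d,e]=[b,q]$. At $b$, both $[a,b]$ and $[b,c]$ end, while $[d,e]$ extends past $b$ with some turning. Using the failure of conditions (1) and (2) for $[a,c]$ and $[d,e]$ at $p$ and $q$ (which constrains the turning of $[d,e]$ at these endpoints) together with the fact that $b$ is an endpoint for both $[a,b]$ and $[b,c]$, one checks that at least one of the shared subsegments $[p,b]$ or $[b,q]$ has no satisfied noncrossing condition, so one of $[a,b], [b,c]$ crosses $[d,e]$.

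The most delicate case is when $b$ is an endpoint of $t$, say $b=q$, so $t=[p,b]\subseteq[a,b]$ and $[b,c]\cap[d,e]\subseteq\{b\}$ (forcing $[b,c]$ to be noncrossing with $[d,e]$). Here I need $[a,b]$ to cross $[d,e]$ along $[p,b]$; the labels differ from those of $[a,c]$ only at $b$, where $[a,b]$ ends while $[a,c]$ turns. The key geometric input is that the cyclic order of edges at the interior vertex $b$ forces the turning directions of the two distinct green segments $[a,c]$ and $[d,e]$ at $b$ (they leave $b$ along different edges since $t$ is maximal) to be opposite: if one goes to the clockwise neighbor of the incoming $t$-edge, the other must go to the counter-clockwise neighbor. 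This rigid relationship between the labels of $[a,c]$ and $[d,e]$ at $b$ is exactly what converts the crossing of $[a,c]$ and $[d,e]$ into the crossing of $[a,b]$ and $[d,e]$ when one re-applies Lemma~\ref{Lemma:crossingcurves}. The main technical obstacle is this last case: verifying, using the local cyclic order, that all the sub-cases arising from whether $p$ and $b$ coincide with the endpoints $a,c,d,e$ of the full segments collapse to the same conclusion.
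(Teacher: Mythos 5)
Your proposal is a genuinely different attack from the paper's, but as written it has gaps in exactly the places where the content lies. In Case~2 you assert ``one checks that at least one of the shared subsegments $[p,b]$ or $[b,q]$ has no satisfied noncrossing condition'' without doing the check, and in Case~3 you flag ``the main technical obstacle'' yourself without resolving it. There is also a structural issue affecting Case~3 (and the endpoint sub-cases of Case~2): Lemma~\ref{Lemma:crossingcurves} only characterizes crossing for admissible curves that meet \emph{along a common segment}. Once you split $[a,c]$ at $b$, the piece on one side --- e.g.\ $[b,c]$ when $b$ is an endpoint of $t$ --- may meet $[d,e]$ in at most the single vertex $b$, and then that lemma does not apply to the pair. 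You treat this as ``forcing $[b,c]$ to be noncrossing with $[d,e]$,'' but that is not automatic: two admissible curves that pass near the same interior vertex can cross depending on the local cyclic order, so this needs its own argument, or you must show that the crossing can always be relocated entirely onto the pair $([a,b],[d,e])$.

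For contrast, the paper proves the contrapositive without any cases. Suppose neither $[a,b]$ nor $[b,c]$ crosses $[d,e]$; pick green admissible curves $\gamma_1$ for $[a,b]$ and $\gamma_2$ for $[b,c]$ disjoint from a green admissible curve $\gamma'$ for $[d,e]$. Because $[a,c]$ is a segment, the faces $F$ (left of the $[a,b]$-edge at $b$) and $F'$ (left of the $[b,c]$-edge at $b$) are adjacent, so $\gamma_1\circ\alpha_b^{F,F'}\circ\gamma_2$ is a green admissible curve for $[a,c]$ disjoint from $\gamma'$, contradicting the hypothesis that $[a,c]$ crosses $[d,e]$. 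The $\alpha$-paths from Section~\ref{subsec_admissible_curves} were introduced precisely to make such gluings available; reaching for them here short-circuits Lemma~\ref{Lemma:crossingcurves} and the entire case split.
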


\begin{proof}
Suppose neither $[a,b]$ nor $[b,c]$ crosses $[d,e]$. Let $\gamma_1,\gamma_2$ and $\gamma^{\pr}$ be green admissible curves for $[a,b], [b,c],$ and $[d,e]$, respectively. Let $e,e^{\pr}$ be the edges of $[a,b]$ and $[b,c]$ incident to $b$. Orienting $e$ and $e^{\pr}$ away from $b$, let $F$ be the face left of $e$ and $F^{\pr}$ the face left of $e^{\pr}$. Then $\gamma_1$ (resp. $\gamma_2$) has an endpoint at $z(b,F)$ (resp. $z(b,F^{\pr})$). Since $[a,c]$ is a segment, the faces $F$ and $F^{\pr}$ are adjacent. Let $\gamma=\gamma_1\circ\alpha_b^{F,F^{\pr}}\circ\gamma_2$. Then $\gamma$ is a green admissible curve for $[a,c]$ and $\gamma$ does not intersect $\gamma^{\pr}$, a contradiction.
\end{proof}

\subsection{Kreweras complementation}\label{subsec_kreweras}

In this section, we define a bijection on $\NCP(T)$, which we call Kreweras complementation. A representation-theoretic interpretation of this bijection is given in Section~\ref{Sec:SMCs}. 

We define a function $\rho:\ora{FG}(T)\ra\NCP(T)$ as follows. Let $\Fcal\in\ora{FG}(T)$, and let $S$ be the set of segments for which there exists $\Fcal^{\pr}$ with $\Fcal^{\pr}\stackrel{s}{\ra}\Fcal$. Since the arcs in $\Fcal$ are pairwise noncrossing, there exists a realization by simple curves $\{\gamma_p:\ p\in\Fcal\}$ such that the following conditions hold.
\begin{itemize}
\item If $s$ is the largest segment contained in an arc $p\in\Fcal$, then the image of $\gamma_p$ is contained in $D^2\setm (T_{\epsilon}\setm s_{\epsilon})$.
\item For distinct $p,q\in\Fcal$, $\gamma_p$ and $\gamma_q$ are disjoint except possibly at the endpoints.
\item For $p\in\Fcal$, if $\gamma_p$ is marked at $(v,F)$, then $\gamma_p$ contains the point $z(v,F)$.
\end{itemize}

For $s\in S$, let $p$ be the arc marked at the endpoints of $s$. If $p$ is marked at the corners $(v,F),\ (v^{\pr},F^{\pr})$, we let $\gamma_s$ be the subpath of $\gamma_p$ with endpoints $z(v,F)$ and $z(v^{\pr},F^{\pr})$. Since $s$ is a lower label of $\Fcal$, the curve $\gamma_s$ is a red admissible curve for $s$. Since $\gamma_p$ and $\gamma_q$ are disjoint for distinct arcs $p,q$, the collection $\{\gamma_s:\ s\in S\}$ is a noncrossing set of red admissible curves. Hence, $S$ defines a noncrossing tree partition $\Bbf$.

Given $\Fcal$ and $\Bbf$ as above, we set $\rho(\Fcal)=\Bbf$. We prove that $\rho$ is a bijection.

\begin{proposition}\label{prop_rho_bij}
The map $\rho$ is a bijection.
\end{proposition}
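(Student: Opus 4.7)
The plan is to build an explicit two-sided inverse to $\rho$ using the canonical join-representation of facets established in Corollary~\ref{cor_CJR}. By Corollary~\ref{cor_CJR}, every $\Fcal \in \ora{FG}(T)$ is determined by its set $S(\Fcal)$ of down-cover labels, namely $\Fcal = \bigvee_{s \in S(\Fcal)} \eta(C_s)$. So injectivity of $\rho$ reduces to showing that the partition $\Bbf = \rho(\Fcal)$ recovers the set $S(\Fcal)$, and surjectivity reduces to showing that every $\Bbf \in \NCP(T)$ arises from some natural choice of $S$.

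First I would verify well-definedness: that $\rho(\Fcal)$ really is a noncrossing tree partition. Each $s \in S(\Fcal)$ arises from a flip $\Fcal' \stackrel{s}{\to} \Fcal$, so in $\Fcal$ the newly produced arc is marked at both endpoints of $s$ and the subpath $\gamma_s$ of its realization is a red admissible curve. Different segments in $S(\Fcal)$ correspond to different arcs of $\Fcal$, and the realizations are disjoint off their endpoints; hence $\Seg_r(\rho(\Fcal))$ is a pairwise noncrossing family. Each block of $\rho(\Fcal)$ is segment-connected by construction.

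Second I would prove injectivity through the identification $S(\Fcal) = \Seg(\Bbf)$ where $\Bbf = \rho(\Fcal)$. The inclusion $S(\Fcal) \subseteq \Seg(\Bbf)$ amounts to saying that no $s = [u,v] \in S(\Fcal)$ can contain a third block-vertex $w$ in its interior. Suppose such $w$ exists; then $w$ lies in the same block as $u,v$, so by Lemma~\ref{lem_segment_decomposition} the segments $[u,w]$ and $[w,v]$ belong to $\Seg(\Bbf)$, and their associated red admissible curves would have to be disjoint from $\gamma_s$ except at endpoints. A direct case analysis using Lemma~\ref{Lemma:crossingcurves} and Lemma~\ref{lem_crossing_div} forces $\gamma_s$ to cross one of them, contradicting that all curves arising from $S(\Fcal)$ are pairwise noncrossing. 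The reverse inclusion $\Seg(\Bbf) \subseteq S(\Fcal)$ follows from the segment-connectedness of blocks together with the minimality property: any minimal segment with endpoints in the same block must correspond to a down-cover of $\Fcal$ in the canonical join-representation.

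Third, for surjectivity, given $\Bbf \in \NCP(T)$ I would define
$$\Fcal_\Bbf \;=\; \bigvee_{s \in \Seg(\Bbf)} \eta(C_s) \;=\; \eta\Bigl(\,\overline{\textstyle\bigcup_{s \in \Seg(\Bbf)} C_s}\,\Bigr),$$
where the second equality uses that $\eta$ is a lattice map (Theorem~\ref{thm_eta_phi_main}). To conclude $\rho(\Fcal_\Bbf) = \Bbf$, I would show the down-cover labels of $\Fcal_\Bbf$ are exactly $\Seg(\Bbf)$. The noncrossing property of $\Seg_r(\Bbf)$ combined with Lemma~\ref{lem_crossing_condition} implies $K_s \cap C_{s'} = \emptyset$ for distinct $s, s' \in \Seg(\Bbf)$; together with Proposition~\ref{prop_biclosed_facial_interval} applied to the interval $[\,\eta^{-1}_{\downarrow}(\Fcal_\Bbf) \setminus \Seg(\Bbf),\; \eta^{-1}_{\downarrow}(\Fcal_\Bbf)\,]$, this makes the join $\bigvee_{s \in \Seg(\Bbf)} \eta(C_s)$ irredundant, so that $\Seg(\Bbf)$ indeed realizes the canonical join-representation of $\Fcal_\Bbf$. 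Then $\rho(\Fcal_\Bbf) = \Bbf$ by the recovery claim from Step~2.

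The main obstacle will be the identification $S(\Fcal) = \Seg(\Bbf)$ in Step~2; one must ensure that the lattice-theoretic notion of a down-cover label in the congruence-uniform lattice $\ora{FG}(T)$ coincides precisely with the geometric notion of a minimal segment inside a block of the associated noncrossing tree partition. This requires delicately coordinating the noncrossing admissible-curve analysis (Lemmas~\ref{Lemma:crossingcurves}, \ref{lem_crossing_condition}, \ref{lem_crossing_div}) with the segment-decomposition and facial-interval tools (Lemma~\ref{lem_segment_decomposition}, Proposition~\ref{prop_biclosed_facial_interval}).
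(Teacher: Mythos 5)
Your overall structure parallels the paper's: you define an inverse candidate via the canonical join-representation and reduce both injectivity and surjectivity to the set-identity $S(\Fcal) = \Seg(\rho(\Fcal))$. (The paper works in $\Bic(T)$ via $\tilde{\phi}(\Bbf)=\ov{\bigcup_{s\in\Seg(\Bbf)}C_s}$, so your $\Fcal_\Bbf = \eta(\tilde{\phi}(\Bbf))$ and your Step~3 is essentially the paper's proof that $\tilde{\phi}$ is injective with image $\pi_{\downarrow}(\Bic(T))$. These are the same construction, viewed through the lattice quotient $\eta$.) That reduction is sound.

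The difficulty is that the set-identity itself — which carries the entire weight of the proposition — is not actually established, and both halves of your Step~2 have real gaps. For $S(\Fcal) \subseteq \Seg(\Bbf)$, you place a block-vertex $w$ strictly inside $s=[u,v]\in S(\Fcal)$ and assert that the red admissible curves for $[u,w]$ and $[w,v]$ would have to be disjoint from $\gamma_s$ except at endpoints. But $[u,w]$ and $[w,v]$ lie in $\Seg(\Bbf)$, not necessarily in $S(\Fcal)$, and the only curves known to be pairwise noncrossing are $\{\gamma_t:\ t\in S(\Fcal)\}$. There is no a priori noncrossing relation between $\gamma_s$ and curves for segments outside $S(\Fcal)$, so the contradiction does not follow. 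For $\Seg(\Bbf) \subseteq S(\Fcal)$, your sentence "any minimal segment with endpoints in the same block must correspond to a down-cover of $\Fcal$" is the conclusion restated, not an argument. The paper proves this equality by moving to $\Bic(T)$, where down-covers are literally single-element removals, and then runs a multi-case analysis of how closure and co-closure of $\tilde{\phi}(\Bbf)\setminus\{s\}$ can fail, repeatedly invoking Lemma~\ref{lem_subsegment} and Lemma~\ref{lem_crossing_condition}; this is the longest part of the proof and cannot be replaced by a one-line appeal to Lemma~\ref{Lemma:crossingcurves} and Lemma~\ref{lem_crossing_div}. Finally, in Step~3 your invocation of Proposition~\ref{prop_biclosed_facial_interval} to get irredundancy is undeveloped: irredundancy is weaker than being the canonical join-representation, which is what you actually need (via Proposition~\ref{prop_cu_CJR}), and the object you write as $\eta^{-1}_{\downarrow}(\Fcal_\Bbf)$ is not something the paper defines.
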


\begin{proof}
Given $\Bbf\in\NCP(T)$, let
$$\tilde{\phi}(\Bbf)=\ov{\bigcup_{s\in\Seg(\Bbf)}C_s}.$$
Since $C_s$ is a biclosed set, $\tilde{\phi}(\Bbf)$ is biclosed by Theorem~\ref{thm_biclosed_main}. By Corollary~\ref{cor_CJR}, we have $\phi=\tilde{\phi}\circ\rho$. Since $\eta\circ\phi$ is the identity on $\ora{FG}(T)$, we have $(\eta\circ\tilde{\phi})\circ\rho=\eta\circ\phi=\id_{\ora{FG}(T)}$. Hence, $\rho$ is an injective function.

To show that $\rho$ is surjective, it suffices to prove that $\tilde{\phi}$ is injective and its image is $\pi_{\downarrow}(\Bic(T))$. The latter statement is clear since $C_s\in\pi_{\downarrow}(\Bic(T))$ for any segment $s$, and $\pi_{\downarrow}(\Bic(T))$ is a sublattice of $\Bic(T)$ by Theorem~\ref{thm_eta_phi_main}.

Let $\Bbf\in\NCP(T)$ and set $X=\tilde{\phi}(\Bbf)$. Let $S=\{s \in \text{Seg}(T):\ X\setm\{s\}\in\Bic(T)\}$. To prove that $\tilde{\phi}$ is injective, we show that $\Seg(\Bbf)=S$. %\textcolor{red}{SHOULD $S = \{s \in \text{Seg}(T): \ X\backslash\{s\} \in \text{Bic}(T)\}$ INSTEAD?}

Suppose $S\setm\Seg(\Bbf)$ is nonempty, and let $t\in S\setm\Seg(\Bbf)$. Since $X\setm\{t\}$ is biclosed, the segment $t$ is not the concatenation of any two segments in $X$. Consequently, $t\in C_s$ for some $s\in\Seg(\Bbf)$ and $s\neq t$. Then $s=t^{(1)}\circ t\circ t^{(2)}$ where $t^{(1)}$ or $t^{(2)}$ is a nonempty segment (or both). Moreover, since $X\setm\{t\}$ is co-closed and $s\in X$, either $t^{(1)}\in X$ or $t^{(2)}\in X$ holds. We may assume without loss of generality that $t^{(1)}$ is a segment in $X$. Since $t\in C_s$, we have $t^{(1)}\in K_s$. By definition, $t^{(1)}=t_1\circ\cdots\circ t_l$ where each $t_i$ is in $C_{s^{\pr}}$ for some $s^{\pr}\in\Seg(\Bbf)$. By repeated application of Lemma~\ref{lem_subsegment}, some $t_i$ is in $K_s$. Hence, $K_s\cap C_{s^{\pr}}$ is nonempty for some $s^{\pr}\in\Seg(\Bbf)$. By Lemma~\ref{lem_crossing_condition}, the red segments $s$ and $s^{\pr}$ are crossing, which is a contradiction.

Now assume $\Seg(\Bbf)\setm S$ is nonempty, and let $s\in\Seg(\Bbf)\setm S$. Then $X\setm\{s\}$ is not biclosed.

Suppose $X\setm\{s\}$ is not closed. Then there exists $t,t^{\pr}\in X$ such that $s=t\circ t^{\pr}$. Without loss of generality, we may assume $t\in C_s$ and $t^{\pr}\in K_s$. Since $t^{\pr}\in X$, there exist a decomposition $t^{\pr}=t_1\circ\cdots\circ t_l$ such that for all $i$, $t_i\in C_{s^{\pr}}$ for some $s^{\pr}\in\Seg(\Bbf)$. By Lemma~\ref{lem_subsegment}, some $t_i$ is in $K_s$. Hence, $K_s\cap C_{s^{\pr}}$ is nonempty for some $s^{\pr}\in\Seg(\Bbf)$, and we again deduce that $s$ and $s^{\pr}$ are crossing.

Suppose $X\setm\{s\}$ is not co-closed. Then there exists $t\in\Seg(T)\setm X$ such that $s\circ t\in X$. We choose the segment $t$ to be minimal with those properties.

Suppose $s\circ t\in C_{s^{\pr}}$ for some $s^{\pr}\in\Seg(\Bbf)$. Since $t\notin X$, $t$ is not in $C_{s^{\pr}}$. Hence, $t\in K_{s\circ t}$ and $s\in C_{s\circ t}$. But this implies $s\in C_{s^{\pr}}$, so $s$ and $s^{\pr}$ are crossing.

Now assume $s\circ t=s_1\circ\cdots\circ s_l,\ l>1$ where for all $i$, $s_i\in C_{s^{\pr}}$ for some $s^{\pr}\in\Seg(\Bbf)$. Since $t\notin X$ and $X=\pi_{\downarrow}(X)$, the segment $t$ is not in $C_{s_1\circ\cdots\circ s_l}$. Consequently, $s\in C_{s_1\circ\cdots\circ s_l}$. We consider two cases: either $s_l$ is a proper subsegment of $t$ or $t$ is a proper subsegment of $s_l$. We note that $t$ is not equal to $s_l$ since $t\notin X$ and $s_l\in X$.

If $s_l$ is a subsegment of $t$, then there exists a segment $t^{\pr}$ such that $s\circ t^{\pr}=s_1\circ\cdots\circ s_{l-1}$. Then $t^{\pr}\in X$ by minimality of $t$. But $t=t^{\pr}\circ s_l$, contrary to $X$ being closed.

If $t$ is a subsegment of $s_l$, then there exists $t^{\pr}$ such that $t^{\pr}\circ t=s_l$. Then $t\in K_{s_l}$ since $t\in K_{s_1\circ\cdots\circ s_l}$, so $t^{\pr}\in C_{s_l}$. Since $s_l\in C_{s^{\pr}}$ for some $s^{\pr}\in\Seg(\Bbf)$, we have $t^{\pr}\in C_{s^{\pr}}$. Since $t^{\pr}$ is a proper subsegment of $s$ that shares an endpoint with $s$, either $t^{\pr}\in K_s$ or $t^{\pr}\in C_s$. If $t^{\pr}\in K_s$, then $s$ and $s^{\pr}$ cross by Lemma~\ref{lem_crossing_condition}. If $t^{\pr}\in C_s$, then $s_1\circ\cdots\circ s_{l-1}\in K_s$. By repeated application of Lemma~\ref{lem_subsegment}, some $s_i$ is in $K_s$. But $s_i\in C_{s^{\pr}}$ for some $s^{\pr}\in\Seg(\Bbf)$, which is again a contradiction.

We have established that $S$ and $\Seg(\Bbf)$ are identical. Hence, the map $\tilde{\phi}$ is injective, and the result follows.
\end{proof}

\begin{proposition}\label{prop_red_green_switch}
For $s,t\in\Seg(T)$, if $s$ and $t$ are noncrossing as green segments, then they are noncrossing as red segments.
\end{proposition}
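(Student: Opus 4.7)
The plan is a direct geometric construction: starting from noncrossing green admissible curves $\gamma_s, \gamma_t$ for $s$ and $t$, we produce noncrossing red admissible curves $\delta_s, \delta_t$ by modifying each $\gamma_w$ near its endpoints. For each $w \in \{s, t\}$ and each endpoint $v$ of $w$, observe that $z(v, F^L_v)$ and $z(v, F^R_v)$ lie in the two faces on opposite sides of the first edge of $w$ at $v$. We replace a short initial portion of $\gamma_w$ near $v$ by a detour that starts at $z(v, F^R_v)$, crosses the first edge of $w$ at a point close to $v$ in its middle (which lies in $[w]_{\epsilon}$ and therefore in the allowed region for $\delta_w$), and rejoins $\gamma_w$ just past the crossing. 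The resulting $\delta_w$ is a valid red admissible curve for $w$.

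To verify that $\delta_s$ and $\delta_t$ remain noncrossing, I distinguish two cases. If $s$ and $t$ share no common endpoint, the detours at the endpoints of $s$ occur in neighborhoods disjoint from those at the endpoints of $t$, so $\delta_s$ and $\delta_t$ intersect exactly where $\gamma_s$ and $\gamma_t$ do, namely nowhere. If $s$ and $t$ share an endpoint $v$, then they must have different first edges at $v$: otherwise they would share the green corner at $v$ (the green flag at $v$ is determined by the first edge), forcing the green curves to be crossing and contradicting the hypothesis. An analogous dichotomy handles the second endpoints of the two segments.

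The main obstacle is the careful geometric realization near shared endpoints. Given that the two first edges of $s$ and $t$ at a shared endpoint $v$ are distinct, one routes the detour for $\delta_s$ through a small region adjacent to the first edge of $s$, and similarly for $\delta_t$ through a small region adjacent to the first edge of $t$, ensuring the two detours occupy disjoint regions around $v$ and create no new intersections. Combined with the corresponding construction at the other endpoints, this produces noncrossing red admissible curves for $s$ and $t$, which establishes that $s$ and $t$ are noncrossing as red segments.
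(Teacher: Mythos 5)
Your construction is essentially the same as the paper's: you detour each green curve near its endpoints across the first edge of the segment, which is exactly what the paper achieves by concatenating $\gamma_s$ with the arcs $\alpha_u^{F',F}$ and $\alpha_v^{G,G'}$ and then applying a small homotopy. The extra case discussion about shared endpoints is a reasonable elaboration but does not change the underlying argument, which in both accounts rests on the same informal geometric observation that the modifications stay inside the $\epsilon$-neighborhoods of the endpoint vertices and therefore cannot create new intersections.
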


\begin{proof}
Let $\gamma_s$ and $\gamma_t$ be green admissible curves for $s$ and $t$ that do not intersect. Suppose $s$ has corners $(u,F),\ (v,G)$ as a green segment and $(u,F^{\pr}),\ (v,G^{\pr})$ as a red segment. Define $\gamma_s^{\pr}$ to be the curve $\alpha_v^{G,G^{\pr}}\gamma_s\alpha_u^{F^{\pr},F}$. We apply a slight homotopy to $\gamma_s^{\pr}$ so that $\gamma_s^{\pr}$ is a simple curve and $z(u,F^{\pr})$ and $z(v,G^{\pr})$ are the unique points of distance at most $\epsilon$ from some interior vertex of $T$. Then $\gamma_s^{\pr}$ is a red admissible curve for $s$. If $\gamma_t^{\pr}$ is defined in a similar manner, then it is a red admissible curve for $t$ that does not intersect $\gamma_s^{\pr}$. Hence, $s$ and $t$ are noncrossing as red segments.
\end{proof}

\begin{theorem}\label{thm_kreweras}
Let $\Bbf$ be a noncrossing tree partition, and let $\Fcal=\rho^{-1}(\Bbf)$. Setting
$$S=\{s\in\Seg(T):\ \exists\Fcal\stackrel{s}{\ra}\Fcal^{\pr}\},$$
we have $S=\Seg(\Bbf^{\pr})$ for some noncrossing tree partition $\Bbf^{\pr}$.
\end{theorem}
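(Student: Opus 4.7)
The plan is to deduce the theorem directly from Lemma~\ref{lem_kreweras} applied to the lattice $\ora{FG}(T)$. The key intermediate identity is that for every facet $\mathcal{G}\in\ora{FG}(T)$, the set $\lambda_{\downarrow}(\mathcal{G})$ of labels of lower covers of $\mathcal{G}$ coincides with $\Seg(\rho(\mathcal{G}))$.

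First I would verify that $\ora{FG}(T)$ is a congruence-uniform lattice equipped with a CU-labeling by segments. By Theorem~\ref{thm_eta_phi_main}, $\ora{FG}(T)$ is the quotient $\Bic(T)/\Theta$; by Theorem~\ref{thm_biclosed_main}, $\Bic(T)$ is congruence-uniform; and the corollary stated in Section~\ref{subsec_cu_lattices} guarantees that congruence-uniformity descends to quotients. The segment labeling $\lambda(\mathcal{G}^{\pr},\mathcal{G})=s$ whenever $\mathcal{G}^{\pr}\stackrel{s}{\ra}\mathcal{G}$ descends from the natural CN-labeling of $\Bic(T)$ (Theorem~\ref{thm_biclosed_main}(\ref{thm_biclosed_main_cn}) together with Lemma~\ref{lem_cn_cong}(1)), and one checks using Lemma~\ref{lem_associates} that it is in fact a CU-labeling.

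Next I would establish the identity $\lambda_{\downarrow}(\mathcal{G})=\Seg(\rho(\mathcal{G}))$ for every $\mathcal{G}\in\ora{FG}(T)$. Unpacking definitions, $\lambda_{\downarrow}(\mathcal{G})$ is exactly the set $S_{\mathcal{G}}=\{s:\exists\mathcal{G}^{\pr}\stackrel{s}{\ra}\mathcal{G}\}$, which is precisely the set used in the construction of $\rho(\mathcal{G})$. Lemma~\ref{lem_eta_phi_order_preserving} translates these covers into the biclosed condition $\phi(\mathcal{G})\setm\{s\}\in\Bic(T)$, and the proof of Proposition~\ref{prop_rho_bij} identifies $\{s:\phi(\mathcal{G})\setm\{s\}\in\Bic(T)\}$ with $\Seg(\rho(\mathcal{G}))$, yielding the desired identity.

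Granted these two ingredients, the theorem is immediate: Lemma~\ref{lem_kreweras} produces a unique $\mathcal{G}:=\Kr(\Fcal)\in\ora{FG}(T)$ with $\lambda_{\downarrow}(\mathcal{G})=\lambda^{\uparrow}(\Fcal)=S$, and setting $\Bbf^{\pr}:=\rho(\mathcal{G})$ (which lies in $\NCP(T)$ by Proposition~\ref{prop_rho_bij}) gives $\Seg(\Bbf^{\pr})=\lambda_{\downarrow}(\mathcal{G})=S$, as required. The main step requiring careful bookkeeping is the identification $\lambda_{\downarrow}(\mathcal{G})=\Seg(\rho(\mathcal{G}))$; all the tools are already in place, but combining Lemma~\ref{lem_eta_phi_order_preserving} with the proof of Proposition~\ref{prop_rho_bij} to conclude this cleanly is the main bit of work. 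No further conceptual obstacle is anticipated.
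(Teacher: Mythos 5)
Your proof is correct. Since the paper states Theorem~\ref{thm_kreweras} without an explicit proof, it is worth noting that the placement of Proposition~\ref{prop_red_green_switch} immediately before the theorem suggests the authors had a geometric argument in mind: one realizes the segments of $S$ by green admissible curves (as subarcs of the marked arcs, dually to the construction of $\rho$), observes they are pairwise noncrossing, and then invokes Proposition~\ref{prop_red_green_switch} to conclude they are also noncrossing as red segments, which is what the definition of $\NCP(T)$ requires. Your route avoids Proposition~\ref{prop_red_green_switch} entirely: you appeal to the fact that the segment labeling of $\ora{FG}(T)$ is a CU-labeling (established at the end of Section~\ref{Sec:Sub_and_Quot}), apply Lemma~\ref{lem_kreweras} to produce the unique $\mathcal{G}$ with $\lambda_{\downarrow}(\mathcal{G})=\lambda^{\uparrow}(\Fcal)=S$, and read off $\Bbf^{\pr}=\rho(\mathcal{G})$. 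The one identity your argument hinges on, $\lambda_{\downarrow}(\mathcal{G})=\Seg(\rho(\mathcal{G}))$, is indeed extracted correctly: it is the content of the equality $\Seg(\Bbf)=S$ established inside the proof of Proposition~\ref{prop_rho_bij}, translated via Lemma~\ref{lem_eta_phi_order_preserving} from ``$\phi(\mathcal{G})\setminus\{s\}$ biclosed'' to ``$s$ is a lower-cover label of $\mathcal{G}$.'' A benefit of your approach over the geometric one is that it sidesteps having to re-verify, in the green setting, that $S$ is exactly the set of inclusion-minimal segments of its blocks and that the blocks are segment-connected; those facts come for free since $\rho(\mathcal{G})\in\NCP(T)$ by Proposition~\ref{prop_rho_bij}. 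So this is a genuinely different and somewhat cleaner route than what the paper appears to intend.
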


The noncrossing tree partition $\Bbf^{\pr}$ of Theorem~\ref{thm_kreweras} is called the \textbf{Kreweras complement} of $\Bbf$. Kreweras complementation is a bijection $\Kr:\NCP(T)\ra\NCP(T)$.

\subsection{Red-green trees}\label{subsec_redgreen_trees}

A \textbf{red-green tree} $\Tcal$ is a collection of pairwise noncrossing colored segments such that every pair of vertices in $V^o$ is connected by a sequence of curves in $\Tcal$. The segments in $\Tcal$ are allowed to be red or green. Let $\Tcal_r$ (resp. $\Tcal_g$) be the subset of red (resp. green) segments of $\Tcal$.

That red-green trees are actual trees (i.e. acyclic) will be a consequence of Theorem~\ref{thm_red_green_trees}.

Given $\Fcal\in\ora{FG}(T)$, let $S_r=\{s:\ \exists\Fcal^{\pr}\stackrel{s}{\ra}\Fcal\}$ and $S_g=\{s:\ \exists\Fcal\stackrel{s}{\ra}\Fcal^{\pr}\}$.

\begin{theorem}\label{thm_red_green_trees}
The sets $S_r$ and $S_g$ form the red and green segments of a red-green tree. Conversely, every red-green tree is of this form.
\end{theorem}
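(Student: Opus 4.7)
My plan is to prove both directions by reducing to the bijection $\rho : \ora{FG}(T) \to \NCP(T)$ of Proposition~\ref{prop_rho_bij} and its compatibility with Kreweras complementation (Theorem~\ref{thm_kreweras}).

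For the forward direction, I will fix $\Fcal \in \ora{FG}(T)$ and set $\Bbf = \rho(\Fcal)$ and $\Bbf^{\pr} = \Kr(\Bbf)$. By Corollary~\ref{cor_CJR} together with the construction of $\rho$, one has $S_r = \Seg(\Bbf)$, so $S_r$ is a pairwise noncrossing collection of red segments by the very definition of a noncrossing tree partition. By Theorem~\ref{thm_kreweras}, $S_g = \Seg(\Bbf^{\pr})$. To see that $S_g$ is pairwise noncrossing as green segments, I will apply a reflection argument: the reflection $T^{\vee}$ of $T$ swaps red and green admissible curves and reverses the orientation of $\ora{FG}$, so upper labels of $\Fcal$ in $\ora{FG}(T)$ correspond to lower labels of the reflected facet in $\ora{FG}(T^{\vee})$; applying the previous reasoning to $T^{\vee}$ shows $S_g$ is pairwise noncrossing as red segments of $T^{\vee}$, hence as green segments of $T$.

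For noncrossing across colors and for the spanning-tree property, I will realize $\Fcal$ by a noncrossing family of curves $\{\gamma_p\}_{p \in \Fcal}$ as in the definition of $\rho$. Every non-boundary arc $p \in \Fcal$ admits a unique flip producing a segment label $s$. Inspecting Definition~\ref{orflipgraph} together with Proposition~\ref{nccomplexprop}~(4), the marked corners of $p$ in $\Fcal$ lie on the red side of $s$ when $s$ is a lower label of $\Fcal$ and on the green side of $s$ when $s$ is an upper label; in either case the sub-path of $\gamma_p$ between its two endpoints at the marked corners realizes $s$ by an admissible curve of the required color. Since distinct non-boundary arcs produce distinct labels and the $\gamma_p$ are pairwise disjoint, the corresponding admissible curves for segments in $S_r \cup S_g$ are pairwise disjoint regardless of color. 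Counting non-boundary arcs via Corollary~\ref{Cor_pure_thin} and an Euler characteristic computation yields $|S_r| + |S_g| = |V^o| - 1$, and the resulting planar, noncrossing graph on $V^o$ must be acyclic (a cycle would force two of its segments to share an endpoint at a common interior vertex in an incompatible way), so it is a spanning tree.

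For the converse, given a red-green tree $\Tcal = \Tcal_r \cup \Tcal_g$, I will define $\Bbf$ to be the partition of $V^o$ whose blocks are the connected components of $(V^o, \Tcal_r)$. Each block is segment-connected by construction and $\Tcal_r$ is noncrossing as a red collection by hypothesis, so it suffices to verify $\Seg(\Bbf) = \Tcal_r$ to conclude that $\Bbf$ is a noncrossing tree partition. The inclusion $\Tcal_r \subseteq \Seg(\Bbf)$ is immediate, and for the reverse I will combine Lemma~\ref{lem_nonsegment} and Lemma~\ref{lem_segment_decomposition} with the noncrossing property of $\Tcal_r$ to decompose any putative longer segment in $\Seg(\Bbf)$ into elements of $\Tcal_r$, contradicting inclusion-minimality. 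Setting $\Fcal = \rho^{-1}(\Bbf)$ then gives $S_r(\Fcal) = \Tcal_r$, and the forward direction guarantees that $(S_r(\Fcal), S_g(\Fcal))$ is a red-green tree with the same red part as $\Tcal$. The equality $S_g(\Fcal) = \Tcal_g$ will follow once one shows that the green part of a red-green tree is uniquely determined by its red part; the uniqueness step can be deduced from Lemma~\ref{lem_kreweras} applied to the CU-labeling of $\ora{FG}(T)$ by segments, which identifies upper covers with the Kreweras complement of the lower covers.

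The main obstacle is precisely this last uniqueness step: proving that $\Tcal_g$ is forced by $\Tcal_r$ in an abstract red-green tree, or equivalently that $\Tcal_g = \Seg(\Kr(\Bbf))$ directly from the noncrossing and spanning conditions. This will require a careful analysis of how each green segment is constrained by the red noncrossing data, the intra-green noncrossing condition, and the edge-count $|\Tcal_g| = |V^o| - 1 - |\Tcal_r|$.
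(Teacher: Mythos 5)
The converse direction has a genuine gap that you yourself flag as ``the main obstacle.'' Your plan is: given a red-green tree $\Tcal$, set $\Bbf$ from $\Tcal_r$, take $\Fcal=\rho^{-1}(\Bbf)$, and then try to conclude $\Tcal_g=S_g(\Fcal)=\Seg(\Kr(\Bbf))$ by appealing to Lemma~\ref{lem_kreweras}. But Lemma~\ref{lem_kreweras} only asserts that for each facet $\Fcal$ there is a \emph{unique facet} $\Fcal'$ with $\lambda^{\uparrow}(\Fcal)=\lambda_{\downarrow}(\Fcal')$. That is uniqueness \emph{within $\ora{FG}(T)$}, not a statement about abstract red-green trees. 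Nothing in the lattice abstraction rules out a second noncrossing, spanning set of green segments $\Tcal_g'$ compatible with $\Tcal_r$ that does \emph{not} arise as an upper-label set of any facet. The paper closes exactly this gap by a direct (and fairly long) biclosed-set computation: it shows $\Seg(T)\setm\pi^{\uparrow}(\tilde\phi(\Bbf))=\ov{\bigcup_{s\in\Tcal_g}K_s}$, working segment by segment, using Lemmas~\ref{lem_crossing_condition} and \ref{lem_crossing_condition_2} to rule out crossings and reducing an arbitrary $t$ with $K_t\cap X=\emptyset$ to a concatenation of green segments in $\bigcup K_s$. Once this identity holds, the right-hand side determines $\Tcal_g$ via canonical join-representations. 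You cannot skip this computation; it is where the ``forcing'' of $\Tcal_g$ by $\Tcal_r$ actually happens.

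In the forward direction your approach diverges from the paper on the spanning-tree step. The paper proves connectivity of $(V^o,S_r\cup S_g)$ by induction on flips, citing Proposition~\ref{graphofSEG(X)} and Figure~\ref{Fig:typesofflip}: each flip type demonstrably preserves connectivity. You instead count edges (correctly, $|S_r|+|S_g|=|V^o|-1$ from Corollary~\ref{Cor_pure_thin} and an Euler count) and then claim acyclicity from the noncrossing condition, asserting that ``a cycle would force two of its segments to share an endpoint at a common interior vertex in an incompatible way.'' That assertion is not justified: admissible curves sharing an endpoint vertex of $T$ are permitted to be noncrossing provided they leave from distinct corners, so a cycle among segments is not \emph{prima facie} excluded by the noncrossing condition alone. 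You need either to supply a real acyclicity argument (the closed walk in $T$ must double back since $T$ is a tree, and one must then trace through Lemma~\ref{Lemma:crossingcurves} to derive a crossing), or to argue connectivity directly as the paper does; with the edge count in hand, either route suffices, but neither is currently written out.
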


\begin{proof}
In the same way that a nonintersecting collection of red admissible curves for segments of $S_r$ was constructed in the definition of $\rho$ in Section~\ref{subsec_kreweras}, one may construct a family of nonintersecting red and green admissible curves for $S_r\cup S_g$. It remains to show that the graph on the interior vertices of $T$ with edge set $S_r\cup S_g$ is connected. This follows from the fact that the graph of facets is connected, and flips preserve connectivity of $S_r\cup S_g$; see Proposition~\ref{graphofSEG(X)} and Figure~\ref{Fig:typesofflip}.

Now let $\Tcal$ be a red-green tree. Then $\Tcal_r$ is the set of minimal segments of a noncrossing tree partition $\Bbf$. Let $X=\tilde{\phi}(\Bbf)$, where $\tilde{\phi}$ is the map to biclosed sets from the proof of Proposition~\ref{prop_rho_bij}. By definition, $X=\ov{\bigcup_{s\in\Tcal_r}C_s}$. We prove that
$$\Seg(T)\setm\pi^{\uparrow}(X)=\ov{\bigcup_{s\in\Tcal_g}K_s}.$$
Since $\bigvee_{s\in\Tcal_g}\eta(K_s)$ is the canonical join-representation of an element in $\ora{FG}(T^{\vee})$, this equality uniquely identifies $\Tcal_g$.

By definition, the set $\Seg(T)\setm\pi^{\uparrow}(X)$ consists of segments $t$ for which $K_t\cap X$ is empty. We first show that $\ov{\bigcup_{s\in\Tcal_g}K_s}$ is a subset of $\Seg(T)\setm\pi^{\uparrow}(X)$. To this end, it suffices to show that $K_s\cap X=\emptyset$ holds whenever $s\in\Tcal_g$. If not, then let $s\in\Tcal_g$ such that $K_s\cap X$ is nonempty, and let $t\in K_s\cap X$. Since $t\in X$, there exist segments $t_1,\ldots,t_l,\ s_1,\ldots, s_l$ such that $t=t_1\circ\cdots\circ t_l$ and $t_i\in C_{s_i}$ for all $i$. Then $t_i\in K_t$ for some $i$. Since $K_t\subseteq K_s$, $t_i$ is in $K_s$. But since $s_i$ and $s$ do not cross, $K_s\cap C_{s_i}$ is empty by Lemma~\ref{lem_crossing_condition_2}, a contradiction.

Now we prove that $\Seg(T)\setm\pi^{\uparrow}(X)$ is a subset of $\ov{\bigcup_{s\in\Tcal_g}K_s}$. Let $t=[u,v]$ be a segment for which $K_t\cap X=\emptyset$. Since $\Tcal$ is a red-green tree, there is a path in $\Tcal$ with edges $s_1,\ldots,s_l$ such that $s_1$ starts at $u$ and $s_l$ ends at $v$. We consider two cases: either $t$ is the concatenation of $s_1,\ldots,s_l$ (i.e. $t=s_1\circ\cdots\circ s_l$), or it is not.

Assume that $t$ is not equal to the concatenation of $s_1,\ldots,s_l$. Then there exists a vertex $w$ incident to an edge $e$ such that two adjacent segments $s_i,s_{i+1}$ both contain $e$ and share an endpoint at $w$. Then $s_i$ and $s_{i+1}$ must have different colors. Up to reversing the order of the segments, we may assume $s_i$ is red and $s_{i+1}$ is green. Let $[w^{\pr},w]$ be the largest common subsegment of $s_i$ and $s_{i+1}$. Since $s_i$ and $s_{i+1}$ are noncrossing, the segment $[w^{\pr},w]$ is in $K_{s_i}\cap C_{s_{i+1}}$. Let $s_i^{\pr},s_{i+1}^{\pr}$ be segments such that $s_i^{\pr}\circ[w^{\pr},w]=s_i$ and $s_{i+1}^{\pr}\circ[w^{\pr},w]=s_{i+1}$. It is possible that $[w^{\pr},w]$ is equal to $s_i$ or $s_{i+1}$ (but not both), in which case $s_i^{\pr}$ or $s_{i+1}^{\pr}$ is a lazy path and thus not a segment.

If $s_i^{\pr}$ is a segment and $i>1$, we claim that it does not cross $s_{i-1}$. Indeed, if $s_i^{\pr}$ and $s_{i-1}$ cross, then $s_{i-1}$ must be green and $C_{s_i^{\pr}}\cap K_{s_{i-1}}$ is nonempty. But this implies $C_{s_i}\cap K_{s_{i-1}}$ is nonempty, a contradiction.

If $s_i^{\pr}$ is not a segment and $i>1$, we claim that $s_{i+1}^{\pr}$ does not cross $s_{i-1}$. If $s_{i+1}^{\pr}$ and $s_{i-1}$ do cross, then $s_{i-1}$ must be red and $K_{s_{i+1}^{\pr}}\cap C_{s_{i-1}}$ is nonempty. But this implies $K_{s_{i+1}}\cap C_{s_{i-1}}$ is nonempty, a contradiction.

Hence, $s_1,\ldots,s_i^{\pr},s_{i+1}^{\pr},\ldots,s_l$ is a sequence of red and green segments connecting the endpoints of $t$ such that the red segments are in $\bigcup_{s\in\Tcal_r}C_s$ and the green segments are in $\bigcup_{s\in\Tcal_g}K_s$. Moreover, adjacent segments are noncrossing. Proceeding inductively, we may assume that $t$ is the concatenation of noncrossing colored segments $t_1,\ldots,t_l$ where each $t_i$ is either a red segment in $\bigcup_{s\in\Tcal_r}C_s$ or a green segment in $\bigcup_{s\in\Tcal_g}K_s$. If $t_1,\ldots,t_l$ are all green segments, then $t\in\ov{\bigcup_{s\in\Tcal_g}K_s}$, as desired.

Assume at least one segment is red, and let $t_i,\ldots,t_j$ be a maximal subsequence of red segments. We prove that $t_i\circ\cdots\circ t_j$ is in $K_t$. If $i>1$, then $t_{i-1}$ is a green segment not crossing $t_i$ such that the concatenation $t_{i-1}\circ t_i$ is a segment. This implies $t_i\in K_{t_{i-1}\circ t_i}$. Similarly, if $j<l$, then $t_{j+1}$ is a green segment not crossing $t_j$, and $t_j$ is in $K_{t_j\circ t_{j+1}}$. Hence, $t_i\circ\cdots\circ t_j$ is in $K_t$. But this implies $t_m$ is in $K_t$ for some $i\leq m\leq j$. As $t_m\in X$, this contradicts the assumption that $K_t\cap X$ is empty.
\end{proof}

Since $\rho$ is a bijection that only depends on the red segments of a facet, Theorem~\ref{thm_red_green_trees} gives a bijection between noncrossing tree partitions and red-green trees. This correspondence encodes Kreweras complementation in a nice way.

\begin{corollary}\label{cor_kreweras_rgtree}
Let $\Bbf$ be a noncrossing tree partition. There exists a unique red-green tree $\Tcal$ whose set of red segments is $\Seg(\Bbf)$. Moreover, the set of green segments of $\Tcal$ is $\Seg(\Kr(\Bbf))$.
\end{corollary}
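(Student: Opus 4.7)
The plan is to derive this corollary directly from Proposition~\ref{prop_rho_bij}, Theorem~\ref{thm_red_green_trees}, and Theorem~\ref{thm_kreweras}, which together have essentially already packaged all the required ingredients; the corollary itself is mostly a matter of assembling them.

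For existence, I would set $\Fcal = \rho^{-1}(\Bbf)$, which is well-defined since $\rho$ is a bijection by Proposition~\ref{prop_rho_bij}. Applying the forward direction of Theorem~\ref{thm_red_green_trees} to $\Fcal$ produces a red-green tree $\Tcal$ whose set of red segments is $S_r = \{s : \exists\, \Fcal^{\pr} \stackrel{s}{\ra} \Fcal\}$ and whose set of green segments is $S_g = \{s : \exists\, \Fcal \stackrel{s}{\ra} \Fcal^{\pr}\}$. By construction of $\rho$ given just before Proposition~\ref{prop_rho_bij}, the lower labels of $\Fcal$ correspond bijectively to the minimal connecting segments of $\rho(\Fcal)$; thus $S_r = \Seg(\rho(\Fcal)) = \Seg(\Bbf)$. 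By the definition of Kreweras complementation in Theorem~\ref{thm_kreweras}, the set of upper labels $S_g$ equals $\Seg(\Kr(\Bbf))$. This proves existence and simultaneously identifies the green segments of $\Tcal$ with $\Seg(\Kr(\Bbf))$.

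For uniqueness, suppose $\Tcal^{\pr}$ is any red-green tree with $\Tcal^{\pr}_r = \Seg(\Bbf)$. By the converse direction of Theorem~\ref{thm_red_green_trees}, there is a facet $\Fcal^{\pr} \in \ora{FG}(T)$ whose associated set of lower labels is $\Tcal^{\pr}_r$ and whose set of upper labels is $\Tcal^{\pr}_g$. Then the lower labels of $\Fcal^{\pr}$ coincide with $\Seg(\Bbf)$, so $\Seg(\rho(\Fcal^{\pr})) = \Seg(\Bbf)$. Since a noncrossing tree partition is recovered from its inclusion-minimal segments as the connected components of the graph on $V^o$ with these segments as edges (this uses segment-connectedness built into the definition), we conclude $\rho(\Fcal^{\pr}) = \Bbf$. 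Injectivity of $\rho$ gives $\Fcal^{\pr} = \Fcal$, and hence $\Tcal^{\pr}_g = S_g = \Tcal_g$, so $\Tcal^{\pr} = \Tcal$.

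The only place where there is any real content beyond bookkeeping is the step invoking Theorem~\ref{thm_red_green_trees}: without its converse direction one would not know that an arbitrary red-green tree comes from a facet, and without its forward direction one would not know that $S_r \cup S_g$ is connected on $V^o$. These heavy-lifting steps are already in place, so no additional obstacle arises in the proof of the corollary itself.
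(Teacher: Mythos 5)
Your proposal is correct and takes essentially the same route the paper does (which just compresses the argument into one sentence before stating the corollary): compose the bijection $\rho$ with the facet/red-green-tree correspondence of Theorem~\ref{thm_red_green_trees}, using Theorem~\ref{thm_kreweras} to identify the upper labels with $\Seg(\Kr(\Bbf))$. Your explicit observation that a noncrossing tree partition is recoverable from its set of minimal segments via connected components is exactly the implicit content of the paper's remark that ``$\rho$ is a bijection that only depends on the red segments of a facet,'' so no new idea is needed beyond what you have written.
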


\subsection{Lattice property}\label{subsec_noncrossing_lattice}

Let $\Pi(V^o)$ be the lattice of all set partitions of $V^o$, ordered by refinement. Recall that the meet of any two set partitions is their common refinement. We prove that $\NCP(T)$ is a meet-subsemilattice of $\Pi(V^o)$ in Theorem \ref{prop_ncp_lattice}. Since $\NCP(T)$ has a top and bottom element, this implies that it is a lattice.

\begin{theorem}\label{prop_ncp_lattice}
The poset $\NCP(T)$ is a lattice. %\textcolor{red}{I recommend having the statement be ``$\NCP(T)$ is a lattice''. Then in the first lines of the proof you say why the proof reduces to showing that $\NCP(T)$ is closed under meets. That is, I think you should move a couple of the sentences preceding this statement into the beginning of the following proof.}
\end{theorem}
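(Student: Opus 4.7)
The plan is to realize $\NCP(T)$ as a meet-subsemilattice of $\Pi(V^o)$; together with the observation that $\NCP(T)$ has top and bottom elements (supplied by the bijection $\rho:\ora{FG}(T)\ra\NCP(T)$ of Proposition~\ref{prop_rho_bij}), this upgrades it to a lattice. So the whole task reduces to showing that for $\Bbf_1,\Bbf_2\in\NCP(T)$ the common refinement $\Bbf:=\Bbf_1\wedge\Bbf_2\in\Pi(V^o)$, whose blocks are the nonempty intersections $B_1\cap B_2$ with $B_1\in\Bbf_1$ and $B_2\in\Bbf_2$, is again a noncrossing tree partition. I would verify the two defining properties in turn.

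For segment-connectedness of a block $B=B_1\cap B_2$, I would fix distinct $u,v\in B$ and apply Lemma~\ref{lem_segment_decomposition} to $B_1$ and $B_2$ separately, obtaining two decompositions of $[u,v]$ whose junctions are exactly $B_1\cap[u,v]$ and $B_2\cap[u,v]$ (this exhaustion is implicit in the inductive proof of that lemma, which at each step picks the closest remaining block-vertex). The key observation is that any vertex $w\in[u,v]\setm B$ fails to lie in $B_1$ or in $B_2$, so it sits in the interior of some decomposition segment and hence the two edges of $[u,v]$ incident to $w$ must share a face. Therefore $[u,v]$ turns only at points of $B\cap[u,v]$, and enumerating $B\cap[u,v]$ in order as $u=w_0,w_1,\ldots,w_k=v$ yields a concatenation $[u,v]=[w_0,w_1]\circ\cdots\circ[w_{k-1},w_k]$ with each $[w_{i-1},w_i]$ a genuine segment in $\Seg(B)$.

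For the noncrossing condition, I would take $s\in\Seg(B),\ s'\in\Seg(B')$ in distinct blocks of $\Bbf$ and, without loss of generality, assume $B_1\neq B_1'$ (the symmetric case uses $\Bbf_2$). Decomposing $s$ and $s'$ as in the previous step gives $s=s_1\circ\cdots\circ s_p$ with each $s_i\in\Seg(B_1)$ and $s'=s_1'\circ\cdots\circ s_{p'}'$ with each $s_j'\in\Seg(B_1')$, so all pairs of pieces live inside the red-noncrossing family $\Seg(\Bbf_1)$. I then invoke the red analog of Lemma~\ref{lem_crossing_div}: if neither $[a,b]$ nor $[b,c]$ crosses $[d,e]$ as red segments, then neither does $[a,c]$. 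This red analog is obtained from the stated green version by reflecting $T$ to $T^{\vee}$, since red segments of $T$ are precisely the green segments of $T^{\vee}$ and the $\alpha$-arc concatenation argument transfers without change. Iterating the contrapositive, first over the decomposition of $s'$ (to get that each $s_i$ is noncrossing with $s'$) and then over that of $s$ (to get that $s$ is noncrossing with $s'$), propagates the pairwise noncrossing of pieces up to the desired conclusion.

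The main obstacle will be the noncrossing step, which forces the use of the red version of Lemma~\ref{lem_crossing_div}; the reflection argument to $T^{\vee}$ is the shortest way to reuse the green proof verbatim. A subtle but essential auxiliary point is that Lemma~\ref{lem_segment_decomposition} actually enumerates \emph{all} block-vertices on the path $[u,v]$ rather than just producing some sequence, and this exhaustiveness is exactly what makes the consecutive-edge analysis at vertices of $[u,v]\setm B$ valid in the first step.
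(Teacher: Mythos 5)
Your high-level strategy — realize $\NCP(T)$ as a meet-subsemilattice of $\Pi(V^o)$ and then use the top and bottom elements — matches the paper exactly. Your argument for segment-connectedness of the common refinement is a valid alternative route: rather than the paper's induction on the length of $[u,v]$ (pick the farthest $u_1\in[u,v]$ with $[u,u_1]$ a segment, show $u_1\in B\cap B'$), you observe that every $w\in[u,v]\setm B$ fails to lie in one of $B_1,B_2$ and hence is an interior vertex of some $\Seg(B_i)$-piece, so $[u,v]$ only turns at points of $B$. That is correct, though you do need to note (as you did) that the pieces $[w_{i-1},w_i]$ have no interior $B$-vertices, which is what makes them inclusion-minimal and hence members of $\Seg(B)$. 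Your noncrossing argument for segments $s,s'$ from \emph{distinct} blocks of the common refinement is also sound, and the reflection to $T^\vee$ to get the red version of Lemma~\ref{lem_crossing_div} is a clean way to phrase what the paper uses implicitly.

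However, there is a genuine gap: you only consider $s\in\Seg(B)$ and $s'\in\Seg(B')$ with $B\neq B'$, but the definition of a noncrossing tree partition requires \emph{all} pairs in $\Seg_r(\Bbf)$ to be noncrossing, including pairs $s,s'$ from the \emph{same} block of the common refinement. This is precisely where the shared-endpoint case lives: if $s$ and $s'$ share an endpoint $v$, then $v$ belongs to the block of each, so they necessarily sit in one common block of $\Bbf_1\wedge\Bbf_2$. Your decompose-and-iterate scheme does not carry over to this case: when $s$ and $s'$ share the red corner at $v$ (which is exactly what it means for them to cross while sharing an endpoint), the first pieces of their $\Seg(B_1)$-decompositions either coincide or also share that corner, so the premise ``all pieces are pairwise noncrossing inside $\Seg(\Bbf_1)$'' is either vacuous or false, and Lemma~\ref{lem_crossing_div} gives you nothing. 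The paper treats this as a separate case (its ``shared common endpoint'' case): from the crossing one extracts a maximal common subsegment $[b,e]$, then argues that $e$ must lie in the $\Bbf$-block and the $\Bbf'$-block containing the three relevant endpoints, hence $e$ lies in the refined block, contradicting inclusion-minimality of $\Seg(B'')$. You would need to supply an argument of this kind to close the gap.
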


\begin{proof}
Let $\Bbf,\Bbf^{\pr}$ be two noncrossing tree partitions, and let $\Bbf^{\pr\pr}$ be the common refinement of $\Bbf$ and $\Bbf^{\pr}$. We claim that $\Bbf^{\pr\pr}$ is a noncrossing tree partition and deduce that $\text{NCP}(T)$ is a meet-subsemilattice of $\Pi(V^o)$. We first prove that every block of $\Bbf^{\pr\pr}$ is segment-connected.

Let $B^{\pr\pr}$ be a block of $\Bbf^{\pr\pr}$, and let $u,v\in B^{\pr\pr}$. There exist blocks $B\in\Bbf,\ B^{\pr}\in\Bbf^{\pr}$, each containing $u$ and $v$. We prove by induction that there exists a sequence $u=u_0,\ldots,u_l=v$ of elements of $B^{\pr\pr}$ such that $[u_{i-1},u_i]$ is a segment for all $i$. Among vertices of $[u,v]$, choose $u_1$ such that $[u,u_1]$ is a segment of maximum length. If $u_1=v$, we are done. Since $B$ is segment-connected, there exists a sequence $u=w_0,\ldots,w_m=v$ of elements of $B$ such that $[w_{i-1},w_i]$ is a segment for all $i$. Moreover, these segments may be chosen so that $[u,v]$ is the concatenation of the segments $[w_{i-1},w_i]$. Then $u_1$ is a vertex in $[w_{i-1},w_i]$ for some $i$. As $[w_{i-1},w_i]$ is a segment, this forces $u_1=w_{i-1}$ or $u_1=w_i$. Hence, $u_1\in B$. By a similar argument $u_1\in B^{\pr}$ so $u_1$ is an element of $B^{\pr\pr}$. By induction, we conclude that $B^{\pr\pr}$ is segment-connected.

Let $S=\bigcup_{B^{\pr\pr}\in\Bbf^{\pr\pr}}\Seg(B^{\pr\pr})$ and suppose $[a,b],[c,d]\in S$ such that $[a,b]$ and $[c,d]$ are crossing.

Assume that these segments share a common endpoint, say $b=c$, then they intersect in a common segment $[b,e]$. As $\Bbf$ and $\Bbf^{\pr}$ are noncrossing tree partitions, there exist blocks $B\in\Bbf,\ B^{\pr}\in\Bbf^{\pr}$ such that $a,b,d,e\in B$ and $a,b,d,e\in B^{\pr}$. Hence, $e\in B^{\pr\pr}$. But $[b,e]$ is a subsegment of $[a,b]$ and $[b,d]$, contradicting the minimality of segments in $\Seg(B^{\pr\pr})$.

Now assume that the endpoints are all distinct. Let $B_1^{\pr\pr},B_2^{\pr\pr}$ be blocks in $\Bbf^{\pr\pr}$ such that $a,b\in B_1^{\pr\pr}$ and $c,d\in B_2^{\pr\pr}$. Since $\Bbf^{\pr\pr}$ is the common refinement of $\Bbf$ and $\Bbf^{\pr}$, we may assume without loss of generality that $\Bbf$ contains distinct blocks $B_1$ and $B_2$ such that $a,b\in B_1$ and $c,d\in B_2$. Since $\Bbf$ is noncrossing, either $[a,b]\notin\Seg(B_1)$ or $[c,d]\notin\Seg(B_2)$. Suppose $[a,b]\notin\Seg(B_1)$. Then there exists $a_1\in[a,b]$ such that $[a,a_1]\in\Seg(B_1)$. Then either $[a,a_1]$ or $[a_1,b]$ cross $[c,d]$ by Lemma \ref{lem_crossing_div}. By induction, there exists segments $[a^{\pr},b^{\pr}]\in\Seg(B_1),\ [c^{\pr},d^{\pr}]\in\Seg(B_2)$ such that $[a^{\pr},b^{\pr}]$ and $[c^{\pr},d^{\pr}]$ cross, a contradiction.
\end{proof}

\subsection{Shard intersection order}\label{subsec_shard_intersection_order}

In this section, we prove that the shard intersection order of $\ora{FG}(T)$ is naturally isomorphic to $\NCP(T)$.

Let $B$ be a segment-connected subset of $T^o$, and let $S=\Seg_r(B)$. We define the \textbf{contracted tree} $T_B$ such that
\begin{itemize}
\item $B$ is the set of interior vertices of $T_B$,
\item $S$ is the set of interior edges of $T_B$, and
\item for edges $e$ with one endpoint $u$ in $B$ and the other endpoint not between two vertices of $B$, there is an edge from $u$ to the boundary in the direction of $e$.
\end{itemize}

As in Proposition~\ref{prop_biclosed_facial_interval}, we may compute the facial intervals of $\ora{FG}(T)$ as follows.

\begin{proposition}\label{prop_quotient_facial_interval}
Let $\Fcal\in\ora{FG}(T)$, and let $s_1,\ldots,s_k$ be a set of segments for which there exists flips $\Fcal\stackrel{s_i}{\ra}\Fcal^{\pr}$ for each $i$. Let $\Bbf=(B_1,\ldots,B_l)$ be the noncrossing tree partition with segments $\Seg(\Bbf)=\{s_1,\ldots,s_k\}$. Let $T_i$ denote the contracted tree $T_{B_i}$. Then
$$[\Fcal,\bigvee\Fcal^{\pr}]\cong\ora{FG}(T_1)\times\cdots\times\ora{FG}(T_l),$$
where the join is taken over $\Fcal^{\pr}$ for which $\Fcal\stackrel{s_i}{\ra}\Fcal^{\pr}$ for some $s_i$ (see Figure~\ref{Fig:Facial_Int}).
\end{proposition}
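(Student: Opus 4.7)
The strategy is to transfer the claim to the lattice of biclosed sets via the injective lattice map $\phi$, apply the facial interval decomposition of Proposition~\ref{prop_biclosed_facial_interval}, and then descend back through the quotient $\eta$. Let $X := \phi(\Fcal)$. Using Lemma~\ref{lem_eta_phi_order_preserving} and the fact that $\phi$ preserves joins, I will show that for each cover $\Fcal \stackrel{s_i}{\ra} \Fcal^{\pr}_i$ the set $X \cup \{s_i\}$ is biclosed with $\eta(X \cup \{s_i\}) = \Fcal^{\pr}_i$, and deduce that the $\Theta$-equivalence class of $\bigvee \Fcal^{\pr}$ has maximum element $Y := X \cup \ov{\{s_1,\ldots,s_k\}}$. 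By Lemma~\ref{lem_cong_elementary}(1), $[\Fcal, \bigvee \Fcal^{\pr}] \cong [X,Y]/\Theta$.

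Proposition~\ref{prop_biclosed_facial_interval} decomposes $[X,Y]$ as the product $\Bic(\ov{B_1^{\pr}}) \times \cdots \times \Bic(\ov{B_m^{\pr}})$ indexed by the partition of $\{s_1,\ldots,s_k\}$ generated by concatenation. The next step is to verify that this concatenation partition coincides with the partition induced by the NCP blocks $B_1,\ldots,B_l$ of $\Bbf$, so in particular $m = l$. The forward direction is immediate: if $s_i \circ s_j$ is a segment, then the shared endpoint lives in a unique block of $\Bbf$ and segment-connectedness places all endpoints of $s_i$ and $s_j$ in that block. The reverse direction uses the construction of the contracted tree $T_{B_r}$, whose interior edges are $\Seg(B_r)$ and whose local cyclic orders are inherited from $T$ (with boundary-bound edges inserted between non-adjacent segments); segment-connectedness of $B_r$ translates into a chain of pairwise-concatenating elements of $\Seg(B_r)$.

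Next, I will argue that the congruence $\Theta$ restricted to $[X,Y]$ factors over the product decomposition as $\Theta_1 \times \cdots \times \Theta_l$. Since $\Theta$ is determined by $\pi_{\downarrow}$, which is defined purely in terms of the local sets $C_s$ and $K_s$, this reduces to showing that for $s \in \ov{B_r^{\pr}}$ the sets $C_s$ and $K_s \cap (\Seg(T) \setm X)$ stay inside $\ov{B_r^{\pr}}$; this follows from Lemma~\ref{lem_subsegment} together with $X = \pi_{\downarrow}(X)$ and the disjointness of the closures $\ov{B_r^{\pr}}$ established in Proposition~\ref{prop_biclosed_facial_interval}.

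Finally, I will identify each quotient $\Bic(\ov{B_r^{\pr}})/\Theta_r$ with $\ora{FG}(T_{B_r})$. The segments of $T_{B_r}$ correspond bijectively to the elements of $\ov{B_r^{\pr}}$ by sending a segment of $T_{B_r}$ to the underlying concatenation in $T$; this correspondence preserves left/right turn data, the closure operator on segments, and the associated $\pi_{\downarrow}$-map. Consequently, applying Theorem~\ref{thm_eta_phi_main} to $T_{B_r}$ yields $\Bic(\ov{B_r^{\pr}})/\Theta_r \cong \ora{FG}(T_{B_r})$, and assembling the factors gives the desired isomorphism. The main obstacle is the last step: verifying that the contracted tree faithfully reflects the turning structure of $T$ restricted to the block, so that the definitions of admissible curves, biclosed sets, and $\pi_{\downarrow}$ pass transparently across the contraction. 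Once this compatibility is established, the product decomposition follows by assembling the pieces.
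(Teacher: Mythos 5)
Your overall architecture matches the paper's: lift to $\Bic(T)$, apply Proposition~\ref{prop_biclosed_facial_interval}, and descend through the quotient. However, there is a genuine error at the very first step, and a significant gap later on.

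\textbf{The choice of representative $X$.} You set $X := \phi(\Fcal)$, which is the \emph{minimum} of the $\Theta$-class of $\Fcal$. The paper instead uses $X := \pi^{\uparrow}(\phi(\Fcal))$, the \emph{maximum}. This matters: for $X\cup\{s_i\}$ to be biclosed for each upward flip $\Fcal\stackrel{s_i}{\ra}\Fcal_i^{\pr}$, you need $X$ to be the top of its $\Theta$-class, so that Lemma~\ref{lem_cong_elementary}(3) gives a unique biclosed upper cover in $\Bic(T)$ matching each cover in $\ora{FG}(T)$. With $X=\phi(\Fcal)$ this fails. Concretely, take $T$ with three degree-3 interior vertices $v_1,v_2,v_3$ on a path (so $\ora{FG}(T)$ is the Tamari pentagon), with $a=[v_1,v_2]$, $b=[v_2,v_3]$, $c=a\circ b$, and $C_c=\{a,c\}$, $K_c=\{b,c\}$. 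For the facet $\Fcal$ with $\phi(\Fcal)=\{b\}$ we have $\pi^{\uparrow}(\phi(\Fcal))=\{b,c\}$, the unique upward flip is labeled $a$, and $\{b,c\}\cup\{a\}=\{a,b,c\}$ is biclosed — but $\phi(\Fcal)\cup\{a\}=\{a,b\}$ is not even closed, since $a\circ b = c\notin\{a,b\}$. So with your $X$, the hypothesis of Proposition~\ref{prop_biclosed_facial_interval} fails, and the rest of the argument cannot be started. This error also forces you to use $X=\pi_{\downarrow}(X)$ in the third paragraph; after the correction, that identity is false (what holds instead is $X=\pi^{\uparrow}(X)$), which is precisely what the paper uses.

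\textbf{The factoring of $\Theta$.} Your argument that $\Theta$ restricted to $[X,Y]$ is a product congruence is too thin. You reduce to showing $C_s$ and $K_s$ stay inside $\ov{B_r}$ modulo $X$, and cite Lemma~\ref{lem_subsegment} plus disjointness of the closures. But the key input the paper uses is that the blocks $B_i,B_j$ are \emph{noncrossing}: for $Z=X\cup\bigcup_i Z_i$ and $s\in\Seg(T_i)$, one must show $K_s\cap Z_j=\emptyset$ for $j\ne i$, and this is established via Lemma~\ref{lem_crossing_condition}, because a segment $t\in K_s\cap\ov{B_j}$ would, after Lemma~\ref{lem_subsegment}, produce a subsegment witnessing that some segment of $B_i$ crosses some segment of $B_j$. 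Disjointness of the closures $\ov{B_r}$ alone does not rule this out — a segment can lie in $K_s\cap\ov{B_j}$ without lying in $\ov{B_i}$. Without invoking the noncrossing property of the tree partition, the product structure of the congruence is not justified, and this is the heart of the proposition (it is why the decomposition matches the NCP blocks rather than some other coarser or finer partition).

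On the positive side: your explicit verification that the concatenation-induced partition of $\{s_1,\ldots,s_k\}$ coincides with the block structure of $\Bbf$, and your flagging of the need to check that the contracted tree $T_{B_r}$ preserves the closure and turning structure, make explicit two steps that the paper treats as implicit; those observations are sound and would improve the exposition once the two errors above are repaired.
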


\begin{figure}
$$\includegraphics[scale=1]{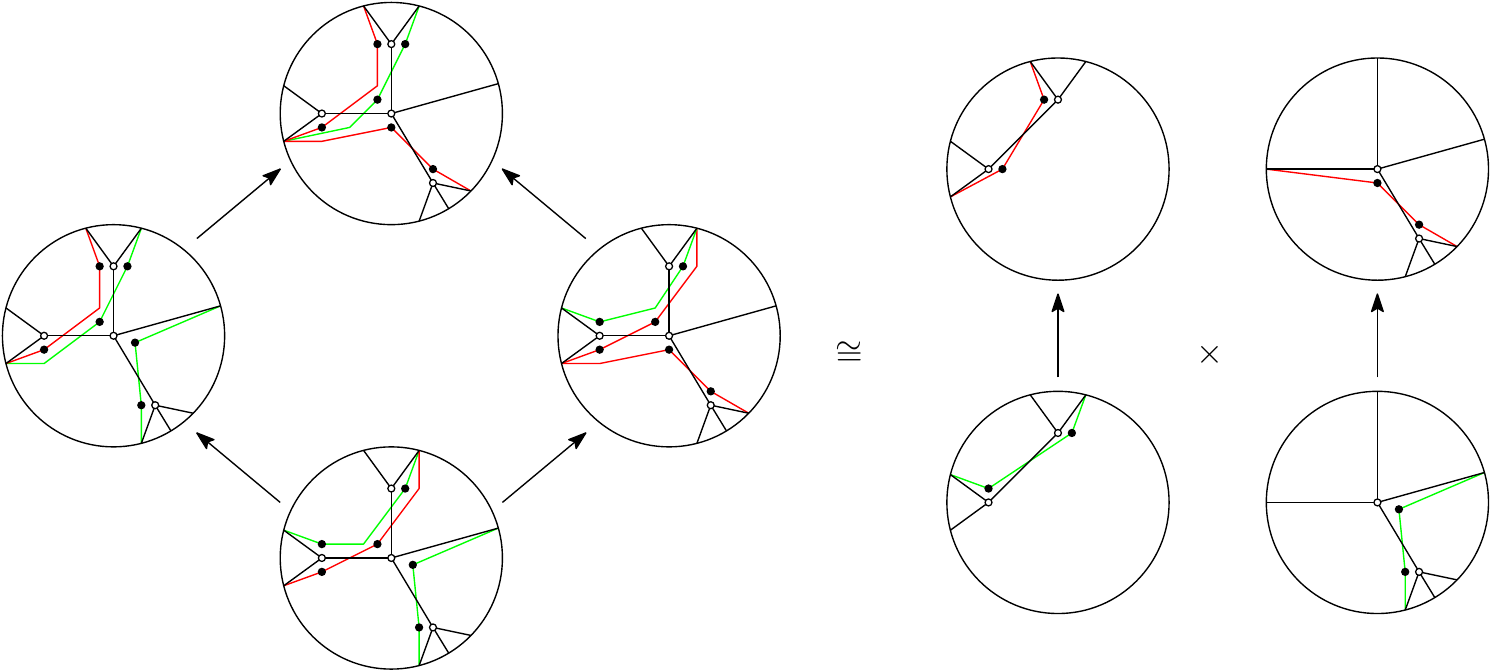}$$
\caption{An example of the isomorphism appearing in Proposition~\ref{prop_quotient_facial_interval} where $\mathcal{F}$ is a facet from the oriented flip graph in Figure~\ref{3dimorflipgraph}.}
\label{Fig:Facial_Int}
\end{figure}

\begin{proof}
Let $X$ be the biclosed set $\pi^{\uparrow}(\phi(\Fcal))$. Then $\{s_1,\ldots,s_k\}$ is the set of segments for which $X\cup\{s_i\}$ is biclosed, and $\eta(X\cup\{s_i\})=\Fcal^{\pr}$ where $\Fcal^{\pr}$ is the facet obtained by flipping $\Fcal$ at $s_i$. Set $Y=X\cup\ov{\{s_1,\ldots,s_k\}}$.

Let $\Bbf=\{B_1,\ldots,B_l\}$ be the noncrossing tree partition with $\Seg(\Bbf)=\{s_1,\ldots,s_k\}$, and let $T_i$ be the contracted tree $T_{B_i}$. By Proposition~\ref{prop_biclosed_facial_interval}, the interval $[X,Y]$ is isomorphic to $\Bic(T_1)\times\cdots\times\Bic(T_l)$.

As usual, we let $\Theta$ denote the lattice congruence that identifies $\ora{FG}(T)$ with $\Bic(T)/\Theta$. We let $\Theta_i$ denote the corresponding lattice congruence on $\Bic(T_i)$. By Lemma~\ref{lem_cong_elementary}, the quotient interval $[X,Y]/\Theta$ is isomorphic to $[\Fcal,\bigvee\Fcal^{\pr}]$. Hence, we prove
$$[X,Y]/\Theta\cong\Bic(T_1)/\Theta_1\times\cdots\times\Bic(T_l)/\Theta_l.$$

Given a segment $s$ supported by $T_i$, we let $C_s^{i}$ (resp. $K_s^{i}$) denote the intersection $C_s\cap\Seg(T_i)$ (resp. $K_s\cap\Seg(T_i)$), and we define maps $\pi_{\downarrow}^{i}$ and $\pi^{\uparrow}_{i}$ by the congruence $\Theta_i$. Explicitly, we have
\begin{align*}
\pi_{\downarrow}^{i}(Z) &= \{s\in\Seg(T_i):\ C_s^{i}\subseteq Z\}\ \mbox{and}\\
\pi^{\uparrow}_{i}(Z) &= \{s\in\Seg(T_i):\ K_s^{i}\cap Z\neq\emptyset\}.
\end{align*}

Let $Z,Z^{\pr}\in[X,Y]$. Then $Z=X\cup\bigcup_{i=1}^lZ_i$ and $Z^{\pr}=X\cup\bigcup_{i=1}^lZ_i^{\pr}$ for some (unique) $Z_i,Z_i^{\pr}\in\Bic(T_i)$. We prove that $Z\equiv Z^{\pr}\mod\Theta$ if and only if $Z_i\equiv Z_i^{\pr}\mod\Theta_i$ for all $i$.

Suppose $Z\equiv Z^{\pr}\mod\Theta$, and fix $i\in\{1,\ldots,l\}$. To prove that $Z_i\equiv Z_i^{\pr}\mod\Theta_i$, it suffices to show that $\pi^{\uparrow}_{i}(Z_i)=\pi^{\uparrow}(Z)\cap\Seg(T_i)$. If $s\in\pi^{\uparrow}_i(Z_i)$, then $K_s^i\cap Z_i$ is nonempty. But this implies $K_s\cap Z$ is nonempty, so $s\in\pi^{\uparrow}(Z)\cap\Seg(T_i)$. Conversely, if $s\in\pi^{\uparrow}(Z)\cap\Seg(T_i)$, then $K_s\cap Z$ is nonempty. Since $\pi^{\uparrow}(X)=X$, we deduce that $K_s\cap\bigcup_{j=1}^lZ_j$ is nonempty. But $K_s\cap Z_j=\emptyset$ whenever $j\neq i$ since blocks $B_i$ and $B_j$ are noncrossing. Hence, $s\in\pi^{\uparrow}_i(Z_i)$, as desired.

Now assume $Z_i\equiv Z_i^{\pr}\mod\Theta_i$ for all $i$. Since $\pi^{\uparrow}_i(Z_i)\subseteq\pi^{\uparrow}(Z)$ and $\pi^{\uparrow}$ is idempotent, we have
\begin{align*}
\pi^{\uparrow}(Z) &= \pi^{\uparrow}(X\cup\bigcup_{i=1}^l\pi^{\uparrow}_i(Z_i))\\
&= \pi^{\uparrow}(X\cup\bigcup_{i=1}^l\pi^{\uparrow}_i(Z_i^{\pr}))\\
&= \pi^{\uparrow}(Z^{\pr}).
\end{align*}

Therefore, $Z\equiv Z^{\pr}\mod\Theta$.

\end{proof}

\begin{theorem}\label{Thm:rhocircPsi}
The map $\rho\circ\psi^{-1}:\Psi(\ora{FG}(T))\ra\NCP(T)$ is a Kreweras-equivariant isomorphism of posets.
\end{theorem}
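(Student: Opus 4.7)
The plan is to show that for every facet $\Fcal \in \ora{FG}(T)$, the shard set $\psi(\Fcal)$ equals
$$\Seg_B(\rho(\Fcal)) := \{s \in \Seg(T) : \text{both endpoints of } s \text{ lie in a common block of } \rho(\Fcal)\},$$
and then to read off both the order-isomorphism and the Kreweras-equivariance from this identification. Since $\psi$ is a bijection by construction and $\rho$ is a bijection by Proposition~\ref{prop_rho_bij}, the composite $\rho\circ\psi^{-1}$ is automatically a bijection; only the compatibility with order and with Kreweras need to be checked.

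To establish this characterization of $\psi$, I would let $y_1,\ldots,y_k$ be the lower covers of $\Fcal$. These are the coatoms of the interval $[\bigwedge_i y_i,\Fcal]$ and $\bigwedge_i y_i$ is trivially the meet of these coatoms; therefore the dual of Lemma~\ref{lem_cu_facial_interval} yields atoms $a_1,\ldots,a_k$ of this interval with $\Fcal=\bigvee_i a_i$ and $\lambda(\bigwedge_j y_j,a_i)=\lambda(y_i,\Fcal)$ after reindexing. By the definition of $\rho$, these common labels comprise $\Seg(\rho(\Fcal))$. Proposition~\ref{prop_quotient_facial_interval}, applied at $\bigwedge_i y_i$ with upper flips $a_1,\ldots,a_k$ whose labels form the segment set of the noncrossing tree partition $\rho(\Fcal)$, then yields
$$\bigl[\bigwedge_i y_i,\Fcal\bigr]\;\cong\;\prod_{B\in\rho(\Fcal)}\ora{FG}(T_B).$$
Passing to cover labels via Lemma~\ref{lem_cn_cong} identifies $\psi(\Fcal)$ with the disjoint union of the segments of the contracted trees $T_B$; these correspond to segments of $T$ with both endpoints in $B$ (via Lemma~\ref{lem_segment_decomposition} and the definition of $T_B$), and every such segment labels some flip in $\ora{FG}(T_B)$ by the canonical join-representation of Corollary~\ref{cor_CJR} applied to $T_B$. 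Hence $\psi(\Fcal)=\Seg_B(\rho(\Fcal))$.

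From this description, the order-isomorphism is transparent: refinement of $\rho(\Fcal)$ into $\rho(\Fcal')$ immediately gives $\psi(\Fcal)\subseteq\psi(\Fcal')$; conversely, assuming $\psi(\Fcal)\subseteq\psi(\Fcal')$ and given $u,v$ in a common block $B$ of $\rho(\Fcal)$, segment-connectedness (Lemma~\ref{lem_segment_decomposition}) yields a chain $u=u_0,\ldots,u_l=v$ in $B$ with each $[u_{i-1},u_i]\in\Seg(B)\subseteq\psi(\Fcal)\subseteq\psi(\Fcal')$, so consecutive pairs $u_{i-1},u_i$ lie in a single block of $\rho(\Fcal')$ and transitivity places all of $u,v$ together; hence $B$ sits inside some block of $\rho(\Fcal')$.

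For Kreweras-equivariance, $\Kr$ on $\Psi(\ora{FG}(T))$ is by definition the pushforward of $\Kr$ on $\ora{FG}(T)$ along $\psi$, so the assertion reduces to $\rho(\Kr(\Fcal))=\Kr_{\NCP}(\rho(\Fcal))$. Both are noncrossing tree partitions and are determined by their minimal segment sets (Corollary~\ref{cor_kreweras_rgtree}): the left-hand side has segments $\lambda_{\downarrow}(\Kr(\Fcal))=\lambda^{\uparrow}(\Fcal)$ by the definitions of $\rho$ and of $\Kr$ (Lemma~\ref{lem_kreweras}), while the right-hand side has segments $\lambda^{\uparrow}(\Fcal)$ by the definition of Kreweras complementation on $\NCP(T)$ in Theorem~\ref{thm_kreweras}. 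The main obstacle lies in the second paragraph: correctly recognizing $[\bigwedge_i y_i,\Fcal]$ as the facial interval to which Proposition~\ref{prop_quotient_facial_interval} applies and, more subtly, verifying that every segment of every contracted tree $T_B$ is actually realized as a cover label of this interval so that we obtain equality $\psi(\Fcal)=\Seg_B(\rho(\Fcal))$ rather than mere containment—both points rest on the self-duality of facial intervals in CU-labelings (Lemma~\ref{lem_cu_facial_interval}) and on the canonical join-representation of Corollary~\ref{cor_CJR}.
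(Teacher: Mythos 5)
Your proof is correct and follows essentially the same route as the paper's. Both identify the interval $\bigl[\bigwedge_i y_i,\Fcal\bigr]$ via Lemma~\ref{lem_cu_facial_interval}, factor it with Proposition~\ref{prop_quotient_facial_interval} into $\prod_B\ora{FG}(T_B)$, read off $\psi(\Fcal)$ as $\bigsqcup_B\Seg(T_B)=\ov{\Seg(\rho(\Fcal))}$ (your $\Seg_B(\rho(\Fcal))$ is the same set, by Lemma~\ref{lem_segment_decomposition}), and then derive the order-isomorphism and Kreweras-equivariance from this identification. The one small wrinkle is the attribution of ``every segment of $T_B$ labels some flip in $\ora{FG}(T_B)$'' to Corollary~\ref{cor_CJR}; a cleaner justification is that $C_s\in\pi_{\downarrow}(\Bic(T_B))$ is join-irreducible with its unique lower cover labeled $s$, so that $s$ labels the corresponding cover of $\eta(C_s)$ in the quotient---but this is a minor citation issue, not a gap, and the paper passes over the same point without comment.
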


\begin{proof}
Let $\Fcal$ be an element of $\ora{FG}(T)$, and let
$$\Fcal^{\pr}=\bigwedge\{\Fcal^{\pr\pr}:\ \Fcal^{\pr\pr}\ra\Fcal\}.$$
Let $S=\Seg(\rho(\Fcal))$. By Lemma~\ref{lem_cu_facial_interval}, $\Fcal$ is equal to
$$\bigvee\{\Fcal^{\pr\pr}:\ \Fcal^{\pr}\stackrel{s}{\ra}\Fcal^{\pr\pr},\ s\in S\}.$$
Let $\rho(\Fcal)=(B_1,\ldots,B_l)$, and let $T_i$ be the contracted tree $T_{B_i}$. By Proposition~\ref{prop_quotient_facial_interval}, the interval $[\Fcal^{\pr},\Fcal]$ is isomorphic to $\ora{FG}(T_1)\times\cdots\times\ora{FG}(T_l)$.

The set $\psi(\Fcal)$ is defined to be the set of labels $s$ such that there exists a covering relation $\Fcal^{(1)}\stackrel{s}{\ra}\Fcal^{(2)}$ where $\Fcal^{\pr}\leq\Fcal^{(1)}\lessdot\Fcal^{(2)}\leq\Fcal$. Hence,
$$\psi(\Fcal)=\bigsqcup_{i=1}^l\Seg(T_i)=\ov{\Seg(\rho(\Fcal))}.$$

From this description, it is clear that $\psi$ is a bijection. Hence, the inverse $\psi^{-1}$ exists, and the composite map $\rho\circ\psi^{-1}$ is a bijection. Since the Kreweras complement is defined for both $\Psi(\ora{FG}(T))$ and $\NCP(T)$ via the bijections $\rho$ and $\psi$, the Kreweras-equivariance is immediate. If $\Fcal_1,\Fcal_2\in\ora{FG}(T)$ satisfy $\psi(\Fcal_1)\subseteq\psi(\Fcal_2)$, then the corresponding noncrossing tree partitions are ordered by refinement. Conversely, it is clear that if $\rho(\Fcal_1)\leq\rho(\Fcal_2)$, then any segment in $\psi(\Fcal_1)$ is contained in $\psi(\Fcal_2)$. Hence, the bijection $\rho\circ\psi^{-1}$ is an isomorphism of posets.
\end{proof}

\section{Trees and their tiling algebras}\label{Sec:FinDimAlg}

Given a tree $T$ embedded in $D^2$, we explain how one associates to it a finite dimensional algebra $\Lambda_T$. The construction we present is useful in that the indecomposable modules of the resulting algebra $\Lambda_T$, as we will show (see Corollary~\ref{stringsegmentbij}), are parameterized by the segments of $T.$ We also classify the extensions between indecomposable $\Lambda_T$-modules (see Propositions~\ref{separatesegext} and \ref{Prop:commonendext} and Theorems~\ref{shareendptext} and \ref{crossingnonsplitext}), which will be useful in our applications. Before presenting the definition of $\Lambda_T$, we review some background on path algebras, quiver representations, and string modules. At the end of this section, we show that the oriented flip graph of $T$ is isomorphic to the lattice of torsion-free classes in $\Lambda_T$-mod, and we show that the lattice of noncrossing tree partitions is isomorphic to the lattice of wide subcategories of $\Lambda_T$-mod.

\subsection{Path algebras and quiver representations} Following \cite{ass06}, let $Q$ be a given quiver. We define a \textbf{path} of \textbf{length} $\ell \ge 1$ to be an expression $\alpha_1\alpha_2\cdots\alpha_\ell$ where $\alpha_i \in Q_1$ for all $i \in [\ell]$ and $s(\alpha_i) = t(\alpha_{i+1})$ for all $i \in [\ell-1]$. We may visualize such a path in the following way $$\begin{xy} 0;<1pt,0pt>:<0pt,-1pt>:: 
(0,0) *+{\cdot} ="0",
(25,0) *+{\cdot} ="1",
(50,0) *+{\cdot} ="2",
(75,0) *+{\cdot} ="3",
(100,0) *+{\cdots} ="4",
(125,0) *+{\cdot} ="5",
(150,0) *+{\cdot} ="6",
(175,0) *+{\cdot} ="7",
"1", {\ar_{\alpha_1}"0"},
"2", {\ar_{\alpha_2}"1"},
"3", {\ar"2"},
"6", {\ar"5"},
"7", {\ar_{\alpha_\ell}"6"},
\end{xy}.$$ Furthermore, the \textbf{source} (resp. \textbf{target}) of the path $\alpha_1\alpha_2\cdots\alpha_\ell$ is $s(\alpha_\ell)$ (resp. $t(\alpha_1)$). Let $Q_\ell$ denote the set of all paths in $Q$ of length $\ell$. We also associate to each vertex $i \in Q_0$ a path of length $\ell = 0$, denoted $\varepsilon_i$, that we will refer to as the \textbf{lazy path} at $i$. 

\begin{definition}
Let $Q$ be a quiver. The \textbf{path algebra} of $Q$ is the $\Bbbk$-algebra generated by all paths of length $\ell \ge 0$. Throughout this paper, we assume that $\Bbbk$ is algebraically closed. The multiplication of two paths $\alpha_1\cdots \alpha_\ell \in Q_\ell$ and $\beta_1\cdots \beta_k \in Q_k$ is given by the following rule $$\alpha_1\cdots \alpha_\ell\cdot\beta_1\cdots \beta_k = \left\{\begin{array}{lrl}\alpha_1 \cdots\alpha_\ell\beta_1\cdots\beta_k \in Q_{\ell + k}& : & s(\alpha_\ell) = t(\beta_1) \\ 0 & : & s(\alpha_\ell) \neq t(\beta_1).\end{array}\right.$$ We will denote the path algebra of $Q$ by $\Bbbk Q$. Note also that as $\Bbbk$-vector spaces we have $$\Bbbk Q = \bigoplus_{\ell = 0}^\infty \Bbbk Q_\ell$$ where $\Bbbk Q_\ell$ is the $\Bbbk$-vector space of all paths of length $\ell$.
\end{definition}

In this paper, we study certain quivers $Q$ which have \textbf{oriented cycles}. We say a path of length $\ell \ge 0$ $\alpha_1 \cdots \alpha_\ell \in Q_\ell$ is an \textbf{oriented cycle} if $t(\alpha_1) = s(\alpha_\ell)$. We denote by $\Bbbk Q_{\ell,\text{cyc}} \subset \Bbbk Q_\ell$ the subspace of all oriented cycles of length $\ell \ge 0$. If a quiver $Q$ possesses any oriented cycles of length $\ell \ge 1$, we see that $\Bbbk Q$ is infinite dimensional. If $Q$ has no oriented cycles, we say that $Q$ is \textbf{acyclic}.

In order to avoid studying infinite dimensional algebras, we will add relations to path algebras whose quivers contain oriented cycles in such a way that we obtain finite dimensional quotients of path algebras. The relations we add are those coming from an \textbf{admissible} ideal $I$ of $\Bbbk Q$ meaning that $$I \subset \bigoplus_{\ell = 2}^\infty \Bbbk Q_\ell.$$ If $I$ is an admissible ideal of $\Bbbk Q$, we say that $(Q,I)$ is a \textbf{bound quiver} and that $\Bbbk Q/I$ is a \textbf{bound quiver algebra}.

In this paper, we study modules over a bound quiver algebra $\Bbbk Q/I$ by studying certain representations of $Q$ that are ``compatible" with the relations coming from $I$. A \textbf{representation} $V = ((V_i)_{i \in Q_0}, (\varphi_\alpha)_{\alpha \in Q_1})$ of a quiver $Q$  is an assignment of a $\Bbbk$-vector space $V_i$ to each vertex $i$ and a $\Bbbk$-linear map $\varphi_\alpha: V_{s(\alpha)} \rightarrow V_{t(\alpha)}$ to each arrow $\alpha \in Q_1$. If $\rho \in \Bbbk Q$, it can be expressed as $${\rho = \sum_{i = 1}^m c_i\alpha^{(i)}_{1}\cdots \alpha^{(i)}_{k_i}}$$ where $c_i \in \Bbbk$ and $\alpha^{(i)}_{1}\cdots \alpha^{(i)}_{k_i}\in Q_{i_k}$ so when considering a representation $V$ of $Q$, we define $$\varphi_\rho := \sum_{i = 1}^m c_i\varphi_{\alpha^{(i)}_{1}}\cdots \varphi_{\alpha^{(i)}_{k_i}}.$$  If we have a bound quiver $(Q,I)$, we define a representation of $Q$ \textbf{bound by} $I$ to be a representation of $Q$ where $\varphi_\rho = 0$ if $\rho \in I.$ We say a representation of $Q$ bound by $I$ is \textbf{finite dimensional} if $\dim_\Bbbk V_i < \infty$ for all $i \in Q_0.$ It turns out that $\Bbbk Q/I$-mod is equivalent to the category of finite dimensional representations of $Q$ bound by $I$. In the sequel, we use this fact without mentioning it further. Additionally, the \textbf{dimension vector} of $V \in \Bbbk Q/I$-mod is the vector $\underline{\dim}(V):=(\dim_\Bbbk V_i)_{i\in Q_0}$ and the \textbf{dimension} of $V$ is defined as $\dim_\Bbbk(V) = \sum_{i \in Q_0} \dim_\Bbbk V_i$. The \textbf{support} of $V \in \Bbbk Q/I$-mod is the set $\text{supp}(V) := \{i\in Q_0 : V_i \neq 0\}$.

In this paper, we will focus on a special type of bound quiver algebras known as \textbf{gentle algebras}. Gentle algebras have a simple combinatorial parameterization of their indecomposable modules in terms of string modules. The string modules and the homological properties of string modules of the tiling algebra $\Lambda_T$ (see Section~\ref{Sec:stringalgtree}) defined by $T$ will be closely related to the combinatorics $T$ that we have developed in the preceding sections. A \textbf{gentle algebra} $\Lambda = \Bbbk Q/I$ is a bound quiver algebra that satisfies the following conditions:

\begin{itemize}
\item[i)] For each vertex of $Q$ is the starting point of at most two arrows and the ending point of at most two arrows.
\item[ii)] For each arrow $\beta \in Q_1$ there is at most one arrow $\alpha \in Q_1$ such that $\beta\alpha \not \in I$, and there is at most one arrow $\gamma \in Q_1$ such that $\gamma\beta \not \in I$.
\item[iii)] For each arrow $\beta \in Q_1$, there is at most one arrow $\delta \in Q_1$ such that $\beta\delta \in I$, and there is at most one arrow $\mu \in Q_1$ such that $\mu\beta \in I$.
\item[iv)] $I$ is generated by paths of length 2.
\end{itemize}

%Following \cite{butler1987auslander}, a \textbf{string algebra} $\Lambda = \Bbbk Q/I$ is a bound quiver algebra where: 

%$\begin{array}{rll}
%i) & \text{for each vertex $i$ of $Q$ at most two arrows of $Q$ start at $i$ and at most two arrows of $Q$ end at $i$,}\\
%ii) & \text{for each arrow $\beta \in Q_1$ there is at most one arrow $\alpha \in Q_1$ and at most one arrow $\gamma \in Q_1$ such that}\\
%& \text{$\alpha\beta \not \in I$ and $\beta\gamma \not \in I$.}
%\end{array}$

\noindent A \textbf{string} in $\Lambda$ is a sequence $$w = x_1 \stackrel{\alpha_1}{\longleftrightarrow} x_2 \stackrel{\alpha_2}{\longleftrightarrow} \cdots \stackrel{\alpha_{m}}{\longleftrightarrow} x_{m+1} $$
where each $x_i \in Q_0$ and each $\alpha_i \in Q_1$ or $\alpha_i \in Q_1^{-1}:= \{\text{formal inverses of arrows of }$Q$\}$. We require that each $\alpha_i$ \textbf{connects} $x_i$ and $x_{i+1}$ (i.e. either $s(\alpha_i) = x_i$ and $t(\alpha_i) = x_{i+1}$ or $s(\alpha_i) = x_{i+1}$ and $t(\alpha_i) = x_i$ where if $\alpha_i \in Q_1^{-1}$ we define $s(\alpha_i) := t(\alpha_i^{-1})$ and $t(\alpha_i) := s(\alpha_i^{-1})$) and that $w$ contains no \textbf{substrings} of $w$ of the following forms:

$\begin{array}{rll}
i) & x \stackrel{\beta}{\longrightarrow} y \stackrel{\beta^{-1}}{\longleftarrow} x \text{ or } x \stackrel{\beta}{\longleftarrow} y \stackrel{\beta^{-1}}{\longrightarrow} x,\\
ii) & x_{i_1} \stackrel{\beta_1}{\longrightarrow} x_{i_2} \cdots x_{i_{s}} \stackrel{\beta_{s}}{\longrightarrow} x_{i_{s+1}} \text{ or } x_{i_1} \stackrel{\gamma_1}{\longleftarrow} x_{i_2} \cdots x_{i_{s}} \stackrel{\gamma_{s}}{\longleftarrow} x_{i_{s+1}} \text{ where $\beta_s \cdots \beta_1, \gamma_1\cdots \gamma_s \in I$}.
\end{array}$

\noindent In other words, $w$ is an irredundant walk in $Q$ that avoids the relations imposed by $I$. By convention, we consider $w$ to be a different word in the vertices of $Q$ than $w^{-1} := x_{m+1} \stackrel{\alpha_m}{\longleftrightarrow} x_m \stackrel{\alpha_{m-1}}{\longleftrightarrow} \cdots \stackrel{\alpha_{1}}{\longleftrightarrow} x_{1}$. We say the string $w$ is \textbf{cyclic} if $x_1 = x_{m+1}$ and we say a cyclic string is a \textbf{band} if $$w^k:= \underbracket{x_1 \stackrel{\alpha_1}{\longleftrightarrow} x_2 \stackrel{\alpha_2}{\longleftrightarrow} \cdots \stackrel{\alpha_{m}}{\longleftrightarrow} x_1 \stackrel{\alpha_1}{\longleftrightarrow} x_2 \stackrel{\alpha_2}{\longleftrightarrow} \cdots \stackrel{\alpha_{m}}{\longleftrightarrow} x_{1}\cdots x_1 \stackrel{\alpha_1}{\longleftrightarrow} x_2 \stackrel{\alpha_2}{\longleftrightarrow} \cdots \stackrel{\alpha_{m}}{\longleftrightarrow} x_{1}}_{\text{$k$ copies of $w$}}$$ is a string but $w$ is not a proper power of another string $u$ (i.e. there does not exist an integer $s \ge 2$ such that $w = u^s$).

Let $w$ be a string in $\Lambda$. The \textbf{string module} defined by $w$ is the bound quiver representation $ M(w) := ((V_i)_{i \in Q_0}, (\varphi_\alpha)_{\alpha\in Q_1})$ where
$$\begin{array}{cccccccccccc}
V_i & := & \left\{\begin{array}{lcl} \Bbbk^{s_j} &: & i = x_j \text{ for some } j \in [m+1]\\ 0 & : & \text{otherwise} \end{array}\right. 
\end{array}$$
where $s_j := \#\{k \in [m+1] : \ x_k = x_j\}$ and the action of $\varphi_\alpha$ is induced by the relevant identity morphisms if $\alpha$ lies on $w$ and is zero otherwise. One observes that $M(w)\cong M(w^{-1})$. If, in addition, $w$ is a band, it defines a \textbf{band module} $M(w,n,\phi) := ((V_i)_{i \in Q_0}, (\varphi_\alpha)_{\alpha\in Q_1})$ where $$\begin{array}{cccccccccccc}
V_i & := & \left\{\begin{array}{lcl} \Bbbk^{n} &: & i = x_j \text{ for some } j \in [m+1]\\ 0 & : & \text{otherwise} \end{array}\right. 
\end{array}$$ for each choice of $n \in \mathbb{N}$ and $\phi \in \text{Aut}(\Bbbk^n)$. The action of $\varphi_\alpha$ is induced by relevant identity morphisms (resp. by $\phi$) if $\alpha = \alpha_j$ for some $j \in [m-1]$ (resp. $\alpha = \alpha_m$). 

%& & & & \varphi_{\alpha} & := & \left\{\begin{array}{rcl} 1 & : & \alpha = \alpha_j \text{ for some } j\in [m-1] \\ \phi & : & \alpha = \alpha_m\\ 0 & :  & \text{otherwise} \end{array}\right. 

If $\Bbbk Q/I$ is a representation-finite gentle algebra, it follows from \cite{wald1985tame} that set of indecomposable $\Bbbk Q/I$-modules, denoted $\text{ind}(\Bbbk Q/I\text{-mod})$, consists of exactly the string modules $M(w)$ where $w$ is a string in $\Bbbk Q/I$. %so $$\text{ind}(\Bbbk Q/I\text{-mod}) = \{M(w): w \text{ is a string in } \Bbbk Q/I \}.$$ 

\begin{example}\label{stringsA3}
Let $Q$ denote the quiver shown below. Then $\Bbbk Q/I = \Bbbk Q/\langle \beta\alpha, \gamma\beta, \alpha\gamma \rangle$ is a gentle algebra. $$\begin{array}{ccccccccccccccc}
\raisebox{-.4in}{$Q$} & \raisebox{-.4in}{=} & \raisebox{-.2in}{$\begin{xy} 0;<1pt,0pt>:<0pt,-1pt>:: 
(0,20) *+{1} ="0",
(20,0) *+{2} ="1",
(40,20) *+{3} ="2",
"1", {\ar_{\alpha}"0"},
"2", {\ar_{\beta}"1"},
"0", {\ar_{\gamma}"2"},
\end{xy}$}  \end{array}$$

\noindent The algebra $\Bbbk Q/I$ has the following string modules. $$\begin{array}{rcccrcccrccccccc}
\raisebox{-.4in}{$M(1)$} & \raisebox{-.4in}{=} & \raisebox{-.2in}{$\begin{xy} 0;<1pt,0pt>:<0pt,-1pt>:: 
(0,20) *+{\Bbbk} ="0",
(20,0) *+{0} ="1",
(40,20) *+{0} ="2",
"1", {\ar_{0}"0"},
"2", {\ar_{0}"1"},
"0", {\ar_{0}"2"},
\end{xy}$} & & \raisebox{-.4in}{$M(2)$} & \raisebox{-.4in}{=} & \raisebox{-.2in}{$\begin{xy} 0;<1pt,0pt>:<0pt,-1pt>:: 
(0,20) *+{0} ="0",
(20,0) *+{\Bbbk} ="1",
(40,20) *+{0} ="2",
"1", {\ar_{0}"0"},
"2", {\ar_{0}"1"},
"0", {\ar_{0}"2"},
\end{xy}$} & & \raisebox{-.4in}{$M(3)$} & \raisebox{-.4in}{=} & \raisebox{-.2in}{$\begin{xy} 0;<1pt,0pt>:<0pt,-1pt>:: 
(0,20) *+{0} ="0",
(20,0) *+{0} ="1",
(40,20) *+{\Bbbk} ="2",
"1", {\ar_{0}"0"},
"2", {\ar_{0}"1"},
"0", {\ar_{0}"2"},
\end{xy}$}\\
\raisebox{-.4in}{$M(1 \stackrel{\alpha}{\longleftarrow} 2)$} & \raisebox{-.4in}{=} & \raisebox{-.2in}{$\begin{xy} 0;<1pt,0pt>:<0pt,-1pt>:: 
(0,20) *+{\Bbbk} ="0",
(20,0) *+{\Bbbk} ="1",
(40,20) *+{0} ="2",
"1", {\ar_{1}"0"},
"2", {\ar_{0}"1"},
"0", {\ar_{0}"2"},
\end{xy}$} & & \raisebox{-.4in}{$M(2 \stackrel{\beta}{\longleftarrow} 3)$} & \raisebox{-.4in}{=} & \raisebox{-.2in}{$\begin{xy} 0;<1pt,0pt>:<0pt,-1pt>:: 
(0,20) *+{0} ="0",
(20,0) *+{\Bbbk} ="1",
(40,20) *+{\Bbbk} ="2",
"1", {\ar_{0}"0"},
"2", {\ar_{1}"1"},
"0", {\ar_{0}"2"},
\end{xy}$} & & \raisebox{-.4in}{$M(3 \stackrel{\gamma}{\longleftarrow} 1)$} & \raisebox{-.4in}{=} & \raisebox{-.2in}{$\begin{xy} 0;<1pt,0pt>:<0pt,-1pt>:: 
(0,20) *+{\Bbbk} ="0",
(20,0) *+{0} ="1",
(40,20) *+{\Bbbk} ="2",
"1", {\ar_{0}"0"},
"2", {\ar_{0}"1"},
"0", {\ar_{1}"2"},
\end{xy}$}
\end{array}$$
\end{example}

\subsection{The tiling algebra of a tree}\label{Sec:stringalgtree}

Let $T$ be a tree embedded in $D^2$. Then $T$ defines a bound quiver, denoted $(Q_T,I_T)$, as follows. Let $Q_T$ be quiver whose vertices are in bijection with the edges of $T$ that contain no leaves and whose arrows are exactly those of the form $e_1 \stackrel{\alpha}{\longrightarrow} e_2$ satisfying:

$\begin{array}{rl}
i) & \text{$e_1$ and $e_2$ define a corner of $T$,}\\
ii) & \text{$e_2$ is {counter}clockwise from $e_1$.} 
\end{array}$

\noindent The admissible ideal $I_T$ is, by definition, generated by the relations $\alpha\beta$ where $\alpha: e_2 \longrightarrow e_3$ defines the corner $(v,F)$ and $\beta: e_1 \longrightarrow e_2$ defines the corner $(v,G)$. We define $\Lambda_T := \Bbbk Q_T/I_T$ and refer to this as the \textbf{tiling algebra} of $T$.

\begin{example}\label{LambdaTExample}

In Figure~\ref{LambdaTFig}, we show three trees. In the left tree in Figure~\ref{LambdaTFig}, we illustrate how $T_1$ determines the quiver $Q_{T_1} = 1 \stackrel{\beta}{\longrightarrow} 2 \stackrel{\alpha}{\longrightarrow} 3.$ The algebra defined by $T_1$ is $\Lambda_{T_1} = \Bbbk Q_{T_1}/I_{T_1}$ where $I_{T_1} = \langle \alpha\beta\rangle$. Also note that $Q_{T_2} \cong Q_{T_3} \cong Q$ and $\Lambda_{T_2} \cong \Lambda_{T_3} \cong \Lambda$ where $Q$ is the quiver from Example~\ref{stringsA3} and $\Lambda$ is the algebra from Example~\ref{stringsA3}.

\begin{figure}[h]
$$\begin{array}{ccccccccccccccc}
\includegraphics[scale=.75]{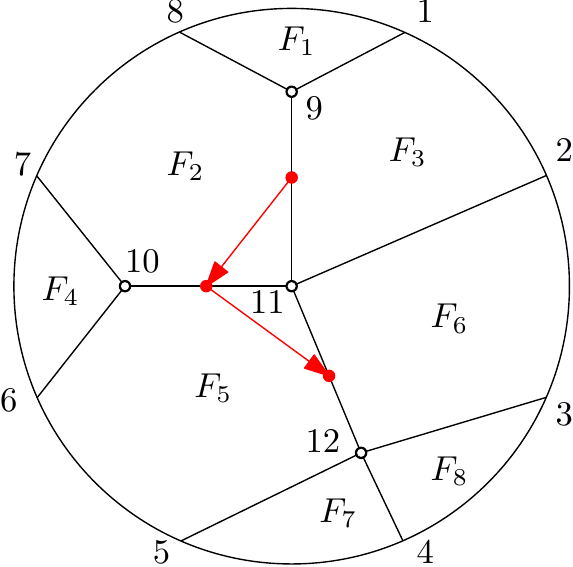} & \includegraphics[scale=.75]{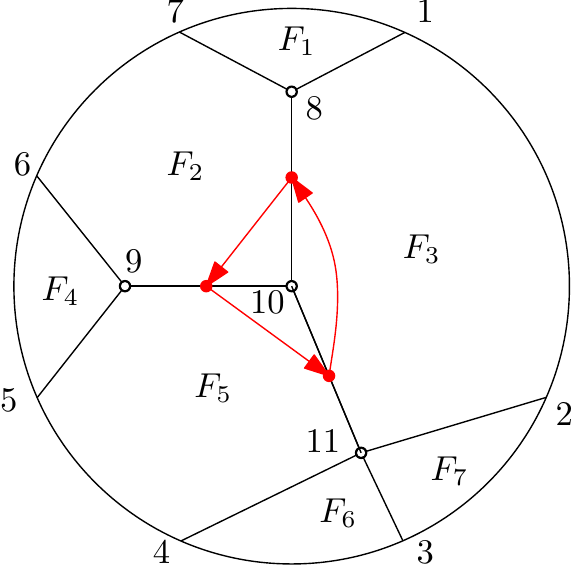} & \includegraphics[scale=.75]{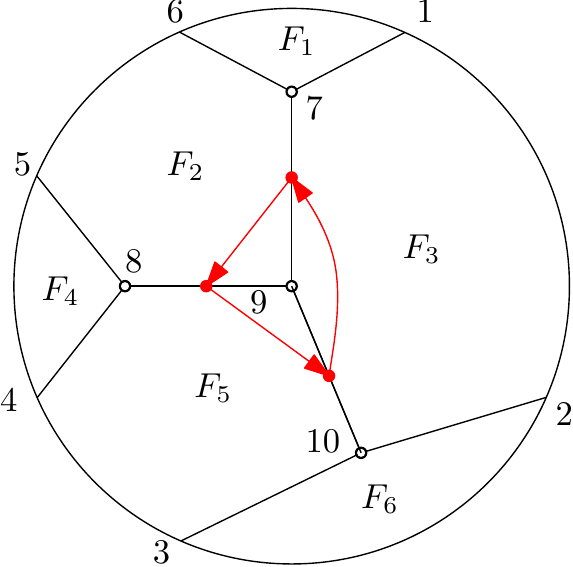}\\
T_1 & T_2 & T_3
\end{array}$$
\caption{}
\label{LambdaTFig}
\end{figure}
\end{example}

We remark that the term tiling algebra first appeared in \cite{simoes2016endomorphism} where a tiling algebra is defined by a partial triangulation of a polygon. The definition of a tiling algebra from \cite{simoes2016endomorphism} agrees with our definition of $\Lambda_\mathcal{P}$ in Section~\ref{Sec:Poly_Sub} which is canonically isomorphic to $\Lambda_T$.

\begin{proposition}\label{lambdarepfin}
The algebra $\Lambda_T$ is a gentle algebra. Furthermore, the algebra $\Lambda_T$ is representation-finite and its indecomposables are exactly the string modules.
\end{proposition}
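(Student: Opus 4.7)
The plan is to prove the three claims in sequence: gentleness of $\Lambda_T$, the nonexistence of band modules, and then the string-module description of indecomposables (which follows from the first two via \cite{wald1985tame}). The work is essentially combinatorial, using the local structure of $T$ at interior vertices together with the tree hypothesis (absence of cycles and of multiple edges) for the band argument.

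For gentleness, I would verify each of the four defining axioms by unpacking the definition of $(Q_T, I_T)$. A vertex $e$ of $Q_T$ is an internal edge of $T$ with endpoints $u, v \in V^o$; the cyclic ordering at each endpoint identifies at most one counterclockwise neighbor (giving an arrow out of $e$) and at most one clockwise neighbor (an arrow into $e$), so $e$ has in- and out-degree at most $2$, handling axiom (i). For a fixed arrow $\beta : e_1 \to e_2$ coming from a corner $(u, F)$, any arrow $\alpha$ with $s(\alpha) = e_2$ corresponds to a corner $(w, G)$ where $w$ is an endpoint of $e_2$; the definition of $I_T$ yields $\alpha\beta \in I_T$ precisely when $w = u$, and there is at most one such $\alpha$ at each endpoint of $e_2$. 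This verifies axioms (ii) and (iii), and axiom (iv) is immediate since $I_T$ is generated by paths of length two by construction.

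To rule out band modules, I would translate a hypothetical band $w = x_1 \stackrel{\alpha_1}{\longleftrightarrow} x_2 \stackrel{\alpha_2}{\longleftrightarrow} \cdots \stackrel{\alpha_m}{\longleftrightarrow} x_1$ into a closed walk on $V^o$. Let $v_i$ be the vertex component of the corner underlying $\alpha_i$, so that $v_i$ is a common endpoint of the internal edges $x_i$ and $x_{i+1}$. After collapsing any maximal run of identical consecutive pivots, the band yields a cyclic sequence $u_1, u_2, \ldots, u_\ell, u_1$ of vertices in $V^o$, where each consecutive pair $u_j, u_{j+1}$ is joined by some edge of $T$ appearing in the string. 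The string conditions---the avoidance of the relations $\alpha\beta \in I_T$ (which arise exactly when two consecutive corners sit at the same vertex on opposite sides of a shared edge) and of immediate inverses---translate into the statement that this walk admits no immediate reversal $u_{j-1} = u_{j+1}$: such a reversal would force two distinct consecutive edges in the walk to share both endpoints, impossible since $T$ has no multi-edges. But any nontrivial closed walk in a tree must immediately backtrack somewhere, contradicting the band. With bands thereby ruled out, the string-module classification and representation-finiteness both follow from \cite{wald1985tame}, the latter because every string is a finite walk in the finite tree.

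The main obstacle I expect is the bookkeeping in the band argument: one must carefully track when the pivot vertex can remain fixed between consecutive steps of the string (which is permitted precisely when two adjacent letters have opposite orientations, one an arrow and the other an inverse) and verify that the collapse to a walk in $T$ is well-defined and preserves the no-reversal property inherited from the string axioms. Once this is handled, the tree hypothesis closes the argument cleanly.
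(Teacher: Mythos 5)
Your argument is correct and takes a more explicit, self-contained route than the paper's. For gentleness the paper simply cites \cite[Proposition 3.2]{simoes2016endomorphism}; your direct verification is sound, and the key observation---that the (at most two) arrows out of a fixed $e_2$ are distinguished by which endpoint of $e_2$ they pivot at, with precisely the one pivoting at $u$ composing with $\beta$ into $I_T$---is exactly what makes axioms (ii) and (iii) work. For the absence of bands, the paper observes tersely that a string embeds as a full connected subquiver of $Q_T$ and therefore uses at most one arrow from any cycle of $Q_T$; your argument instead projects a hypothetical band to a closed walk on the interior vertices of $T$ and invokes the fact that a tree admits no backtrack-free closed walk. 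The two routes are conceptually parallel (both exploit that $T$ has no cycles), but yours makes the contradiction concrete without relying on the unjustified step that a string visits each vertex of $Q_T$ at most once.

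The bookkeeping you flagged as a concern is in fact vacuous: for $\Lambda_T$, consecutive letters of a string can never pivot at the same interior vertex $v$, \emph{even when they have opposite orientations}. If $\alpha_i$ and $\alpha_{i+1}$ both pivot at $v$, then two direct (or two inverse) letters compose into $I_T$; a direct letter $x_i\to x_{i+1}$ followed by an inverse one $x_{i+1}\leftarrow x_{i+2}$, both pivoting at $v$, forces the two to be the same arrow---since at $v$ there is at most one arrow into $x_{i+1}$---which is the forbidden immediate inverse, and the remaining case is symmetric. So there are no runs to collapse; the pivot sequence $v_1,\ldots,v_m$ already is the closed walk in $T$, and the distinctness of the consecutive edges it traverses is automatic because $x_i\neq x_{i+1}$ as vertices of $Q_T$. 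It would also strengthen the final step to note explicitly that the same closed-walk argument shows a string cannot revisit \emph{any} vertex of $Q_T$, not merely that no cyclic string exists: this is what actually bounds the number of strings and hence gives representation-finiteness, rather than the bare finiteness of $T$.
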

\begin{proof}
The first assertion follows from \cite[Proposition 3.2]{simoes2016endomorphism}. To prove the second assertion, it is enough to observe that any string $w$ in $\Lambda_T$ can be regarded as a full, connected subquiver of $Q_T$ that avoids the relations imposed by $I_T$. In particular, $w$ has at most one arrow from any cycle in $Q_T$ so $w$ is not a cyclic string. This description of the strings in $\Lambda_T$ implies that there are only finitely many strings in $\Lambda_T$ and there are no bands in $\Lambda_T$. Thus $\Lambda_T$ is representation-finite. \end{proof}

%First, we show that $\Lambda_T$ is a string algebra. For any vertex of $e \in (Q_T)_0$, there are at most four distinct vertices $e_1, e_2, e_3, e_4 \in (Q_T)_0$ such that $e_i$ and $e$ define corners of $T$ for $i = 1, 2, 3, 4.$ At most two of these vertices are clockwise from $e$ and, in this case, the other two of these vertices are counterclockwise from $e$. Thus there are at most two arrows of $Q_T$ ending at $e$ and at most two arrows of $Q_T$ starting at $e$. 

%Next, we show that for any arrow $\beta \in (Q_T)_1$ there is at most one arrow $\alpha \in (Q_T)_1$ such that $\alpha\beta \not \in I_T$ and at most one arrow $\gamma \in (Q_T)_1$ such that $\beta\gamma \not \in I_T.$ If such an arrow $\alpha \in (Q_T)_1$ (resp. $\gamma \in (Q_T)_1$) exists, it appears in a configuration as in Figure~\ref{alphagamma} ($a$) (resp. Figure~\ref{alphagamma} ($b$)) up to equivalence of trees.

%\begin{figure}[h]
%$$\begin{array}{ccccccccccccccc}
%\includegraphics[scale=2]{alpha.pdf} & & \includegraphics[scale=2]{gamma.pdf} \\
%(a) & & (b)
%\end{array}$$
%\caption{}
%\label{alphagamma}
%\end{figure}

\begin{corollary}\label{stringsegmentbij}
The following hold for the tiling algebra $\Lambda_T$.
\begin{enumerate}
\item[1.] Assume $M(w) := ((V_i)_{i \in Q_0}, (\varphi_\alpha)_{\alpha\in Q_1})$ is a string module of $\Lambda_T$. Then $\dim_\Bbbk(V_i) = 1$ if $i \in \text{supp}(M(w))$ and $\dim_\Bbbk(V_i) = 0$ otherwise.
\item[2.] The map $\text{ind}(\Lambda_T) \longrightarrow \text{Seg}(T)$ defined by $$M(w) \longmapsto s_w := (v_0,\ldots, v_t)$$ where each $v_i$ is a vertex of $T$ belonging to some $e_j \in \text{supp}(M(w))$ and where each pair $v_i$ and $v_{i+1}$ belongs to a common $e_j \in \text{supp}(M(w))$ is a bijection.
\end{enumerate}
\end{corollary}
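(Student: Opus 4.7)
My plan is to invoke Proposition~\ref{lambdarepfin}, which reduces the problem to analyzing string modules, and then set up a direct translation between strings in $\Lambda_T$ and walks through the tree $T$. Given a string $w = e_1 \stackrel{\alpha_1}{\leftrightarrow} e_2 \stackrel{\alpha_2}{\leftrightarrow} \cdots \stackrel{\alpha_{m}}{\leftrightarrow} e_{m+1}$ in $\Lambda_T$, each arrow $\alpha_i$ corresponds to a unique corner $(v_i, F_i)$ of $T$ located at the common endpoint of the interior edges $e_i$ and $e_{i+1}$. Writing $v_0$ and $v_{m+1}$ for the remaining endpoints of $e_1$ and $e_{m+1}$, one obtains a sequence $(v_0, v_1, \ldots, v_{m+1})$ of interior vertices of $T$ whose consecutive pairs are joined by the edges $e_i = (v_{i-1}, v_i)$.

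The crux, from which both parts of the corollary follow, is to show that this sequence is an acyclic path in $T$. Viewed as a walk in a tree, it suffices to rule out any immediate backtrack $v_{i-1} = v_{i+1}$: such a coincidence would force $e_i = e_{i+1}$ as edges of $T$, which is impossible because consecutive steps of a walk in $Q_T$ visit distinct vertices. Since $T$ is a tree, a walk without immediate backtracks is automatically a simple path, so the vertices $v_i$, and hence the edges $e_i$, are pairwise distinct. This immediately yields part (1): each vertex of $Q_T$ appears at most once in $w$, so the string module $M(w)$ has dimension $1$ at every vertex in its support and dimension $0$ elsewhere.

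For part (2), I will produce an explicit two-sided inverse. The map $M(w) \mapsto s_w$ lands in $\Seg(T)$: the sequence $s_w$ is an acyclic path by the preceding paragraph, its endpoints $v_0$ and $v_{m+1}$ are non-leaves since they are endpoints of interior edges of $T$, and the incidence condition is built into the construction because consecutive edges $e_i, e_{i+1}$ meet at the corner $(v_i, F_i)$. Conversely, given a segment $s = (u_0, u_1, \ldots, u_t)$, each edge $(u_{j-1}, u_j)$ is interior and the incidence condition at $u_j$ supplies a corner $(u_j, F_j)$, hence an arrow $\alpha_j$ of $Q_T$ linking these two edges; the resulting walk $w_s$ is a valid string because consecutive arrows $\alpha_j, \alpha_{j+1}$ are based at distinct vertices $u_j \neq u_{j+1}$ of $T$, so their composition does not lie in $I_T$ (whose generators are pairs of corner arrows at a common vertex of $T$), and no backtracking occurs. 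The two constructions are manifestly mutually inverse, and the identification $M(w) \cong M(w^{-1})$ matches the convention that segments are undirected.

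The main obstacle I anticipate is the backtracking step, which relies on the tree-theoretic observation that non-backtracking walks in a tree are simple, combined with the fact that two edges of a tree share at most one corner of $T$. Once this is in hand, the remaining verifications — checking the segment axioms for $s_w$ and verifying the absence of relations in $w_s$ — are routine.
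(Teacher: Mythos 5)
Your proof is correct and follows essentially the same route as the paper: both arguments translate a string $w$ in $\Lambda_T$ into a walk in $T$ via the corners attached to its arrows, use properties of the tree (you spell out the non-backtracking argument, which the paper leaves implicit in the claim that $w$ is a ``full, connected subquiver'' of $Q_T$), and then read off the bijection with $\Seg(T)$. Your version is somewhat more detailed --- in particular the explicit construction of the two-sided inverse and the verification that consecutive corners sit at distinct vertices of $T$ --- but there is no difference in method.
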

\begin{proof}
Assertion $1.$ follows from the proof of Proposition~\ref{lambdarepfin}.

To prove assertion 2., we note that, as in the proof of Proposition~\ref{lambdarepfin}, any string module $M(w) \in \text{ind}(\Lambda_T)$ can be regarded as a full, connected subquiver of $Q_T$ that avoids the relations imposed by $I_T$. With this identification, we observe that $M(w)$ is equivalent to a sequence of interior vertices $(v_0, \ldots, v_t)$ of $Q_T$ with the property that any two edges $(v_{i-1}, v_i)$ and $(v_i, v_{i+1})$ are contained in a common face of $T$. Thus the given map is a bijection.
\end{proof}

We now present a description of the spaces of extensions between indecomposable $\Lambda_T$-modules. These results, especially Theorems~\ref{crossingnonsplitext}, appear to be new. These results generalize, in the finite representation type case, the description of extensions between indecomposables found in \cite{cs14}. The proofs of the following results depend on several lemmas presented in Section~\ref{ingredients}.

\begin{proposition}\label{separatesegext}
Let $0 \to M(u) \stackrel{f}{\to} X  \stackrel{g}{\to} M(v) \to 0$ be an extension where $\text{supp}(M(u)) \cap \text{supp}(M(v)) = \emptyset$ and where $s_u$ and $s_v$ do not have any common vertices. Then the given extension is split so $\text{Ext}^1_{\Lambda_T}(M(v), M(u)) = 0$.
\end{proposition}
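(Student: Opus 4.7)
The plan is to show that the given extension splits by constructing an explicit section of $g$ using the hypotheses about supports and segment-vertices. Since $\Lambda_T$-modules are representations of $(Q_T, I_T)$, I will work at the level of representations and build a subrepresentation of $X$ complementary to $f(M(u))$.

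First, I will exploit the hypothesis $\mathrm{supp}(M(u))\cap\mathrm{supp}(M(v))=\emptyset$ to control the dimension profile of $X$. By exactness, for each vertex $i\in(Q_T)_0$,
\[
\dim_\Bbbk X_i \;=\; \dim_\Bbbk M(u)_i + \dim_\Bbbk M(v)_i,
\]
and by disjointness of supports, at most one of the two terms on the right is nonzero. So $(Q_T)_0$ splits into three parts: vertices $i\in\mathrm{supp}(M(u))$ where $X_i=f(M(u)_i)$, vertices $i\in\mathrm{supp}(M(v))$ where $f(M(u)_i)=0$ and $g_i\colon X_i\to M(v)_i$ is an isomorphism, and the remaining vertices where $X_i=0$. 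For each $i$, set $N_i=0$ in the first and third cases and $N_i=X_i$ in the second. Then $g_i|_{N_i}\colon N_i\to M(v)_i$ is an isomorphism for every $i$.

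Next comes the key combinatorial step, which I expect to be the main (and only substantive) obstacle: I must show $\{N_i\}$ is closed under the $\Bbbk$-linear maps $\varphi_\alpha^X$, i.e.\ that it is a subrepresentation of $X$. Equivalently, for every arrow $\alpha\colon i\to j$ in $Q_T$, I must rule out the situation $i\in\mathrm{supp}(M(v))$ and $j\in\mathrm{supp}(M(u))$, since that is the only way $\varphi_\alpha^X(N_i)$ could fail to land in $N_j$. By definition of $Q_T$, an arrow $i\to j$ means the two non-boundary edges $i,j$ of $T$ meet at a common interior vertex $w$ of $T$ forming a corner. By Corollary~\ref{stringsegmentbij}, the segment $s_u$ consists exactly of those vertices of $T$ incident to edges in $\mathrm{supp}(M(u))$, and likewise for $s_v$. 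Hence $w$ would be a vertex of both $s_u$ and $s_v$, contradicting the hypothesis $s_u\cap s_v=\emptyset$. Therefore no such arrow exists, and in every remaining case ($i$ of the first or third type, or $j$ in the support of $M(v)$, or $X_j=0$) the inclusion $\varphi_\alpha^X(N_i)\subseteq N_j$ is automatic.

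Finally, I conclude by observing that $N:=\bigoplus_i N_i$ is a subrepresentation of $X$, and that the composite $N\hookrightarrow X\xrightarrow{g} M(v)$ is an isomorphism componentwise, so it is an isomorphism of $\Lambda_T$-modules (the maps $\varphi_\alpha^{M(v)}$ are transported to $\varphi_\alpha^X|_{N_i}$ via the componentwise identifications). The inverse of this isomorphism provides a section of $g$, so the sequence splits and $\mathrm{Ext}^1_{\Lambda_T}(M(v),M(u))=0$. The whole argument rests on the purely combinatorial translation of ``arrow in $Q_T$'' into ``shared vertex of $T$'', which is the only nontrivial link between the module-theoretic and tree-theoretic sides.
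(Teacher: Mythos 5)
Your proof is correct, and it takes a genuinely different route from the paper's. The paper first notes that the hypothesis rules out any arrow $\alpha$ making $u \stackrel{\alpha}{\leftarrow} v$ a string, and then invokes Lemma~\ref{disjointExt}, which constrains the possible indecomposable summands of $X$ and forces $X = M(u)\oplus M(v)$. You bypass the analysis of indecomposable summands entirely: you build an explicit subrepresentation $N\subseteq X$ supported on $\mathrm{supp}(M(v))$, verify it is closed under the structure maps, and observe that $g|_N$ is a componentwise (hence genuine) isomorphism onto $M(v)$, whose inverse furnishes a section of $g$. The only substantive input you need is the same combinatorial translation the paper uses at the outset — an arrow of $Q_T$ between two edges of $T$ forces those edges to share an interior vertex, which by Corollary~\ref{stringsegmentbij} would be a common vertex of $s_u$ and $s_v$ — but you need it in only one direction (no arrow from $\mathrm{supp}(M(v))$ into $\mathrm{supp}(M(u))$), whereas Lemma~\ref{disjointExt} treats both directions symmetrically. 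Your argument is more elementary and self-contained (no Krull–Schmidt decomposition, no case analysis of the $X_i$) and arguably more transparent; the paper's route has the advantage that Lemma~\ref{disjointExt} is reused as a template for the harder extension computations later in Section~\ref{ingredients}, so the structure of that lemma pays for itself downstream even though it is heavier here.
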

\begin{proof}
Since $s_u$ and $s_v$ have no common vertices, there is no arrow $\alpha \in (Q_T)_1$ such that $u \stackrel{\alpha}{\leftarrow} v$ is a string in $\Lambda_T$. By exactness of the given sequence and by Lemma~\ref{disjointExt}, it is clear that $X = M(u) \oplus M(v)$. Thus the given sequence is split.
\end{proof}

\begin{proposition}\label{Prop:commonendext}
Let $M(u), M(v) \in \text{ind}(\Lambda_T\text{-mod})$ where $s_u$ and $s_v$ either share an endpoint and agree along a segment or they have a common vertex that is an endpoint of at most one of $s_u$ and $s_v$. Then $\text{Ext}_{\Lambda_T}^1(M(v), M(u)) = 0.$
\end{proposition}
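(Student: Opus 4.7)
The plan is to show that any short exact sequence $0\to M(u)\to X\to M(v)\to 0$ must split, building on the same framework that underlies Proposition~\ref{separatesegext}. Since $\Lambda_T$ is representation-finite with only string modules as indecomposables (Proposition~\ref{lambdarepfin}), the middle term $X$ is necessarily a direct sum of string modules, and a non-split extension must be realized by some combinatorial ``glueing'' of the strings $u$ and $v$ (via an arrow of $Q_T$ connecting their ends, or by a substring/factor overlap that can be ``flipped''). Such a glueing corresponds to configurations that are classified in Section~\ref{ingredients}, and my strategy is to invoke those lemmas and check that neither of the two configurations in the statement can produce such data.

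For Case 1, suppose $s_u$ and $s_v$ share an endpoint $x$ and agree along a maximal subsegment $[x,y]$. Viewed as walks in $Q_T$, $u$ and $v$ share an initial piece ending at the interior edge of $T$ incident to $y$, and then either diverge at $y$ (traversing two distinct corners at $y$) or one of them stops there. Because $x$ is an endpoint of both segments, the shared initial piece cannot be extended further on that side by any arrow of $Q_T$. On the side of $y$, I would argue that the two distinct corners traversed at $y$ correspond, under the gentle relations defining $I_T$, to arrows in $Q_T$ whose compositions lie in $I_T$; consequently no arrow of $Q_T$ can join a tail of $u$ to a tail of $v$ at $y$ without violating the relations. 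Any ``overlap flip'' of the shared initial portion simply reproduces the split extension $M(u)\oplus M(v)$.

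For Case 2, suppose $s_u$ and $s_v$ pass through (or terminate at) a common vertex $w$ without sharing an edge incident to $w$, and $w$ is an endpoint of at most one of them. I would split the analysis according to the position of $w$ along each segment (interior vs.\ endpoint) and according to the cyclic ordering of the four (or fewer) edges at $w$ carrying $s_u$ and $s_v$. In each sub-case the interior edges adjacent to $w$ that belong to $s_u$ and to $s_v$ give vertices of $Q_T$ that are not adjacent by any arrow of $u$ or $v$, and the only candidate arrows of $Q_T$ joining them are those arising from the corners formed at $w$; these are precisely the pairs whose compositions lie in $I_T$, so no string of $\Lambda_T$ can glue $u$ and $v$ into one longer string, and no non-split extension exists. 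The main obstacle is the case enumeration at $w$, which is more delicate than the endpoint-shared case because $w$ may be interior to both segments and the cyclic ordering of faces at $w$ must be tracked carefully; the gentle property of $\Lambda_T$ (at most one arrow in and one arrow out per vertex of $Q_T$ satisfying each quadratic relation) is what ultimately closes off every candidate extension.
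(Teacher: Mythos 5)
Your proposal takes a genuinely different route from the paper. The paper's proof is very short: it invokes Lemma~\ref{Lemma:endptoverlapext}, which says (in both sub-cases of the hypothesis) that the middle term $X$ of any extension $0\to M(u)\to X\to M(v)\to 0$ must be decomposable and can have no indecomposable summand whose support is a proper subset of $\text{supp}(M(u))$ or $\text{supp}(M(v))$; from these constraints one forces $X\cong M(u)\oplus M(v)$. That lemma in turn is obtained by adapting the proofs of parts $i)$, $iv)$, $v)$ of Lemma~\ref{2termextlemma}, which analyze the possible direct summands of $X$ via exactness and dimension-vector bookkeeping. The paper thus never needs a classification of string-module extensions in terms of ``glueings''; it works at the level of the module $X$ itself. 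By contrast, your proposal tries to rule out extensions from the other side, by arguing that no admissible arrow-glueing or overlap-flip of $u$ and $v$ exists. This is closer to the folklore description of extensions between string modules over gentle algebras, and it is a viable style of argument, but it is not what the paper does.

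The genuine gap is the unproved claim that ``a non-split extension must be realized by some combinatorial glueing of the strings $u$ and $v$ (via an arrow of $Q_T$ connecting their ends, or by a substring/factor overlap that can be flipped).'' This is precisely the dichotomy (arrow extensions vs.\ overlap extensions) that the paper painstakingly re-derives in Section~\ref{ingredients}; you cannot invoke it as known without either proving it or citing it. In particular, nothing in your write-up rules out a priori that $X$ might decompose into summands other than the ones produced by an arrow or overlap move --- e.g.\ a summand whose support crosses from $\text{supp}(M(u))\setminus\text{supp}(M(v))$ to $\text{supp}(M(v))\setminus\text{supp}(M(u))$ via some edge of $T$ at a vertex of degree $\geq 3$ that is not part of either segment, or a summand with support properly contained in one of the two. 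Lemma~\ref{Lemma:endptoverlapext} exists exactly to kill those possibilities; your proposal says it will ``invoke those lemmas'' but then never cites the relevant one and proceeds from scratch instead. Beyond this, your case analysis is sketched rather than carried out: in Case 1, the claim that ``any overlap flip of the shared initial portion simply reproduces the split extension'' needs to be verified against both possible cyclic orderings at the divergence vertex (in one of them the flipped string would be killed by a relation of $I_T$, not produce the split extension), and in Case 2 you acknowledge that the enumeration at $w$ is ``delicate'' but do not complete it. So as written, the proposal contains the right intuition for a different proof but has a real missing step that the paper's Lemma~\ref{Lemma:endptoverlapext} supplies.
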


\begin{proof}
Let $0 \to M(u) \stackrel{f}{\to} X  \stackrel{g}{\to} M(v) \to 0$ be an extension. By Lemma~\ref{Lemma:endptoverlapext} $i)$, $X$ has at least two summands $M(y)$ and $M(z)$ for some nonempty strings $y$ and $z$ in $\Lambda_T$. By Lemma~\ref{Lemma:endptoverlapext} $ii)$, without loss of generality, we have that $M(y) = M(u)$ and $M(z) = M(v)$ so the given sequence is split.\end{proof}

\begin{theorem}\label{shareendptext}
Let $M(u),M(v) \in \text{ind}(\Lambda_T\text{-mod})$ where $s_u$ and $s_v$ agree only at an endpoint. Then there is a nonsplit extension $\xi = 0 \to M(u) \to M(u\stackrel{\alpha}{\leftarrow} v)  \to M(v) \to 0$ if and only if there exists an arrow $\alpha \in (Q_T)_1$ such that $u \stackrel{\alpha}{\leftarrow} v$ is a string in $\Lambda_T.$ In this case, $\xi$ is the unique nonsplit extension of $M(u)$ by $M(v)$. 
\end{theorem}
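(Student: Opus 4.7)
The plan is to prove the two implications and the uniqueness clause separately, relying on the general structure of string modules in the gentle algebra $\Lambda_T$ and on the extension lemmas deferred to Section~\ref{ingredients}.

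For sufficiency, assume an arrow $\alpha \in (Q_T)_1$ is given for which $w := u \stackrel{\alpha}{\leftarrow} v$ is a valid string in $\Lambda_T$. I would form $M(w)$ and observe that the notation $\leftarrow$ means $s(\alpha)$ lies at the initial vertex of the $v$-side and $t(\alpha)$ at the terminal vertex of the $u$-side. Consequently, the subrepresentation of $M(w)$ supported on the quiver vertices of $u$ is closed under all arrow actions (the only arrow connecting the two sides is $\varphi_\alpha$, whose source sits outside this subrepresentation) and is isomorphic to $M(u)$, with quotient isomorphic to $M(v)$. This gives the displayed short exact sequence, and it is nonsplit because $M(w)$ is indecomposable (every string module is) whereas $M(u) \oplus M(v)$ is not.

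For necessity, let $\xi = 0 \to M(u) \to X \to M(v) \to 0$ be any nonsplit extension. By Proposition~\ref{lambdarepfin} and Krull--Schmidt, $X$ decomposes as a direct sum of string modules, each of dimension vector with entries in $\{0,1\}$ by Corollary~\ref{stringsegmentbij}. Since $s_u$ and $s_v$ share only a common vertex $\tau \in V^o$ of $T$, the supports of $M(u)$ and $M(v)$ in $(Q_T)_0$ are disjoint. If no arrow of $Q_T$ joined these supports, an application of Lemma~\ref{disjointExt} (as in the proof of Proposition~\ref{separatesegext}) would force $X \cong M(u) \oplus M(v)$, contradicting nonsplitness. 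So there must be an arrow $\alpha \in (Q_T)_1$ between the unique edge of $s_u$ at $\tau$ and the unique edge of $s_v$ at $\tau$, which means these two edges bound a common face $F$ with $\alpha$ corresponding to the corner $(\tau,F)$; moreover $\alpha$ must be compatible with the adjacent arrows of $u$ and $v$ in the sense that $w := u \stackrel{\alpha}{\leftarrow} v$ contains no length-two composition lying in $I_T$ or substring of the form $\beta \beta^{-1}$. A support and dimension-vector argument then identifies $X$ with $M(w)$.

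For uniqueness of the nonsplit extension, the same local analysis at $\tau$ pins down $\alpha$: the unique edge of $s_u$ at $\tau$ and the unique edge of $s_v$ at $\tau$ bound at most one common face of $T$, yielding at most one arrow of $Q_T$ between them. Hence $M(u \stackrel{\alpha}{\leftarrow} v)$ is the only possible middle term, and any two nonsplit extensions have isomorphic middle terms, which upgrades to equivalence in $\text{Ext}^1_{\Lambda_T}(M(v),M(u))$ by standard homological arguments. The main obstacle I expect is the case analysis at $\tau$: one must track the cyclic order of faces and the ideal $I_T$ to rule out extraneous indecomposable summands of $X$ and to verify that the candidate arrow $\alpha$ does not generate a relation upon concatenation with the adjacent arrows of $u$ and $v$. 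This is most cleanly handled through Lemma~\ref{Lemma:endptoverlapext} and Lemma~\ref{disjointExt}.
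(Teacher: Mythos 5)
Your sufficiency and necessity arguments track the paper's proof closely: sufficiency by exhibiting the short exact sequence coming from the indecomposable string module $M(u\stackrel{\alpha}{\leftarrow}v)$, and necessity by the contrapositive via Lemma~\ref{disjointExt}. Two issues, though.

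First, the uniqueness step has a genuine gap. You argue that the middle term is forced (since only one arrow $\alpha$ between the supports can exist at $\tau$ when $\deg \tau \geq 3$), and then claim that isomorphic middle terms ``upgrade to equivalence in $\mathrm{Ext}^1$ by standard homological arguments.'' There is no such standard argument: two non-equivalent classes in $\mathrm{Ext}^1_{\Lambda_T}(M(v),M(u))$ can perfectly well produce isomorphic middle terms (already nonzero scalar multiples of a class do). What actually closes the gap is a dimension bound on the $\mathrm{Ext}$ space itself, and this is exactly what the paper does: Lemma~\ref{dimext} gives $\dim_\Bbbk \mathrm{Ext}^1_{\Lambda_T}(M(v),M(u)) \le 1$, so once one nonsplit extension is exhibited, the class is unique up to scalar. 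Your middle-term argument is not wrong, just insufficient; replace the last sentence of that paragraph with a direct appeal to Lemma~\ref{dimext}.

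Second, a smaller misstep: you suggest Lemma~\ref{Lemma:endptoverlapext} as the lemma handling the case analysis, but its hypothesis requires that $s_u$ and $s_v$ either share an endpoint \emph{and agree along a segment}, or meet at a vertex that is an endpoint of at most one of them. In the present theorem the two segments agree \emph{only} at a vertex that is an endpoint of both, so neither disjunct of that hypothesis is satisfied and the lemma does not apply. The only lemma needed here is Lemma~\ref{disjointExt}, since the vertex supports in $(Q_T)_0$ are genuinely disjoint when $s_u$ and $s_v$ share just one vertex of $T$.
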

\begin{proof}
Assume that there exists an arrow $\alpha \in (Q_T)_1$ such that $u \stackrel{\alpha}{\leftarrow} v$ is a string in $\Lambda_T$. Thus $M(u \stackrel{\alpha}{\leftarrow} v)$ is a string module and so $\xi$ is a nonsplit extension.

Assume that there does not exist an arrow $\alpha \in (Q_T)_1$ such that $u \stackrel{\alpha}{\leftarrow} v$ is a string in $\Lambda_T$. Let $0 \to M(u) \stackrel{f}{\to} X  \stackrel{g}{\to} M(v) \to 0$ be an extension. Lemma~\ref{disjointExt} implies that $X = M(u) \oplus M(v)$ so all such extensions are split.

The last assertion follows from the fact that $\dim_\Bbbk\text{Ext}^1_{\Lambda_T}(M(v),M(u)) = 1$ by Lemma~\ref{dimext}. 
\end{proof}

\begin{theorem}\label{crossingnonsplitext}
Suppose that $\text{supp}(M(u)) \cap \text{supp}(M(v)) \neq \emptyset$, and let $w$ denote the unique maximal string supported on $\text{supp}(M(u)) \cap \text{supp}(M(v)).$ Furthermore, assume that the segments $s_u$ and $s_v$ do not have any common endpoints. Write $u = u^{(1)}\leftrightarrow w \leftrightarrow u^{(2)}$ and $v = v^{(1)} \leftrightarrow w \leftrightarrow v^{(2)}$ for some strings $u^{(1)}, u^{(2)}, v^{(1)},$ and $v^{(2)}$ in $\Lambda_T$ some of which may be empty. Then $\text{Ext}^1_{\Lambda_T}(M(v), M(u)) \neq 0$ if and only if $u = u^{(1)}\leftarrow w \rightarrow u^{(2)}$ and $v = v^{(1)} \rightarrow w \leftarrow v^{(2)}$. Additionally, in this case, $$0 \to M(u) \to M(u^{(1)} \leftarrow w \leftarrow v^{(2)}) \oplus M(v^{(1)} \rightarrow w \rightarrow u^{(2)}) \to M(v) \to 0$$ is the unique nonsplit extension of $M(u)$ by $M(v)$.
\end{theorem}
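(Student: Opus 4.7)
The plan is to perform a case analysis on the arrow configurations of $Q_T$ at the two endpoints of the maximal common substring $w$. By the hypothesis that $s_u$ and $s_v$ share no endpoints, both $u$ and $v$ extend $w$ nontrivially on both sides, so all four pieces $u^{(1)}, u^{(2)}, v^{(1)}, v^{(2)}$ are nonempty. At each endpoint of $w$, the gentle property of $\Lambda_T$ together with the maximality of $w$ forces the arrow in $u$ extending $w$ to be distinct from the arrow in $v$ extending $w$ (otherwise the common substring could be enlarged). This yields a bounded number of orientation patterns at each endpoint, exactly one configuration of which — namely $u = u^{(1)} \leftarrow w \rightarrow u^{(2)}$ and $v = v^{(1)} \rightarrow w \leftarrow v^{(2)}$ — should produce a nonsplit extension.

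For the necessity direction, I would show that in each non-qualifying orientation pattern any short exact sequence $0 \to M(u) \to X \to M(v) \to 0$ splits. In a representative case — say at the left endpoint $p$ of $w$ both $u$ and $v$ extend outward via two distinct arrows out of $p$ — I would analyze the representation $X$ locally at $p$, use the relations in $I_T$ (which are generated by length-two paths) to identify a subrepresentation of $X$ isomorphic to $M(v)$ and thereby construct a splitting. This argument runs in the same spirit as Lemma~\ref{disjointExt} and Lemma~\ref{Lemma:endptoverlapext}, with the new complication that here the supports of $M(u)$ and $M(v)$ already overlap along $w$.

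For the sufficiency direction, set $N_1 = M(u^{(1)} \leftarrow w \leftarrow v^{(2)})$ and $N_2 = M(v^{(1)} \rightarrow w \rightarrow u^{(2)})$. The first task is to verify that the concatenations defining $N_1$ and $N_2$ are valid strings in $\Lambda_T$, which amounts to checking that no length-two path from $I_T$ is introduced at the two joining points at the endpoints of $w$; this is a purely local check that uses the orientation hypothesis together with the fact that $u$ and $v$ are already valid strings. Next, define $f : M(u) \to N_1 \oplus N_2$ by embedding the $u^{(1)} \leftarrow w$ subword of $M(u)$ diagonally into the corresponding subword of $N_1$ and the $w \rightarrow u^{(2)}$ subword into that of $N_2$, with a sign on one factor at the vertices of $w$ where both summands contribute (to ensure linearity across the shared support). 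Dually, define $g : N_1 \oplus N_2 \to M(v)$ as the projection onto the $w \leftarrow v^{(2)}$ subword of $N_1$ and the $v^{(1)} \rightarrow w$ subword of $N_2$, again with a sign. Exactness is then a vertex-by-vertex dimension count using part (1) of Corollary~\ref{stringsegmentbij}, and non-splitness is immediate since $M(v)$ is indecomposable and not isomorphic to either $N_1$ or $N_2$.

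Uniqueness of the nonsplit extension follows by invoking Lemma~\ref{dimext}, which should give $\dim_\Bbbk \text{Ext}^1_{\Lambda_T}(M(v), M(u)) = 1$ in the qualifying case. The main obstacle is expected to be the necessity direction: each of the disqualifying orientation patterns requires a separate splitting argument, and the cases where the two extending arrows at an endpoint of $w$ both point inward (but into different vertices) are subtler than the outward case because one must rule out nontrivial extensions supported on the three-vertex ``spider'' at that endpoint without merely restricting to a local submodule.
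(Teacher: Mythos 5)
Your proposal for the necessity direction differs materially from the paper's, and this is also where the gap lies. The paper does not perform a case-by-case splitting argument for each disqualifying orientation pattern. Instead it first establishes Lemma~\ref{2termextlemma} and Corollary~\ref{Cor:middletermext}, which show that for \emph{any} nonsplit extension $0 \to M(u) \to X \to M(v) \to 0$ with overlapping supports and no shared endpoints, the middle term is forced to be $X = M(u^{(1)}\leftrightarrow w \leftrightarrow v^{(2)}) \oplus M(v^{(1)}\leftrightarrow w \leftrightarrow u^{(2)})$, independently of the orientations at the ends of $w$. Once $X$ is pinned down, the orientations are read off from the requirement that $f$ be injective and $g$ surjective, and the theorem follows in a few lines. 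Your plan of building an explicit splitting in every disqualifying orientation pattern is plausible in outline, but you acknowledge it is not carried out, and the cases you flag as subtle (both extending arrows at an endpoint of $w$ pointing inward) are precisely where the machinery of Lemma~\ref{2termextlemma}(iv)--(v) does the essential work, namely ruling out summands of $X$ whose support is properly contained in one of $u^{(1)}, u^{(2)}, v^{(1)}, v^{(2)}$ or which fail to contain all of $\text{supp}(M(w))$. Reproving that by hand, orientation pattern by orientation pattern, is not a minor complication; it is the core difficulty, and without it your necessity argument is incomplete.

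There is also a concrete error: your claim that all four pieces $u^{(1)}, u^{(2)}, v^{(1)}, v^{(2)}$ are nonempty does not follow from $s_u$ and $s_v$ sharing no endpoints. For example, $s_u$ can be a strict subsegment of $s_v$, so that $u = w$ and both $u^{(1)}$ and $u^{(2)}$ are empty; what is true is only that at most one of $\{u^{(1)}, v^{(1)}\}$, and at most one of $\{u^{(2)}, v^{(2)}\}$, can be empty. The theorem's statement explicitly allows empty pieces, and your ``finite list of orientation patterns'' framing must be enlarged to include these degenerate configurations.

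Your sufficiency direction (explicit construction of $f,g$ with a sign at the shared $w$-vertices so that $g \circ f = 0$, followed by a dimension count via Corollary~\ref{stringsegmentbij}) and the appeal to Lemma~\ref{dimext} for uniqueness are both consistent with the paper; the paper is terse about the former, essentially taking the standard overlap-extension construction for granted.
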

\begin{proof}
Assume that $u = u^{(1)}\leftarrow w \rightarrow u^{(2)}$ and $v = v^{(1)} \rightarrow w \leftarrow v^{(2)}$ for some strings $u^{(1)}, u^{(2)}, v^{(1)},$ and $v^{(2)}$ in $\Lambda_T$. Note that the segments $s_u$ and $s_v$ have no common endpoints.   This means that $M(u^{(1)} \leftarrow w \leftarrow v^{(2)})$ is not isomorphic to $M(u)$ or $M(v)$ and the same is true for $M(v^{(1)} \rightarrow w \rightarrow u^{(2)})$. Thus $$0 \to M(u) \to M(u^{(1)} \leftarrow w \leftarrow v^{(2)}) \oplus M(v^{(1)} \rightarrow w \rightarrow u^{(2)}) \to M(v) \to 0$$ is a nonsplit extension. This implies that $\text{Ext}^1_{\Lambda_T}(M(v), M(u)) \neq 0$.

Conversely, assume that $\text{Ext}^1_{\Lambda_T}(M(v), M(u)) \neq 0$. Let $0 \to M(u) \stackrel{f}{\to} X  \stackrel{g}{\to} M(v) \to 0$ be a nonsplit extension and let $X = \oplus_{i = 1}^k X_i$ be a direct sum decomposition of $X$ into indecomposables. By Corollary~\ref{Cor:middletermext}, we have that $X = M(u^{(1)}\leftrightarrow w \leftrightarrow v^{(2)}) \oplus M(v^{(1)}\leftrightarrow w \leftrightarrow u^{(2)}).$ Since the given sequence is exact, we must have that $(u^{(1)}\leftrightarrow w \leftrightarrow v^{(2)}) = (u^{(1)}\leftarrow w \leftarrow v^{(2)})$ and $(v^{(1)}\leftrightarrow w \leftrightarrow u^{(2)}) = (v^{(1)}\rightarrow w \rightarrow u^{(2)}).$ Thus $u = u^{(1)}\leftarrow w \rightarrow u^{(2)}$ and $v = v^{(1)} \rightarrow w \leftarrow v^{(2)}$.

The last assertion follows from the fact that $\dim_\Bbbk\text{Ext}^1_{\Lambda_T}(M(v),M(u)) = 1$ by Lemma~\ref{dimext}.  \end{proof}

\subsection{Homomorphisms and extensions between string modules}\label{ingredients}
In this section, we present the technical facts required to prove Propositions~\ref{separatesegext} and \ref{Prop:commonendext} and Theorems~\ref{shareendptext} and \ref{crossingnonsplitext}. We prove Lemma~\ref{intcomponents}, which is used in the statement of Theorem~\ref{crossingnonsplitext}, Lemma~\ref{2termextlemma}, and Corollary~\ref{Cor:middletermext}. We omit the proofs of Lemma~\ref{injsurj1dim}, \ref{dimhom}, and \ref{dimext} as they are nearly identical to that of \cite[Lemma 9.2]{garver2015lattice}, \cite[Lemma 9.3]{garver2015lattice}, and \cite[Lemma 9.4]{garver2015lattice}, respectively.

\begin{lemma}\label{intcomponents}
Let $M(u), M(v) \in \text{ind}(\Lambda_T\text{-mod})$ with $\text{supp}(M(u)) \cap \text{supp}(M(v)) \neq \emptyset$. Then $w = x_1 \leftrightarrow x_2 \cdots x_{k-1}\leftrightarrow x_k$ where $\text{supp}(M(u)) \cap \text{supp}(M(v)) = \{x_i\}_{i \in [k]}$ is a string in $\Lambda$. Furthermore, $w$ is the unique maximal string along which $u$ and $v$ agree.
\end{lemma}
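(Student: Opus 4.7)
The plan is to translate the lemma into the language of segments in $T$ via Corollary~\ref{stringsegmentbij}, and then exploit the tree structure of $T$ to conclude that the common support is a contiguous subpath, hence yields a string.

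First I would recall that vertices of $Q_T$ are in bijection with internal edges of $T$. Under the bijection of Corollary~\ref{stringsegmentbij}, $M(u)$ corresponds to a segment $s_u$ in $T$, and $\supp(M(u))$ is precisely the set of internal edges of $T$ lying on $s_u$. Hence $\supp(M(u))\cap\supp(M(v))$ is the set of internal edges of $T$ that lie on both $s_u$ and $s_v$.

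Second, since $T$ is a tree, any two acyclic paths in $T$ intersect in a connected subpath. Therefore, when $\supp(M(u))\cap\supp(M(v))$ is nonempty, the shared internal edges can be enumerated in a unique linear order $x_1,\ldots,x_k$ along $T$ so that consecutive edges $x_i$ and $x_{i+1}$ share a corner of $T$. Each such corner determines an arrow of $Q_T$ joining $x_i$ and $x_{i+1}$ (in some direction), so the data $w=x_1\leftrightarrow x_2\leftrightarrow\cdots\leftrightarrow x_k$ is a well-defined walk in $Q_T$.

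Third, I would verify that $w$ is a legal string in $\Lambda_T$. Because the internal edges $x_1,\ldots,x_k$ also occur consecutively along $s_u$ (and along $s_v$), the walk $w$ appears as a connected sub-walk of $u$ (up to reversal). Any connected sub-walk of a string is again a string: it cannot contain a detour $\alpha\alpha^{-1}$ (since $u$ does not), and it cannot contain a forbidden length-two subpath (since $u$ avoids the generators of $I_T$). Equivalently, $w$ corresponds to a sub-segment of $s_u$ and every sub-segment of a segment gives a string by the construction in the proof of Proposition~\ref{lambdarepfin}.

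Finally, for uniqueness and maximality, suppose $w^\prime$ is any string in $\Lambda_T$ along which both $u$ and $v$ agree, i.e.\ $w^\prime$ appears as a sub-walk of each. Then $\supp(M(w^\prime))\subseteq\supp(M(u))\cap\supp(M(v))=\{x_1,\ldots,x_k\}$, so $s_{w^\prime}$ is contained in the sub-segment of $T$ determined by $w$; by Corollary~\ref{stringsegmentbij} this forces $w^\prime$ to be a sub-string of $w$, and $w$ is the unique maximal common sub-string.

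The main subtlety is keeping straight the two kinds of vertices in play (vertices of $Q_T$, which are internal edges of $T$, versus vertices of $T$): two segments in $T$ may touch at a vertex of $T$ without sharing any internal edge, in which case the supports are disjoint and the lemma is vacuous. Once this bookkeeping is in place, every step reduces to the elementary observation that intersections of acyclic paths in a tree are connected.
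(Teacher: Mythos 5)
Your proof is correct, and it takes a route that is close in spirit but differently phrased from the paper's. The paper argues directly in the underlying graph of $Q_T$: it observes that a string of $\Lambda_T$ visits at most two vertices of any oriented cycle, concludes that a string is a shortest path between its endpoints in that graph, and infers that the common support of $u$ and $v$ is itself a string. You instead translate everything to segments in $T$ via Corollary~\ref{stringsegmentbij} and use the elementary fact that geodesics in a tree intersect in a connected subpath, so the common internal edges of $s_u$ and $s_v$ form a contiguous sub-walk of each, and then you verify a contiguous sub-walk of a string is a string (no $\alpha\alpha^{-1}$ and no forbidden relation appears, since none appears in $u$). Both arguments rest on the same geometry — the acyclicity of $T$ is what makes the graph $Q$ a cactus of cycles glued along the tree — but yours separates the bookkeeping (edges of $T$ versus vertices of $T$) more explicitly and avoids relying on the slightly compressed ``shortest path'' step, which as written in the paper leaves implicit that shortest paths between two fixed vertices of $u\cap v$ need not be unique in the underlying graph of $Q_T$; your tree-geodesic framing sidesteps that subtlety cleanly. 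The maximality/uniqueness argument you give via supports is the same observation the paper dismisses as ``clear.''
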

\begin{proof}
Any string in $\Lambda_T$ includes at most two vertices from any oriented cycle in $Q_T$. Thus a string $u = y_1 \leftrightarrow y_2 \cdots y_{s-1} \leftrightarrow y_s$ is the shortest path connecting $y_1$ and $y_s$ in the underlying graph of $Q_T$. This implies that for any $y_i$ and $y_j$ appearing in $u$, the string $y_i \leftrightarrow y_{i+1} \cdots y_{j-1} \leftrightarrow y_j$ is the shortest path connecting $y_i$ and $y_j$ in the underlying graph of $Q_T$. Therefore if $\text{supp}(M(u)) \cap \text{supp}(M(v)) \neq \emptyset$, then $w = x_1 \leftrightarrow x_2 \cdots x_{k-1}\leftrightarrow x_k$ where $\text{supp}(M(u)) \cap \text{supp}(M(v)) = \{x_i\}_{i \in [k]}$ is a string in $\Lambda_T$. Clearly, $w$ is the unique maximal string along which $u$ and $v$ agree.
\end{proof}

\begin{lemma}\label{injsurj1dim}
{Let $M(u),M(v) \in \text{ind}(\Lambda_T\text{-mod})$. If $M(u) \hookrightarrow M(v)$ or $M(u) \twoheadrightarrow M(v)$, then $$\dim_\Bbbk\text{Hom}_{\Lambda_T}(M(u), M(v)) = 1.$$}
\end{lemma}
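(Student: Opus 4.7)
The plan is to compute $\text{Hom}_{\Lambda_T}(M(u),M(v))$ directly from the quiver-representation description of string modules. By Corollary~\ref{stringsegmentbij}, every space $(V_w)_e$ attached to an indecomposable $M(w)$ of $\Lambda_T$ is at most one-dimensional, and each structure map $(V_w)_\alpha$ is either the identity (when $\alpha$ appears in the walk $w$) or zero. Consequently any $\phi\in\text{Hom}_{\Lambda_T}(M(u),M(v))$ is encoded by a single scalar $c_e\in\Bbbk$ at each vertex $e$ of $Q_T$, with $c_e=0$ whenever $e\notin\text{supp}(M(u))\cap\text{supp}(M(v))$.

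Next, I would apply Lemma~\ref{intcomponents} to identify $\text{supp}(M(u))\cap\text{supp}(M(v))$ with the support $\text{supp}(M(w))$ of a single connected string $w$. The proof of Lemma~\ref{intcomponents} shows that strings in $\Lambda_T$ realize shortest paths in the underlying graph of $Q_T$, so any arrow of $Q_T$ joining two vertices of $\text{supp}(M(w))$ must appear on $w$, and hence on both $u$ and $v$. Commutativity of $\phi$ with such an arrow forces $c_e=c_{e'}$, and since $\text{supp}(M(w))$ is connected, all the scalars $c_e$ with $e\in\text{supp}(M(w))$ collapse to a single value $c\in\Bbbk$. The only remaining relations arise at the two endpoints of $w$, from arrows of $Q_T$ lying on exactly one of $u,v$; a short case analysis shows each such relation is either vacuous or imposes $c=0$, depending on the arrow's orientation and which string it sits on.

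Finally, under the hypothesis that $M(u)\hookrightarrow M(v)$ or $M(u)\twoheadrightarrow M(v)$, the ambient nonzero morphism certifies that $c$ is not forced to vanish, so $c$ ranges freely over $\Bbbk$ and $\dim_\Bbbk\text{Hom}_{\Lambda_T}(M(u),M(v))=1$. The main obstacle is the endpoint bookkeeping: one must verify that every boundary arrow yields either the trivial relation or $c=0$ (never an inconsistent mix, and never coupling $c$ to a second independent parameter), which comes down to tracking whether the extra arrow points inward or outward relative to $w$ and whether it sits on $u$ or on $v$. Once this verification is complete, the conclusion is immediate, exactly parallel to the argument in \cite[Lemma 9.2]{garver2015lattice}.
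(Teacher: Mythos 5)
Your computation is correct and matches the argument the paper refers to in omitting the proof: Lemma 9.2 of \cite{garver2015lattice} proceeds in exactly this way, directly computing the $\text{Hom}$ space from the quiver-representation data, and the case analysis you describe (the scalars on $\text{supp}(M(u))\cap\text{supp}(M(v))$ collapse to a single parameter $c$, and each boundary arrow imposes either a vacuous constraint or $c=0$) is sound. One implicit step worth spelling out: you need $\text{supp}(M(u))\cap\text{supp}(M(v))\neq\emptyset$ before invoking Lemma~\ref{intcomponents}, and this is supplied by the hypothesis, since an injection or surjection between nonzero indecomposables is a nonzero morphism while your parametrization shows any morphism vanishes when the supports are disjoint.
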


\begin{lemma}\label{dimhom}
Let $M(u),M(v) \in \text{ind}(\Lambda_T\text{-mod}).$ Then $\dim_\Bbbk\text{Hom}_{\Lambda_T}(M(u),M(v)) \le 1.$ Additionally, assume $\text{Hom}_{\Lambda_T}(M(u),M(v)) \neq 0$, but $M(u)$ is not a submodule of $M(v)$ and $M(u)$ does not surject onto $M(v)$. Then there exists a string $w$ in $\Lambda_T$ distinct from both $u$ and $v$ such that $M(u) \twoheadrightarrow M(w) \hookrightarrow M(v).$
\end{lemma}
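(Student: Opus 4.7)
The plan is to invoke the Crawley--Boevey description of morphisms between string modules. Since $\Lambda_T$ is gentle (hence a string algebra) by Proposition~\ref{lambdarepfin} and admits no band modules, every nonzero morphism $M(u) \to M(v)$ is a sum of \emph{graph maps}, each indexed by a common substring $w$ of $u$ and $v$ together with factorizations $u = u^{(1)}\leftrightarrow w \leftrightarrow u^{(2)}$ and $v = v^{(1)} \leftrightarrow w \leftrightarrow v^{(2)}$ satisfying admissibility conditions at the endpoints of $w$ which together encode that $M(w)$ is a quotient of $M(u)$ and a submodule of $M(v)$. By construction each such graph map is the composition $M(u) \twoheadrightarrow M(w) \hookrightarrow M(v)$, so the factorization claim in the second part reduces to the dimension bound in the first.

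For the dimension bound I would argue as follows. By Corollary~\ref{stringsegmentbij}, each string of $\Lambda_T$ visits every vertex of $Q_T$ at most once, so any common substring of $u$ and $v$ is determined by its vertex set. By Lemma~\ref{intcomponents} that vertex set is contained in a unique maximal common substring $w_{\max} = x_1 \leftrightarrow \cdots \leftrightarrow x_k$, and any admissible $w$ is a contiguous substring of $w_{\max}$. The admissibility at each end of $w$ combines constraints on the directions of the arrows of $u$ and $v$ immediately outside $w$; since those directions are fixed by $u$ and $v$ themselves, scanning inward from $x_1$ and $x_k$ shows that at each potential boundary vertex only one of ``extend $w$'' or ``stop here'' is consistent with admissibility. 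Hence at most one admissible $w$ exists and $\dim_\Bbbk\text{Hom}_{\Lambda_T}(M(u), M(v)) \le 1$.

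For the final assertion, suppose $\text{Hom}_{\Lambda_T}(M(u), M(v)) \ne 0$ and let $w$ be the unique admissible common substring, giving the factorization $M(u) \twoheadrightarrow M(w) \hookrightarrow M(v)$. If $w = u$ as a string then the surjection is an isomorphism, so $M(u) \hookrightarrow M(v)$, contradicting the hypothesis that $M(u)$ is not a submodule of $M(v)$. Symmetrically, $w = v$ would force $M(u) \twoheadrightarrow M(v)$, contradicting the other hypothesis. Hence $w$ is distinct from both $u$ and $v$, as required.

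The main obstacle is the careful invocation of the Crawley--Boevey graph-map machinery, in particular the verification that its admissibility conditions at each boundary vertex of $w_{\max}$ genuinely leave a unique choice of $w$; the combinatorial input that makes this clean is precisely the multiplicity-one property of Corollary~\ref{stringsegmentbij}, which replaces the analogous ingredient used in \cite[Lemma 9.3]{garver2015lattice}. Since the paper indicates that the present argument is nearly identical to that one, the technical content amounts to confirming that the tree-theoretic framework of Section~\ref{Sec:stringalgtree} supplies the same building blocks in our setting.
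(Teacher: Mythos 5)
Your argument is correct and follows the same route the authors intend by deferring to \cite[Lemma 9.3]{garver2015lattice}: classify morphisms between string modules via graph maps and use the multiplicity-one property of strings in $\Lambda_T$ (Corollary~\ref{stringsegmentbij} together with Lemma~\ref{intcomponents}) to show at most one graph map exists, with the stated factorization being exactly the canonical form of the resulting graph map. One point in the uniqueness step is worth tightening: the reason no proper substring $w$ of the maximal common substring $w_{\max}$ can index a graph map is not really a vertex-by-vertex ``extend or stop'' decision, but simply that at any boundary vertex of $w$ lying in the interior of $w_{\max}$, the adjacent arrow of $Q_T$ is shared by $u$ and $v$, so the quotient condition for $u$ would force it to point into $w$ while the submodule condition for $v$ would force it to point out of $w$ --- a contradiction; hence $w=w_{\max}$ is the only candidate, and the dimension bound reduces to whether $w_{\max}$ passes the admissibility test at its own two endpoints.
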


\begin{lemma}\label{endpointoverlapchar}
Assume $s_u$ and $s_v$ agree at an endpoint. Then $s_u$ and $s_v$ agree along a segment $s_w$ if and only if either $\text{Hom}_{\Lambda_T}(M(u),M(v)) \neq 0$ or $\text{Hom}_{\Lambda_T}(M(v),M(u)) \neq 0.$
\end{lemma}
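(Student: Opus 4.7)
The plan is to reduce both directions to standard facts about submodules and quotients of string modules over gentle algebras, combined with the tree structure of $T$ and the counterclockwise arrow convention defining $Q_T$.

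First I would handle the forward direction. Assume $s_u$ and $s_v$ share the endpoint $v^*$ and agree along some segment $s_w$. By the tree property (any two paths in a tree sharing an endpoint intersect in a connected initial subpath), the shared vertices of $s_u$ and $s_v$ form a subpath $(v^*, v_1, \dots, v_k)$ starting at $v^*$, and I may assume $s_w$ is this maximal common initial subsegment, so $w$ is a common initial substring of $u$ and $v$ under Corollary~\ref{stringsegmentbij}. If $s_w = s_u$ (respectively $s_w = s_v$), then $u=w$ is an end-substring of $v$, and the single junction arrow between $w$ and the rest of $v$ in $Q_T$ will realize $M(u)$ as either a submodule or a quotient of $M(v)$, producing a nonzero $\text{Hom}$.

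Otherwise both segments extend past $v_k$, and I would analyze the arrow configuration at $v_k$: the next edges $e_{k+1}^u$ and $e_{k+1}^v$ lie in the two distinct corners of $T$ at $v_k$ adjacent to the shared edge $e_k$, necessarily one clockwise and one counterclockwise from $e_k$. Because $Q_T$ has an arrow $e \to e'$ precisely when $e'$ is counterclockwise from $e$ at a corner, exactly one of the two junction arrows (one in $u$, one in $v$) will point ``out of'' $w$ (source in $w$) and the other will point ``into'' $w$ (target in $w$). Using the standard description of quotients and submodules of string modules in a gentle algebra, the ``out of'' junction realizes $M(w)$ as a quotient of the containing string module, while the ``into'' junction realizes $M(w)$ as a submodule. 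Composing these yields a nonzero map $M(u) \twoheadrightarrow M(w) \hookrightarrow M(v)$ or the reversed composite, so $\text{Hom}$ is nonzero in one of the two directions.

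For the backward direction, I would apply Lemma~\ref{dimhom}: a nonzero Hom from $M(u)$ to $M(v)$ (or vice versa) forces either $M(u) \hookrightarrow M(v)$, $M(u) \twoheadrightarrow M(v)$, or the existence of a string $w'$ distinct from $u$ and $v$ with $M(u) \twoheadrightarrow M(w') \hookrightarrow M(v)$. In every case this produces a common substring of $u$ and $v$, hence a common subsegment of $s_u$ and $s_v$ with at least two vertices, which by definition means they agree along a segment.

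The hard part will be pinning down precisely which end is a submodule and which is a quotient in the forward direction, and confirming that the counterclockwise convention in $Q_T$ always produces complementary orientations at the divergence vertex. This ultimately rests on the geometric observation that the two faces incident to $e_k$ at $v_k$ sit on opposite sides of $e_k$, combined with the gentleness of $\Lambda_T$ established in Proposition~\ref{lambdarepfin}.
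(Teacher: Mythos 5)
Your proposal is correct and takes essentially the same route as the paper: identify the maximal common segment $s_w$ (the paper's Lemma~\ref{intcomponents}), observe that the two continuation edges at the divergence vertex lie in the two corners of $T$ on opposite sides of the shared edge so that the counterclockwise arrow convention forces one junction arrow into $w$ and one out of $w$, and then compose the resulting quotient and inclusion $M(u)\twoheadrightarrow M(w)\hookrightarrow M(v)$ (or its reverse) to get the nonzero morphism; the converse is handled via Lemma~\ref{dimhom} exactly as you sketch. Your treatment is slightly more explicit than the paper's (which compresses the converse to "obvious" and does not spell out the degenerate case $s_w = s_u$), but the key lemmas and the geometric observation are the same.
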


\begin{proof}
Assume $s_u$ and $s_v$ agree along a segment $s_w$. By Lemma~\ref{intcomponents}, assume that $s_w$ is the unique largest segment along which $s_u$ and $s_v$ agree. We have that either $u = u^{(1)} \leftarrow w$ and $v = v^{(1)} \rightarrow w$ or $u = u^{(1)} \rightarrow w$ and $v = v^{(1)} \leftarrow w$. In the former case, $\text{Hom}_{\Lambda_T}(M(u),M(v)) \neq 0$. In the latter case, $\text{Hom}_{\Lambda_T}(M(v),M(u)) \neq 0.$ 

The converse statement is obvious.
\end{proof}

\begin{lemma}\label{dimext}
Let $M(u),M(v) \in \text{ind}(\Lambda_T\text{-mod}).$ Then $\dim_\Bbbk\text{Ext}^1_{\Lambda_T}(M(u),M(v)) \le 1.$ 
\end{lemma}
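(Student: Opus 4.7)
My plan is to reduce the bound on $\mathrm{Ext}^1$ to the bound on $\mathrm{Hom}$ already obtained in Lemma \ref{dimhom}, via Auslander-Reiten duality. This parallels the argument of \cite[Lemma 9.4]{garver2015lattice}, which we are told applies with only minor modification.

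First, observe that if $M(u)$ is projective, then $\mathrm{Ext}^1_{\Lambda_T}(M(u),M(v))=0$ and the bound is trivial, so assume $M(u)$ is non-projective. Recall the Auslander-Reiten formula for a finite-dimensional algebra:
$$D\,\mathrm{Ext}^1_{\Lambda_T}(M(u),M(v))\ \cong\ \overline{\mathrm{Hom}}_{\Lambda_T}(M(v),\tau M(u)),$$
where $D=\mathrm{Hom}_\Bbbk(-,\Bbbk)$, $\tau$ is the Auslander-Reiten translate, and $\overline{\mathrm{Hom}}$ denotes Hom modulo morphisms factoring through an injective module.

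Second, by Proposition \ref{lambdarepfin}, $\Lambda_T$ is a representation-finite gentle algebra whose indecomposables are exactly the string modules. By Butler-Ringel's classical description of AR sequences for string algebras, the AR translate of a non-projective string module is again a string module (or a direct sum of band modules, but there are no bands here by Proposition \ref{lambdarepfin}). Thus $\tau M(u)=M(u^{\pr})$ for some string $u^{\pr}$ in $\Lambda_T$.

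Third, I apply Lemma \ref{dimhom} to the pair of string modules $M(v)$ and $M(u^{\pr})$, obtaining $\dim_\Bbbk\mathrm{Hom}_{\Lambda_T}(M(v),M(u^{\pr}))\leq 1$. Since $\overline{\mathrm{Hom}}$ is a quotient of $\mathrm{Hom}$, the same bound holds for $\overline{\mathrm{Hom}}_{\Lambda_T}(M(v),\tau M(u))$, and taking $\Bbbk$-duals (which preserves dimension) in the AR formula yields the desired inequality. The only non-routine step is invoking Butler-Ringel to guarantee $\tau M(u)$ is a string module; all remaining content is immediate from prior lemmas.
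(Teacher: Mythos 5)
Your proof is correct, and the reduction to Lemma~\ref{dimhom} via the Auslander--Reiten formula $D\,\mathrm{Ext}^1_{\Lambda_T}(M(u),M(v))\cong\overline{\mathrm{Hom}}_{\Lambda_T}(M(v),\tau M(u))$ is a legitimate route. Two small points on the middle step. First, the parenthetical ``(or a direct sum of band modules)'' is off: $\tau$ carries indecomposables to indecomposables (or zero), so $\tau M(u)$ is never a direct sum, and band modules are $\tau$-stable so never arise as $\tau$ of a string module anyway. Second, you do not need Butler--Ringel at all here: Proposition~\ref{lambdarepfin} already says that \emph{every} indecomposable $\Lambda_T$-module is a string module, so $\tau M(u)$, being indecomposable when $M(u)$ is non-projective, is automatically a string module, and Lemma~\ref{dimhom} applies directly. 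As for whether this matches the paper: the paper does not give its own proof but defers to the cited companion lemma, which (like the surrounding Lemmas~\ref{injsurj1dim} and~\ref{dimhom}) is established by a direct combinatorial analysis of strings — for gentle algebras one counts the ``arrow'' and ``overlap'' configurations parametrizing a basis of $\mathrm{Ext}^1$ between two string modules and shows there is at most one. Your AR-duality argument is genuinely different: it is slicker and piggybacks entirely on the $\mathrm{Hom}$ bound, at the cost of invoking $\tau$ and the AR formula, machinery the paper otherwise avoids; the direct combinatorial argument is more elementary and self-contained and also furnishes, as a byproduct, the explicit description of the middle term when $\mathrm{Ext}^1\neq 0$, which the paper exploits in Theorems~\ref{shareendptext} and~\ref{crossingnonsplitext}.
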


Next, we present four results, each of which is crucial to classifying extensions between indecomposable $\Lambda_T$-modules. Lemma~\ref{disjointExt} is used in the proof of Proposition~\ref{separatesegext} and Theorem~\ref{shareendptext}. Corollary~\ref{Cor:middletermext}, which is used in the proof Theorem~\ref{crossingnonsplitext}, follows from Lemma~\ref{2termextlemma}. Lemma~\ref{2termextlemma} establishes several restrictions on which indecomposable $\Lambda_T$-modules can appear as middle terms of a nonsplit extension between two indecomposables whose corresponding segments agree along a segment, but have no shared endpoints. Lastly, Lemma~\ref{Lemma:endptoverlapext} is used in the proof of Proposition~\ref{Prop:commonendext}.

\begin{lemma}\label{disjointExt}
Let $0 \to M(u) \stackrel{f}{\to} X  \stackrel{g}{\to} M(v) \to 0$ be an extension where $\text{supp}(M(u)) \cap \text{supp}(M(v)) = \emptyset$. Assume that there does not exist an arrow $\alpha \in (Q_T)_1$ such that $u \stackrel{\alpha}{\leftarrow} v$ is a string in $\Lambda_T$ and let $X = \oplus_{i = 1}^k X_i$ be a direct sum decomposition of $X$ in to indecomposables (i.e. $X_i \in \text{ind}(\Lambda_T\text{-mod})$ for each $i \in [k]$). Then none of the modules $X_i$ have any of the following properties

$\begin{array}{rl}
i) & \text{$\text{supp}(X_i) \cap \text{supp}(M(u)) \neq \emptyset$ and $\text{supp}(X_i) \cap \text{supp}(M(v)) \neq \emptyset$}\\
ii) & \text{$\text{supp}(X_i) \subsetneq \text{supp}(M(u))$}\\
iii) & \text{$\text{supp}(X_i) \subsetneq \text{supp}(M(v))$.}
\end{array}$
\end{lemma}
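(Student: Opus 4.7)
The plan is to use additivity of dimension vectors to partition the supports of the indecomposable summands $X_i$, and then to combine the direct-sum decomposition $X = X_i \oplus Y$ with the short exact sequence to analyze each case in turn.

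First, since $\text{supp}(M(u)) \cap \text{supp}(M(v)) = \emptyset$, additivity of dimension vectors in the short exact sequence gives $\underline{\dim}(X) = \underline{\dim}(M(u)) + \underline{\dim}(M(v))$, which takes only values $0$ and $1$ and is supported on $\text{supp}(M(u)) \sqcup \text{supp}(M(v))$. By Corollary~\ref{stringsegmentbij}(1), each indecomposable summand $X_i = M(w_i)$ has dimension vector equal to the indicator of $\text{supp}(w_i)$, so the supports of the $X_i$ are pairwise disjoint and jointly partition $\text{supp}(M(u)) \sqcup \text{supp}(M(v))$.

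For case (ii), suppose $\text{supp}(X_i) \subsetneq \text{supp}(M(u))$. Then $\text{supp}(X_i) \cap \text{supp}(M(v)) = \emptyset$, so the restriction $g|_{X_i}$ vanishes and $X_i \subseteq \ker g = f(M(u))$. Writing $X = X_i \oplus Y$ with projection $\pi_i$ and noting that $\pi_i$ restricts to the identity on $X_i \subseteq f(M(u))$, the composition $\pi_i \circ f \colon M(u) \to X_i$ is surjective and the inclusion $X_i \hookrightarrow f(M(u)) \cong M(u)$ is a section of it. Hence $X_i$ is a nonzero direct summand of the indecomposable module $M(u)$, forcing $X_i \cong M(u)$ and contradicting $\text{supp}(X_i) \subsetneq \text{supp}(M(u))$. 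Case (iii) is handled dually: if $\text{supp}(X_i) \subsetneq \text{supp}(M(v))$, then $\pi_i \circ f = 0$, so $f(M(u)) \subseteq Y$; this yields an injection $g|_{X_i} \colon X_i \hookrightarrow M(v)$ (since $X_i \cap f(M(u)) \subseteq X_i \cap Y = 0$) and a surjection $M(v) \twoheadrightarrow X_i$ induced by $X/Y \cong X_i$, and these split, so indecomposability of $M(v)$ forces $X_i \cong M(v)$, again a contradiction.

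The main obstacle is case (i), where $\text{supp}(X_i)$ meets both $\text{supp}(M(u))$ and $\text{supp}(M(v))$ and neither of the previous vanishings is automatic. The strategy is combinatorial. Because $w_i$ is a simple walk in $Q_T$ whose vertices lie in the disjoint union of the vertex sets of two other simple walks $u$ and $v$, one argues that $w_i$ must take the form $w_i = u' \stackrel{\alpha}{\leftrightarrow} v'$ for substrings $u' \subseteq u$, $v' \subseteq v$, and a single arrow $\alpha \in (Q_T)_1$. Having already ruled out (ii) and (iii), any other summand $X_j$ with support meeting $\text{supp}(M(u)) \setminus \text{supp}(u')$ would itself have to be of mixed type, and the partition property eventually forces the existence of a single mixed summand with $u' = u$ and $v' = v$. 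But then $w_i = u \stackrel{\alpha}{\leftrightarrow} v$ is a string in $\Lambda_T$, contradicting (up to reversing $u$ or $v$) the standing hypothesis. The hardest technical point is establishing that $w_i$ must have exactly one crossing between $\text{supp}(u)$ and $\text{supp}(v)$ and that multiple coexisting mixed summands cannot be fit together; this will require a careful argument exploiting the restricted branching at each vertex of $Q_T$ that comes from the gentle-algebra structure.
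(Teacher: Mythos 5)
Your argument for cases (ii) and (iii) is correct, and it is a genuinely different route from the paper's. You show directly that a summand $X_i$ with $\text{supp}(X_i)\subsetneq\text{supp}(M(u))$ lies in $\ker g=\text{im}\,f$, so that $\pi_i\circ f\colon M(u)\to X_i$ is a retraction split by the inclusion $X_i\hookrightarrow f(M(u))$; indecomposability of $M(u)$ then forces $X_i\cong M(u)$, a contradiction, and dually for (iii). The paper instead proves (i) first, partitions the summands into $u$-side and $v$-side using (i), and then finds a contradiction by tracking arrow directions between two adjacent $u$-side summands. Your version is cleaner and does not need (i) as input.

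However, you have not actually proved case (i), and this is the substantive part of the lemma. You explicitly leave the combinatorial core as a sketch (``this will require a careful argument \ldots''): you assert but do not show that a mixed summand's string crosses $\text{supp}(u)$–$\text{supp}(v)$ exactly once, and that two mixed summands cannot coexist, in order to reduce to the case of a single full-support mixed summand $M(u\leftrightarrow v)$. Moreover, even granting that reduction, the final step is logically wrong. The hypothesis of the lemma only excludes $u\stackrel{\alpha}{\leftarrow}v$ as a string; it says nothing about $u\stackrel{\alpha}{\rightarrow}v$. If the mixed summand is $M(u\stackrel{\alpha}{\rightarrow}v)$, the hypothesis is not violated, and you must instead observe that in this module $M(u)$ is a quotient rather than a submodule, so the exactness of $0\to M(u)\to X\to M(v)\to 0$ is contradicted. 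The parenthetical ``up to reversing $u$ or $v$'' does not rescue this: reversing a string as a walk gives an isomorphic module and does not flip the direction of the connecting arrow, and the short exact sequence treats $u$ and $v$ asymmetrically (submodule vs.\ quotient). The paper's treatment of (i) is both simpler and complete: take \emph{any} mixed summand $X_i=M(w)$, write $w=w'\leftrightarrow w''$ with $w'$ supported on $u$ and $w''$ on $v$, note that the hypothesis forces the junction arrow to point $w'\to w''$, conclude $\text{Hom}_{\Lambda_T}(M(u),M(w))=0$ and $\text{Hom}_{\Lambda_T}(M(w),M(v))=0$, and deduce that $M(w)\cap\text{im}\,f=0$ while $M(w)\subseteq\ker g$, contradicting $\ker g=\text{im}\,f$. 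This avoids the single-summand reduction entirely. You should replace your case (i) with a self-contained argument of this type; the exactness contradiction is indispensable and cannot be collapsed into an appeal to the no-string hypothesis alone.
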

\begin{proof}
%In our argument, we use the fact that since $\text{supp}(M(u)) \cap \text{supp}(M(v)) = \emptyset$, we have that $\{\text{supp}(X_i)\}_{i = 1}^k$ is a set partition of the set $\text{supp}(X)$.

Suppose some $X_i$ satisfies $i)$. Then we can write $X_i = M(w)$, $u = u^\prime{\leftrightarrow} w^\prime,$ and $v = w^{\prime\prime} \leftrightarrow v^{\prime\prime}$ where $w = w^\prime {\leftrightarrow} w^{\prime\prime}$  is a string in $\Lambda_T$. By assumption, $w = w^\prime {\leftrightarrow} w^{\prime\prime} = w^\prime \stackrel{\beta}{\rightarrow} w^{\prime\prime}$.  Observe that the direction of $\beta$ implies that $\text{Hom}_{\Lambda_T}(M(u), M(w)) = 0$ and $\text{Hom}_{\Lambda_T}(M(w), M(v)) = 0$. Since $\text{supp}(M(u)) \cap \text{supp}(M(v)) = \emptyset$, $\{\text{supp}(X_i)\}_{i = 1}^k$ is a set partition of the set $\text{supp}(X)$. Thus we have that $M(w) \cap \text{im}(f) = 0$, but $M(w) \subset \text{ker}(g)$. This contradicts that the given sequence is exact. %$X/\text{im}(f) \not \cong M(v)$.

As none of the $X_i$ satisfy $i)$, we can separate these modules into those supported on $M(u)$ and those supported on $M(v)$. We denote the former modules by $\{M(u^{(j)})\}_{j = 1}^{s}$ and the latter by $\{M(v^{(\ell)})\}_{\ell = 1}^{t}$. 

Suppose $M(u^{(j)})$ satisfies $ii)$. Then there exist $M(u^{(j^\prime)})$ for some $j^\prime \neq j$ such that $u^{(j)} \stackrel{\beta}{\longleftrightarrow} u^{(j^\prime)}$ is a string in $\Lambda_T$ supported on $u$. Thus if $u^{(j)} \stackrel{\beta}{\longleftarrow} u^{(j^\prime)}$ (resp. $u^{(j)} \stackrel{\beta}{\longrightarrow} u^{(j^\prime)}$) is a string in $\Lambda_T$, we have that $\text{Hom}_{\Lambda_T}(M(u), M(u^{(j)})) = 0$ (resp. $\text{Hom}_{\Lambda_T}(M(u), M(u^{(j^\prime)})) = 0$). This implies that there exists a summand $M(u^{(j^{\prime\prime})})$ of $X$ such that $M(u^{(j^{\prime\prime})}) \cap \text{im}(f) = 0.$ However, $M(u^{(j^{\prime\prime})}) \subset \text{ker}(g)$ since $\text{supp}(M(u^{(j^{\prime\prime})})) \cap \text{supp}(M(v)) = \emptyset.$ This contradicts that the given sequence is exact. The proof that there are no summands $M(v^{(\ell)})$ of $X$ that satisfy $iii)$ is similar so we omit it.
\end{proof}

\begin{lemma}\label{2termextlemma}
Let $M(u), M(v) \in \text{ind}(\Lambda_T\text{-mod})$ where $s_u$ and $s_v$ have no common endpoints. Let $0 \to M(u) \stackrel{f}{\to} X  \stackrel{g}{\to} M(v) \to 0$ be a nonsplit extension where $\text{supp}(M(u)) \cap \text{supp}(M(v)) \neq \emptyset$, and let $w$ denote the unique maximal string supported on $\text{supp}(M(u)) \cap \text{supp}(M(v)).$ Let $X = \oplus_{i=1}^k X_i$ be a direct sum decomposition of $X$ into indecomposables and write $u = u^{(1)}\leftrightarrow w \leftrightarrow u^{(2)}$ and $v = v^{(1)} \leftrightarrow w \leftrightarrow v^{(2)}$ for some strings $u^{(1)}, u^{(2)}, v^{(1)},$ and $v^{(2)}$ in $\Lambda_T$ some of which may be empty. Then the following hold.

\begin{itemize}
\item[$i)$] $X$ is not indecomposable.
\item[$ii)$] There is no $X_i$ such that $\text{supp}(X_i) \cap \text{supp}(M(x)) \neq \emptyset$ for any $x \in \{w, u^{(1)}, u^{(2)}\},$ assuming that both $u^{(1)}$ and $u^{(2)}$ are nonempty strings.
\item[$iii)$] There is no $X_i$ such that $\text{supp}(X_i) \cap \text{supp}(M(x)) \neq \emptyset$ for any $x \in \{w, v^{(1)}, v^{(2)}\},$ assuming that both $v^{(1)}$ and $v^{(2)}$ are nonempty strings.
\item[$iv)$] There is no $X_i$ such that $\text{supp}(X_i) \subsetneq \text{supp}(M(x))$ where $x \in \{u^{(1)}, u^{(2)}, v^{(1)}, v^{(2)}\}.$ Thus each $X_i$ satisfies $\text{supp}(X_i) \cap \text{supp}(M(w)) \neq \emptyset$.
\item[$v)$] If $X_i$ and $x \in \{w, u^{(1)}, u^{(2)}, v^{(1)}, v^{(2)}\}$ satisfy $\text{supp}(X_i) \cap \text{supp}(M(x)) \neq \emptyset$, then $\text{supp}(M(x)) \subset \text{supp}(X_i)$.
\end{itemize}
\end{lemma}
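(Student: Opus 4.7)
The plan begins with part (i), which I would derive from a quick dimension-vector count: exactness of the sequence gives $\underline{\dim}(X)=\underline{\dim}(M(u))+\underline{\dim}(M(v))$, and by Corollary~\ref{stringsegmentbij}(1) both summand dimension vectors are $\{0,1\}$-valued. Since $\text{supp}(M(u))\cap \text{supp}(M(v))\supseteq \text{supp}(M(w))\neq \emptyset$, the dimension vector $\underline{\dim}(X)$ takes the value $2$ on every vertex of $\text{supp}(M(w))$, so $X$ is not a string module and by Proposition~\ref{lambdarepfin} it is decomposable. This also gives the useful bookkeeping that each vertex of $\text{supp}(M(w))$ lies in exactly two of the indecomposable summands $X_i$, and each vertex of $(\text{supp}(M(u))\cup\text{supp}(M(v)))\setminus\text{supp}(M(w))$ lies in exactly one.

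For (iv) and (v), I would study each summand $X_i=M(z_i)$ via the induced maps $f_i:=\pi_i\circ f\colon M(u)\to X_i$ and $g_i:=g\circ\iota_i\colon X_i\to M(v)$, where $\pi_i$ and $\iota_i$ are the projection onto and inclusion from the $i$th summand. Each is at most one-dimensional by Lemma~\ref{dimhom}, and when nonzero factors as a surjection onto a common substring followed by an injection, by the second half of Lemma~\ref{dimhom}. Suppose $z_i$ has an endpoint at an interior vertex of one of the four ``wings'' $u^{(1)},u^{(2)},v^{(1)},v^{(2)}$. Gentleness of $\Lambda_T$ (Proposition~\ref{lambdarepfin}) tightly constrains the arrow configurations that can occur at such a vertex, and matching the factorizations of $f_i,g_i$ against the requirement that $f=\sum_i \iota_i f_i$ is injective and $g=\sum_i g_i \pi_i$ is surjective produces a contradiction. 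This simultaneously rules out endpoints of $z_i$ interior to any wing, yielding (v), and forbids summands $X_i$ with support strictly contained in a wing, yielding (iv): if $\text{supp}(X_i)\subsetneq\text{supp}(M(x))$, then some endpoint of $z_i$ lies strictly inside the wing $x$.

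For (ii) and (iii), which are symmetric, assume $X_i$ meets the supports of $M(w), M(u^{(1)})$, and $M(u^{(2)})$ (with both $u^{(1)},u^{(2)}$ nonempty). By (v), the string $z_i$ contains $u^{(1)}$, $w$, and $u^{(2)}$ as substrings, and because these occur consecutively along $u$, connectedness of $z_i$ as a walk in $Q_T$ forces $z_i$ to contain $u$ itself. The dimension count from the first paragraph then shows that every other summand $X_j$ ($j\neq i$) is supported inside $\text{supp}(M(v))$, and applying (v) to each such $X_j$ forces the remaining summands to collectively contain $v^{(1)}, w$, and $v^{(2)}$, hence $v$. Comparing dimension vectors yields $X_i\cong M(u)$ and $\bigoplus_{j\neq i}X_j\cong M(v)$, so the exact sequence splits, contradicting the hypothesis.

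The main obstacle, and the step that will need the most care, is the endpoint/arrow-direction analysis in the second paragraph: one must use the combinatorics of the gentle relations around the two endpoints of $w$ in $Q_T$ to exclude every possible ``partial'' summand $X_i$. The assumption that $s_u$ and $s_v$ share no common endpoints is used here to guarantee that both ends of $w$ are honestly interior to $u$ and to $v$, so that the local arrow directions at those endpoints follow the standard branching pattern and no degenerate case needs to be handled separately.
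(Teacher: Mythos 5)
Your part (i) matches the paper's proof exactly, and your plan for (ii)/(iii) — deduce from (v) that some summand contains all of $u$, then invoke the observation that $X_i\cong M(u)$ forces the sequence to split — is a legitimate alternative to the paper's case analysis; the paper instead proves (ii)/(iii) directly by analyzing the arrow direction at the boundary of $\mathrm{supp}(X_i)\cap\mathrm{supp}(M(u^{(1)}))$ and deriving $\mathrm{Hom}$ vanishings incompatible with exactness, without any appeal to (v). Your reordering is fine in principle, and your part (iv) is also in the right spirit: the paper's (iv) proof is essentially the ``adjacent summand with the wrong arrow direction gives a kernel/cokernel contradiction'' argument you describe.

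However, there is a genuine gap in your treatment of (v). You analyze only the case where $z_i$ has an endpoint interior to one of the four wings $u^{(1)},u^{(2)},v^{(1)},v^{(2)}$, and then claim this ``yields (v).'' But (v) also requires the case $x=w$: one must show that no $X_i$ has support meeting $w$ but failing to contain all of $w$, i.e.\ no $z_i$ has an endpoint strictly interior to $w$. This is not covered by the wing analysis — it is not a consequence of (iv) either, since (iv) only forbids summands contained strictly inside a wing, not strictly inside $w$ — and the paper devotes a separate, intricate two-paragraph argument to it (assume $X_i=M(y^{(2)})\subsetneq M(w)$, produce an adjacent summand $X_j=M(y^{(1)})$, split into the two possible orientations of the connecting arrow, and in each case derive a contradiction with injectivity of $f$ or surjectivity of $g$ using a further adjacent summand). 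Without this, your (v) is incomplete; and while your own deduction in (ii)/(iii) can actually sidestep (v) for $w$ (since once $\mathrm{supp}(M(u^{(1)}))$ and $\mathrm{supp}(M(u^{(2)}))$ are both in $\mathrm{supp}(X_i)$, connectedness of the string $z_i$ already forces $w\subset z_i$), the statement of the lemma itself requires (v) to hold for $x=w$, and it is used in that form in Corollary~\ref{Cor:middletermext} to bound the number of summands of $X$ by two. You should also be aware that the ``endpoint/arrow-direction analysis'' you flag as the main obstacle is indeed where all the work lives even in the wing case: the paper's (iv) proof is not merely a gentleness constraint but a specific construction of an adjacent summand whose $\mathrm{Hom}$-space to $M(v)$ (or from $M(u)$) vanishes, and that construction needs to be carried out explicitly.
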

\begin{proof}
We first show that each $X_i$ satisfies $X_i \not \cong M(u)$ and $X_i \not \cong M(v)$ since the given extension is nonsplit. Without loss of generality, suppose a summand $X_i$ of $X$ satisfies $X_i \cong M(u)$. Since $s_u$ and $s_v$ have no common endpoints, $\text{im}(f) = X_i$. By dimension considerations and the fact that $g$ is surjective, $M(v)$ is also a summand of $X$. Thus the given sequence is split, a contradiction. 

$i)$ We observe that by exactness, $\underline{\text{dim}}_\Bbbk(X) = \underline{\text{dim}}_\Bbbk(M(u)) + \underline{\text{dim}}_\Bbbk(M(v))$. Since $\text{supp}(M(u)) \cap \text{supp}(M(v)) \neq \emptyset$, Lemma~\ref{stringsegmentbij} $1.$ implies that $X$ is not a string module and therefore not indecomposable.

$ii)$ Suppose that such an $X_i$ exists. Then $\text{supp}(M(w)) \subset \text{supp}(X_i)$.  Now note that since $X_i \not \cong M(u)$ and $X_i \not \cong M(v)$, we can assume that $\text{dim}_\Bbbk(M(u^{(1)})) \ge 1$ or $\text{dim}_\Bbbk(M(u^{(2)})) \ge 1$. Without loss of generality, we assume the former. This implies that $\text{supp}(X_i) \cap\text{supp}(M(u^{(1)})) \subsetneq \text{supp}(M(u^{(1)}))$ and $\text{supp}(X_i) \cap \text{supp}(M(u^{(1)})) \neq \emptyset$. The fact that $\text{dim}_\Bbbk(M(u^{(1)})) \ge 1$ also implies that we can write $u^{(1)} = x^{(1)}\leftrightarrow x^{(2)}$ for some nonempty strings $x^{(1)}$ and $x^{(2)}$ in $\Lambda_T$ where $\text{supp}(M(x^{(2)})) = \text{supp}(X_i) \cap \text{supp}(M(u^{(1)}))$ and $u = x^{(1)}\leftrightarrow x^{(2)} \leftrightarrow w \leftrightarrow u^{(2)}$. %$\text{supp}(X_i) \cap\text{supp}(M(u^{(1)})) \neq \emptyset$ and

Suppose $u^{(1)} = x^{(1)}\leftarrow x^{(2)}$. Now write $x^{(1)} = x^{(1)}_1 \leftrightarrow x^{(1)}_2 \leftrightarrow \cdots \leftrightarrow x^{(1)}_\ell$ so that $$u^{(1)} = x^{(1)}\leftarrow x^{(2)} = (x^{(1)}_1 \leftrightarrow x^{(1)}_2 \leftrightarrow \cdots \leftrightarrow x^{(1)}_\ell) \leftarrow x^{(2)}.$$ In this case, $\text{Hom}_{\Lambda_T}(M(u), X_j) = 0$ if $X_j$ is any summand of $X$ where $\text{supp}(X_j) \subset \text{supp}(M(x^{(1)}))$ and $x^{(1)}_\ell \in \text{supp}(X_j).$ Thus any such $X_j$ satisfies $X_j \cap \text{im}(f) = 0$. One also observes that $\text{supp}(M(x^{(1)})) \cap \text{supp}(M(v)) = \emptyset$ so $\text{Hom}_{\Lambda_T}(X_j, M(v)) = 0$. Therefore, any such $X_j \subset \text{ker}(g).$ This means that if such a summand $X_j$ exists, then the given sequence is not exact.

We show that there must be a summand $X_j$ of $X$ satisfying $\text{supp}(X_j) \subset \text{supp}(M(x^{(1)}))$ and whose string contains $x^{(1)}_\ell$. First note that by the exactness of the given sequence, there must exist a summand $X_j$ of $X$ whose support contains $x^{(1)}_\ell$ and thus intersects $\text{supp}(M(x^{(1)}))$. To complete the proof, it is enough to show that, without loss of generality, there is no string $y$ in $\Lambda_T$ such that $\text{supp}(M(y)) \cap \text{supp}(M(x^{(1)})) \neq \emptyset$ and $\text{supp}(M(y)) \cap \text{supp}(M(v^{(1)})) \neq \emptyset$. To show this, it is enough to observe that the segments $s_{x^{(1)}}$ and $s_{v^{(1)}}$ have no common vertices, since $x^{(2)}$ is a nonempty string. We obtain a contradiction.

We now have that $u^{(1)} = x^{(1)}\rightarrow x^{(2)}$. This implies that $\text{Hom}_{\Lambda_T}(M(u), X_i) = 0$. Let us express $X_i$ as $X_i = ((V_i)_{i \in Q_0}, (\varphi_\alpha)_{\alpha\in Q_1})$. By exactness and dimension considerations, the module $X_i$ is the only summand of $X$ satisfying $\text{supp}(X_i) \cap \text{supp}(M(x^{(2)})) \neq \emptyset.$ Thus if $\lambda \in V_i$ is nonzero and $i \in \text{supp}(X_i) \cap \text{supp}(M(x^{(2)}))$, then $\lambda \not \in \text{im}(f)$. This contradicts that $f$ is injective. 

$iii)$ The proof of this assertion is similar to the proof of assertion $ii)$ so we omit it. 

$iv)$ It suffices to show that there does not exist a summand $X_i$ of $X$ such that $\text{supp}(X_i) \subsetneq \text{supp}(M(v^{(1)}))$. Suppose there exists such a summand $X_i$. Then there exist summands $M(x)$ and $M(y)$ of $X$ where $x \leftrightarrow y$ is a string in $\Lambda_T$ where $\text{supp}(M(x)) \subsetneq \text{supp}(M(v^{(1)}))$ and $\text{supp}(M(y)) \cap \text{supp}(M(v^{(1)})) \neq \emptyset.$ If the string $(x\leftrightarrow y) = (x\leftarrow y)$, then $\text{Hom}_{\Lambda_T}(M(y), M(v)) = 0$. Let us express $M(y)$ as $M(y) = ((V_i)_{i \in Q_0}, (\varphi_\alpha)_{\alpha\in Q_1})$. Then any nonzero $\lambda \in V_i$ where $i \in \text{supp}(M(y)) \cap \text{supp}(M(v^{(1)}))$ satisfies $\lambda \in \text{ker}(g).$ Since $\lambda$ does not belong to any summand besides $M(y)$, we have that $g$ is not surjective, a contradiction. If the string $(x\leftrightarrow y) = (x\rightarrow y)$, then $\text{Hom}_{\Lambda_T}(M(x), M(v)) = 0.$ Similarly, this implies that $M(x) \subset \text{ker}(g)$, which contradicts that $g$ is surjective.

$v)$ We first prove the assertion for any $x \in \{u^{(1)}, u^{(2)}, v^{(1)}, v^{(2)}\}$. As in the proof of $iv)$, it suffices to prove this for $x = v^{(1)}.$ Suppose that there exists $X_i$ such that $\text{supp}(X_i) \cap \text{supp}(M(v^{(1)})) \neq \emptyset$ and $\text{supp}(M(v^{(1)})) \not \subset \text{supp}(X_i).$ By $iv)$, we have that $\text{supp}(X_i) \cap \text{supp}(M(w)) \neq \emptyset$. Now by exactness of the given sequence, there exists another summand $X_j$ of $X$ such that $\text{supp}(X_j) \subset \text{supp}(M(v^{(1)}))\backslash \text{supp}(X_i) \subset \text{supp}(M(v^{(1)})).$ This contradicts $iv)$.

By assertion $iv)$, each summand $X_i$ satisfies $\text{supp}(X_i)\cap \text{supp}(M(w)) \neq \emptyset$. Thus it is enough to show that there are no summands $X_i$ such that $\text{supp}(X_i) \subsetneq \text{supp}(M(w)).$ Suppose there exists such a summand $X_i = M(y^{(2)})$. We can assume, without loss of generality, that there is another summand $X_j = M(y^{(1)})$ of $X$ such that 

\begin{itemize}
\item $y^{(1)}\leftrightarrow y^{(2)}$ is a string in $\Lambda_T$, 
\item $\text{supp}(M(y^{(1)})) \cap \text{supp}(M(v^{(1)})) \neq \emptyset$,
\item $\text{supp}(M(y^{(1)})) \cap \text{supp}(M(w)) \neq \emptyset$.
\end{itemize}

Suppose that $(y^{(1)}\leftrightarrow y^{(2)}) = (y^{(1)}\rightarrow y^{(2)}).$ Then $\text{Hom}_{\Lambda_T}(M(y^{(1)}), M(v)) = 0.$ Let us express $M(y^{(1)})$ as $M(y^{(1)}) = ((V_i)_{i \in Q_0}, (\varphi_\alpha)_{\alpha\in Q_1})$. The for any nonzero $\lambda \in V_i$ where $i \in \text{supp}(M(y^{(1)})) \cap \text{supp}(M(v^{(1)}))$ satisfies $\lambda \in \text{ker}(g).$ Since $M(y^{(1)})$, is the only summand containing $\lambda$, this contradicts that $g$ is surjective.

Now suppose $(y^{(1)}\leftrightarrow y^{(2)}) = (y^{(1)}\leftarrow y^{(2)})$ and write $y^{(2)} = y_1^{(2)} \leftrightarrow \cdots \leftrightarrow y_\ell^{(2)}$. Then $\text{Hom}_{\Lambda_T}(M(y^{(2)}),M(v)) = 0.$ This means that any other summand $M(y^{(3)})$ of $X$ where $(y^{(1)}\leftarrow y^{(3)})$ is a string in $\Lambda_T$ and $y_1^{(2)} \in \text{supp}(M(y^{(3)}))$ has the property that $\text{Hom}_{\Lambda_T}(M(y^{(3)}),M(v)) = 0.$ Since $M(y^{(1)})$ is the only summand of $X$ whose support intersects $\text{supp}(M(y^{(1)}))\cap \text{supp}(M(v^{(1)}))$ and since $\text{supp}(M(y^{(1)})) \subset \text{supp}(M(v))$, we have that there is an inclusion $M(y^{(1)}) \hookrightarrow M(v).$ Since the given sequence is exact, there must exist a summand $M(z) = ((V_i)_{i \in Q_0}, (\varphi_\alpha)_{\alpha\in Q_1})$ of $X$ where $z$ satisfies

\begin{itemize}
\item $\text{supp}(M(z)) \cap \text{supp}(M(y^{(1)})) \neq \emptyset$ where any nonzero $\lambda \in V_i$ for $i \in \text{supp}(M(z)) \cap \text{supp}(M(y^{(1)}))$ satisfies $\lambda \not \in \text{ker}(g) = \text{im}(f)$, and
\item $\text{supp}(M(z)) \cap \text{supp}(M(y^{(2)})) \neq \emptyset$ where any nonzero $\lambda \in V_i$ for $i \in \text{supp}(M(z)) \cap \text{supp}(M(y^{(2)}))$ satisfies $\lambda \in \text{im}(f)$. 
\end{itemize}

\noindent However, since $(y^{(1)}\leftrightarrow y^{(2)}) = (y^{(1)}\leftarrow y^{(2)})$ there are no homomorphisms from $M(u)$ to $M(z)$ satisfying these properties. Thus there are no summands $X_i$ of $X$ such that $\text{supp}(X_i) \subsetneq \text{supp}(M(w)).$
\end{proof}

\begin{corollary}\label{Cor:middletermext}
Let $M(u), M(v) \in \text{ind}(\Lambda_T\text{-mod})$ where $s_u$ and $s_v$ have no common endpoints. Let $0 \to M(u) \stackrel{f}{\to} X  \stackrel{g}{\to} M(v) \to 0$ be a nonsplit extension where $\text{supp}(M(u)) \cap \text{supp}(M(v)) \neq \emptyset$, and let $w$ denote the unique maximal string supported on $\text{supp}(M(u)) \cap \text{supp}(M(v)).$ Let $X = \oplus_{i=1}^k X_i$ be a direct sum decomposition of $X$ into indecomposables and write $u = u^{(1)}\leftrightarrow w \leftrightarrow u^{(2)}$ and $v = v^{(1)} \leftrightarrow w \leftrightarrow v^{(2)}$ for some strings $u^{(1)}, u^{(2)}, v^{(1)},$ and $v^{(2)}$ in $\Lambda_T$ some of which may be empty. Then $X = M(u^{(1)}\leftrightarrow w \leftrightarrow v^{(2)}) \oplus M(v^{(1)}\leftrightarrow w \leftrightarrow u^{(2)})$. \end{corollary}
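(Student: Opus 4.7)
The plan is to use Lemma~\ref{2termextlemma} to pin down the shape of each indecomposable summand of $X$, and then use a dimension count together with the nonsplit hypothesis to determine the pairing.

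First I would use Lemma~\ref{2termextlemma}(iv) to observe that every summand $X_i$ in the decomposition $X=\bigoplus_{i=1}^k X_i$ satisfies $\Seg(X_i)\cap\supp(M(w))\neq\emptyset$, and then use Lemma~\ref{2termextlemma}(v) to upgrade this to $\supp(M(w))\subseteq\supp(X_i)$. Because $\underline{\dim}(X)=\underline{\dim}(M(u))+\underline{\dim}(M(v))$ and the common part $\supp(M(w))$ is where both $M(u)$ and $M(v)$ are supported, the dimension of $X$ at every vertex of $\supp(M(w))$ equals $2$. Combined with the previous observation and Corollary~\ref{stringsegmentbij}(1) (string modules are $1$-dimensional at every support vertex), this forces exactly $k=2$ summands, and each $X_i$ restricts to $M(w)$ on $\supp(M(w))$.

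Next I would analyze how each summand extends $w$ off of $\supp(M(w))$. By Lemma~\ref{2termextlemma}(ii) (and (iii)), no $X_i$ can have support meeting both $\supp(M(u^{(1)}))$ and $\supp(M(u^{(2)}))$ when both are nonempty, nor both $\supp(M(v^{(1)}))$ and $\supp(M(v^{(2)}))$; and by Lemma~\ref{2termextlemma}(v) applied to each $x\in\{u^{(1)},u^{(2)},v^{(1)},v^{(2)}\}$, whenever $X_i$ meets $\supp(M(x))$ it actually contains $\supp(M(x))$. Each vertex of $\supp(M(u^{(j)}))$ has total multiplicity $1$ in $X$, so exactly one of the two summands extends $w$ across that endpoint to contain $\supp(M(u^{(j)}))$; likewise for $v^{(j)}$. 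Therefore the two summands have underlying strings of the form $a\leftrightarrow w\leftrightarrow b$, where on each side of $w$ the two summands partition the set $\{u^{(j)},v^{(j)}\}$ (either string may be empty). Moreover, the maximality of $w$ as the common string of $u$ and $v$ guarantees that $u^{(j)}$ and $v^{(j)}$ leave $w$ through distinct arrows of $(Q_T)_1$ at each endpoint, which ensures that both potential pairings yield admissible (irredundant, relation-avoiding) strings in $\Lambda_T$.

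This leaves only two possibilities for the unordered pair $\{X_1,X_2\}$: either the ``straight'' pairing $\{M(u^{(1)}\leftrightarrow w\leftrightarrow u^{(2)}),M(v^{(1)}\leftrightarrow w\leftrightarrow v^{(2)})\}=\{M(u),M(v)\}$, or the ``swapped'' pairing $\{M(u^{(1)}\leftrightarrow w\leftrightarrow v^{(2)}),M(v^{(1)}\leftrightarrow w\leftrightarrow u^{(2)})\}$. The first is excluded: if $X\cong M(u)\oplus M(v)$, then the short exact sequence splits (as was already observed at the top of the proof of Lemma~\ref{2termextlemma}, using that $s_u$ and $s_v$ share no endpoints), contradicting the nonsplit hypothesis. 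Thus $X$ is the swapped pairing, which is the claimed decomposition.

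The main subtlety I expect is step three: confirming that exactly the two pairings above can occur and that the ``straight'' pairing literally reproduces $M(u)\oplus M(v)$ (rather than some variant that would need to be ruled out separately). This boils down to checking that, with $w$ the maximal common substring of $u$ and $v$ and with $s_u, s_v$ having no shared endpoints, the walks $u^{(1)}\leftrightarrow w\leftrightarrow v^{(2)}$ and $v^{(1)}\leftrightarrow w\leftrightarrow u^{(2)}$ actually avoid the relations in $I_T$ at the two endpoints of $w$; this is a local check using the gentle-algebra condition and the fact that distinct ways of extending $w$ through a vertex correspond to distinct corners of $T$.
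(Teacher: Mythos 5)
Your proof is correct and follows essentially the same route as the paper's: you invoke Lemma~\ref{2termextlemma}(iv) and (v) to force each summand to contain $\supp(M(w))$, a dimension count at those vertices to get exactly two summands (the paper also cites part (i) here, but that is subsumed by the count), and parts (ii)/(iii) together with (v) to pin down the shapes. The one place you diverge is the last step: the paper concludes the pairing directly from (ii) and (iii), whereas you enumerate the two a priori possible pairings and rule out the ``straight'' one $\{M(u),M(v)\}$ via the nonsplit hypothesis. Your version is actually a touch more careful, since (ii) and (iii) are vacuous in the degenerate configuration where, say, $u^{(1)}$ and $v^{(2)}$ are both empty, and the straight pairing survives them; in that case one must fall back on the observation (made in the preamble of the proof of Lemma~\ref{2termextlemma}) that no summand can be isomorphic to $M(u)$ or $M(v)$, which is exactly what your nonsplit argument supplies. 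So the two arguments are the same in substance, with your finish making the edge cases explicit.
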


\begin{proof}
By Lemma~\ref{2termextlemma} $i)$, $X$ has at least two indecomposable summands. By Lemma~\ref{2termextlemma} $iv)$ and $v)$, $X$ has exactly two summands, $M(y)$ and $M(z)$, where $\text{supp}(M(w)) \subset \text{supp}(M(y))$ and $\text{supp}(M(w)) \subset \text{supp}(M(z))$. By exactness of the given sequence and by Lemma~\ref{2termextlemma} $v)$, for any $x \in \{u^{(1)}, u^{(2)}, v^{(1)}, v^{(2)}\}$ we have that $\text{supp}(M(x))$ is contained in $\text{supp}(M(y))$ or $\text{supp}(M(z))$. By combining Lemma~\ref{2termextlemma} $ii)$ and $iii)$, we have that $M(y) = M(u^{(1)}\leftrightarrow w \leftrightarrow v^{(2)})$ and $M(z) = M(v^{(1)}\leftrightarrow w \leftrightarrow u^{(2)}).$
\end{proof}

%\#\#\#\#\#\#

\begin{lemma}\label{Lemma:endptoverlapext}
Let $M(u), M(v) \in \text{ind}(\Lambda_T\text{-mod})$ where $s_u$ and $s_v$ either share an endpoint and agree along a segment or they have a common vertex that is an endpoint of at most one of $s_u$ and $s_v$. If $0 \to M(u) \stackrel{f}{\to} X  \stackrel{g}{\to} M(v) \to 0$ is an extension and $X = \oplus_{i = 1}^k X_i$ is a direct sum decomposition into indecomposables, then the following hold.

\begin{itemize}
\item[$i)$] $X$ is not indecomposable.
\item[$ii)$] There is no $X_i$ such that $\text{supp}(X_i) \subsetneq \text{supp}(M(x))$ where $x \in \{u, v\}.$
\end{itemize}
\end{lemma}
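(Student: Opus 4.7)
The plan is to exploit exactness of the given short exact sequence, which yields $\underline{\dim}(X)=\underline{\dim}(M(u))+\underline{\dim}(M(v))$, together with the fact from Proposition~\ref{lambdarepfin} and Corollary~\ref{stringsegmentbij}(1) that every indecomposable $\Lambda_T$-module is a string module with all dimension-vector entries in $\{0,1\}$. Both (i) and (ii) then reduce to a careful analysis of how $\text{supp}(M(u))$ and $\text{supp}(M(v))$ sit inside $(Q_T)_0$ under the two alternatives of the hypothesis.

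For part (i), first split on whether $\text{supp}(M(u))\cap\text{supp}(M(v))$ is empty in $(Q_T)_0$. If it is nonempty --- which is automatic in case (a), and holds in every subcase of (b) in which the two segments share at least one edge of $T$ --- then some coordinate of $\underline{\dim}(X)$ is at least $2$, so $X$ is not thin and therefore cannot be an indecomposable string module, hence by Proposition~\ref{lambdarepfin} cannot be indecomposable. If the intersection is empty --- only possible in case (b), when the common vertex $v_0$ of $T$ is interior to at least one segment but the two segments use disjoint edges at $v_0$ --- I would argue locally at $v_0$: the interior segment uses two consecutive edges around a corner $(v_0,F)$, and the third edge coming from the other segment meets one of these at a different corner $(v_0,G)$; by the definition of $I_T$, the composite of the associated arrows in $Q_T$ is a length-two path around $v_0$ and thus lies in $I_T$. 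No valid concatenation $u \leftarrow v$ (or $v \leftarrow u$) of strings in $\Lambda_T$ therefore exists, and Lemma~\ref{disjointExt} (whose hypothesis is now satisfied) forces $X=M(u)\oplus M(v)$, which is decomposable.

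For part (ii), assume for contradiction that $X_i=M(y^{(1)})$ is a summand with $\text{supp}(X_i)\subsetneq\text{supp}(M(x))$; by symmetry take $x=v$. Since the dimension vector of $X$ restricted to $\text{supp}(M(v))$ matches that of $M(v)$, exactness forces a second summand $X_j=M(y^{(2)})$ with $y^{(1)}\leftrightarrow y^{(2)}$ a string of $\Lambda_T$, $\text{supp}(M(y^{(2)}))$ meeting $\text{supp}(M(v))\setminus\text{supp}(X_i)$ nontrivially. Mirroring the case analysis in the proof of Lemma~\ref{2termextlemma}(iv), if the connecting arrow is $y^{(1)}\leftarrow y^{(2)}$ then $\text{Hom}_{\Lambda_T}(M(y^{(1)}),M(v))=0$, so any nonzero vector in $M(y^{(1)})$ at a coordinate lying in $\text{supp}(X_i)\subseteq\text{supp}(M(v))$ lands in $\text{ker}(g)$ but not in $\text{im}(f)$ (since $M(y^{(1)})$ is the unique summand containing that coordinate), violating exactness; the opposite arrow direction yields the mirror contradiction through injectivity of $f$ applied to $M(y^{(2)})$.

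The main obstacle will be the disjoint-support subcase of part (i), where the dimension/thinness argument is unavailable and one must instead produce the obstructing relation in $I_T$ explicitly from the hypothesis that $v_0$ is interior to at least one segment. This requires enumerating the possible cyclic positions at $v_0$ of the outlying edge of the other segment relative to the two consecutive edges of the interior segment, and checking that in each case the unique candidate connecting arrow completes a forbidden length-two composition at a corner of $T$.
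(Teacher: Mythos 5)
Your overall strategy matches the paper's, which gives only a one-sentence proof: adapt the arguments used for Lemma~\ref{2termextlemma}~$i)$, $iv)$, $v)$, since those did not use non-splitness. You correctly notice that the proof of Lemma~\ref{2termextlemma}~$i)$ does not carry over verbatim because it hinges on $\text{supp}(M(u))\cap\text{supp}(M(v))\neq\emptyset$, and you supply a separate disjoint-support argument via Lemma~\ref{disjointExt}. That is the right fix. Your local argument at $v_0$ is a bit imprecise (it presupposes that the would-be connecting arrow of $Q_T$ lives at $v_0$, and that the third edge is cyclically adjacent to one of the interior segment's two edges), but the conclusion is sound: since $\text{supp}(M(u))\cap\text{supp}(M(v))=\emptyset$ forces $s_u$ and $s_v$ to share only the single vertex $v_0$, which by hypothesis is not a common endpoint, the segments share no endpoint, so no concatenation $u\leftrightarrow v$ can be a string in $\Lambda_T$; Lemma~\ref{disjointExt} plus the dimension count then forces $X\cong M(u)\oplus M(v)$.

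The genuine gap is in your argument for $ii)$. You assert that the dimension vector of $X$ restricted to $\text{supp}(M(v))$ equals that of $M(v)$, and that $M(y^{(1)})$ is the unique summand containing a given coordinate. Both assertions are false whenever $\text{supp}(M(u))\cap\text{supp}(M(v))\neq\emptyset$, which is exactly the situation in case~(a) of the hypothesis (and in much of case~(b)): on the overlap $\text{supp}(M(u))\cap\text{supp}(M(v))$ the dimension of $X$ is $2$, so a coordinate there may lie in two summands, and the ``lands in $\ker(g)$ but not in $\mathrm{im}(f)$'' step no longer forces non-exactness. You cannot port the proof of Lemma~\ref{2termextlemma}~$iv)$ word-for-word because there the target $\text{supp}(M(v^{(1)}))$ was disjoint from $\text{supp}(M(u))$, which is precisely what made the dimension and uniqueness claims valid. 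Your adaptation would need either to choose the boundary coordinate $e'$ outside the overlap (and handle the case where all of $\text{supp}(X_i)$ lies in the overlap by a separate argument), or to track where the two copies of $\Bbbk$ at an overlap coordinate go under $f$ and $g$ more carefully, as the paper does in the proof of Lemma~\ref{2termextlemma}~$v)$.
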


\begin{proof}
Note that only Lemma~\ref{2termextlemma} $ii)$ and $iii)$ relied on the assumption that the given extension was nonsplit. Thus one proves these assertions by adapting the proofs of Lemmas~\ref{2termextlemma} $i)$, $iv)$, and $v)$, since these did not depend on Lemma~\ref{2termextlemma} $ii)$ and $iii)$.
\end{proof}

\subsection{Oriented flip graphs and torsion-free classes}\label{subsec:torsf}

In this section, we recall the definition of torsion-free classes and their lattice structure. After that, we show that oriented flip graphs are isomorphic as posets to the lattice of torsion-free classes of $\Lambda_T$ ordered by inclusion and torsion classes of $\Lambda_T$ ordered by reverse inclusion.

Let $\Lambda$ be a finite dimensional $\Bbbk$-algebra. A full, additive subcategory $\mathcal{C} \subset \Lambda$-mod is \textbf{extension closed} if for any objects $X,Y \in \mathcal{C}$ satisfying $0 \to X \to Z \to Y \to 0$ one has $Z \in \mathcal{C}$. We say $\mathcal{C}$ is \textbf{quotient closed} (resp. \textbf{submodule closed}) if for any $X \in \mathcal{C}$ satisfying $X \stackrel{\alpha}{\longrightarrow} Z$ where $\alpha$ is a surjection (resp. $Z \stackrel{\beta}{\longrightarrow} X$ where $\beta$ is an injection), then $Z \in \mathcal{C}$. A full, additive subcategory $\mathcal{T} \subset \Lambda$-mod is called a \textbf{torsion class} if $\mathcal{T}$ is quotient closed and extension closed. Dually, a full, additive subcategory $\mathcal{F} \subset \Lambda$-mod is called a \textbf{torsion-free class} if $\mathcal{F}$ is extension closed and submodule closed. 

Let $\text{tors}(\Lambda)$ (resp. $\text{torsf}(\Lambda)$) denote the lattice of torsion classes  (resp. of torsion-free classes) of $\Lambda$ ordered by inclusion. We have the following proposition, which shows that a torsion class of $\Lambda$ uniquely determines a torsion-free class of $\Lambda$ and vice versa. Given $\mathcal{T}$ a torsion class and $\mathcal{F}$ its corresponding torsion-free class, we say that the data $(\mathcal{T}, \mathcal{F})$ is a \textbf{torsion pair}.

\begin{proposition}\label{torsbij}
\cite[Prop. 1.1 a)]{iyama.reiten.thomas.todorov:latticestrtors} The maps $$\begin{array}{rcl} \text{tors}(\Lambda) & \stackrel{(-)^\perp}{\longrightarrow} & \text{torsf}(\Lambda)\\ \mathcal{T} & \longmapsto &  \mathcal{T}^\perp := \{X \in \Lambda\text{-mod}: \ \text{Hom}_{\Lambda}(\mathcal{T},X) = 0\}\end{array}$$
and
$$\begin{array}{rcl} \text{torsf}(\Lambda) & \stackrel{^{\perp}(-)}{\longrightarrow} & \text{tors}(\Lambda)\\ \mathcal{F} & \longmapsto &  {}^{\perp}\mathcal{F} := \{X \in \Lambda\text{-mod}: \ \text{Hom}_{\Lambda}(X,\mathcal{F}) = 0\}\end{array}$$ are inverse bijections. 
\end{proposition}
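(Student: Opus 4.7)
The plan is to verify that the two maps are well-defined and mutually inverse by establishing the classical fact that a torsion class $\Tcal$ and its right-perpendicular $\Tcal^\perp$ (resp.\ a torsion-free class $\Fcal$ and its left-perpendicular ${}^\perp\Fcal$) form a torsion pair $(\Tcal,\Tcal^\perp)$ in the sense that every $M\in\Lambda\text{-mod}$ fits into a canonical short exact sequence with torsion and torsion-free parts. Since $\Lambda$ is finite-dimensional, we have Noetherian and Artinian conditions on modules, which makes the construction of these parts straightforward.

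First I would check well-definedness. For $\Tcal\in\text{tors}(\Lambda)$, the subcategory $\Tcal^\perp$ is extension-closed: given $0\to X\to Z\to Y\to 0$ with $X,Y\in\Tcal^\perp$ and $T\in\Tcal$, the left-exactness of $\Hom(T,-)$ applied to this sequence yields $\Hom(T,Z)=0$. It is submodule-closed because any map $T\to Z'\hookrightarrow X$ composes to zero, hence is zero. The dual closure properties for ${}^\perp\Fcal$ follow from the right-exactness of $\Hom(-,F)$ in the first argument together with closure of $\Fcal$ under submodules (to handle the surjection $X\twoheadrightarrow Z$: any $Z\to F$ pulls back to a map $X\to F$ whose image in $F$ must come out of zero on the kernel, but a cleaner argument is to use the cokernel of the inclusion of the image).

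Next I would construct the canonical torsion decomposition. For a torsion class $\Tcal$ and a module $M$, define $tM$ to be the sum of all submodules of $M$ contained in $\Tcal$; since $M$ is finite-dimensional this is itself a submodule of $M$, and it lies in $\Tcal$ because $\Tcal$ is closed under quotients (so in particular under quotients of direct sums of its objects, hence under finite sums inside $M$). To see that $M/tM\in\Tcal^\perp$, suppose $T\to M/tM$ is a nonzero map from some $T\in\Tcal$; the image is a quotient of $T$, so it lies in $\Tcal$, and its preimage $N\subseteq M$ fits in $0\to tM\to N\to N/tM\to 0$ with both ends in $\Tcal$, so by extension-closure $N\in\Tcal$, contradicting the maximality of $tM$. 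A dual construction produces $fN$ for a torsion-free class: take $fN$ to be the intersection of all submodules of $N$ whose quotient lies in $\Fcal$, and show ${}^\perp\Fcal$-ness of $N/fN$ using Artinian-ness to realize the intersection as a finite one.

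Finally I would verify ${}^\perp(\Tcal^\perp)=\Tcal$ and $({}^\perp\Fcal)^\perp=\Fcal$. The inclusions $\Tcal\subseteq{}^\perp(\Tcal^\perp)$ and $\Fcal\subseteq({}^\perp\Fcal)^\perp$ are immediate from the defining Hom-vanishing. For the reverse, given $M\in{}^\perp(\Tcal^\perp)$, the quotient $M/tM$ lies in $\Tcal^\perp$ by the previous paragraph, so the quotient map $M\to M/tM$ must be zero, forcing $M=tM\in\Tcal$. The dual argument using $fM\hookrightarrow M$ handles $({}^\perp\Fcal)^\perp\subseteq\Fcal$. The main technical step — and the only one requiring care — is the construction and torsion/torsion-free properties of $tM$ (resp.\ $fM$); everything else is formal Hom-vanishing and diagram chasing. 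No obstacle beyond this appears because finite-dimensionality trivializes the set-theoretic questions about existence of these canonical sub/quotient modules.
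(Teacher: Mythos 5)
The paper does not prove this statement; Proposition~\ref{torsbij} is quoted from Iyama--Reiten--Thomas--Todorov, so there is no in-house argument to compare against. Your proof is the standard one for length categories and is correct: well-definedness of the two perpendiculars follows from the usual Hom-exactness arguments together with the closure properties of $\Tcal$ and $\Fcal$; the torsion submodule $tM$ (maximal $\Tcal$-submodule of $M$) and the torsion-free quotient exist because modules are finite-dimensional; and the inverse-bijection property falls out because $M/tM\in\Tcal^\perp$ forces $M=tM$ whenever $M\in{}^\perp(\Tcal^\perp)$, with the dual argument handling the other composite. One small slip of the pen: in the dual construction you write that you will show that $N/fN$ lies in ${}^\perp\Fcal$, but it is $fN$ itself that must land in ${}^\perp\Fcal$ (the quotient $N/fN$ is the piece in $\Fcal$); the argument you sketch -- using minimality of $fN$ and extension-closure of $\Fcal$ -- is the right one for $fN$. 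No gaps.
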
 

The lattices $\text{tors}(\Lambda)$ and $\text{torsf}(\Lambda)$ have the following description of the meet and join operations. 
\begin{proposition}\cite[Prop. 1.3]{iyama.reiten.thomas.todorov:latticestrtors}\label{meetandjointors} Let $\Lambda$ be a finite dimensional algebra. Then $\text{tors}(\Lambda)$ and $\text{torsf}(\Lambda)$ are complete lattices. The join and meet operations are described as follows

$\begin{array}{rl}
a) & \text{Let } \{\mathcal{T}_i\}_{i \in I} \subset \text{tors}(\Lambda) \text{ be a collection of torsion classes. Then we have } \bigwedge_{i\in I}\mathcal{T}_i = \bigcap_{i \in I} \mathcal{T}_i\\
& \text{and }  \bigvee_{i\in I}\mathcal{T}_i = {}^{\perp}\left(\bigcap_{i \in I} \mathcal{T}_i^\perp\right). \\
b) & \text{Let } \{\mathcal{F}_i\}_{i \in I} \subset \text{torsf}(\Lambda) \text{ be a collection of torsion-free classes. Then we have } \bigwedge_{i\in I}\mathcal{F}_i = \bigcap_{i \in I} \mathcal{F}_i\\
& \text{and }  \bigvee_{i\in I}\mathcal{F}_i = \left(\bigcap_{i \in I} {}^{\perp}\mathcal{F}_i\right)^{\perp}.
\end{array}$

\end{proposition}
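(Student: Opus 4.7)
The plan is to deduce both completeness and the explicit formulas from two ingredients: a direct check that intersections behave well, and the antitone Galois connection between subcategories of $\Lambda$-mod given by ${}^{\perp}(-)$ and $(-)^{\perp}$, whose fixed points are precisely the torsion and torsion-free classes (by Proposition~\ref{torsbij}).

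First, I would verify directly that an arbitrary intersection $\bigcap_i \mathcal{F}_i$ of torsion-free classes is again torsion-free: submodule-closure and extension-closure of each $\mathcal{F}_i$ are preserved because a short exact sequence lies in the intersection iff it lies in every $\mathcal{F}_i$. The dual argument shows arbitrary intersections of torsion classes are torsion classes. Since $\Lambda$-mod is itself both a torsion and torsion-free class, both posets have a maximum, and existence of arbitrary meets together with a maximum yields a complete lattice; this simultaneously establishes the meet formulas $\bigwedge_i \mathcal{F}_i = \bigcap_i \mathcal{F}_i$ and $\bigwedge_i \mathcal{T}_i = \bigcap_i \mathcal{T}_i$.

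For the join formula in $\text{torsf}(\Lambda)$, I would exploit the Galois-connection relation $\mathcal{A}\subseteq {}^{\perp}\mathcal{B} \Longleftrightarrow \text{Hom}_\Lambda(\mathcal{B},\mathcal{A})=0 \Longleftrightarrow \mathcal{B}\subseteq \mathcal{A}^{\perp}$, which in particular implies that the two operators are inclusion-reversing and that $({}^{\perp}\mathcal{F})^{\perp}=\mathcal{F}$ for any torsion-free $\mathcal{F}$ by Proposition~\ref{torsbij}. Set $\mathcal{G}:=\bigl(\bigcap_i{}^{\perp}\mathcal{F}_i\bigr)^{\perp}$. By the first step, $\bigcap_i{}^{\perp}\mathcal{F}_i$ is a torsion class, so $\mathcal{G}$ is a torsion-free class. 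From $\bigcap_j{}^{\perp}\mathcal{F}_j \subseteq {}^{\perp}\mathcal{F}_i$ applied with $(-)^{\perp}$ one obtains $\mathcal{F}_i=({}^{\perp}\mathcal{F}_i)^{\perp}\subseteq \mathcal{G}$, so $\mathcal{G}$ is an upper bound for the $\mathcal{F}_i$. Conversely, if $\mathcal{F}\in\text{torsf}(\Lambda)$ contains every $\mathcal{F}_i$, then ${}^{\perp}\mathcal{F}\subseteq{}^{\perp}\mathcal{F}_i$ for all $i$, hence ${}^{\perp}\mathcal{F}\subseteq\bigcap_i{}^{\perp}\mathcal{F}_i$, and applying $(-)^{\perp}$ once more yields $\mathcal{G}\subseteq({}^{\perp}\mathcal{F})^{\perp}=\mathcal{F}$. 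Thus $\mathcal{G}=\bigvee_i\mathcal{F}_i$. The formula $\bigvee_i\mathcal{T}_i={}^{\perp}\bigl(\bigcap_i\mathcal{T}_i^{\perp}\bigr)$ follows by the same argument with the roles of the two perpendicular operators interchanged.

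No step here presents a serious obstacle: the content is entirely the formal machinery of Galois connections together with the elementary closure remark about intersections. The one subtle point to keep in mind is that the argument depends crucially on the bijectivity statement $({}^{\perp}\mathcal{F})^{\perp}=\mathcal{F}$ and ${}^{\perp}(\mathcal{T}^{\perp})=\mathcal{T}$ from Proposition~\ref{torsbij}; without this, the double-perpendicular could in principle enlarge the class and the extremality of $\mathcal{G}$ would fail.
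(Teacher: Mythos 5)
The paper does not prove this proposition; it is cited verbatim from Iyama--Reiten--Thomas--Todorov, so there is no internal argument to compare against. Your proposed proof is the standard one and is essentially correct: show intersections of torsion(-free) classes are again torsion(-free), observe $\Lambda\text{-mod}$ is a top element (and $0$ a bottom), conclude completeness with the meet given by intersection, and then derive the join formula from the antitone Galois connection together with the fixed-point property $({}^{\perp}\mathcal{F})^{\perp}=\mathcal{F}$ supplied by Proposition~\ref{torsbij}. One small slip: the middle term of your displayed equivalence should be $\text{Hom}_{\Lambda}(\mathcal{A},\mathcal{B})=0$, not $\text{Hom}_{\Lambda}(\mathcal{B},\mathcal{A})=0$. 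With the paper's conventions, $\mathcal{A}\subseteq{}^{\perp}\mathcal{B}$ says every object of $\mathcal{A}$ has no maps \emph{to} $\mathcal{B}$, and $\mathcal{B}\subseteq\mathcal{A}^{\perp}$ says every object of $\mathcal{B}$ receives no maps \emph{from} $\mathcal{A}$; both are the vanishing of $\text{Hom}_{\Lambda}(\mathcal{A},\mathcal{B})$. This typo does not propagate, since the rest of your argument only invokes that both operators are order-reversing and that the composites restrict to the identity on torsion and torsion-free classes, which are correct.
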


\begin{lemma}\cite[Prop. 1.4 a), c)]{iyama.reiten.thomas.todorov:latticestrtors}\label{standarddual}
The maps $$\begin{array}{rcl} \text{tors}(\Lambda) & \stackrel{D(-)}{\longrightarrow} & \text{torsf}(\Lambda^{\text{op}}) \cong \text{torsf}(\Lambda)^{\text{op}} \\ \mathcal{T} & \longmapsto &  D{\mathcal{T}} \end{array}$$
and
$$\begin{array}{rcl} \text{torsf}(\Lambda) & \stackrel{D(-)}{\longrightarrow} & \text{tors}(\Lambda^{\text{op}}) \cong \text{tors}(\Lambda)^{\text{op}}\\ \mathcal{F} & \longmapsto &  D\mathcal{F} \end{array}$$ are isomorphisms of lattices where $D(-):= \text{Hom}_\Lambda(-,\Bbbk)$ is the \textbf{standard duality}. Furthermore, the functor $D((-)^\perp): \text{tors}(\Lambda) \to \text{tors}(\Lambda^{\text{op}})$ is an anti-isomorphism of posets.
\end{lemma}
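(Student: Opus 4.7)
The plan is to exploit the fact that the standard duality $D=\mathrm{Hom}_\Lambda(-,\Bbbk)$ is an exact contravariant equivalence $\Lambda\text{-mod}\to\Lambda^{\mathrm{op}}\text{-mod}$ with $D^2\cong\mathrm{id}$. All structural properties of torsion/torsion-free classes are defined by short exact sequences, and exact contravariant functors simply reverse the arrows, swapping ``quotient'' with ``submodule'' while preserving ``extension.'' So the work is essentially bookkeeping.

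First, I would show that if $\mathcal{T}\in\mathrm{tors}(\Lambda)$ then $D\mathcal{T}:=\{DX:X\in\mathcal{T}\}$ lies in $\mathrm{torsf}(\Lambda^{\mathrm{op}})$. Given a short exact sequence $0\to A\to B\to C\to 0$ in $\Lambda^{\mathrm{op}}\text{-mod}$ with $A,C\in D\mathcal{T}$, applying $D$ produces an exact sequence $0\to DC\to DB\to DA\to 0$ in $\Lambda\text{-mod}$ with $DC,DA\in\mathcal{T}$; extension-closure forces $DB\in\mathcal{T}$, whence $B\cong D^2B\in D\mathcal{T}$. Similarly, a monomorphism $B\hookrightarrow A$ with $A\in D\mathcal{T}$ dualizes to an epimorphism $DA\twoheadrightarrow DB$ with $DA\in\mathcal{T}$, and quotient-closure gives $DB\in\mathcal{T}$, so $B\in D\mathcal{T}$. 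The dual argument shows $D$ carries $\mathrm{torsf}(\Lambda)$ into $\mathrm{tors}(\Lambda^{\mathrm{op}})$. Because $D^2\cong\mathrm{id}$, these two assignments are mutually inverse bijections.

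Next, I would observe that both assignments are inclusion-preserving: $\mathcal{T}_1\subseteq\mathcal{T}_2$ immediately gives $D\mathcal{T}_1\subseteq D\mathcal{T}_2$, and the same holds for the inverse. A bijection of lattices that is order-preserving in both directions is automatically a lattice isomorphism, so we obtain $\mathrm{tors}(\Lambda)\cong\mathrm{torsf}(\Lambda^{\mathrm{op}})$ and $\mathrm{torsf}(\Lambda)\cong\mathrm{tors}(\Lambda^{\mathrm{op}})$ as lattices.

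Finally, for the anti-isomorphism statement, I would use Proposition~\ref{torsbij}: the map $(-)^{\perp}:\mathrm{tors}(\Lambda)\to\mathrm{torsf}(\Lambda)$ is a bijection, and since $\mathcal{T}_1\subseteq\mathcal{T}_2$ forces $\mathcal{T}_2^{\perp}\subseteq\mathcal{T}_1^{\perp}$ (and likewise for the inverse ${}^{\perp}(-)$), this bijection is order-reversing, hence an anti-isomorphism of posets. Composing with the order-preserving isomorphism $D:\mathrm{torsf}(\Lambda)\to\mathrm{tors}(\Lambda^{\mathrm{op}})$ just established yields $D\circ(-)^{\perp}:\mathrm{tors}(\Lambda)\to\mathrm{tors}(\Lambda^{\mathrm{op}})$ as the desired anti-isomorphism. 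There is no real obstacle here beyond keeping the variance straight; the only subtle point is confirming that $D$ is genuinely order-preserving (rather than order-reversing) between tors and torsf, which is why all of the order-reversal in the final statement must come from the Hom-orthogonality operation $(-)^{\perp}$ and not from $D$ itself.
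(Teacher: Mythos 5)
The paper does not prove this lemma; it simply cites the result to Iyama--Reiten--Thomas--Todorov, so there is no internal proof to compare against. Your argument is correct and is the standard one: the exact contravariant duality with $D^2\cong\mathrm{id}$ swaps quotient closure with submodule closure while preserving extension closure, and the order-reversal in the final anti-isomorphism comes entirely from $(-)^{\perp}$, not from $D$. The one cosmetic remark is that the standard duality should be $\mathrm{Hom}_{\Bbbk}(-,\Bbbk)$ rather than $\mathrm{Hom}_{\Lambda}(-,\Bbbk)$; this typo is inherited from the paper's statement, and the properties you actually invoke are those of the correct functor, so it does not affect the argument.
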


\begin{theorem}\label{Thm:TorsfOrFlipIso}
For any tree $T$, we have that $\overrightarrow{FG}(T) \cong \text{torsf}(\Lambda_T)$ and $\overrightarrow{FG}(T) \cong \text{tors}(\Lambda_T)^{\text{op}}$ where $\text{tors}(\Lambda_T)^{\text{op}}$ denotes the lattice of torsion classes ordered by reverse inclusion.
\end{theorem}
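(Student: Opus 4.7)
The plan is to realize both $\text{torsf}(\Lambda_T)$ and $\ora{FG}(T)$ as the same sublattice $\pi_{\downarrow}(\Bic(T))$ of $\Bic(T)$. The identification $\ora{FG}(T)\cong\pi_{\downarrow}(\Bic(T))$ is already provided by Theorem~\ref{thm_eta_phi_main}. On the representation-theoretic side, I would define
\[
\Psi\colon\text{torsf}(\Lambda_T)\longrightarrow\Bic(T),\qquad \Psi(\mathcal{F})=\{\,s\in\Seg(T):M(w)\in\mathcal{F}\text{ where }s_w=s\,\},
\]
using the bijection $M(w)\leftrightarrow s_w$ of Corollary~\ref{stringsegmentbij}, and prove that $\Psi$ is a lattice isomorphism onto $\pi_{\downarrow}(\Bic(T))$.

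First I would verify that $\Psi(\mathcal{F})$ is biclosed. Closedness follows from extension-closure of $\mathcal{F}$: if $s,t\in\Psi(\mathcal{F})$ and $s\circ t\in\Seg(T)$, the shared endpoint determines a corner whose arrow yields a string of the form $s\leftarrow t$ or $s\to t$ in $\Lambda_T$, and Theorem~\ref{shareendptext} supplies a nonsplit extension with middle term $M(s\circ t)$. Co-closedness amounts to exhibiting one of $M(s),M(t)$ as an indecomposable submodule of $M(s\circ t)$; this follows from the standard description of submodules of string modules, as the single connecting arrow in $Q_T$ has a definite head and the subsegment on its side is always a submodule. Checking $\Psi(\mathcal{F})\in\pi_{\downarrow}(\Bic(T))$ amounts to the observation that a proper subsegment $t\subset s$ lies in $C_s$ precisely when both boundary arrows of the corresponding substring of the string of $s$ point inward, which is exactly the criterion for $M(t)\hookrightarrow M(s)$; submodule-closure of $\mathcal{F}$ then gives $C_s\subseteq\Psi(\mathcal{F})$.

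Injectivity of $\Psi$ is immediate since torsion-free classes in the representation-finite algebra $\Lambda_T$ are determined by their indecomposable objects. For surjectivity, given $X\in\pi_{\downarrow}(\Bic(T))$ I would show that $\mathcal{F}_X:=\text{add}\{M(s):s\in X\}$ is a torsion-free class. Submodule-closure reduces via Krull--Schmidt to the argument of the previous paragraph run in reverse, using biclosedness plus $\pi_{\downarrow}$-invariance and an induction on subsegment length. Extension-closure must be verified against the classification of extensions between indecomposables: Propositions~\ref{separatesegext} and \ref{Prop:commonendext} supply no nonsplit extensions, the endpoint-sharing case of Theorem~\ref{shareendptext} is handled by closedness of $X$, and the crossing case of Theorem~\ref{crossingnonsplitext} requires showing that when $u,v\in X$ cross along $w$ with the specific arrow configuration, the two ``uncrossed'' middle-term segments $u^{(1)}\leftarrow w\leftarrow v^{(2)}$ and $v^{(1)}\to w\to u^{(2)}$ also lie in $X$. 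This crossing case is the principal obstacle: its proof requires extracting an ``uncrossing closure'' property from the combination of biclosedness and $\pi_{\downarrow}$-invariance.

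Meet-preservation is immediate since meets in $\text{torsf}(\Lambda_T)$ are intersections by Proposition~\ref{meetandjointors} and the meet in $\pi_{\downarrow}(\Bic(T))$ coincides with intersection by the proof of Theorem~\ref{thm_eta_phi_main}; join-preservation follows from being an order isomorphism of finite lattices. The second isomorphism $\ora{FG}(T)\cong\text{tors}(\Lambda_T)^{\text{op}}$ is then an immediate consequence of the first via the order-reversing bijection $(-)^{\perp}\colon\text{tors}(\Lambda_T)\to\text{torsf}(\Lambda_T)$ of Proposition~\ref{torsbij}.
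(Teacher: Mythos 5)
Your proposal follows essentially the same route as the paper's proof: identify $\ora{FG}(T)$ with $\pi_{\downarrow}(\Bic(T))$ via Theorem~\ref{thm_eta_phi_main}, define the segment-set map $\Psi$ and its inverse on the module side, and verify biclosedness, $\pi_{\downarrow}$-invariance, and the torsion-free-class axioms by appealing to the string-module combinatorics and the extension classification of Section~\ref{Sec:stringalgtree}. However, there is a genuine gap exactly where you flag it: you state that the crossing case of extension-closure ``requires extracting an `uncrossing closure' property from the combination of biclosedness and $\pi_{\downarrow}$-invariance,'' but you do not produce that argument, and the phrasing suggests you expect it to be a separate combinatorial lemma about biclosed sets. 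In fact no new combinatorial input is needed. The resolution is to use the submodule-closedness of $\mathcal{F}_X$, which you establish first, to conclude that $M(u^{(1)})\hookrightarrow M(u)$ and $M(w\leftarrow v^{(2)})\hookrightarrow M(v)$ both lie in $\mathcal{F}_X$. The segment of the uncrossed middle term $M(u^{(1)}\leftarrow w\leftarrow v^{(2)})$ is then the concatenation $s_{u^{(1)}}\circ s_{(w\leftarrow v^{(2)})}$ of two segments already in $X$, which lies in $X$ because $X$ is closed, so the middle term lies in $\mathcal{F}_X$; the other summand is handled symmetrically. Until this reduction is made explicit, the extension-closure verification is incomplete.

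A secondary weak point is your treatment of submodule-closedness of $\mathcal{F}_X$. Your inverse observation that $t\in C_s$ iff $M(t)\hookrightarrow M(s)$ handles an indecomposable submodule of a single indecomposable, but an injection $M(v)\hookrightarrow \bigoplus_i M(w^{(i)})^{a_i}$ need not land in a single summand, and Krull--Schmidt does not reduce to that case. Closing this requires an induction on $\dim_{\Bbbk}M(v)$: if the injection is not into a summand, Lemma~\ref{dimhom} factors a component as $M(v)\twoheadrightarrow M(w)\hookrightarrow M(w^{(i)})$ with $w$ proper, one writes $v=v^{(1)}\leftarrow w\rightarrow v^{(2)}$, inducts to get $M(v^{(1)}),M(v^{(2)})\in\mathcal{F}_X$, and then rebuilds $M(v)$ from single-arrow extensions whose segments remain in $X$ by closedness. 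Your phrase ``argument of the previous paragraph run in reverse\ldots and an induction on subsegment length'' gestures at this but omits the part that actually carries the proof. The remaining steps (biclosedness and $\pi_{\downarrow}$-invariance of $\Psi(\mathcal{F})$, injectivity, the meet/join check, and the passage to $\text{tors}(\Lambda_T)^{\text{op}}$) are sound and match the paper in substance.
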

\begin{proof}
By Lemma~\ref{standarddual}, it is enough to prove that $\overrightarrow{FG}(T) \cong \text{torsf}(\Lambda_T)$.  Furthermore, by Theorem~\ref{thm_eta_phi_main} (2), we have that $\overrightarrow{FG}(T) \cong \pi_{\downarrow}(\text{Bic}(T))$ so it is enough to show that the latter is isomorphic to $\text{torsf}(\Lambda_T)$.

We claim that the map $$\begin{array}{rcl} \pi_{\downarrow}(\text{Bic}(T)) & \stackrel{\zeta}{\longrightarrow} & \text{torsf}(\Lambda_T)\\ \pi_{\downarrow}(X) & {\longmapsto} &  \mathcal{F}:=\text{add}(\oplus_{s_u} M(u): \ s_u \in \pi_{\downarrow}(X))  \end{array}$$ is an isomorphism of posets where $\text{add}(\oplus_{i= 1}^k X_i)$ for any finite set of $\Lambda_T$-modules $X_i$ denotes the smallest full, additive subcategory of $\Lambda_T$-mod closed under taking summands of $\oplus_{i= 1}^k X_i$. Furthermore, we claim that the inverse of this map is given by $$\begin{array}{rcl} \text{torsf}(\Lambda_T) & \stackrel{\delta}{\longrightarrow} & \pi_{\downarrow}(\text{Bic}(T))\\ {\mathcal{F}} = \text{add}(\oplus_{i \in [k]} M(w^{(i)})) & \longmapsto & \pi_\downarrow(\{s_{w^{(1)}}, \ldots, s_{w^{(k)}}\}). \end{array}$$ We can see that these maps are order-preserving, since $\pi_\downarrow$ is order-preserving by Lemma~\ref{lem_map_properties} (7). Assuming that $\zeta(\pi_{\downarrow}(X))$ is a torsion-free class and that $\delta(\mathcal{F}) \in \pi_\downarrow(\text{Bic}(T)),$ we have that $\delta = \zeta^{-1}$ as $\pi_\downarrow$ is an idempotent map (see Lemma~\ref{lem_map_properties} (5)).

We first show that $\delta(\mathcal{F}) \in \pi_\downarrow(\text{Bic}(T))$ where ${\mathcal{F}} = \text{add}(\oplus_{i \in [k]} M(w^{(i)}))$. Let $s_u, s_v \in \{s_{w^{(1)}}, \ldots, s_{w^{(k)}}\}$ and assume $s_u \circ s_v \in \text{Seg}(T).$ Then, up to reversing the roles of $u$ and $v$, $u \leftarrow v$ is a string in $\Lambda_T$ so there is an extension $0 \to M(u) \to M(u \leftarrow v) \to M(v) \to 0$. Since $\mathcal{F}$ is extension closed, $M(u\leftarrow v) \in \mathcal{F}$ so $s_u \circ s_v = s_{(u\leftarrow v)} \in \{s_{w^{(1)}}, \ldots, s_{w^{(k)}}\}$. Thus $\{s_{w^{(1)}}, \ldots, s_{w^{(k)}}\}$ is closed. Since $\mathcal{F}$ is submodule closed, there are no extensions of the form $0 \to M(u) \to M(u \leftarrow v) \to M(v) \to 0$ where $s_u, s_v \not \in \{s_{w^{(1)}}, \ldots, s_{w^{(k)}}\}$, but $s_{(u \leftarrow v)} \in \{s_{w^{(1)}}, \ldots, s_{w^{(k)}}\}$. Thus $\{s_{w^{(1)}}, \ldots, s_{w^{(k)}}\}$ is co-closed.

Next, we show that $\mathcal{F}:=\text{add}(\oplus_{s_u} M(u): \ s_u \in \pi_{\downarrow}(X))$ is a torsion-free class. We begin by showing that it is submodule closed. Assume that there is an inclusion $M(v) \hookrightarrow M(u)$ where $M(u) \in \mathcal{F}$. Write $s_u = (x_0, \ldots, x_\ell)$ and orient this segment from $x_0$ to $x_\ell$. Let $s_v = (x_{i}, \ldots, x_j)$ where we can assume that $0 < i$ and $j < \ell$. The inclusion $M(v) \hookrightarrow M(u)$ implies that $u = u^{(1)} \rightarrow v \leftarrow u^{(2)}$ for some nonempty strings $u^{(1)}$ and $u^{(2)}$ in $\Lambda_T$. Now we have that $s_v$  turns right (resp. left) at $x_i$ (resp. at $x_j$). Thus  $s_v \in C_{s_u} \subset X$. This implies that $C_{s_v} \subset C_{s_u} \subset X$ so $s_v \in \pi_\downarrow(X)$. We obtain that $M(v) \in \mathcal{F}$.

Now suppose $f: M(v) \hookrightarrow X = \oplus_{i \in [k]}M(w^{(i)})^{a_i}$ for some $a_i \ge 0$ and $M(v)$ does not include into any summand of $X$. Furthermore, suppose any indecomposable $M(u)$ with $\text{dim}_\Bbbk(M(u)) < \text{dim}_\Bbbk(M(v))$ that includes into an object of $\mathcal{F}$ belongs to $\mathcal{F}$. Let $M(w^{(i)})$ be a summand of $X$ where the component map $g: M(v) \to M(w^{(i)})$ of $f$ is nonzero. By Lemma~\ref{dimhom}, we can assume that there exists a nonempty string $w$ in $\Lambda_T$ not equal to $u$ or $w^{(i)}$ such that $M(v) \twoheadrightarrow M(w) \hookrightarrow M(w^{(i)})$. By the previous paragraph, $M(w) \in \mathcal{F}$. Now express $v$ as $v = v^{(1)} \leftarrow w \rightarrow v^{(2)}$ where, without loss of generality, both $v^{(1)}$ and $v^{(2)}$ are nonempty. This implies that $M(v^{(i)}) \hookrightarrow X$ so $M(v^{(i)}) \in \mathcal{F}$ for $i = 1,2$ since $\text{dim}_\Bbbk(M(v^{(i)})) < \text{dim}_\Bbbk(M(v))$. Observe that we have an extension $0 \to M(v^{(2)}) \to M(w\rightarrow v^{(2)}) \to M(w) \to 0$, which shows that $M(w\rightarrow v^{(2)}) \in \mathcal{F}$ since $s_{(w\rightarrow v^{(2)})} = s_w\circ s_{v^{(2)}} \in \pi_\downarrow(X).$ This implies that we have an extension $0 \to M(v^{(1)}) \to M(v) \to M(w \rightarrow v^{(2)}) \to 0$, which shows that $M(v) \in \mathcal{F}$ since $s_{v} = s_{v^{(1)}} \circ s_w\circ s_{v^{(2)}} \in \pi_\downarrow(X).$ We conclude that $\mathcal{F}$ is submodule closed.

Lastly, we show that $\mathcal{F}$ is extension closed. Since $\pi_\downarrow(X)$ is closed, it is easy to see that $\mathcal{F}$ is extension closed with respect to extensions whose nonzero terms are indecomposable. By our description of nonsplit extensions in $\Lambda_T$-mod (see Section~\ref{Sec:stringalgtree}), it suffices to show that if $M(u), M(v) \in \mathcal{F}$ where $u = u^{(1)} \leftarrow w \rightarrow u^{(2)}$ and $v = v^{(1)} \rightarrow w \leftarrow v^{(2)}$ and $$0 \to M(u) \to M(u^{(1)} \leftarrow w \leftarrow v^{(2)}) \oplus M(v^{(1)} \rightarrow w \rightarrow u^{(2)}) \to M(v) \to 0$$ is the nonsplit extension defined by these modules, then $M(u^{(1)} \leftarrow w \leftarrow v^{(2)}), M(v^{(1)} \rightarrow w \rightarrow u^{(2)}) \in \mathcal{F}$. We show $M(u^{(1)} \leftarrow w \leftarrow v^{(2)}) \in \mathcal{F}$ and the proof that $M(v^{(1)} \rightarrow w \rightarrow u^{(2)}) \in \mathcal{F}$ is very similar. Notice that $M(u^{(1)}) \hookrightarrow M(u)$ and $M(w \leftarrow v^{(2)}) \hookrightarrow M(v)$ so $M(u^{(1)}), M(w \leftarrow v^{(2)}) \in \mathcal{F}$. Thus we obtain a nonsplit extension $0 \to M(u^{(1)}) \to M(u^{(1)} \leftarrow w \leftarrow v^{(2)}) \to M(w \leftarrow v^{(2)}) \to 0$, which shows that $M(u^{(1)} \leftarrow w \leftarrow v^{(2)}) \in \mathcal{F}.$\end{proof}

\subsection{Noncrossing tree partitions and wide subcategories}\label{Subsec:NCP_and_wide}

%\textcolor{red}{EXPOSITION}

In this section, we show that noncrossing tree partitions of a tree $T$ provide a combinatorial model for the wide subcategories of $\Lambda_T$-mod. 

If $\Lambda$ is a finite dimensional $\Bbbk$-algebra, we say that a full, additive subcategory $\mathcal{W} \subset \Lambda$-mod is a  \textbf{wide subcategory} if it is abelian and extension closed. We let $\text{wide}(\Lambda)$ denote the poset of wide subcategories of $\Lambda$-mod partially ordered by inclusion. It is easy to see that the intersection of two wide subcategories is also a wide subcategory, and the zero subcategory (resp. $\Lambda$-mod) is the bottom (resp. top) element $\text{wide}(\Lambda)$. Thus if $\Lambda$ is representation-finite, the poset $\text{wide}(\Lambda)$ is a lattice.  %\textcolor{red}{MORE?}

\begin{theorem}
For any tree $T$, we have the following isomorphisms of posets: $$\begin{array}{rcccl}\text{NCP}(T) & \longrightarrow & \Psi(\overrightarrow{FG}(T)) & \longrightarrow & \text{wide}(\Lambda_T) \\ \textbf{B} & \longmapsto & \overline{\text{Seg}(\textbf{B})} & \longmapsto & \text{add}\left(\oplus_{s_u} M(u): s_u \in \overline{\text{Seg}(\textbf{B})}\right). \end{array}$$
\end{theorem}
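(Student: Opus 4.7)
The first isomorphism $\mathbf{B} \mapsto \overline{\Seg(\mathbf{B})}$ is essentially a restatement of Theorem~\ref{Thm:rhocircPsi}. Indeed, the proof of that theorem derives the explicit formula $\psi(\Fcal) = \bigsqcup_{i} \Seg(T_{B_i}) = \overline{\Seg(\rho(\Fcal))}$, so the inverse of the isomorphism $\rho \circ \psi^{-1}$ is precisely the stated map, with order-preservation in both directions already established there.

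For the second map, set $\Phi(\mathbf{B}) := \text{add}\bigl(\bigoplus_{s_u \in \overline{\Seg(\mathbf{B})}} M(u)\bigr)$. The plan is to first use Proposition~\ref{prop_quotient_facial_interval} to decompose $\overline{\Seg(\mathbf{B})} = \bigsqcup_{i=1}^l \Seg(T_{B_i})$, so that $\Phi(\mathbf{B})$ is realized as the direct sum of the full additive subcategories spanned by indecomposables of $\Lambda_T$ whose associated segments are supported inside a single contracted tree $T_{B_i}$. Extension-closure would then be verified via the classification in Section~\ref{Sec:stringalgtree}: for two modules $M(u), M(v)$ with $s_u, s_v$ in the same $\Seg(T_{B_i})$, the middle terms described in Theorems~\ref{shareendptext} and \ref{crossingnonsplitext} are built by concatenating subsegments of $s_u$ and $s_v$ along their overlap, so they remain in $\Seg(T_{B_i})$; for $s_u, s_v$ in distinct blocks, the corresponding red segments are noncrossing and share no vertex interactions of the form required by Theorem~\ref{crossingnonsplitext}, so Propositions~\ref{separatesegext} and \ref{Prop:commonendext} force every extension to be split. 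For abelian closure, Lemma~\ref{dimhom} factors every nonzero $f : M(u) \to M(v)$ as $M(u) \twoheadrightarrow M(w) \hookrightarrow M(v)$; here $s_w$ is the maximal common subsegment, and the kernel and cokernel decompose into string modules whose segments are the complementary subsegments of $s_u$ and $s_v$, which again lie in the same $\Seg(T_{B_i})$.

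For the inverse, given $\mathcal{W} \in \text{wide}(\Lambda_T)$, I would set $S(\mathcal{W}) := \{s_u : M(u) \in \text{ind}(\mathcal{W})\}$ and prove that $S(\mathcal{W}) = \overline{\Seg(\mathbf{B})}$ for a unique $\mathbf{B} \in \NCP(T)$. The blocks of $\mathbf{B}$ are the segment-connected components of the interior vertices appearing in $S(\mathcal{W})$, with inclusion-minimal segments defining $\Seg(\mathbf{B})$; extension-closure of $\mathcal{W}$ ensures that concatenations (of the correct orientation) stay in $S(\mathcal{W})$, and abelianness ensures closure under the subsegment operations dictated by kernels and cokernels. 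Order-preservation in both directions is then automatic, since refinement of noncrossing tree partitions corresponds under the decomposition $\overline{\Seg(\mathbf{B})} = \bigsqcup_i \Seg(T_{B_i})$ to inclusion of the resulting segment sets, hence to inclusion of the wide subcategories. The main obstacle will be surjectivity of $\Phi$, i.e., showing that one cannot have two indecomposables $M(u), M(v) \in \mathcal{W}$ whose segments cross as red segments; one must leverage the unique nonsplit extension of Theorem~\ref{crossingnonsplitext} together with the abelian requirement on $\mathcal{W}$ to force such a pair to produce kernels or cokernels outside the additive hull of $\text{ind}(\mathcal{W})$, yielding the desired contradiction.
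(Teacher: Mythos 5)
Your outline is close to the paper's proof through the well-definedness step, but your plan for the inverse goes wrong at exactly the point you flag as "the main obstacle," and the mechanism you propose to resolve it cannot work.

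You claim that showing surjectivity amounts to proving no two indecomposables $M(u), M(v) \in \mathcal{W}$ have crossing red segments, and that this should follow because the abelian requirement would "force such a pair to produce kernels or cokernels outside the additive hull of $\text{ind}(\mathcal{W})$." Both halves of this are incorrect. First, the claim itself is false: the segments in $\overline{\Seg(\mathbf{B})}$ do cross each other as red segments as soon as $\mathbf{B}$ has a block with more than two vertices. For instance, if $[a,b], [b,c] \in \Seg(B)$ so that $[a,c] = [a,b]\circ[b,c] \in \overline{\Seg(B)}$, the red admissible curves for $[a,b]$ and $[a,c]$ share the endpoint $z(a,F)$ and are therefore crossing by the convention set out just before Lemma~\ref{Lemma:crossingcurves}. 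So the statement you want to prove is not true of wide subcategories of the form $\Phi(\mathbf{B})$, and trying to prove it will lead nowhere. Second, the proposed contradiction mechanism is a non-starter: since $\mathcal{W}$ is abelian, every kernel and cokernel of a morphism between objects of $\mathcal{W}$ lies in $\mathcal{W}$ by definition, so you can never produce a kernel or cokernel outside the additive hull.

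What the paper actually proves is that the \emph{simple} objects of $\mathcal{W}$ — i.e., those corresponding to the inclusion-minimal segments, which your own construction singles out to form $\Seg(\mathbf{B})$ — have pairwise noncrossing red segments. The contradiction there is of a different flavor: if the simple objects $M(u), M(v)$ had crossing segments with no common endpoint, Theorem~\ref{crossingnonsplitext} yields a factorization forcing a nonzero, non-invertible morphism $M(u) \to M(v)$ with image $M(w)$; but distinct simple objects of an abelian category have $\text{Hom} = 0$, contradiction. The cases where the two segments share an endpoint are ruled out by Lemma~\ref{endpointoverlapchar}. Your proposal needs to be corrected to argue about simples only, and the contradiction needs to come from $\text{Hom}$-vanishing between simples rather than from anything escaping $\mathcal{W}$.

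A smaller issue: when you describe $\Phi(\mathbf{B})$ as a "direct sum" of subcategories indexed by the blocks $B_i$, you implicitly use that there are no nonzero morphisms between indecomposables whose segments lie in the closures of distinct blocks — this is needed before decomposing the abelianness check block by block. Lemma~\ref{HomLambdaFree} only gives this for the minimal segments $\Seg(B_i)$; extending it to the full $\overline{\Seg(B_i)}$ requires the explicit argument the paper gives (writing $s_u$ as a concatenation of minimal segments and extracting an inclusion $M(u^{(t)}) \hookrightarrow M(u)$ to reduce to the minimal case). You should flag this step rather than assume it.
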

\begin{proof}
That the map on the left is an isomorphism follows from Theorem~\ref{Thm:rhocircPsi}. It is clear that the map on the right is order-preserving so it is enough to show that this same map defines a wide subcategories and has an order-preserving inverse.  %It is clear that this map is order-preserving so we will prove that it is bijective and that the map $$\mathcal{W} \mapsto \overline{\{s_u: \ \text{$M(u)$ is a simple object of $\mathcal{W}$}\}}$$ where $\mathcal{W} \in \text{wide}(\Lambda_T)$ is an order-preserving inverse.

We first show that $\mathcal{W} := \text{add}\left(\oplus_{s_u} M(u): s_u \in \overline{\text{Seg}(\textbf{B})}\right) \in \text{wide}(\Lambda_T)$. By Lemma~\ref{HomLambdaFree}, we know that $\mathcal{W}$ is closed under taking kernels and cokernels of maps between modules $M(u), M(v) \in \mathcal{W}$ where $s_u, s_v \in \text{Seg}(\textbf{B})$. Now suppose that $f \in \text{Hom}_{\Lambda_T}(M(u), M(v))$ is nonzero where $s_u \in \overline{\text{Seg}(B)}$, $s_v \in \overline{\text{Seg}(B^\prime)}$, and where $B$ and $B^\prime$ are blocks of $\textbf{B}$. We can further assume that $f$ is neither injective nor surjective. We write $s_u = s_{u^{(1)}}\circ \cdots \circ s_{u^{(k)}}$ and $s_v = s_{v^{(1)}}\circ \cdots \circ s_{v^{(\ell)}}$ where $s_{u^{(1)}}, \ldots, s_{u^{(k)}} \in \text{Seg}(B)$ and $s_{v^{(1)}}, \ldots, s_{v^{(\ell)}} \in \text{Seg}(B^\prime)$. 

Assume $B \neq B^\prime.$ Since $f$ is nonzero, then $s_u \not \in \text{Seg}(B)$ or $s_v \not \in \text{Seg}(B^\prime)$. Without loss of generality, we assume that $s_u \not \in \text{Seg}(B)$. Thus we have an inclusion $M(u^{(t)}) \hookrightarrow M(u)$ for some $t = 1, \ldots, k$. This implies that $\text{Hom}_{\Lambda_T}(M(u^{(t)}),M(v)) \neq 0$, which contradicts Lemma~\ref{HomLambdaFree}.

Now assume $B = B^\prime$, and suppose that any $g \in \text{Hom}_{\Lambda_T}(X,Y)$ has $\text{ker}(g), \text{coker}(g) \in \mathcal{W}$ for any $X, Y \in \mathcal{W}$ with $\text{dim}_\Bbbk(X) + \text{dim}_\Bbbk(Y) < \text{dim}_\Bbbk(M(u)) + \text{dim}_\Bbbk(M(v))$. Define $s_w := s_{w^{(1)}}\circ \cdots \circ s_{w^{(t)}}$ where $\{s_{w^{(1)}}, \ldots, s_{w^{(t)}}\} = \{s_{u^{(1)}}, \ldots, s_{u^{(k)}}\}\cap \{s_{v^{(1)}}, \ldots, s_{v^{(\ell)}}\}$. Now we have that $f$ factors as $M(u) \stackrel{\alpha}{\twoheadrightarrow} M(w) \stackrel{\beta}{\hookrightarrow} M(v)$. Since $f$ is neither injective nor surjective, we know that $\text{dim}_\Bbbk(M(u)) + \text{dim}_\Bbbk(M(w))$ and $\text{dim}_\Bbbk(M(w)) + \text{dim}_\Bbbk(M(v))$ are both less than $\text{dim}_\Bbbk(M(u)) + \text{dim}_\Bbbk(M(v))$. We thus obtain that $\text{ker}(f) = \text{ker}(\alpha) \in \mathcal{W}$ and $\text{coker}(f) = \text{coker}(\beta) \in \mathcal{W}$. We conclude that $\mathcal{W}$ is abelian. 

To see that $\mathcal{W}$ is extension closed, it is enough to show that if $M(u), M(v) \in \mathcal{W}$ where $u = u^{(-)} \leftarrow w \rightarrow u^{(+)}$, $v = v^{(-)} \rightarrow w \leftarrow v^{(+)}$, and $$0 \to M(u) \to M(u^{(-)} \leftarrow w \leftarrow v^{(+)}) \oplus M(v^{(-)} \rightarrow w \rightarrow u^{(+)}) \to M(v) \to 0$$ is the nonsplit extension defined by these modules, then $M(u^{(-)} \leftarrow w \leftarrow v^{(+)}), M(v^{(-)} \rightarrow w \rightarrow u^{(+)}) \in \mathcal{W}$. Notice that the existence of such an extension implies that the segments $s_u$ and $s_v$ are crossing. Now using Lemma~\ref{lem_crossing_div} and \ref{prop_red_green_switch}, we deduce the existence of a crossing between two segments in $\text{Seg}(\textbf{B})$, a contradiction. We thus have that $s_u, s_v \in \overline{\text{Seg}(B)}$ for some block $B$ of $\textbf{B}$.

Write $s_u = s_{u^{(1)}}\circ \cdots \circ s_{u^{(k)}}$ and $s_v = s_{v^{(1)}}\circ \cdots \circ s_{v^{(\ell)}}$ where $s_{u^{(1)}}, \ldots, s_{u^{(k)}}, s_{v^{(1)}}, \ldots, s_{v^{(\ell)}} \in \text{Seg}(B)$. Note that up to reversing the expression, these are the unique expressions for $s_u$ and $s_v$ in terms of elements of $\text{Seg}(\textbf{B})$. This implies that that $s_w = s_{w^{(1)}}\circ \cdots \circ s_{w^{(t)}}$ where $\{s_{w^{(1)}}, \ldots, s_{w^{(t)}}\} = \{s_{u^{(1)}}, \ldots, s_{u^{(k)}}\}\cap \{s_{v^{(1)}}, \ldots, s_{v^{(\ell)}}\}$ so that $M(w) \in \mathcal{W}$.  We now observe that we can write 
$$\begin{array}{rcl}
s_u & = & \underbracket{s_{u^{(1)}}\circ \cdots \circ s_{u^{(i)}}}_{s_{u^{(-)}}} \circ \underbracket{s_{w^{(1)}}\circ \cdots \circ s_{w^{(t)}}}_{s_{w}} \circ \underbracket{s_{u^{(i+t)}}\circ \cdots \circ s_{u^{(k)}}}_{s_{u^{(+)}}}\\
s_v & = & \underbracket{s_{v^{(1)}}\circ \cdots \circ s_{v^{(j)}}}_{s_{v^{(-)}}} \circ \underbracket{s_{w^{(1)}}\circ \cdots \circ s_{w^{(t)}}}_{s_{w}} \circ \underbracket{s_{v^{(j+t)}}\circ \cdots \circ s_{u^{(\ell)}}}_{s_{v^{(+)}}}.
\end{array}$$
One can thus construct extensions showing that $M(u^{(-)} \leftarrow w \leftarrow v^{(+)}), M(v^{(-)} \rightarrow w \rightarrow u^{(+)}) \in \mathcal{W}$. We conclude that $\mathcal{W} \in \text{wide}(\Lambda_T)$.

We now claim that the map $\omega: \text{wide}(\Lambda_T) \longrightarrow \Psi(\overrightarrow{FG}(T))$ defined by $$\mathcal{W} \mapsto \mathcal{S} := \overline{\{s_{u}: \ M(u) \text{ is a simple object of $\mathcal{W}$}\}}$$ is an order-preserving inverse to the map $\Psi(\overrightarrow{FG}(T)) \longrightarrow \text{wide}(\Lambda_T)$. Assuming that $\omega(\mathcal{W}) \in \Psi(\overrightarrow{FG}(T))$, it is clear that $\omega$ is an inverse as a map of sets.

Our earlier argument shows that the elements $M(u^{(i)})$ are the simple objects of $\mathcal{W}$ where $s_{u^{(i)}} \in \text{Seg}(\textbf{B})$. Thus to prove that $\mathcal{S} = \overline{\text{Seg}(\textbf{B})}$ for some $\textbf{B} \in \text{NCP}(T)$, it is enough to show that any two distinct segments $s_u$ and $s_v$ are noncrossing where $M(u), M(v) \in \mathcal{W}$ are simple objects in $\mathcal{W}$. Note that $\text{Hom}_{\Lambda_T}(M(u), M(v)) = \text{Hom}_{\Lambda_T}(M(v), M(u)) = 0$, since $M(u)$ and $M(v)$ are simple objects and $\mathcal{W}$ is a wide subcategory. %$\text{Hom}_{\mathcal{W}}(M(u), M(v)) = \text{Hom}_{\mathcal{W}}(M(v), M(u)) = 0$ actually these morphisms vanish over $\Lambda_T$-mod since $\mathcal{W}$ is wide.

If $s_u$ and $s_v$ share an endpoint, then Lemma~\ref{endpointoverlapchar} implies that $s_u$ and $s_v$ do not agree along a segment. Thus they are noncrossing in this case.

If $s_u$ and $s_v$ are crossing, then by Theorem~\ref{crossingnonsplitext}, and up to reversing the roles of $u$ and $v$ they define a unique nonsplit extension $$0 \to M(u) \to M(u^{(-)} \leftarrow w \leftarrow v^{(+)}) \oplus M(v^{(-)} \rightarrow w \rightarrow u^{(+)}) \to M(v) \to 0$$ where $u = u^{(-)} \leftarrow w \rightarrow u^{(+)}$, $v = v^{(-)} \rightarrow w \leftarrow v^{(+)}$. Using this description of the strings $u$ and $v$, we notice that there is map $f \in \text{Hom}_{\Lambda_T}(M(u), M(v))$ where $\text{im}(f) = M(w)$, a contradiction. Thus $s_u$ and $s_v$ are noncrossing. %Since $\mathcal{W}$ is extension closed, we have $M(u^{(-)} \leftarrow w \leftarrow v^{(+)}), M(v^{(-)} \rightarrow w \rightarrow u^{(+)}) \in \mathcal{W}$.  

Next, we show that $\omega$ is order-preserving. Since any two simple objects of $\mathcal{W} \in \text{wide}(\Lambda_T)$ correspond to noncrossing segments, the segment defined by any indecomposable object of $\mathcal{W}$ can be expressed as a concatenation of segments corresponding to simple objects of $\mathcal{W}$. That is, the segments of $\omega(\mathcal{W})$ are in bijection with the indecomposable objects of $\mathcal{W}$. Thus if $\mathcal{W}_1 \subset \mathcal{W}_2$, one has $\omega(\mathcal{W}_1) \subset \omega(\mathcal{W}_2)$.\end{proof}

\begin{lemma}\label{HomLambdaFree}
Let $\textbf{B} \in \text{NCP}(T)$ and let $M(u), M(v)$ be two distinct indecomposable $\Lambda_T$-modules whose corresponding segments appear in $\text{Seg}(B)$ and $\text{Seg}(B^\prime)$, respectively, for some blocks $B$ and $B^\prime$ of $\textbf{B}$. Then one has $\text{Hom}_{\Lambda_T}(M(u), M(v)) = 0$ and $\text{Hom}_{\Lambda_T}(M(v), M(u)) = 0.$ %Consequently, $\text{Hom}_{\mathcal{D}^b(\Lambda_T)}(M(u), M(v)) = 0$ and $\text{Hom}_{\mathcal{D}^b(\Lambda_T)}(M(v), M(u)) = 0$.
\end{lemma}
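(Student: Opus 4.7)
The plan is to show that any nonzero homomorphism $M(u)\to M(v)$ produces a common subsegment of $s_u$ and $s_v$ lying in $C_{s_u}\cap K_{s_v}$, which contradicts the noncrossing property of $s_u,s_v$ as red segments via Lemma~\ref{lem_crossing_condition}. The vanishing of $\text{Hom}_{\Lambda_T}(M(v),M(u))$ then follows by interchanging $u$ and $v$ and using $K_{s_u}\cap C_{s_v}=\emptyset$.

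I would first record two preliminary observations. Since $s_u$ and $s_v$ are distinct elements of $\Seg(\Bbf)$, they are noncrossing red segments, so Lemma~\ref{lem_crossing_condition} gives $C_{s_u}\cap K_{s_v}=\emptyset$ and $K_{s_u}\cap C_{s_v}=\emptyset$. Moreover, if $s_u$ and $s_v$ shared an endpoint $p$ together with a common initial edge at $p$, then their red admissible curves would both have an endpoint at the marked point $z(p,F)$ inside the face $F$ to the right of that common first edge, forcing them to cross. Consequently, at any shared endpoint of $s_u$ and $s_v$, the initial edges of $s_u$ and $s_v$ at that endpoint are different.

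Next, suppose for contradiction that $\text{Hom}_{\Lambda_T}(M(u),M(v))\neq 0$. By Lemma~\ref{dimhom}, a generator of this Hom space factors as $M(u)\twoheadrightarrow M(w')\hookrightarrow M(v)$ for some indecomposable $M(w')$; this covers the special cases $w'=u$ and $w'=v$ as well as the case that $w'$ is distinct from both. The standard description of submodules and quotients of string modules over a gentle algebra --- implicit in the proof of Lemma~\ref{injsurj1dim} --- lets us write $u=u_L\leftarrow w'\to u_R$ and $v=v_L\to w'\leftarrow v_R$, where any of $u_L,u_R,v_L,v_R$ may be empty. Via Corollary~\ref{stringsegmentbij}, $w'$ corresponds to a subsegment $s_{w'}$ that is common to $s_u$ and $s_v$.

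The final step is to translate the arrow-direction conditions on $u$ and $v$ into turning conditions on $s_u$ and $s_v$. Because $Q_T$ has an arrow $e\to e'$ iff $e'$ is counterclockwise from $e$ at their shared corner, an arrow in a string agrees with the traversal direction of the corresponding segment exactly when the segment turns counterclockwise (left) at the intervening vertex, and is reversed exactly when it turns clockwise (right). Hence the conditions above say that $s_u$ turns right at the left endpoint of $s_{w'}$ when that point is interior to $s_u$, and turns left at the right endpoint of $s_{w'}$ when interior to $s_u$; symmetrically, $s_v$ turns left at the left endpoint of $s_{w'}$ and right at the right endpoint of $s_{w'}$ whenever those points are interior to $s_v$. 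By the first paragraph, no endpoint of $s_{w'}$ is simultaneously an endpoint of $s_u$ and of $s_v$, so each such endpoint is either interior to both $s_u$ and $s_v$ or coincides with an endpoint of exactly one of them. In all cases the relevant turn conditions either hold non-vacuously from above or are vacuous in the definitions of $C_{s_u}$ and $K_{s_v}$, so $s_{w'}\in C_{s_u}\cap K_{s_v}$, contradicting the first paragraph. The main obstacle is this careful bookkeeping of boundary cases, which is resolved by the initial-edge observation.
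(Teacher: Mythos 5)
Your proof is correct in outline and takes a genuinely different route from the paper's. The paper first disposes of the case $B=B'$ separately (asserting without explicit proof that segments of the same block share at most one vertex of $T$), and then, for $B\neq B'$, splits into sub-cases according to whether one support contains the other, applying Lemma~\ref{Lemma:crossingcurves} directly in each. You instead factor a hypothetical nonzero map $M(u)\to M(v)$ through its image $M(w')$, write the string decompositions $u=u_L\leftarrow w'\to u_R$ and $v=v_L\to w'\leftarrow v_R$, and translate these into membership of the common subsegment $s_{w'}$ in the sets $C,K$, contradicting Lemma~\ref{lem_crossing_condition}. This unifies all cases at once (including $B=B'$, which you do not need to treat separately) and leans on the combinatorial $C_s/K_s$ machinery already built for the biclosed-set quotient rather than on the geometric curve-crossing case analysis.

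Two corrections. First, your direction dictionary is reversed: comparing with the proof of Theorem~\ref{Thm:TorsfOrFlipIso}, where $u=u^{(1)}\to v\leftarrow u^{(2)}$ is matched with $s_u$ turning \emph{right} at the first vertex of $s_v$ and \emph{left} at its last vertex (so that $s_v\in C_{s_u}$), an arrow pointing in the direction of traversal corresponds to a \emph{right} turn, not a left one. With the correct dictionary, $u=u_L\leftarrow w'\to u_R$ yields $s_{w'}\in K_{s_u}$ and $v=v_L\to w'\leftarrow v_R$ yields $s_{w'}\in C_{s_v}$; the contradiction is therefore with $K_{s_u}\cap C_{s_v}=\emptyset$ rather than with $C_{s_u}\cap K_{s_v}=\emptyset$. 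Since Lemma~\ref{lem_crossing_condition} gives both vanishings, the argument survives the sign flip, but the bookkeeping should be fixed. Second, the preliminary observation about shared endpoints with a common initial edge is unnecessary: whenever an endpoint of $s_{w'}$ coincides with an endpoint of $s_u$ (resp.\ $s_v$), the corresponding clause in the definition of $K_{s_u}$ (resp.\ $C_{s_v}$) is vacuously satisfied, so $s_{w'}\in K_{s_u}\cap C_{s_v}$ holds regardless; the only thing the observation independently excludes (an endpoint of $s_{w'}$ being an endpoint of both $s_u$ and $s_v$ simultaneously) is already harmless.
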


\begin{proof}
First assume $B = B^\prime$. Since $M(u)$ and $M(v)$ are distinct, the corresponding segments $s_u$ and $s_v$ share at most one vertex of $T$. This means $u$ and $v$ are supported on disjoint sets of vertices of $Q_T$ so the statement holds. Thus we can assume that $s_u \in \text{Seg}(B)$ and $s_v \in \text{Seg}(B^\prime)$ where $B$ and $B^\prime$ are distinct blocks of $\textbf{B}$. Since $\textbf{B} \in \text{NCP}(T)$, this implies that $s_u$ and $s_v$ have no common endpoints. 

Let $\gamma_u$ and $\gamma_v$ be left admissible curves for $s_u$ and $s_v$, respectively, witnessing that $s_u$ and $s_v$ are noncrossing. Write $s_w = [a,b]$ for the unique maximal segment along which $s_u$ and $s_v$ agree, if it exists, and orient $\gamma_u$ and $\gamma_v$ from $a$ to $b$. Without loss of generality, we have two cases:

\begin{itemize}
\item[i)] $\text{supp}(M(u)) \subsetneq \text{supp}(M(v))$
\item[ii)] $\text{supp}(M(v))\backslash\text{supp}(M(u)) \neq \emptyset$ and $\text{supp}(M(u))\backslash\text{supp}(M(v)) \neq \emptyset$
\end{itemize}

Suppose $\text{supp}(M(u)) \subsetneq \text{supp}(M(v))$. Here $s_w = s_u$. By Lemma~\ref{Lemma:crossingcurves} (1), with $s_u$ playing the role of $t$, we have that $\gamma_v$ either turns left at both $a$ and $b$ or it turns right at both $a$ and $b$. This means that either $v = v^{(1)} \leftarrow u \leftarrow v^{(2)}$ or $v = v^{(1)} \rightarrow u \rightarrow v^{(2)}$ for some nonempty strings $v^{(1)}$ and $v^{(2)}$ in $\Lambda_T.$ Thus $\text{Hom}_{\Lambda_T}(M(u), M(v)) = 0$ and $\text{Hom}_{\Lambda_T}(M(v), M(u)) = 0.$

Now suppose that $\text{supp}(M(v))\backslash\text{supp}(M(u)) \neq \emptyset$ and $\text{supp}(M(u))\backslash\text{supp}(M(v)) \neq \emptyset$. We can assume that $a$ (resp. $b$) is an endpoint of $s_v$ (resp. $s_u$). Thus we can write $s_v = [a,b] \circ s_{v^{\prime}}$ and $s_u = s_{u^{\prime}} \circ [a,b]$ for some nonempty segments $s_{v^\prime}, s_{u^\prime} \in \text{Seg}(T)$. By Lemma~\ref{Lemma:crossingcurves} (2), with $[a,b]$ playing the role of $t$, we have that either $\gamma_v$ turns right at $b$ and $\gamma_u$ turns left at $a$ or $\gamma_v$ turns left at $b$ and $\gamma_u$ turns right at $a$. Thus either $v = w\rightarrow v^\prime$ and $u = u^\prime \leftarrow w$ or $v = w \leftarrow v^\prime$ and $u = u^\prime \rightarrow w.$ We conclude that $\text{Hom}_{\Lambda_T}(M(u), M(v)) = 0$ and $\text{Hom}_{\Lambda_T}(M(v), M(u)) = 0.$\end{proof}

\section{Polygonal subdivisions}\label{Sec:Poly_Sub}

In this section, we show how oriented flip graphs can be equivalently described using certain decompositions of a convex polygon $P \subset \mathbb{R}^2$ into smaller convex polygons called polygonal subdivisions. The notion of a flip between two facets of the reduced noncrossing complex will translate into a type of \textit{flip} between polygonal subdivisions of $P$. After that, we show that the polygonal subdivision corresponding to the top element of an oriented flip graph is obtained by \textit{rotating} the arcs in the polygonal subdivision corresponding to the bottom element. We show that oriented exchange graphs of quivers that are mutation-equivalent to type $\mathbb{A}$ Dynkin quivers are examples of oriented flip graphs. Lastly, we show that the Stokes poset of quadrangulations are also examples of oriented flip graphs.

A \textbf{polygonal subdivision} $\mathcal{P} = \{P_i\}_{i \in [\ell]}$ of a polygon $P$ is a family of polygons $P_1,\ldots,P_\ell$ such that
\begin{itemize}
\item $\ds\bigcup_{i=1}^l P_i=P$
\item $P_i\cap P_j$ is a face of $P_i$ and $P_j$ for all $i,j$, and
\item every vertex of $P_i$ is a vertex of $P$ for all $i$.
\end{itemize}

\noindent Equivalently, we can define a polygonal subdivision of $P$ to be a collection of pairwise noncrossing \textbf{diagonals} of $P$ (i.e. curves in $\mathbb{R}^2$ connecting two vertices of $P$) up to endpoint fixing isotopy.

\begin{figure}[h]
$$\begin{array}{ccccccccccccccc}
\raisebox{.25in}{\includegraphics[scale=1]{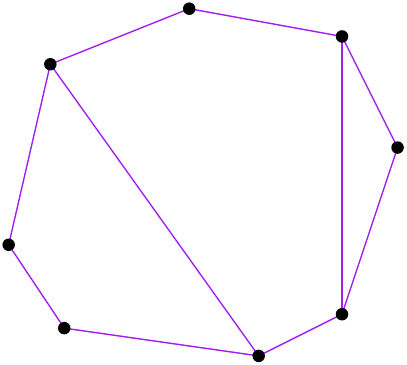}} & & & & \includegraphics[scale=1]{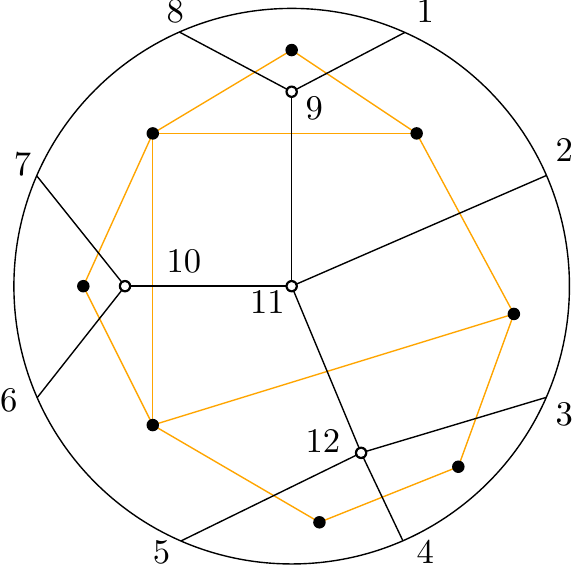}
\end{array}$$
\caption{Two examples of polygonal subdivisions where the latter is drawn with its corresponding tree.}
\label{poly1}
\end{figure}

\begin{remark}\label{polytreeduality}
Trees and polygonal subdivisions are dual. Given any tree $T$ embedded in $D^2$, it defines a polygonal subdivision $\mathcal{P}$ as follows. Let $P$ be a polygon with vertex set $\{v_F: F \text{ is a face of } T\}$ and where $v_{F_1}$ is connected to $v_{F_2}$ by an edge of $P$ if and only if there is an edge of $T$ that is incident to both $F_1$ and $F_2$. Using the data of the embedding of $T$, the resulting collection of polygons $\mathcal{P}$ is a polygonal subdivision of $P$. It is straightforward to verify that this construction can be reversed. We show an example of this duality in Figure~\ref{poly1}.
\end{remark}

Given a polygonal subdivision $\mathcal{P} = \{P_i\}_{i \in [\ell]}$ of a polygon $P$, there is a natural bound quiver $(Q_{\mathcal{P}}, I_{\mathcal{P}})$ that we associate to $\mathcal{P}$. Define $Q_{\mathcal{P}}$ to be the quiver whose vertices are in bijection with edges in $\mathcal{P}$ belonging to two distinct polygons $P_i, P_j \in \mathcal{P}$ and whose arrows are exactly those of the form $\epsilon_1 \stackrel{\alpha}{\longrightarrow} \epsilon_2$ satisfying:

$\begin{array}{rl}
i) & \text{$\epsilon_1$ and $\epsilon_2$ share a vertex of $P$,}\\
ii) & \text{$\epsilon_2$ is clockwise from $\epsilon_1$.} 
\end{array}$

\noindent The admissible ideal $I_{\mathcal{P}}$ is, by definition, generated by the relations $\alpha\beta$ where $\alpha: \epsilon_2 \longrightarrow \epsilon_3$, $\beta: \epsilon_1 \longrightarrow \epsilon_2$, and $\epsilon_1, \epsilon_2,$ and $\epsilon_3$ all belong to a common polygon $P_i \in \mathcal{P}$. We also define $\Lambda_{\mathcal{P}} := \Bbbk Q_\mathcal{P}/I_\mathcal{P}$. The following lemma is easy to verify using Remark~\ref{polytreeduality}.

\begin{lemma}
Let $T$ be a tree embedded in $D^2$ and let $\mathcal{P}$ be the corresponding polygonal subdivision. Then there are natural isomorphisms $Q_T \cong Q_\mathcal{P}$ and $\Lambda_T \cong \Lambda_\mathcal{P}.$
\end{lemma}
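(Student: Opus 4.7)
The proof is a direct verification that the planar duality between $T$ and $\mathcal{P}$ described in Remark~\ref{polytreeduality} induces an isomorphism of bound quivers $(Q_T, I_T) \cong (Q_\mathcal{P}, I_\mathcal{P})$, after which the algebra isomorphism is automatic. The plan is to check the correspondence successively on vertices, arrows, and relations.

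First I would fix notation for the duality: each face $F$ of $T$ corresponds to a vertex $v_F$ of $P$; each edge $e$ of $T$ separating faces $F_1, F_2$ corresponds to the edge of $P$ joining $v_{F_1}$ and $v_{F_2}$; and each interior vertex $v$ of $T$ corresponds to a polygon $P_v \in \mathcal{P}$ whose vertices are $\{v_F : F \ni v\}$. With this dictionary, an edge $e$ of $T$ contains a leaf if and only if its two bounding faces $F_1, F_2$ are consecutive along $\partial D^2$, which happens precisely when the dual edge joins two vertices of $P$ that are consecutive on $\partial P$, i.e. the dual edge is a boundary edge of $P$ rather than a diagonal. Hence the vertices of $Q_T$ (edges of $T$ containing no leaf) are in canonical bijection with the vertices of $Q_\mathcal{P}$ (edges of $\mathcal{P}$ belonging to two distinct polygons).

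Next I would translate arrows. A pair of edges $e_1, e_2$ of $T$ defines a corner $(v, F)$ precisely when they are the two edges at the interior vertex $v$ bounding the face $F$; dualising, this says that $\epsilon_1 := e_1^\vee$ and $\epsilon_2 := e_2^\vee$ are two edges of the polygon $P_v$ meeting at the vertex $v_F$ of $P$, and are consecutive at $v_F$ in the cyclic order around $v_F$. The key orientation fact, which is the only mildly delicate point, is that the planar duality reverses local orientation at corners: the counterclockwise cyclic order around $v$ inside $F$ is the reverse of the clockwise cyclic order around $v_F$ inside $P_v$. Hence "$e_2$ is counterclockwise from $e_1$ at $(v,F)$" becomes "$\epsilon_2$ is clockwise from $\epsilon_1$ at $v_F$", matching the definitions of arrows in $Q_T$ and $Q_\mathcal{P}$ exactly.

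Finally I would identify the ideals. A generator of $I_T$ is determined by three edges $e_1, e_2, e_3$ of $T$ all incident to a common interior vertex $v$, realising two corners $(v,G)$ and $(v,F)$ at $v$ with $F \ne G$ and arrows $\beta: e_1 \to e_2$, $\alpha: e_2 \to e_3$. Under the duality the three edges correspond to three distinct edges $\epsilon_1, \epsilon_2, \epsilon_3$ of $\mathcal{P}$ all belonging to the single polygon $P_v$, with corresponding arrows $\beta: \epsilon_1 \to \epsilon_2$ and $\alpha: \epsilon_2 \to \epsilon_3$; conversely every generator of $I_\mathcal{P}$ arises this way since the polygon $P_v$ records exactly the information of the interior vertex $v$. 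Thus $I_T$ and $I_\mathcal{P}$ are identified under the quiver isomorphism, and passing to path algebras modulo these ideals yields $\Lambda_T \cong \Lambda_\mathcal{P}$. The only step that requires care is the orientation bookkeeping in the arrow correspondence; all other points reduce to unpacking definitions.
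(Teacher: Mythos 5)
Your proposal is correct and follows exactly the route the paper has in mind: the paper offers no written proof, stating only that the lemma is easy to verify from Remark~\ref{polytreeduality}, and your argument supplies precisely that verification (vertex/arrow/relation correspondence under planar duality, with the orientation reversal ccw $\leftrightarrow$ cw correctly identified as the one point requiring care).
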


\begin{remark}
In \cite{simoes2016endomorphism}, it is shown that tiling algebras can be defined without reference to a combinatorial model such as a polygonal subdivision, and any tiling algebra so defined naturally gives rise to a polygonal subdivision. More generally, it is shown in \cite{schroll2015trivial} that any gentle algebra gives rise to a certain finite graph. If the algebra is a tiling algebra, then its polygonal subdivision from \cite{simoes2016endomorphism} is exactly the finite graph associated to it in \cite{schroll2015trivial}.
\end{remark}

\begin{remark}\label{Remark:triang} %Can remove remark environment and add other contextual comments.
When $\mathcal{P} = \{P_i\}_{i \in [\ell]}$ is a \textbf{triangulation} of a polygon $P$ (i.e. each polygon $P_i$ is a triangle), the definition of the algebra $\Lambda_\mathcal{P}$ agrees with the definition of the \textbf{Jacobian algebra} \cite{dwz1} associated to the triangulation. Moreover, the triangulations of $P$ are exactly those polygonal subdivisions whose corresponding tree has only degree 3 interior vertices. When $\mathcal{P} = \{P_i\}_{i \in [\ell]}$ is an \textbf{$(m+2)$-angulation} of $P$ where $m \ge 1$ (i.e. each polygon $P_i$ is an $(m+2)$-gon), the algebra $\Lambda_\mathcal{P}$ is an \textbf{$m$-cluster-tilted algebra} of type $\mathbb{A}$ as was shown in \cite{murphy2010derived}.

Additionally, the class of tiling algebras also contains the \textbf{surface algebras} when the surface is the disk. These algebras were introduced in \cite{david2012algebras} and studied further in \cite{david2014derived} and \cite{amiot2016derived}.
\end{remark}

Now let $T$ be a tree embedded in $D^2$. Using Remark~\ref{polytreeduality}, let $\mathcal{P}_T$ be the polygonal subdivision of the polygon $P_T$ defined by $T$ and let $\{v_F: F \text{ is a face of } T\}$ be the set of vertices of the polygon ${P}_T$. There is an obvious bijection between elements of $\{v_F: F \text{ is a face of } T\}$ and the set of boundary vertices of $T$ given by sending $v_F$ to the counterclockwise most leaf of $T$ in face $F$. Using this bijection and the fact that any arc of $T$ is completely determined by the leaves of $T$ it connects, we obtain the following.

\begin{proposition}\label{Prop:NCfacetasPolySub}
Let $T$ be a tree embedded in $D^2$. The map sending each arc in a facet $\mathcal{F} \in \widetilde{\Delta}^{NC}(T)$ to its corresponding diagonal of $P_T$ defines a polygonal subdivision $\mathcal{P}(\mathcal{F})$ of $P_T$. This map defines an injection from the facets of $\widetilde{\Delta}^{NC}(T)$ to the set of polygonal subdivisions of $P_T$.
\end{proposition}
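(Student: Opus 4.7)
The plan is to establish the map on individual arcs via the leaf--vertex bijection, verify that it preserves the noncrossing property, and then deduce both clauses of the proposition.

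First, I would make precise the bijection between leaves of $T$ and vertices of $P_T$. An Euler-characteristic count on the disk gives $\#\{\text{faces of }T\}=\#\{\text{leaves of }T\}$, and one checks that each face $F$ contains exactly one arc of $\partial D^2$ on its boundary; sending $v_F$ to the counterclockwise-most leaf of $T$ incident to $F$ is thus a bijection onto the leaves of $T$, as asserted in the paragraph preceding the proposition. Since $T$ is a tree, an arc is determined by its unordered pair of leaf endpoints, so the map sending an arc $p$ to the chord of $P_T$ joining its two associated vertices is injective on arcs. Moreover, this chord is a boundary edge of $P_T$ precisely when the two leaf endpoints are consecutive on $\partial D^2$, which happens precisely when $p$ is a boundary arc; hence non-boundary arcs map to honest diagonals of $P_T$.

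Second, I would show that two arcs of $T$ are noncrossing if and only if the associated chords of $P_T$ are noncrossing. For the forward direction, suppose the diagonals $d_p,d_q$ in $P_T$ cross. Then their endpoints are interleaved around $P_T$, so the leaf endpoints of $p$ and $q$ are interleaved on $\partial D^2$. By a planarity/Jordan-curve argument, any two paths in $T$ joining two interleaved pairs of leaves in the disk must meet; following both from a first common vertex to a last common vertex produces a common segment $s$, and a short case analysis on the sides along which $p$ and $q$ leave $s$ shows that the regions defined by $p$ and $q$ are not nested, so $p$ and $q$ cross along $s$. Conversely, if $p$ and $q$ cross along a segment $s$ with non-nested regions, then each arc has an endpoint leaf in each region of the other, so the leaf endpoints interleave and the diagonals cross.

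Third, I would assemble the conclusion. For a facet $\mathcal{F}\in\widetilde{\Delta}^{NC}(T)$, the arcs in $\mathcal{F}$ are pairwise noncrossing non-boundary arcs, so by the equivalence just established their associated diagonals in $P_T$ are pairwise noncrossing; by the alternative description of polygonal subdivisions given just after Figure~\ref{poly1}, this set of diagonals constitutes a polygonal subdivision $\mathcal{P}(\mathcal{F})$ of $P_T$. Injectivity of $\mathcal{F}\mapsto\mathcal{P}(\mathcal{F})$ is then immediate: if $\mathcal{F}\neq\mathcal{F}^{\pr}$ are facets, they differ in some non-boundary arc, and since distinct arcs correspond to distinct diagonals, $\mathcal{P}(\mathcal{F})$ and $\mathcal{P}(\mathcal{F}^{\pr})$ have distinct diagonal sets. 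The main technical obstacle will lie in Step two: making rigorous the translation between the arc-theoretic definition of ``crossing along a segment with non-nested regions'' and the chord-theoretic notion of ``interleaved endpoints,'' which requires invoking planarity of the embedding and verifying that interleaved leaves force, rather than merely permit, the extraction of a genuine common segment along which the regions are non-nested.
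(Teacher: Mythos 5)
Your proof is correct and fleshes out the same argument the paper leaves almost entirely implicit: the paper simply invokes the leaf--face bijection and the fact that arcs are determined by their leaf endpoints, then states the proposition without further proof. Your identification of the crossing-preservation step (noncrossing arcs in $T$ correspond to noncrossing chords in $P_T$) as the hidden technical content, and your sketch of it via the planarity/interleaving argument, is a reasonable filling-in of what the paper treats as self-evident.
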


\begin{figure}[h]
\includegraphics[scale=1]{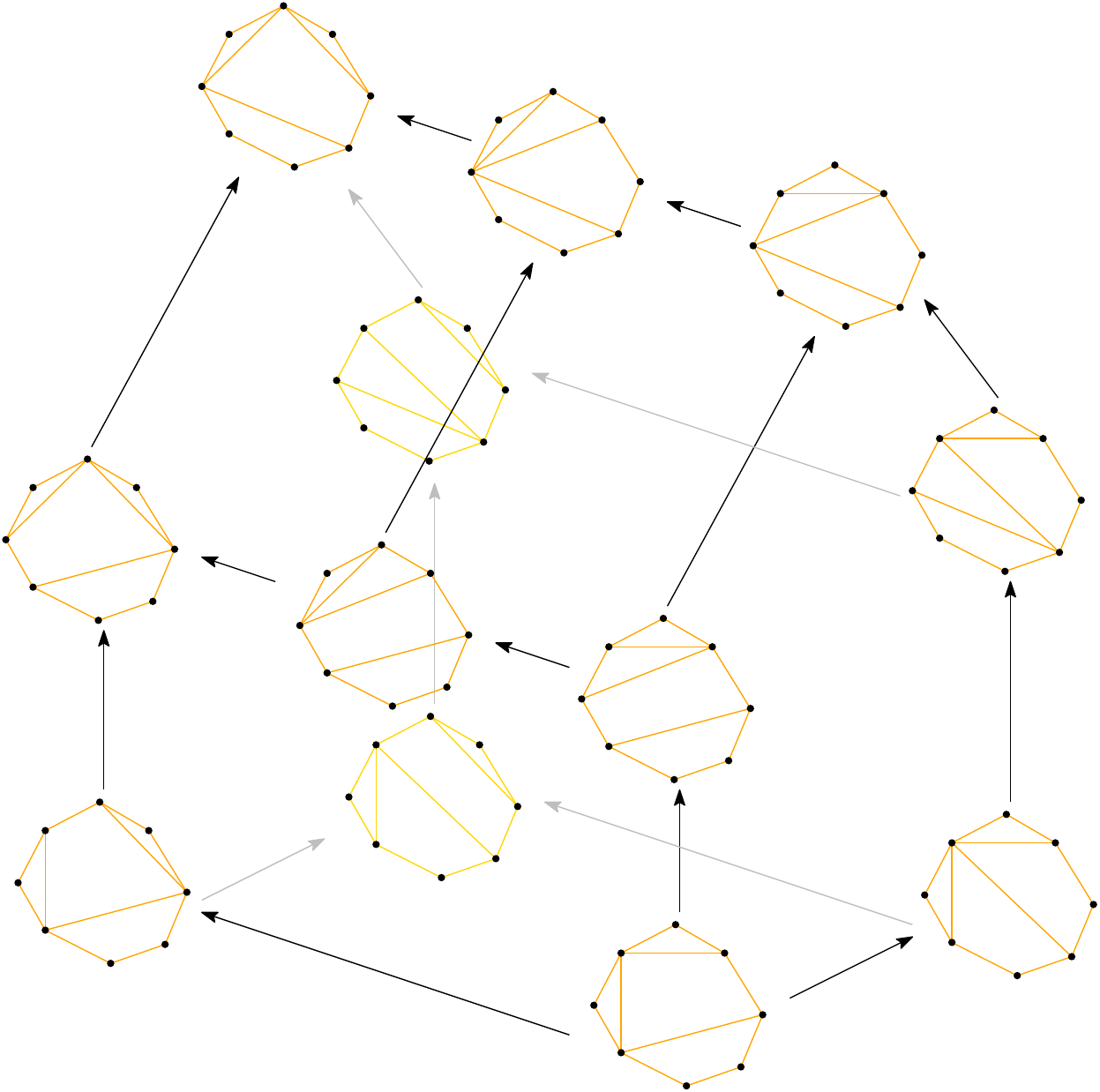}
\caption{}
\label{polysubex1}
\end{figure}

\begin{example}
By Proposition~\ref{Prop:NCfacetasPolySub}, we can identify the vertices of $\overrightarrow{FG}(T)$ with a certain subset of the polygonal subdivisions of $P_T$. In Figure~\ref{polysubex1}, we show the oriented flip graph from Figure~\ref{3dimorflipgraph} with its vertices represented by the corresponding polygonal subdivisions of $P_T$.
\end{example}

Next, we show how the polygonal subdivisions corresponding to the top and bottom elements of an oriented flip graph compare to each other. Note that there is a natural cyclic action on the diagonals of the polygon $P_T$. If $\alpha$ is a diagonal of $P_T$, we define the \textbf{rotation} of $\alpha$, denoted $\varrho(\alpha)$, to be the diagonal of $P$ whose endpoints are the vertices of $P$ immediately clockwise from the endpoints of $\alpha$ (see Figure~\ref{rotationexample}). If $\mathcal{P}(\mathcal{F})$ is a polygonal subdivision of $P_T$, we let $\varrho(\mathcal{P}(\mathcal{F}))$ denote the polygonal subdivision of $P_T$ obtained by applying $\varrho$ to each diagonal in $\mathcal{P}(\mathcal{F})$.

\begin{figure}
\includegraphics[scale=1]{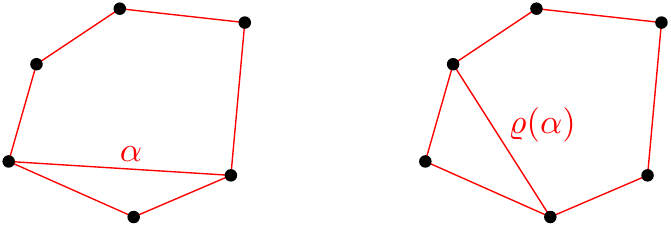}
\caption{The effect of $\varrho$ on a diagonal $\alpha$.}
\label{rotationexample}
\end{figure} 

\begin{theorem}\label{univtagrotation}
Let $T$ be a tree embedded in $D^2$. Then the bottom element (resp. top element) of $\overrightarrow{FG}(T)$ corresponds to the polygonal subdivision $\mathcal{P}_T$ (resp. $\varrho(\mathcal{P}_T)$).
\end{theorem}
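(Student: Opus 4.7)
The plan is to prove both statements using the explicit description of the lattice quotient map $\eta\colon\Bic(T) \to \ora{FG}(T)$ from Section~\ref{subsec_map}. The bottom (resp.\ top) element of $\ora{FG}(T)$ is $\eta(\emptyset)$ (resp.\ $\eta(\Seg(T))$), since $\eta$ is a surjective lattice map by Theorem~\ref{thm_eta_phi_main} and $\emptyset$ (resp.\ $\Seg(T)$) is the minimum (resp.\ maximum) element of $\Bic(T)$. By the construction of $\eta$, the facet $\eta(\emptyset)$ consists of the arcs $p_{(v,F)}$ which, when oriented away from $v$, turn right at every non-$v$ interior vertex of the arc (since no segment lies in $\emptyset$); dually, the arcs of $\eta(\Seg(T))$ always turn left.

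To prove $\Pcal(\eta(\emptyset)) = \Pcal_T$, I would exhibit a bijection between the interior edges of $T$ and the nonboundary arcs of $\eta(\emptyset)$ that matches the diagonals of $\Pcal_T$ under the leaf-face correspondence of Proposition~\ref{Prop:NCfacetasPolySub}. For each interior edge $e = (u,u')$ of $T$, let $F_L$ (resp.\ $F_R$) be the face on the left (resp.\ right) of $e$ when oriented from $u$ to $u'$, and let $e_L^u$ (resp.\ $e_R^{u'}$) be the other edge of $F_L$ at $u$ (resp.\ of $F_R$ at $u'$). Set $p_e := p_{(u,F_L)}$. I would first check that $p_e = p_{(u',F_R)}$: tracing from $u$ along $e$ to $u'$, the right-turn rule selects the edge immediately counterclockwise from the arrival edge, which is the edge of $F_R$ at $u'$ distinct from $e$ because $F_R$ lies on the right of the motion direction; hence $p_e$ traverses the corner $(u',F_R)$, and by uniqueness (Lemma~\ref{lem_eta_facet}) it equals $p_{(u',F_R)}$. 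Next, I would trace each half of $p_e$: starting at $u$ along $e_L^u$, successive right-turns follow $\partial F_L$ in the clockwise direction around $F_L$ (keeping $F_L$ on the right of the motion direction) until reaching a leaf, which must be the leaf at the clockwise end of $\partial F_L \cap \partial D^2$, i.e., the counterclockwise-most leaf of $F_L$. The same argument on the $F_R$-side gives the counterclockwise-most leaf of $F_R$ as the other endpoint. Under the leaf-face bijection, $p_e$ then corresponds exactly to the diagonal $d_e$ of $\Pcal_T$. Since the count $\tfrac{1}{2}(\#\Cor(T) - \#\{\text{faces of }T\})$ of nonboundary arcs in a facet (Corollary~\ref{Cor_pure_thin}) equals the number of interior edges of $T$ by a short tree-counting argument, the map $e \mapsto p_e$ is a bijection, which yields $\Pcal(\eta(\emptyset)) = \Pcal_T$.

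The top element would be treated symmetrically, swapping right-turns for left-turns. The analogue of $p_e$ is the arc $p_e^{\mathrm{top}} := p_{(u,F_R)} = p_{(u',F_L)}$, whose two halves trace $\partial F_R$ and $\partial F_L$ in the counterclockwise direction and terminate at the clockwise-most leaves of $F_R$ and $F_L$ on $\partial D^2$. Under the leaf-face bijection, the clockwise-most leaf of any face $F$ corresponds to the vertex of $P_T$ immediately clockwise from $v_F$ in the boundary cyclic order, so the diagonal associated to each interior edge $e$ is $\varrho(d_e)$; hence $\Pcal(\eta(\Seg(T))) = \varrho(\Pcal_T)$. The main technical obstacle is the tracing claim: I need to verify that successive right-turns (or left-turns) indeed keep the path on $\partial F_L$ (or the appropriate face). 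The crux is the equivalence ``$F$ lies on the right of the motion direction'' $\Leftrightarrow$ ``$F$ lies on the counterclockwise side of the arrival edge at the next vertex'', which I would make precise by a short analysis of the cyclic order of edges at each interior vertex, noting that reversing an edge's orientation simultaneously swaps left/right (of motion) and clockwise/counterclockwise (of arrival edge).
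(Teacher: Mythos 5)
Your proof takes essentially the same route as the paper's: identify the bottom and top facets as $\eta(\emptyset)$ and $\eta(\Seg(T))$, observe the always-turn-right/always-turn-left characterization, and for each interior edge $e$ of $T$ analyze the arc through $e$ by listing the corners it traverses. Where the two differ is mostly in explicitness. The paper picks $p = p_{(v_1,G)} \in \eta(\emptyset)$ and $q = p_{(v_1,F)} \in \eta(\Seg(T))$, writes out their corner lists, reads off $\alpha_q = \varrho(\alpha_p)$ directly from the shift between the two lists, and asserts $\mathcal{P}(\mathcal{F}_1) = \mathcal{P}_T$ essentially without further comment. You supply the two pieces the paper leaves implicit: (a) the tracing argument showing that each half of $p_e$ follows the boundary of a fixed face (clockwise, keeping the face on the right) until it reaches the counterclockwise-most leaf of that face, which is exactly what is needed to match $p_e$ with the diagonal $d_e$ under the leaf-face bijection; and (b) the counting argument $\tfrac{1}{2}(\#\Cor(T) - \#\{\text{faces}\}) = \#\{\text{interior edges}\}$ that upgrades ``each $p_e$ is a diagonal of $\mathcal{P}_T$'' to ``$e \mapsto p_e$ is a bijection,'' hence $\mathcal{P}(\eta(\emptyset)) = \mathcal{P}_T$ exactly. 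You also handle the top element by a symmetric direct trace rather than by appealing to $\alpha_q = \varrho(\alpha_p)$; both work. The ``main technical obstacle'' you flag — that a right turn at the next interior vertex is precisely the edge on the same side of the arrival edge as the face you are keeping on your right, hence the path stays on $\partial F$ — is correct and is exactly what the paper's corner list $(u_2,G),\ldots,(v_1,G),(v_2,F),\ldots,(u_{r-1},F)$ is silently encoding, so you are not missing anything the paper supplies; you are making the same check explicit.
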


\begin{proof}
Let $\mathcal{F}_1$ and $\mathcal{F}_2$ be the facets of $\widetilde{\Delta}^{NC}(T)$ corresponding to the bottom and top elements $\overrightarrow{FG}(T)$, respectively. Using Proposition~\ref{Prop:NCfacetasPolySub}, we let $\mathcal{P}(\mathcal{F}_1)$ and $\mathcal{P}(\mathcal{F}_2)$ be the corresponding polygonal subdivisions of $P_T$. It is clear that $\mathcal{F}_2 = \eta(\text{Seg}(T))$ and $\mathcal{F}_1 = \eta(\emptyset)$. 

Let $p_{(v,F)}$ be any arc of $T$ that appears in $\eta(\text{Seg}(T))$ (resp. $\eta(\emptyset)$). Let $u$ be any interior vertex of $T$ that appears in $p_{(v,F)}$, and orient the arc $p_{(v,F)}$ from $v$ to $u$. By the definition of $\eta$, the arc $p_{(v,F)}$ must turn left (resp. right) at $u$. 

Next, let $e = (v_1,v_2)$ be an edge of $T$ whose endpoints are internal vertices of $T$, and let $F$ and $G$ be the two faces of $T$ that are incident to $e$ and satisfy $(v_1, F)$ (resp. $(v_2, G)$) is immediately clockwise from $(v_1,G)$ (resp. $(v_2, F)$). Define $p:=p_{(v_1,G)} \in \eta(\emptyset)$ and $q:= p_{(v_1,F)} \in \eta(\text{Seg}(T))$ and let $\alpha_p \in \mathcal{P}(\mathcal{F}_1)$ and $\alpha_q \in \mathcal{P}(\mathcal{F}_2)$ be the diagonals corresponding to $p$ and $q,$ respectively. If we write $p = (u_1, \ldots, u_k, v_1, v_2, u_{k+1}, \ldots, u_{r})$ and $q = (w_1, \ldots, w_\ell, v_1, v_2, w_{\ell + 1}, \ldots, w_s)$, then the argument in the previous paragraph implies that the corners contained in $q$ are $(w_2, F), \ldots, (w_\ell, F), (v_1, F), (v_2, G), (w_{\ell+1}, G), \ldots, (w_{s-1}, G)$ and the corners contained in $p$ are $(u_2, G), \ldots, (u_k, G), (v_1, G), (v_2, F), (u_{k+1}, F), \ldots, (u_{r-1}, F).$ Thus we have that $\alpha_q = \varrho(\alpha_p)$ and $\mathcal{P}(\mathcal{F}_1) = \mathcal{P}_T$. The desired result follows.\end{proof}

As we mentioned in Example~\ref{Example_assoc}, the flip graph of a tree $T$ with only degree 3 internal vertices is isomorphic to the dual associahedron. By this identification and by Proposition~\ref{Prop:NCfacetasPolySub}, we obtain an orientation of the 1-skeleton of the associahedron. This orientation adds the data of a ``sign" to the operation of performing a single flip between two triangulations  $\mathcal{P}(\mathcal{F}_1), \mathcal{P}(\mathcal{F}_2)$ of $P_T$. 

It turns out that this oriented version of flipping between triangulations has been described by Fomin and Thurston (we refer the reader to \cite{fomin2012cluster} for more details). Given any triangulation $\mathcal{P}(\mathcal{F}_1)$ of $P_T$, one adds some additional curves $(L_1, \ldots, L_n)$ to $\mathcal{P}(\mathcal{F}_1)$ (here $n = \#(Q_T)_0$), called an \textbf{elementary lamination} (see \cite[Definition 17.2]{fomin2012cluster}), and records the \textbf{shear coordinates} \cite[Definition 12.2]{fomin2012cluster} (i.e. integer vectors indicating the number of certain crossings of arcs in $\mathcal{P}(\mathcal{F})$ and the curves $(L_1, \ldots, L_n)$). The elementary lamination is a collection of curves that are slightly deformed versions of the arcs in $\mathcal{P}_T$ and the shear coordinates are the \textbf{c}-vectors appearing in the \textbf{c}-matrix of the ice quiver corresponding to $\mathcal{P}(\mathcal{F}_1)$. Then there is a directed edge $\mathcal{P}(\mathcal{F}_1) \to \mathcal{P}(\mathcal{F}_2)$ in $\overrightarrow{FG}(T)$ if and only if $\mathcal{P}(\mathcal{F}_2)$ is obtained from $\mathcal{P}(\mathcal{F}_1)$ by performing a single diagonal flip on an arc $\alpha$ in $\mathcal{P}(\mathcal{F}_1)$ and the shear coordinate of $\alpha$ is positive in $\mathcal{P}(\mathcal{F}_1)$. We thus obtain following proposition.

\begin{proposition}\label{orexandorflip}
If $T$ is a tree whose internal vertices have degree 3, then $\overrightarrow{FG}(T) \cong \overrightarrow{EG}(\widehat{Q}_T)$ and this isomorphism commutes with flips and mutations.
\end{proposition}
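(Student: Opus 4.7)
The plan is to lift the classical isomorphism between triangulations of a convex polygon and clusters of a type $\mathbb{A}$ cluster algebra to an isomorphism of oriented graphs. First I would use Proposition~\ref{Prop:NCfacetasPolySub} to identify the facets of $\widetilde{\Delta}^{NC}(T)$ with triangulations of $P_T$: since every interior vertex of $T$ has degree $3$, each polygon of $\mathcal{P}_T$ is a triangle, and conversely every triangulation of $P_T$ arises as $\mathcal{P}(\mathcal{F})$ for a unique facet $\mathcal{F}$ (the reverse construction picking up both boundary arcs and diagonals). Under this identification the flip operation $\mu_p$ of Section~\ref{Sec:noncrossingcomplex} translates into the diagonal flip on a triangulation of $P_T$, and by Remark~\ref{Remark:triang} the algebra $\Lambda_T$ coincides with the Jacobian algebra of the quiver $Q_T$ associated to $\mathcal{P}_T$. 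The well-known theorem of Fomin--Shapiro--Thurston \cite{fomin2008cluster} then supplies a bijection between triangulations of $P_T$ and the clusters parameterizing vertices of $EG(\widehat{Q}_T)$ in which diagonal flips correspond exactly to quiver mutations; this yields the isomorphism of the underlying undirected graphs $FG(T) \cong EG(\widehat{Q}_T)$ and automatically intertwines flips with mutations at the level of edges.

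The remaining task is to verify that the two orientations agree. Here I would invoke the Fomin--Thurston shear coordinate machinery recalled in the paragraph preceding the proposition. Pick as seed triangulation the one corresponding to the bottom element of $\overrightarrow{FG}(T)$, which by Theorem~\ref{univtagrotation} is exactly $\mathcal{P}_T$. Choose the elementary lamination $\mathcal{L} = (L_1,\ldots,L_n)$ whose components are obtained by rotating each diagonal of $\mathcal{P}_T$ slightly in the clockwise sense. With respect to this choice, the $k$-th \textbf{c}-vector of a triangulation $\mathcal{P}(\mathcal{F})$ is by definition the shear coordinate of its $k$-th diagonal against $\mathcal{L}$, so the edge $\mathcal{P}(\mathcal{F}_1) \to \mathcal{P}(\mathcal{F}_2)$ exists in $\overrightarrow{EG}(\widehat{Q}_T)$ precisely when the diagonal being flipped has positive shear coordinate in $\mathcal{P}(\mathcal{F}_1)$. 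On the flip-graph side, Definition~\ref{orflipgraph} orients $\mathcal{F}_1 \to \mathcal{F}_2$ exactly when the flip replaces the two marked corners of $p$ by the corners immediately clockwise from them, which is the very rotation used to define $\mathcal{L}$.

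The crux of the proof is therefore a local comparison showing that these two positivity conditions coincide for a single flip: the diagonal $\alpha$ being flipped inside a quadrilateral of $\mathcal{P}(\mathcal{F}_1)$ has positive shear coordinate against $L_\alpha$ if and only if the arc of $\mathcal{F}_1$ dual to $\alpha$ has its marked corners swept clockwise into the marked corners of its image in $\mathcal{F}_2$. I expect this to be the main obstacle, as it requires carefully matching the ``arc-marked-at-a-corner'' formalism of Section~\ref{Sec:noncrossingcomplex} with the ``signed crossing with a lamination curve'' formalism of \cite{fomin2012cluster} in the four possible local configurations of a quadrilateral in $P_T$. Once this local check is in place, the global statement that the isomorphism commutes with flips and mutations follows immediately, since both $\overrightarrow{FG}(T)$ and $\overrightarrow{EG}(\widehat{Q}_T)$ are determined by their covering relations.
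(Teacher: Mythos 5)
Your proposal follows essentially the same route as the paper: identify facets of $\widetilde{\Delta}^{NC}(T)$ with triangulations of $P_T$ via Proposition~\ref{Prop:NCfacetasPolySub} (so flips become diagonal flips and the underlying undirected graphs agree), then invoke Fomin--Thurston shear coordinates against an elementary lamination obtained by a slight clockwise rotation of $\mathcal{P}_T$ to match orientations, using Theorem~\ref{univtagrotation} to pin down the seed triangulation. The paper likewise treats the proposition as a consequence of the discussion of shear coordinates immediately preceding it and does not spell out the local case analysis you flag as ``the crux''; your write-up is, if anything, slightly more forthright about the verification that remains implicit in the paper, but the mathematical content and sequence of steps are the same.
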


\begin{remark}
A version of Theorem~\ref{univtagrotation} has been established  by Br\"ustle and Qiu (see \cite{brustle2015tagged}) for oriented exchange graphs defined by quivers arising from triangulations of \textbf{marked surfaces} (see \cite{fomin2008cluster} for more details). By identifying a convex polygon with an unpunctured disk, Theorem~\ref{univtagrotation} recovers their result in the case where one considers oriented flip graphs of a tree arising from a polygonal subdivision of an unpunctured disk. In their language, $\varrho$ is the \textbf{universal tagged rotation} of the marked surface.
\end{remark}

The Stokes poset defined by Chapoton in \cite{chapoton:stokes} is a partial order on a family of quadrangulations which are ``compatible'' with a given quadrangulation $Q$. The compatibility condition was defined by Baryshnikov as follows \cite{baryshnikov2001stokes}. Let $P$ be a $(2n)$-gon whose vertices lie on a circle. The vertices of $P$ are colored black and white, alternating in color around the circle. Let $P^{\pr}$ be the same polygon, rotated slightly clockwise. A \textbf{quadrangulation} is a polygonal subdivision into quadrilaterals. Fix a quadrangulation $Q$ of $P$. A quadrangulation $Q^{\pr}$ of $P^{\pr}$ is compatible with $Q$ if for each diagonal $q\in Q$ and $q^{\pr}\in Q^{\pr}$ such that $q$ and $q^{\pr}$ intersect, the white endpoint of $q^{\pr}$ appears clockwise from the white endpoint of $q$ before the black endpoint of $q$.

Let $T$ be the tree dual to $P$. We may assume that the leaves of $T$ are the vertices of $Q^{\pr}$. If $p$ is a geodesic between two leaves of $T$ that does not take a sharp turn at an interior vertex then it crosses a pair of opposite sides of some quadrilateral in $Q$. As a result, $p$ cannot be part of a quadrangulation compatible with $Q$. Let $\Delta_Q$ be the simplicial complex on the diagonals of $Q^{\pr}$ whose facets are quadrangulations compatible with $Q$. Then $\Delta_Q$ is a pure subcomplex of $\Delta^{NC}(T)$ of the same dimension. The complex $\Delta_Q$ is thin by Proposition 1.1 of \cite{chapoton:stokes}. Since the dual graph of $\Delta^{NC}(T)$ is connected, it follows that $\Delta_Q$ and $\Delta^{NC}(T)$ are isomorphic. Moreover, the orientation on the flips of quadrangulations defined in Section~1.3 of \cite{chapoton:stokes} coincides with $\ora{FG}(T)$. Consequently, we deduce the following proposition.

\begin{proposition}
If every interior vertex of $T$ has degree 4, then the poset $\ora{FG}(T)$ is isomorphic to the Stokes poset of quadrangulations compatible with the quadrangulation $\mathcal{P}_T$. %dual to $T$.
\end{proposition}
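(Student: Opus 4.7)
The plan is to formalize the sketch given in the paragraph preceding the proposition. First, I would fix the bijection between combinatorial objects. Via the tree--polygonal subdivision duality of Section~\ref{Sec:Poly_Sub}, $\mathcal{P}_T$ is a quadrangulation $Q$ of a $(2n)$-gon $P$, and the leaves of $T$ are naturally identified with the vertices of the rotated polygon $P^{\pr}$ used to form $Q^{\pr}$. Under this identification, a diagonal of $P^{\pr}$ becomes a geodesic in the disk between two leaves of $T$, and an arc of $T$ (a geodesic that, at each interior vertex, turns into an adjacent face) is a particular such geodesic.

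Second, I would verify at the level of diagonals the identification of Baryshnikov's compatibility with the arc condition. Since every interior vertex of $T$ has degree $4$, each interior vertex $v$ of $T$ is incident to exactly four faces of $T$; dually, the corresponding quadrilateral $q_v$ of $Q$ has two pairs of opposite sides. A geodesic that does \emph{not} take a sharp turn at $v$ must, as a diagonal in $P^{\pr}$, cross a pair of opposite sides of $q_v$, and this is exactly what Baryshnikov's condition forbids. Conversely, an arc of $T$ always turns sharply at each interior vertex, and a short case analysis at each quadrilateral $q_v$ (checking the clockwise relationship of the black/white endpoints of the crossing diagonal with those of the sides of $q_v$) shows that the sharp-turn condition forces compatibility. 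Noncrossing pairs of arcs of $T$ correspond to noncrossing, hence jointly compatible, pairs of diagonals in $P^{\pr}$.

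Third, I would conclude the simplicial isomorphism. By the previous step, $\Delta_Q$ is a subcomplex of the reduced noncrossing complex $\widetilde{\Delta}^{NC}(T)$ whose facets are quadrangulations of $P^{\pr}$ compatible with $Q$. By Corollary~\ref{Cor_pure_thin}, $\widetilde{\Delta}^{NC}(T)$ is pure and thin, and both complexes have the same dimension since a quadrangulation of a $(2n)$-gon and a facet of $\widetilde{\Delta}^{NC}(T)$ each contain $n-2$ nonboundary arcs. Since Chapoton's Proposition~1.1 in \cite{chapoton:stokes} shows $\Delta_Q$ is also pure and thin of the same dimension, and since $FG(T)$ is connected (flips, being defined on all nonboundary arcs of any facet, preserve membership in both $\Delta_Q$ and $\widetilde{\Delta}^{NC}(T)$), the subcomplex must coincide with the whole: $\Delta_Q=\widetilde{\Delta}^{NC}(T)$.

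Finally, I would match orientations. The flip orientation on the Stokes poset in Section~1.3 of \cite{chapoton:stokes} is defined by the clockwise motion of a flipped diagonal in $P^{\pr}$ relative to the fixed quadrangulation $Q$. Under the dual correspondence, this is precisely the clockwise rotation of the two marked corners of the flipped arc described in Definition~\ref{orflipgraph}, so the two orientations agree and $\ora{FG}(T)$ is isomorphic to the Stokes poset. The main subtlety I expect is the careful bookkeeping in the second step: one must check that the black/white coloring convention used by Baryshnikov matches the clockwise ordering of leaves around each face of $T$, so that ``sharp turn'' and ``compatible crossing'' really do translate to each other.
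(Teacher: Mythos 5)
Your proposal follows the paper's own argument essentially step for step: identify arcs of $T$ with $Q$-compatible diagonals of $P^{\pr}$ via the observation that a geodesic without a sharp turn must cross opposite sides of some quadrilateral, conclude $\Delta_Q = \widetilde{\Delta}^{NC}(T)$ from purity, thinness, equal dimension, and connectivity of the flip graph, and then match the flip orientations to those of Section~1.3 of \cite{chapoton:stokes}. The extra bookkeeping you supply (the $n-2$ dimension count and the explicit thinness-plus-connectivity argument showing the subcomplex is the whole complex) only makes explicit what the paper leaves implicit, so this is the same route, not a different one.
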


\section{Simple-minded collections}\label{Sec:SMCs} In this section, we interpret noncrossing tree partitions in terms of the representation theory of $\Lambda_T$ using \text{simple-minded collections} in the bounded derived category of $\Lambda_T$, denoted $\mathcal{D}^b(\Lambda_T)$. We show that the data of a noncrossing tree partition and its Kreweras complement is equivalent to that of a certain type of simple-minded collection.

Simple-minded collections were originally used by Rickard \cite{rickard2002equivalences} in the construction of derived equivalences of symmetric algebras from stable equivalences. A standard example of a simple-minded collection in representation theory is a complete set of non-isomorphic simple $\Lambda$-modules regarded as elements of $\mathcal{D}^b(\Lambda)$. Note that any $\Lambda$-module $X$ becomes an element of $\mathcal{D}^b(\Lambda)$ by mapping it to the stalk complex concentrated in degree 0 whose degree 0 term is $X$. Additionally, in \cite{koenig2014silting}, simple-minded collections were useful in computing spaces of Bridgeland stability conditions \cite{bridgeland2007stability}. 

Here we recall some of the definitions we will need in order to study simple-minded collections. For a more complete presentation of the notions of derived categories and triangulated categories, we refer the reader to Chapter 1 of \cite{kashiwara2013sheaves}.

Let $\Lambda$ be a finite dimensional $\Bbbk$-algebra (or, more generally, a ring). By a \textbf{complex}, we mean a diagram of finitely generated $\Lambda$-modules $$\begin{array}{rcl}
X & = & \cdots \stackrel{d^{-2}_X}{\longrightarrow} X^{-1} \stackrel{d^{-1}_X}{\longrightarrow} X^0 \stackrel{d^{0}_X}{\longrightarrow} X^1 \stackrel{d^{1}_X}{\longrightarrow} X^2 \stackrel{d^{2}_X}{\longrightarrow} \cdots
\end{array}$$ that satisfies $d^{i+1}_X\circ d^i_X = 0$ for each $i \in \mathbb{Z}$. We say that the $\Lambda$-module $X^i$ in the complex $X$ is in \textbf{degree} $i$. We refer to the $\Lambda$-module homomorphisms $d_X^i: X^{i} \to X^{i-1}$ as \textbf{differentials}. If the only nonzero module of a complex $X$ is in degree $i$, we say that $X$ is a \textbf{stalk complex} concentrated in degree $i$. Given a complex $X$, it is natural to define the  \textbf{shift} of $X$, denoted $X[1]$, where $$\begin{array}{rcl}
X[1] & = & \cdots \stackrel{-d^{-1}_X}{\longrightarrow} X^{0} \stackrel{-d^{0}_X}{\longrightarrow} X^1 \stackrel{-d^{1}_X}{\longrightarrow} X^2 \stackrel{-d^{2}_X}{\longrightarrow} X^3 \stackrel{-d^{3}_X}{\longrightarrow} \cdots
\end{array}$$ and where in $X[1]$ the module in degree $i$ is $X^{i+1}.$ Now let $f: X \to Y$ be a morphism of complexes. We define the \textbf{mapping cone} or \textbf{cone} of $f$, denoted $\text{Cone}(f)$, to be the componentwise direct sum of complexes $$\begin{array}{rcl}
X[1] \oplus Y & = & \cdots \stackrel{d^{-2}_{\text{Cone}(f)}}{\longrightarrow} X^{0}\oplus Y^{-1} \stackrel{d^{-1}_{\text{Cone}(f)}}{\longrightarrow} X^1\oplus Y^{0} \stackrel{d^{0}_{\text{Cone}(f)}}{\longrightarrow} X^2\oplus Y^1 \stackrel{d^{1}_{\text{Cone}(f)}}{\longrightarrow} X^3\oplus Y^2 \stackrel{d^{2}_{\text{Cone}(f)}}{\longrightarrow} \cdots
\end{array}$$ with differential given by $$d^{i}_{\text{Cone}(f)} = \left[\begin{array}{cc} -d_X^{i+1} & 0\\ f^{i+1} & d^i_Y \end{array}\right].$$ Dually, one defines the \textbf{cocone} of $f$, denoted $\text{Cocone}(f)$.

The bounded derived category of $\Lambda$ has objects given by complexes $X$ of $\Lambda$-modules with $X^i = 0$ when $|i|$ is sufficiently large. Two objects $X$ and $Y$ in $\mathcal{D}^b(\Lambda)$ are isomorphic if and only if $X$ and $Y$ are \textbf{quasi-isomorphic} (i.e. there exists a morphism of complexes $\varphi: X \to Y$ that induces an isomorphism $H^k(X) \to H^k(Y)$ for all $k$). The category $\mathcal{D}^b(\Lambda)$, which is a triangulated category, also has the property that any triangle is isomorphic to a triangle of the form $$X \stackrel{f}{\longrightarrow} Y \longrightarrow \text{Cone}(f) \longrightarrow X[1].$$ One can also show that any triangle in $\mathcal{D}^b(\Lambda)$ is isomorphic to one of the form $$X[-1] {\longrightarrow} \text{Cocone}(f) \longrightarrow X \stackrel{f}{\longrightarrow} Y.$$

In this paper, we will be interested in understanding collections of objects from $\mathcal{D}^b(\Lambda)$ where the spaces of morphisms between any two objects in such a collection satisfy certain strong constraints. Morphism spaces between objects in derived categories can be very complicated. However, the objects in the collections we will study turn out to be stalk complexes. In this situation, the problem of understanding morphisms between such objects in $\mathcal{D}^b(\Lambda)$ is more tractable, as the following well-known proposition shows.  

\begin{proposition}
Let $X, Y \in \mathcal{D}^b(\Lambda)$ be stalk complexes concentrated in degree 0. Then $$\text{Hom}_{\mathcal{D}^b(\Lambda)}(X[i], Y[j]) = \text{Ext}^{j-i}_{\Lambda}(X,Y).$$
\end{proposition}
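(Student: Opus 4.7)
The plan is to reduce the claim to the standard identification of $\mathrm{Ext}$-groups with morphism spaces in the derived category, together with the behavior of the shift functor. First, I would note that the shift functor $[1]$ is an auto-equivalence of $\mathcal{D}^b(\Lambda)$, so for any $k\in\mathbb{Z}$ we have a natural isomorphism
\[
\mathrm{Hom}_{\mathcal{D}^b(\Lambda)}(X[i],Y[j]) \;\cong\; \mathrm{Hom}_{\mathcal{D}^b(\Lambda)}(X,Y[j-i]).
\]
This reduces the problem to identifying $\mathrm{Hom}_{\mathcal{D}^b(\Lambda)}(X,Y[n])$ with $\mathrm{Ext}^{n}_{\Lambda}(X,Y)$ for stalk complexes $X$ and $Y$ concentrated in degree $0$.

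Next, I would choose a projective resolution $P^{\bullet}\to X$ of $X$, viewed as a quasi-isomorphism of complexes. Since quasi-isomorphisms become isomorphisms in $\mathcal{D}^b(\Lambda)$, we have $X\cong P^{\bullet}$ in the derived category, so
\[
\mathrm{Hom}_{\mathcal{D}^b(\Lambda)}(X,Y[n]) \;\cong\; \mathrm{Hom}_{\mathcal{D}^b(\Lambda)}(P^{\bullet},Y[n]).
\]
Because $P^{\bullet}$ is a bounded-above complex of projectives and $Y[n]$ is a bounded complex, the morphism space in the derived category coincides with the homotopy-category morphism space $\mathrm{Hom}_{K^b(\Lambda)}(P^{\bullet},Y[n])$. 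A chain map $P^{\bullet}\to Y[n]$ is exactly a $\Lambda$-linear map $P^{-n}\to Y$ that vanishes on the image of $d_{P}^{-n-1}$, and two such maps are chain-homotopic if and only if their difference factors through $d_{P}^{-n}$. This is precisely the definition of $\mathrm{Ext}^{n}_{\Lambda}(X,Y)$ computed from the resolution $P^{\bullet}$.

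Combining the two identifications yields
\[
\mathrm{Hom}_{\mathcal{D}^b(\Lambda)}(X[i],Y[j]) \;\cong\; \mathrm{Hom}_{\mathcal{D}^b(\Lambda)}(X,Y[j-i]) \;\cong\; \mathrm{Ext}^{j-i}_{\Lambda}(X,Y),
\]
as required. The only real subtlety is verifying that morphisms in $\mathcal{D}^b(\Lambda)$ out of a bounded-above complex of projectives into an arbitrary bounded complex reduce to homotopy classes of chain maps; this is a standard fact that I would cite rather than reprove (it follows from the fact that such $P^{\bullet}$ are \emph{K-projective} in the sense of Spaltenstein, so no further fraction needs to be inverted). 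No other obstacle is expected, as the statement is essentially a translation of the classical definition of $\mathrm{Ext}$ into derived-categorical language.
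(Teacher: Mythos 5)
Your argument is correct, and the paper itself offers no proof of this proposition (it is stated as well known), so there is no competing approach to compare against; what you give is the standard derivation. One notational slip worth fixing: after replacing $X$ by a projective resolution $P^{\bullet}$, you correctly describe $P^{\bullet}$ as a \emph{bounded-above} complex of projectives, but then compute morphisms in $K^{b}(\Lambda)$. This should be $K^{-}(\Lambda)$ (or just $K(\Lambda)$), since $P^{\bullet}$ need not be bounded when $\Lambda$ has infinite global dimension -- which is in fact the relevant case for the tiling algebras $\Lambda_T$ of this paper, whose quivers contain oriented cycles. The substance of the step is unaffected: a bounded-above complex of projectives is K-projective whether or not it is bounded, so $\mathrm{Hom}_{\mathcal{D}^b(\Lambda)}(P^{\bullet},Y[n])\cong\mathrm{Hom}_{K(\Lambda)}(P^{\bullet},Y[n])$ still holds, and your unwinding of chain maps and homotopies into cocycles and coboundaries of $\mathrm{Hom}_{\Lambda}(P^{\bullet},Y)$ is exactly the definition of $\mathrm{Ext}^{n}_{\Lambda}(X,Y)$.
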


We now give the main definition of this section.

\begin{definition}\label{smcdefin}
Let $\mathcal{C}$ be a triangulated category. A collection $\{X_1,\ldots, X_n\}$ of objects of $\mathcal{C}$ is said to be \textbf{simple-minded} if the following hold for any $i,j \in [n]$:
\begin{itemize}
\item[i)] $\text{Hom}_\mathcal{C}(X_i, X_j[k]) = 0 \text{ for any } k < 0,$
\item[ii)] $\text{Hom}_\mathcal{C}(X_i, X_j) = \left\{\begin{array}{rcl} \Bbbk & : & \text{if } i = j\\ 0 & : & \text{otherwise,}  \end{array}\right.$
\item[iii)] $\mathcal{C} = \text{thick}\langle X_1, \ldots, X_n\rangle$ (i.e. the smallest triangulated category containing $X_1, \ldots, X_n$  and closed under taking summands of objects is $\mathcal{C}$). One says that the objects $\{X_1, \ldots, X_n\}$ form a \textbf{thick subcategory} of $\mathcal{C}$.
\end{itemize}
Now let $\Lambda$ be a finite dimensional $\Bbbk$-algebra and consider a simple-minded collection $\{X_1,\ldots, X_n\}$ in $\mathcal{D}^b(\Lambda)$. If for each $i \in [n]$ one has $H^k(X_i) = 0$ for any $k \neq 0, -1$, we say the collection is $\textbf{2-term}$. We let 2-smc($\Lambda$) denote the set of isomorphism classes of 2-term simple-minded collections of $\mathcal{D}^b(\Lambda).$
\end{definition}

It turns out that, as the following lemma shows, it is easy to say what objects can appear in a 2-term simple minded collection in $\mathcal{D}^b(\Lambda_T)$.

\begin{lemma}\label{stalks0and-1}
Let $\mathcal{X} = \{X_1, \ldots, X_n\} \in 2\text{-smc}(\Lambda_T)$. Each $X_i \in \mathcal{X}$ is isomorphic to a stalk complex of an indecomposable $\Lambda_T$-module concentrated in degree $0$ or $-1$.
\end{lemma}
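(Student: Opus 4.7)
The plan is to realize each $X_i$ as a simple object in the heart $\mathcal{H}$ of a bounded $t$-structure on $\mathcal{D}^b(\Lambda_T)$ and then use the Happel--Reiten--Smal{\o} (HRS) description of $\mathcal{H}$ to force $X_i$ to be concentrated in a single cohomological degree. The argument is formal and does not really use any tiling-specific feature of $\Lambda_T$; it is a general statement about $2$-term simple-minded collections over a finite-dimensional algebra.

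First, I will appeal to the reconstruction result of Koenig--Yang \cite{koenig2014silting} that is invoked in the introduction: the three simple-minded axioms of Definition~\ref{smcdefin} are exactly what is needed to ensure that $\mathcal{X}$ is the set of simple objects of the heart $\mathcal{H}$ of a bounded $t$-structure on $\mathcal{D}^b(\Lambda_T)$. The $2$-term hypothesis confines every object of $\mathcal{H}$ to cohomological degrees $-1$ and $0$, and HRS tilting then identifies $\mathcal{H}$ with the tilt of $\Lambda_T\text{-mod}$ at a torsion pair $(\mathcal{T},\mathcal{F})$, in the explicit form
$$\mathcal{H}=\{Y\in\mathcal{D}^b(\Lambda_T):H^0(Y)\in\mathcal{T},\ H^{-1}(Y)\in\mathcal{F},\ H^k(Y)=0\text{ for }k\neq -1,0\}.$$
Once this description of $\mathcal{H}$ is in hand, for any $X_i\in\mathcal{X}$ the cohomologies $M:=H^0(X_i)\in\mathcal{T}$ and $N:=H^{-1}(X_i)\in\mathcal{F}$ yield stalk complexes $M$ (in degree $0$) and $N[1]$ (in degree $-1$), both of which already lie in $\mathcal{H}$. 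The canonical truncation distinguished triangle
$$N[1]\longrightarrow X_i\longrightarrow M\longrightarrow N[2]$$
then descends to a short exact sequence $0\to N[1]\to X_i\to M\to 0$ in the abelian category $\mathcal{H}$.

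Simplicity of $X_i$ in $\mathcal{H}$ now forces one of $N[1]$ or $M$ to vanish, so $X_i$ is isomorphic to a stalk complex concentrated in degree $-1$ or in degree $0$. Indecomposability is immediate afterwards: the condition $\text{End}_{\mathcal{D}^b(\Lambda_T)}(X_i)=\Bbbk$ specializes to a one-dimensional endomorphism ring on the underlying module, so that module is a brick and hence indecomposable. The main obstacle is really packaged into the input machinery rather than the remainder of the argument: once the heart of the $t$-structure generated by a $2$-term simple-minded collection has been identified with an HRS-tilt of $\Lambda_T\text{-mod}$, the truncation-triangle step dispatches the lemma in essentially one line.
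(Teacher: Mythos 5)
Your argument is correct, and it takes a genuinely different route from the paper. The paper disposes of the main structural claim in one citation: \cite[Remark 4.11]{by14} is invoked directly for the fact that each object of a $2$-term simple-minded collection is a stalk complex concentrated in degree $0$ or $-1$, and only the indecomposability step (a brick, since $\mathrm{End}_{\mathcal{D}^b(\Lambda_T)}(X_i)=\Bbbk$) is argued in the text. You instead re-derive that stalk-complex fact from first principles: the Koenig--Yang correspondence realizes $\mathcal{X}$ as the set of simple objects of the heart $\mathcal{H}$ of a bounded $t$-structure with length heart, the $2$-term hypothesis forces $\mathcal{H}$ to sit inside the interval $\mathcal{D}^{[-1,0]}$ and hence (by the HRS/BBD classification of intermediate $t$-structures) to be a tilt of $\Lambda_T\text{-mod}$ at a torsion pair, and then the standard truncation triangle $H^{-1}(X_i)[1]\to X_i\to H^0(X_i)\to H^{-1}(X_i)[2]$ lands in $\mathcal{H}$ and becomes a short exact sequence whose extreme terms must alternately vanish by simplicity. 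This costs you a paragraph of $t$-structure bookkeeping but buys self-containment and makes explicit the mechanism (simplicity in the tilted heart) that the citation to \cite{by14} hides; it also makes transparent that nothing tiling-specific is being used, which the paper's phrasing leaves implicit.
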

\begin{proof}
By \cite[Remark 4.11]{by14}, each $X \in \mathcal{X}$ is isomorphic to a stalk complex of a $\Lambda_T$-module concentrated in degree $0$ or $-1$. Suppose $X \in \mathcal{X}$ is of the form $X \cong M[1]$ where $M \in \Lambda_T\text{-mod}$. Now we have that $$\begin{array}{rcl}
\text{End}_{\Lambda_T}(M) & = & \text{Hom}_{\Lambda_T}(M, M) \\
& = & \text{Hom}_{\mathcal{D}^b(\Lambda_T)}(M,M) \\
& = & \text{Hom}_{\mathcal{D}^b(\Lambda_T)}(M[1],M[1]) \\
& = & \Bbbk
\end{array}$$ where the last equality follows from the fact that $\mathcal{X} \in 2\text{-smc}(\Lambda_T)$. Since $\text{End}_{\Lambda_T}(M)$ is a local ring, $M$ is indecomposable. The proof is similar when $X \cong M$ for some $M \in \Lambda_T\text{-mod}$.
\end{proof}

From Lemma~\ref{stalks0and-1}, we have that any 2-term simple-minded collection $\mathcal{X} = \{X_1, \ldots, X_n\}$ in $\mathcal{D}^b(\Lambda_T)$ can be regarded as a collection of segments of $T$. We define $\text{Seg}(\mathcal{X}) = \{s_1, \ldots, s_n\}$ to be this collection where $s_i \in \text{Seg}(\mathcal{X})$ corresponds to $X_i \in \mathcal{X}$. Moreover, we can write $\text{Seg}(\mathcal{X}) = \text{Seg}^0(\mathcal{X}) \sqcup \text{Seg}^{-1}(\mathcal{X})$ where $$\text{Seg}^{i}(\mathcal{X}) := \{s_j \in \text{Seg}(\mathcal{X}): \ X_j \text{ is concentrated in degree $i$}\}.$$

The simple-minded collection $\mathcal{X}$ also naturally defines a graph lying on $D^2$ as follows. Let $\mathcal{SEG}(\mathcal{X})$ be the graph whose vertices are the internal vertices of $T$ and whose edges are admissible curves $\gamma_i$ defined by the segments $s_i \in \text{Seg}(\mathcal{X})$ up to endpoint fixing isotopy where if $s_i \in \text{Seg}^0(\mathcal{X})$ (resp. $s_i \in \text{Seg}^{-1}(\mathcal{X})$) then $\gamma_i$ is a green- (resp. red-) admissible curve. By abuse of notation, we will write $\mathcal{SEG}(\mathcal{X}) = \{\gamma_1, \ldots, \gamma_n\}$. It will also be useful to define $\mathcal{SEG}^{0}(\mathcal{X})$ (resp. $\mathcal{SEG}^{-1}(\mathcal{X})$) to be the subgraph of $\mathcal{SEG}(\mathcal{X})$ consisting of green- (resp. red-) admissible curves from $\mathcal{SEG}(\mathcal{X})$.

Our next main theorem, which we now state, gives a combinatorial classification of the 2-term simple-minded collections for the algebras $\Lambda_T$. This theorem implies that the data of a noncrossing tree partition paired with its Kreweras complement is equivalent to that for $\mathcal{SEG}(\mathcal{X})$ for a unique $\mathcal{X} \in 2\text{-smc}(\Lambda_T)$. 

\begin{theorem}\label{Thm:ncpsmcbijection}
There is a bijection $\theta: \{(\textbf{B},\text{Kr}(\textbf{B}))\}_{\textbf{B} \in \text{NCP}(T)} \longrightarrow 2\text{-smc}(\Lambda_T)$ given by $$(\textbf{B},\text{Kr}(\textbf{B})) \stackrel{\theta}{\longmapsto} \{M(u)[1]: s_u \in \text{Seg}(B) \text{ where } B \in \textbf{B}\} \sqcup \{M(v): s_v \in \text{Seg}(B^\prime) \text{ where } B^\prime \in \text{Kr}(\textbf{B})\}.$$
\end{theorem}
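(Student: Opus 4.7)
The strategy is to verify three claims in sequence: that $\theta$ is well-defined (i.e.\ lands in $2\text{-smc}(\Lambda_T)$), that $\theta$ is injective, and that $\theta$ is surjective. Given a pair $(\textbf{B}, \Kr(\textbf{B}))$, Corollary~\ref{cor_kreweras_rgtree} supplies a unique red-green tree $\mathcal{T}$ with $\mathcal{T}_r = \Seg(\textbf{B})$ and $\mathcal{T}_g = \Seg(\Kr(\textbf{B}))$, and this geometric datum is what drives all subsequent computations.

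For well-definedness, the 2-term property is immediate from the construction. The simple-minded axioms translate, via $\text{Hom}_{\mathcal{D}^b(\Lambda_T)}(M[i], N[j]) = \text{Ext}^{j-i}_{\Lambda_T}(M,N)$, into Hom and $\text{Ext}^1$ vanishing conditions between indecomposable string modules $M(u), M(v)$. These split into three cases: both $s_u,s_v$ red in $\Seg(\textbf{B})$, both green in $\Seg(\Kr(\textbf{B}))$, or one of each (a pair from the red-green tree $\mathcal{T}$). The same-color cases are handled by Lemma~\ref{HomLambdaFree} for Hom vanishing and by Theorem~\ref{crossingnonsplitext} together with the pairwise noncrossing condition for $\text{Ext}^1$ vanishing. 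The cross-color cases require a red-green analogue of Lemma~\ref{HomLambdaFree} together with the case analysis of Lemma~\ref{Lemma:crossingcurves} and the asymmetric noncrossing criterion of Lemma~\ref{lem_crossing_condition_2}. Generation of the thick subcategory can be addressed either directly, by an inductive cone construction starting from the simples and using the mutation calculus developed in Section~\ref{Sec:MutSMC}, or obtained a posteriori from the cardinality argument below.

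Injectivity is straightforward: the stalk degree partitions $\theta(\textbf{B},\Kr(\textbf{B}))$ into its shifted and unshifted pieces, and Corollary~\ref{stringsegmentbij} shows that each piece recovers the corresponding segment set, which in turn uniquely determines the noncrossing tree partition. For surjectivity I would use a cardinality argument. Proposition~\ref{prop_rho_bij} and Theorem~\ref{Thm:TorsfOrFlipIso} give $|\NCP(T)| = |\overrightarrow{FG}(T)| = |\text{torsf}(\Lambda_T)|$, and the standard correspondence between 2-term simple-minded collections and functorially finite torsion pairs (cf.\ \cite{koenig2014silting}, \cite{by14}) identifies the last cardinality with $|2\text{-smc}(\Lambda_T)|$. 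Combined with injectivity, this forces $\theta$ to be a bijection.

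The main obstacle will be the cross-color extension vanishing in the well-definedness step, specifically showing $\text{Ext}^1(M(u), M(v)) = 0$ whenever $s_u$ is a green segment of $\Kr(\textbf{B})$ and $s_v$ is a red segment of $\textbf{B}$ that share a common subsegment $w$. By Theorem~\ref{crossingnonsplitext}, a nonsplit extension exists precisely when $u$ and $v$ extend in prescribed opposite directions at the endpoints of $w$; translating these directional conditions into membership in $K_{s_u} \cap C_{s_v}$ and then invoking Lemma~\ref{lem_crossing_condition_2} is what rules them out. The asymmetry in color in that lemma mirrors the asymmetric roles of $\textbf{B}$ and $\Kr(\textbf{B})$ in the definition of $\theta$ and is the source of the essential subtlety.
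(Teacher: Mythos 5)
Your overall plan is sound, and the surjectivity step is a genuinely different route from the paper: the paper constructs an explicit inverse $\epsilon$ (decomposing $\mathcal{SEG}^{-1}(\mathcal{X})$ and $\mathcal{SEG}^0(\mathcal{X})$ into connected components and invoking Proposition~\ref{partitionassociatedtoX} to see that the first gives a noncrossing tree partition and the second its Kreweras complement), whereas you close the gap by a counting argument $|\NCP(T)| = |\ora{FG}(T)| = |\text{torsf}(\Lambda_T)| = |2\text{-smc}(\Lambda_T)|$. The cardinality chain is valid — the last equality is exactly what the paper establishes inside the proof of Proposition~\ref{Prop:2smclatticeprop} via Happel--Reiten--Smal{\o} tilting and Koenig--Yang — and combined with injectivity it does yield bijectivity. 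The tradeoff is that the paper's explicit inverse gives more: it is what makes the later use of the map $\epsilon$ (and the red-green-tree picture of a 2-term simple-minded collection) available, whereas your route establishes bijectivity without telling you what $\theta^{-1}$ looks like. Your well-definedness plan is essentially the content of Lemmas~\ref{Lemma:homvanishing} and~\ref{generatesDb}; the paper's version of the cross-color $\text{Ext}^1$ vanishing argues directly from the case analysis of Lemma~\ref{Lemma:crossingcurves} rather than passing through $K_{s_u}\cap C_{s_v}$ and Lemma~\ref{lem_crossing_condition_2}, but either route should go through with care.

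There is one genuine flaw in the plan: the claim that the thick-generation axiom ``can be obtained a posteriori from the cardinality argument'' does not work. The counting argument requires that you already know the image of $\theta$ lands inside $2\text{-smc}(\Lambda_T)$; otherwise you only know that $\theta$ injects into the (a priori larger) set of $n$-element, 2-term collections satisfying the Hom-orthogonality axioms (i) and (ii), and matching cardinalities between $\NCP(T)$ and $2\text{-smc}(\Lambda_T)$ gives no control over that larger set. So you cannot defer axiom (iii) to the end; it must be verified up front as part of well-definedness. Your alternative — the direct inductive cone construction — is the right move and is exactly what the paper's Lemma~\ref{generatesDb} does, building each string module from the simples via a sequence of cones along a red-green tree (the paper calls these ``admissible sequences''). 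If you want the mutation calculus of Section~\ref{Sec:MutSMC} to do the work instead, note that Lemma~\ref{Cone_Lemma} describes mutations of a collection already known to be simple-minded, so you would still need an independent argument that the proposed collection can be reached from $\{M(i)\}_{i\in(Q_T)_0}$ by mutations; the admissible-sequence argument is cleaner.
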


\begin{figure}
$$\includegraphics[scale=1.2]{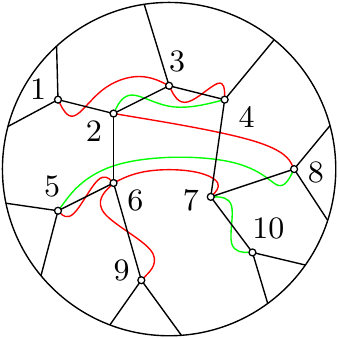} \ \ \raisebox{.7in}{$\longleftrightarrow$} \ \ \raisebox{.7in}{$\begin{array}{ccccccccccccc} \{M(w_{(1,3)})[1], & M(w_{(3,4)})[1], & M(w_{(2,8)})[1],\\ M(w_{(5,6)})[1], & M(w_{(6,7)})[1], & M(w_{(6,9)})[1], \\ M(w_{(2,4)}), & M(w_{(5,8)}), & M(w_{(7,10)})\}\end{array}$}$$
\caption{The noncrossing tree partition $\textbf{B} = (\{1,3,4\}, \{2,8\}, \{5, 6, 7, 9\},\{10\})$ with its Kreweras complement $\text{Kr}(\textbf{B}) = (\{1\}, \{2,4\}, \{3\}, \{5,8\}, \{6\}, \{7, 10\}, \{9\})$ and its corresponding simple-minded collection via the map $\theta$ in Theorem~\ref{Thm:ncpsmcbijection}. Here $w_{(i,j)}$ denotes the string corresponding to the segment of $T$ connecting $i$ and $j$.}
\label{noncrossingcurves}
\end{figure}

\begin{proof}
The image of $\theta$ lies in $2\text{-smc}(\Lambda_T)$ by Lemma~\ref{HomLambdaFree}, Lemma~\ref{Lemma:homvanishing}, and Lemma~\ref{generatesDb}. 

Next, decompose $\text{Seg}^0(\mathcal{X})$ and $\text{Seg}^{-1}(\mathcal{X})$ into segment-connected subsets of maximal size as follows: $$\text{Seg}^{0}(\mathcal{X}) = \bigsqcup_{i = 1}^\ell\text{Seg}^0_i(\mathcal{X}) \ \text{ and } \ \text{Seg}^{-1}(\mathcal{X}) = \bigsqcup_{i = 1}^k\text{Seg}^{-1}_i(\mathcal{X}).$$ In Section~\ref{smctoncp}, we construct a map from $\epsilon: 2\text{-smc}(\Lambda_T) \longrightarrow \{(\textbf{B},\text{Kr}(\textbf{B}))\}_{\pi \in \text{NCP}(T)}$ by $$\mathcal{X} \stackrel{\epsilon}{\longmapsto} (\textbf{B}_{\mathcal{X}}, \text{Kr}(\textbf{B}_{\mathcal{X}}))$$ where $\textbf{B}_{\mathcal{X}} := (B_1, \ldots, B_k)$ and where $B_i := \{\text{vertices of $T$ that are endpoints of segments in $\text{Seg}^{-1}_i(\mathcal{X})$}\}.$ It follows from Proposition~\ref{partitionassociatedtoX} that $\textbf{B}_{\mathcal{X}} \in \text{NCP}(T)$ and that any block $B_i^\prime$ in $\text{Kr}(\textbf{B}_{\mathcal{X}}) = (B_1^\prime, \ldots, B_\ell^\prime)$ satisfies $B_i^\prime = \{\text{vertices of $T$ that are endpoints of segments in $\text{Seg}^{0}_i(\mathcal{X})$}\}.$

It is easy to see that $\epsilon = \theta^{-1}.$
\end{proof}

\subsection{Mutation of simple-minded collections}\label{Sec:MutSMC}

Here we recall the notion of \textbf{mutation} of simple-minded collections and interpret this as a combinatorial operation on configurations of admissible curves. Our interpretation of mutation will be a key ingredient in showing that a 2-term simple-minded collection gives rise to a noncrossing tree partition paired with its Kreweras complement. 

Mutation was first introduced in \cite[Section 8.1]{KS} for spherical collections and generalized in \cite{koenig2014silting} to Hom-finite, Krull-Schmidt triangulated categories. This notion is defined using the language of \textbf{approximations}, which we now briefly review.

Let $\mathcal{C}$ be an arbitrary category (not necessarily triangulated), and let $\mathcal{A}$ be any subcategory of $\mathcal{C}$. We say that a morphism $f: C \to A$ where $C \in \mathcal{C}$ and $A \in \mathcal{A}$ is a \textbf{left $\mathcal{A}$-approximation} of $C$ if for any morphism $g: C \to A^\prime$ where $A^\prime \in \mathcal{A}$ one has $g = g^\prime f$ for some morphism $g^\prime : A \to A^\prime.$ Dually, one defines the notion of a \textbf{right $\mathcal{A}$-approximation} of $C$. Additionally, we say that $f: C \to A$ where $C \in \mathcal{C}$ and $A \in \mathcal{A}$ is \textbf{left minimal} morphism if for every morphism $g: A \to A$ that satisfies $gf = f$ one has that $g$ is an isomorphism. Dually, one defines \textbf{right minimal morphisms}. A morphism $f: C\to A$ (resp. $f: A \to C$) is a \textbf{left minimal $\mathcal{A}$-approximation} (resp. \textbf{right minimal $\mathcal{A}$-approximation}) if $f$ is left minimal and is a left $\mathcal{A}$-approximation (resp. right minimal and is a right $\mathcal{A}$-approximation).

Let $\mathcal{X} = \{X_1, \ldots, X_n\}$ be a simple-minded collection in $\mathcal{D}^b(\Lambda)$ where $\Lambda$ is an arbitrary finite dimensional $\Bbbk$-algebra. Let $\text{ext}(X_k)$ denote the \textbf{extension closure} of $X_k$ in  $\mathcal{D}^b(\Lambda)$ (i.e. the smallest subcategory of $\mathcal{D}^b(\Lambda)$ that contains $X_k$ and is closed under extensions). We define the \textbf{left mutation} of $\mathcal{X}$ to be $\mu_k^+(\mathcal{X}) := \{X_1^+, \ldots, X_n^+\}$ where $$\begin{array}{rcl} X_i^+ & := & \left\{\begin{array}{rcl} X_k[1] & : & \text{if $i = k$}\\
\text{Cone}(g^+_i: X_i[-1] \to X_{k,i}) & : & \text{if $i \neq k$}  \end{array}\right. \end{array}$$ where $g^+_i$ is a left minimal $\text{ext}(X_k)$-approximation. It is known that such approximations exist and that $\mu_k^+(\mathcal{X})$ is a simple-minded collection in $\mathcal{D}^b(\Lambda)$ (see \cite[Section 7.2]{koenig2014silting}). Dually, one defines the \textbf{right mutation} of $\mathcal{X}$, denoted $\mu_k^-(\mathcal{X})$. The resulting collection $\mu_k^-(\mathcal{X}):=\{X_1^-, \ldots, X_n^-\}$ has objects given by $$\begin{array}{rcl} X_i^- & := & \left\{\begin{array}{rcl} X_k[-1] & : & \text{if $i = k$}\\
\text{Cocone}(g_i^-: X_{k,i} \to X_i[1]) & : & \text{if $i \neq k$}  \end{array}\right. \end{array}$$ where $g_i^-$ is a right minimal $\text{ext}(X_k)$-approximation. It follows from \cite[Proposition 7.6 (a)]{koenig2014silting} that the $\mu_k^{-}\mu_k^+(\mathcal{X}) = \mathcal{X}$ and $\mu_k^{+}\mu_k^-(\mathcal{X}) = \mathcal{X}$.

\begin{remark}\label{Remark:leftrightmutation}
Let $\mathcal{X} = \{X_1, \ldots, X_n\} \in \text{2-smc}(\Lambda_T)$. By Lemma~\ref{stalks0and-1}, we have that $\mu_k^+(\mathcal{X}) \in \text{2-smc}(\Lambda_T)$ (resp. $\mu_k^-(\mathcal{X}) \in \text{2-smc}(\Lambda_T)$) if and only if $X_k$ is a stalk complex of an indecomposable concentrated in degree 0 (resp. $-1$). Using Proposition~\ref{Prop:commonendext}, we have that, when performing the mutation $\mu_k^+$ (resp. $\mu_k^-$) on $\mathcal{X}$, $\text{ext}(X_k) = \text{add}(X_k)$ (resp. $\text{ext}(X_k) = \text{add}(X_k[1])$).
\end{remark}

\begin{lemma}\label{Cone_Lemma}
Let $\mathcal{X} = \{X_1, \ldots, X_n\} = \{M(u^{(1)})[1], \ldots, M(u^{(n_1)})[1]\} \sqcup \{M(v^{(1)}), \ldots, M(v^{(n_2)})\} \in \text{2-smc}(\Lambda_T)$ and let $g_i^+: X_i[-1] \to X_{k,i}$ and $g_i^-: X_{k,i} \to X_i[1]$ be approximations used in the mutations $\mu_k^+(\mathcal{X})$ and $\mu_k^-(\mathcal{X})$. Then if $X_k = M(v^{(j)})$, we have $$\begin{array}{rcccl} X^+_i & = & \text{Cone}(g_i^+) & \cong & \left\{\begin{array}{rcl} M(v^{(j)} \leftarrow v^{(j^\prime)}) & : & \text{$\text{Ext}^1_{\Lambda_T}(X_i, X_{k}) \neq 0$ and $X_i = M(v^{(j^\prime)})$ where}\\ & & \text{supp}(M(v^{(j)}))\cap \text{supp}(M(v^{(j^\prime)})) = \emptyset,\\ M(w) & : & \text{$\text{Hom}_{\Lambda_T}(X_i[-1], X_{k}) \neq 0$ and $X_i = M(u^{(j^\prime)})[1]$ where} \\ & & \text{$\text{supp}(M(w)) = \text{supp}(M(v^{(j)}))\backslash\text{supp}(M(u^{(j^\prime)})) \text{ and}$}\\ & & \text{supp}(M(u^{(j^\prime)})) \subset \text{supp}(M(v^{(j)})),\\ M(w)[1] & : & \text{$\text{Hom}_{\Lambda_T}(X_i[-1], X_{k}) \neq 0$ and $X_i = M(u^{(j^\prime)})[1]$ where} \\ & & \text{$\text{supp}(M(w)) = \text{supp}(M(u^{(j^\prime)}))\backslash\text{supp}(M(v^{(j)}))$ and} \\ & & \text{supp}(M(u^{(j)})) \subset \text{supp}(M(v^{(j^\prime)})), \\ X_i & : & \text{otherwise.}  \end{array}\right.\end{array}$$ 
If $X_k = M(u^{(j)})[1]$, we have
$$\begin{array}{rcccl} X^-_i & = & \text{Cocone}(g_i^-) & \cong & \left\{\begin{array}{rcl} M(u^{(j^\prime)} \leftarrow u^{(j)})[1] & : & \text{$\text{Ext}^1_{\Lambda_T}(X_k, X_{i}) \neq 0$ and $X_i = M(u^{(j^\prime)})[1]$ where}\\ & & \text{supp}(M(u^{(j)}))\cap \text{supp}(M(u^{(j^\prime)})) = \emptyset, \\ M(w)[1] & : & \text{$\text{Hom}_{\Lambda_T}(X_k, X_{i}[1]) \neq 0$ and $X_i = M(v^{(j^\prime)})$ where} \\ & & \text{$\text{supp}(M(w)) = \text{supp}(M(u^{(j)}))\backslash\text{supp}(M(v^{(j^\prime)}))$ and}\\ & & \text{supp}(M(v^{(j^\prime)})) \subset \text{supp}(M(u^{(j)})), \\ M(w) & : & \text{$\text{Hom}_{\Lambda_T}(X_k, X_{i}[1]) \neq 0$ and $X_i = M(v^{(j^\prime)})$ where} \\ & & \text{$\text{supp}(M(w)) = \text{supp}(M(v^{(j^\prime)}))\backslash\text{supp}(M(u^{(j)}))$ and}\\ & & \text{supp}(M(u^{(j)})) \subset \text{supp}(M(v^{(j^\prime)})),  \\ X_i & : & \text{otherwise.} \end{array}\right. \end{array}$$
\end{lemma}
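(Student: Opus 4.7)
By symmetry, I treat only the left-mutation case $X_k = M(v^{(j)})$; the right-mutation formulas follow from an entirely analogous argument. By Remark~\ref{Remark:leftrightmutation} we have $\text{ext}(X_k) = \text{add}(X_k)$, so any approximation object $X_{k,i}$ is a direct sum of copies of $X_k$. The relevant morphism space $\text{Hom}_{\mathcal{D}^b(\Lambda_T)}(X_i[-1], X_k)$ equals $\text{Ext}^1_{\Lambda_T}(M(v^{(j')}), M(v^{(j)}))$ when $X_i = M(v^{(j')})$ is concentrated in degree $0$, and equals $\text{Hom}_{\Lambda_T}(M(u^{(j')}), M(v^{(j)}))$ when $X_i = M(u^{(j')})[1]$ is concentrated in degree $-1$. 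Each of these spaces is at most $1$-dimensional, by Lemmas~\ref{dimext} and \ref{dimhom} respectively. Consequently $g_i^+$ is either zero --- in which case we may take $X_{k,i}=0$, giving $\text{Cone}(g_i^+) \cong X_i$ (the ``otherwise'' case) --- or, up to scalar, a single universal morphism $X_i[-1] \to X_k$.

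Suppose first that $X_i = M(v^{(j')})$ and $g_i^+ \neq 0$, so $\text{Ext}^1_{\Lambda_T}(M(v^{(j')}), M(v^{(j)})) \neq 0$. Proposition~\ref{separatesegext} forces $s_{v^{(j)}}$ and $s_{v^{(j')}}$ to share at least one vertex of $T$, so combined with the disjoint-support hypothesis we are in the situation of Theorem~\ref{shareendptext}, which produces the unique nonsplit extension
$$0 \to M(v^{(j)}) \to M(v^{(j)} \leftarrow v^{(j')}) \to M(v^{(j')}) \to 0.$$
Rotating this short exact sequence produces the distinguished triangle $X_i[-1] \to X_k \to M(v^{(j)} \leftarrow v^{(j')}) \to X_i$, whence $\text{Cone}(g_i^+) \cong M(v^{(j)} \leftarrow v^{(j')})$, matching the first listed case.

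Now assume $X_i = M(u^{(j')})[1]$ and $g_i^+$ corresponds to a nonzero morphism $f : M(u^{(j')}) \to M(v^{(j)})$. Its mapping cone has cohomology $H^{-1}(\text{Cone}(f)) = \ker(f)$ and $H^0(\text{Cone}(f)) = \text{coker}(f)$. By Remark~\ref{Remark:leftrightmutation}, the mutated collection $\mu_k^+(\mathcal{X})$ is again a $2$-term simple-minded collection, so by Lemma~\ref{stalks0and-1} the new object $X_i^+ = \text{Cone}(g_i^+)$ must be a stalk complex in degree $0$ or $-1$. This forces $f$ to be injective or surjective. In the injective case, $\text{Cone}(f) \cong \text{coker}(f) = M(w)$ sits in degree $0$ with $\text{supp}(M(w)) = \text{supp}(M(v^{(j)})) \setminus \text{supp}(M(u^{(j')}))$; in the surjective case, $\text{Cone}(f) \cong \ker(f)[1] = M(w)[1]$ with $\text{supp}(M(w)) = \text{supp}(M(u^{(j')})) \setminus \text{supp}(M(v^{(j)}))$. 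These correspond respectively to the two listed cases and exhaust the possibilities.

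The main technical obstacle is precisely this last reduction to injective or surjective morphisms: a priori $f$ could factor nontrivially as $M(u^{(j')}) \twoheadrightarrow M(w) \hookrightarrow M(v^{(j)})$ by Lemma~\ref{dimhom}, in which case $\text{Cone}(f)$ would be a genuine two-term complex rather than a stalk, and the stated formulas would not apply. Ruling this out depends critically on the 2-term stability of mutation recorded in Remark~\ref{Remark:leftrightmutation}, together with Lemma~\ref{stalks0and-1}. For right mutation at $X_k = M(u^{(j)})[1]$, the parallel argument replaces the relevant morphism spaces $\text{Hom}_{\mathcal{D}}(X_k, X_i[1])$ by $\text{Ext}^1_{\Lambda_T}(M(u^{(j)}), M(u^{(j')}))$ and $\text{Hom}_{\Lambda_T}(M(u^{(j)}), M(v^{(j')}))$, and identifies cocones via the relation $\text{Cocone}(g) = \text{Cone}(g)[-1]$, which exchanges the roles of kernel and cokernel to yield the dual formulas.
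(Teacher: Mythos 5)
Your proof follows the same basic plan as the paper's (set up the triangle/long exact sequence, split into the cases $X_i$ in degree $0$ or degree $-1$, use the $1$-dimensionality of the relevant Hom/Ext space to reduce to a single morphism, and in the degree $-1$ case invoke $2$-term stability plus Lemma~\ref{stalks0and-1} to force the morphism $M(u^{(j')}) \to M(v^{(j)})$ to be injective or surjective). The degree $-1$ analysis, including your remark about why a proper $\twoheadrightarrow\hookrightarrow$ factorization must be excluded, matches the paper's reasoning closely.

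There is, however, a genuine gap in the degree $0$ case. You write that Proposition~\ref{separatesegext} forces $s_{v^{(j)}}$ and $s_{v^{(j')}}$ to share a vertex and then say ``combined with the disjoint-support hypothesis we are in the situation of Theorem~\ref{shareendptext}.'' But disjointness of supports is not a hypothesis available to you --- it is a \emph{conclusion} the lemma is asserting. A priori, $\text{Ext}^1_{\Lambda_T}(M(v^{(j')}),M(v^{(j)})) \neq 0$ could occur with $\text{supp}(M(v^{(j)}))\cap\text{supp}(M(v^{(j')}))\neq\emptyset$ (the situation of Theorem~\ref{crossingnonsplitext}), in which case the cone would be a direct sum of two indecomposables, not $M(v^{(j)}\leftarrow v^{(j')})$, and the ``otherwise'' branch of the lemma (which claims $X_i^+\cong X_i$) would be false. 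One must actively rule this out. The paper does so by observing that if the segments crossed as in Theorem~\ref{crossingnonsplitext} --- with $v^{(j)}=u^{(1)}\leftarrow w\rightarrow u^{(2)}$ and $v^{(j')}=v^{(1)}\rightarrow w\leftarrow v^{(2)}$ --- there would be a nonzero composite $M(v^{(j')})\twoheadrightarrow M(w)\hookrightarrow M(v^{(j)})$, contradicting $\text{Hom}_{\Lambda_T}(M(v^{(j')}),M(v^{(j)}))=0$, which holds because $\mathcal{X}$ is simple-minded. You need to add this step; without it the case list in the lemma is not shown to be exhaustive.
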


Lemma~\ref{Cone_Lemma} shows how mutation of a 2-term simple-minded collection $\mathcal{X}$ of $\mathcal{D}^b(\Lambda_T)$ can be understood combinatorially as an operation on admissible curves in  $\mathcal{SEG}(\mathcal{X})$. In Figure~\ref{Fig:typesofflip}, we illustrate the possible ways that mutation can effect $\mathcal{SEG}(\mathcal{X})$. Lemma~\ref{Cone_Lemma} also shows that $\mu_k^+(\mathcal{X})$ differs from $\mathcal{X}$ by at most three objects.

\begin{figure}
\includegraphics[scale=2]{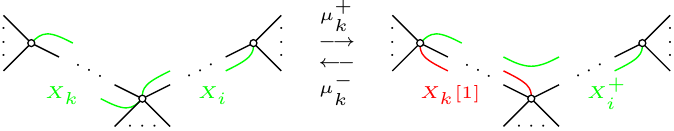}
\includegraphics[scale=2]{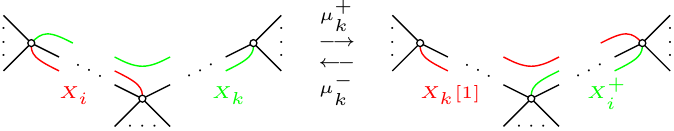}
\includegraphics[scale=2]{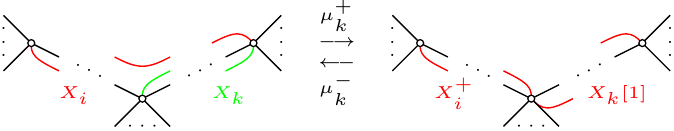}
\caption{The three types of nontrivial transformations.}
\label{Fig:typesofflip}
\end{figure}

\begin{proof}[Proof of Lemma~\ref{Cone_Lemma}]
It is easy to see that $X_{k,i}$ is isomorphic to $X_k$ or $0$, since $g_i^+$ is a left minimal $\text{add}(X_k)$-approximation. Note that the map $g_i^+$ defines the triangle $X_i[-1] \stackrel{g_i^+}{\longrightarrow} X_{k,i} \longrightarrow \text{Cone}(g_i^+) \longrightarrow X_i$ in $\mathcal{D}^b(\Lambda_T)$. This triangle gives rise to the long exact sequence $$0 \longrightarrow H^{-1}(\text{Cone}(g_i^+)) \longrightarrow H^0(X_i[-1]) \stackrel{(g_i^+)^*}{\longrightarrow} H^0(X_{k,i}) \longrightarrow H^0(\text{Cone}(g_i^+)) \longrightarrow H^1(X_i[-1]) \longrightarrow 0,$$ which, by Lemma~\ref{stalks0and-1}, vanishes outside of the terms shown. This sequence becomes $$0 \longrightarrow H^{-1}(\text{Cone}(g_i^+)) \longrightarrow H^{-1}(X_i) \stackrel{(g_i^+)^*}{\longrightarrow} H^0(X_{k,i}) \longrightarrow H^0(\text{Cone}(g_i^+))\longrightarrow H^0(X_i) \longrightarrow 0.$$

Now note that since $X_i$ is a stalk complex concentrated in degree $0$ or $-1$, we have the following two cases $$\begin{array}{rcl}\text{Hom}_{\mathcal{D}^b(\Lambda_T)}(X_i[-1], X_{k,i}) & = & \left\{\begin{array}{lcl}\text{Ext}^1_{\Lambda_T}(X_i, X_{k,i}) & : & \text{if } H^0(X_i) = X_i,\\ \text{Hom}_{\Lambda_T}(X_i[-1], X_{k,i}) & : & \text{if } H^{-1}(X_i) = X_i.\end{array}\right. \end{array}$$ We first consider the case when $H^0(X_i) = X_i$. By Lemma~\ref{dimext}, $\text{dim}_\Bbbk\text{Ext}^1_{\Lambda_T}(X_i, X_{k,i}) \le 1$. Suppose that $\text{dim}_\Bbbk\text{Ext}^1_{\Lambda_T}(X_i, X_{k,i}) = 0.$ This means that $g_i^+: X_i[-1] \to X_{k,i}$ is the zero map. Since $g_i^+$ is a left minimal morphism, this implies that $X_{k,i} = 0$. Then the long exact sequence implies that $H^{-1}(\text{Cone}(g_i^+)) \cong H^{-1}(X_i) = 0$ and $H^0(\text{Cone}(g_i^+)) \cong H^0(X_i) = X_i.$ Thus we obtain that $\text{Cone}(g_i^+) \cong X_i$.

Next, suppose that $\text{dim}_\Bbbk\text{Ext}^1_{\Lambda_T}(X_i, X_{k,i}) = 1.$ Since $g_i^+$ is a left minimal morphism, we know that $g_i^+$ is nonzero and thus $X_{k,i} = X_k.$ Assume $X_i$ is concentrated in degree 0 and write $X_k = M(v^{(j)})$, $X_i = M(v^{(j^\prime)})$. Since $\mathcal{X}$ is a simple-minded collection, $\text{Hom}_{\Lambda_T}(M(v^{(j)}), M(v^{(j^\prime)})) = 0$ and $\text{Hom}_{\Lambda_T}(M(v^{(j^\prime)}), M(v^{(j)})) = 0$. Thus Theorems~\ref{shareendptext} and \ref{crossingnonsplitext} imply that $0 \to M(v^{(j)}) \to M(v^{(j)} \leftarrow v^{(j^\prime)}) \to M(v^{(j^\prime)}) \to 0$ is the unique nonsplit extension of $M(v^{(j^\prime)})$ by $M(v^{(j)})$ up to equivalence of extensions. 

Let $M(v^{(j)}) {\longrightarrow} M(v^{(j)} \leftarrow v^{(j^\prime)}) \longrightarrow M(v^{(j^\prime)}) \stackrel{\xi}{\longrightarrow} M(v^{(j)})[1]$ be the triangle in $\mathcal{D}^b(\Lambda_T)$ defined by this nonsplit extension where $\xi$ is the class of this extension in $\text{Ext}^1_{\Lambda_T}(M(v^{(j^\prime)}), M(v^{(j)}))$. As $\text{dim}_\Bbbk\text{Ext}^1_{\Lambda_T}(M(v^{(j^\prime)}), M(v^{(j)})) = 1,$ we know that $\xi \neq 0$. Furthermore, we have that $g_i^+ = c\cdot\xi$ for some $c \in \Bbbk\backslash\{0\}$. Thus we have the following isomorphism of triangles in $\mathcal{D}^b(\Lambda_T)$ $$\xymatrix @R=1pc
{
M(v^{(j^\prime)})[-1]\ar@{->}[rr]^{-\xi}\ar@{=}[dd] & & M(v^{(j)})\ar@{->}[dd]^{(-c)\cdot 1}\ar@{->}[rr] & & M(v^{(j)} \leftarrow v^{(j^\prime)})\ar@{->}[rr]\ar@{-->}[dd]^{\cong} & & M(v^{(j^\prime)})\ar@{=}[dd] \\
&  & &  \\
M(v^{(j^\prime)})[-1]\ar@{->}[rr]^{c\cdot \xi} & & M(v^{(j)})\ar@{->}[rr] & & \text{Cone}(c\cdot \xi)\ar@{->}[rr] & & M(v^{(j^\prime)}) }$$ This implies that $\text{Cone}(g_i^+) \cong M(v^{(j)} \leftarrow v^{(j^\prime)}).$

Next, we consider the case when $H^{-1}(X_i) = X_i.$ By Lemma~\ref{dimhom}, $\text{dim}_\Bbbk\text{Hom}_{\Lambda_T}(X_i[-1], X_{k,i}) \le 1$. Suppose that $\text{dim}_\Bbbk\text{Hom}_{\Lambda_T}(X_i[-1], X_{k,i}) = 0.$ This means that $g_i^+: X_i[-1] \to X_{k,i}$ is the zero map. Since $g_i^+$ is a left minimal morphism, this implies that $X_{k,i} = 0$. Then the long exact sequence implies that $H^{-1}(\text{Cone}(g_i^+)) \cong H^{-1}(X_i) = X_i$ and $H^0(\text{Cone}(g_i^+)) \cong H^0(X_i) = 0.$ Thus we obtain that $\text{Cone}(g_i^+) \cong X_i$.

Now suppose that $\text{dim}_\Bbbk\text{Hom}_{\Lambda_T}(X_i[-1], X_{k,i}) = 1.$ Since $g_i^+$ is a left minimal morphism, we know that $g_i^+$ is nonzero and thus $X_{k,i} = X_k.$ Thus if we write $X_i[-1] = M(u^{(j^\prime)})$ and $X_k = M(v^{(j)})$, we have that $\text{supp}(M(u^{(j^\prime)})) \cap \text{supp}(M(v^{(j)})) \neq \emptyset.$ Furthermore, since $\mathcal{X}$ is a simple-minded collection, we have that $$\text{Ext}^1_{\Lambda_T}(M(u^{(j^\prime)}), M(v^{(j)})) = \text{Ext}^1_{\Lambda_T}(X_i[-1], X_k) = \text{Hom}_{\mathcal{D}^b(\Lambda_T)}(X_i, X_k) = 0.$$ Thus Theorem~\ref{crossingnonsplitext} implies that the segments $s_{u^{(j)}}$ and $s_{v^{(j^\prime)}}$ must share an endpoint. As $\text{Hom}_{\Lambda_T}(M(u^{(j^\prime)}), M(v^{(j)})) \neq 0$, the two segments must agree along a segment.

We know from Lemma~\ref{stalks0and-1} that $\text{Cone}(g_i^+)$ must be isomorphic in $\mathcal{D}^b(\Lambda_T)$ to either $M(w)$ or $M(w)[1]$ for some $M(w) \in \text{ind}(\Lambda_T\text{-mod})$ in order to have $\mu_k^{+}(\mathcal{X}) \in \text{2-smc}(\Lambda_T)$. This implies that either $\text{ker}((g_i^+)^*) = 0$ or $\text{coker}((g_i^+)^*) = 0$. In the former case $\text{Cone}(g_i^+) \cong M(w)$ where $\text{supp}(M(w)) = \text{supp}(M(v^{(j)}))\backslash\text{supp}(M(u^{(j^\prime)}))$. In the latter case $\text{Cone}(g_i^+) \cong M(w)[1]$ where $\text{supp}(M(w)) = \text{supp}(M(u^{(j^\prime)}))\backslash\text{supp}(M(v^{(j)}))$.

The computation of $\text{Cocone}(g_i^-)$ is similar so we omit it.
\end{proof}

\subsection{From simple-minded collections to noncrossing tree partitions}\label{smctoncp}

In this section, we show how any 2-term simple-minded collection gives rise to a noncrossing tree partition paired with its Kreweras complement.

Using left mutation, we can endow $2\text{-smc}(\Lambda_T)$ with a poset structure by regarding it as the transitive closure of the relation $\mathcal{X}_1 \lessdot \mathcal{X}_2$ if and only if $\mathcal{X}_2 = \mu_{k}^+(\mathcal{X}_1)$ for some $k \in [n]$. Perhaps surprisingly, this poset can be understood more globally. In \cite[Proposition 7.9]{koenig2014silting} it is shown that the partial order on $(2\text{-smc}(\Lambda_T), <)$ can be described as follows. If $\mathcal{X}_1 = \{X_1^{(1)}, \ldots, X_n^{(1)}\}, \mathcal{X}_2 = \{X_1^{(2)}, \ldots, X_n^{(2)}\} \in 2\text{-smc}(\Lambda_T),$ then $$\mathcal{X}_1 \le \mathcal{X}_2 \ \ \text{if and only if} \ \ \text{Hom}_{\mathcal{D}^b(\Lambda_T)}(X^{(1)}_i, X_j^{(2)}[m]) =0$$ for any $m < 0$ and any $i, j \in [n].$ The next proposition shows that the poset $(2\text{-smc}(\Lambda_T), <)$ has an even richer structure.

\begin{proposition}\label{Prop:2smclatticeprop}
The poset $(2\text{-smc}(\Lambda_T), <)$ is a finite lattice whose unique minimal (resp. maximal) element is $\{M(i): i \in (Q_T)_0\}$ (resp. $\{M(i)[1]: i \in (Q_T)_0\}$).
\end{proposition}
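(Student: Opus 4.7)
The strategy is to transport the lattice structure of $\ora{FG}(T)$ to $2\text{-smc}(\Lambda_T)$ via the bijection $\theta$ of Theorem~\ref{Thm:ncpsmcbijection}, while simultaneously reading off the extremal elements. Recall that by Theorem~\ref{Thm:rhocircPsi} we may identify $\NCP(T)$ with $\Psi(\ora{FG}(T))$ and hence with $\ora{FG}(T)$ itself (as a set) via the bijection $\rho$.

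First, I would identify the images under $\theta$ of the bottom and top of $\ora{FG}(T)$. At the bottom $\Fcal_{\min} = \eta(\emptyset)$ the set of lower labels is empty, so $\rho(\Fcal_{\min}) = \Bbf_{\min}$ is the partition of $V^o$ into singletons, and $\Seg(\Bbf_{\min}) = \emptyset$. The upper labels at $\Fcal_{\min}$ are exactly those segments $s$ for which $\{s\}$ is biclosed, i.e.\ the length-one segments, one per interior edge of $T$; by Corollary~\ref{cor_kreweras_rgtree} these are precisely the segments of $\Kr(\Bbf_{\min})$. Applying $\theta$ yields the stalk complexes of the simples, $\{M(i) : i \in (Q_T)_0\}$. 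Reflecting $T$ dualises this computation and identifies $\{M(i)[1] : i \in (Q_T)_0\}$ as the image of the top.

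Second, I would show that left mutation transports to a single flip in $\ora{FG}(T)$. By Remark~\ref{Remark:leftrightmutation}, $\mu_k^+(\mathcal{X})$ is a 2-term simple-minded collection precisely when $X_k$ sits in degree $0$, and Lemma~\ref{Cone_Lemma} translates the mutation into one of the three local modifications of the admissible-curve configuration $\mathcal{SEG}(\mathcal{X})$ depicted in Figure~\ref{Fig:typesofflip}. Each of those modifications turns a single green curve into a red curve, possibly adjusting two adjacent curves of the opposite colour so as to maintain the red-green tree property. Under Corollary~\ref{cor_kreweras_rgtree} this corresponds to a single upward flip of the underlying facet in $\ora{FG}(T)$, oriented as in Definition~\ref{orflipgraph}. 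Conversely, every upward flip in $\ora{FG}(T)$ is realised by some $\mu_k^+$ because the label of the flip determines uniquely which curve changes colour. Hence $\theta$ induces a bijection on cover relations. Since the Hom-vanishing description of the order recalled from \cite{koenig2014silting} shows that $(2\text{-smc}(\Lambda_T),<)$ is a finite poset that is the transitive closure of its covers, and $\ora{FG}(T)$ has the same property, the cover-level bijection upgrades to a poset isomorphism $\ora{FG}(T) \cong (2\text{-smc}(\Lambda_T),<)$. The conclusion then follows from Theorem~\ref{thm_eta_phi_main}: $\ora{FG}(T)$ is a lattice, so $(2\text{-smc}(\Lambda_T),<)$ is too, with extremal elements as identified above.

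The main obstacle lies in the second step. One must verify case by case that the three mutation types in Lemma~\ref{Cone_Lemma} correspond precisely to single flips of the red-green tree with the orientation of Definition~\ref{orflipgraph}. In particular, we have to track how the domain and codomain of the minimal approximation $g_i^+$ interact with the segment-connected blocks of both $\Bbf$ and $\Kr(\Bbf)$, and show that the combinatorial effect on the pair $(\Bbf, \Kr(\Bbf))$ is exactly what the flip operation $\mu_s$ induces on $\rho^{-1}(\Bbf)$ via the block-merging/splitting it causes.
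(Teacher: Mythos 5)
Your strategy is fundamentally circular. You invoke the bijection $\theta$ of Theorem~\ref{Thm:ncpsmcbijection} to transport the lattice structure of $\ora{FG}(T)$ onto $2\text{-smc}(\Lambda_T)$, but the proof of Theorem~\ref{Thm:ncpsmcbijection} itself depends on the proposition you are trying to prove. Concretely: to show that $\theta$ is surjective, the paper must show that every $\mathcal{X}\in 2\text{-smc}(\Lambda_T)$ arises from a pair $(\Bbf,\Kr(\Bbf))$; this is done via the inverse map $\epsilon$, which is well-defined only after Proposition~\ref{partitionassociatedtoX} establishes that $\mathcal{SEG}(\mathcal{X})$ decomposes into a noncrossing tree partition and its Kreweras complement; Proposition~\ref{partitionassociatedtoX} in turn rests on Proposition~\ref{graphofSEG(X)}, whose proof begins by citing Proposition~\ref{Prop:2smclatticeprop} in order to write an arbitrary $\mathcal{X}$ as a composite $\mu_{i_k}^+\circ\cdots\circ\mu_{i_1}^+(\{M(i)\})$ of left mutations from the candidate bottom element. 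In other words, the very mutation-connectivity fact you wish to extract from the proposed poset isomorphism is a prerequisite for knowing that $\theta$ is a bijection in the first place. You cannot use $\theta$ here.

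The paper avoids this entirely by proving the proposition with external representation-theoretic machinery, making no reference to the combinatorial model: $\text{torsf}(\Lambda_T)$ is a finite lattice because $\Lambda_T$ is representation-finite (Proposition~\ref{meetandjointors}); by Happel--Reiten--Smal{\o} tilting together with a result of Woolf, $\text{torsf}(\Lambda_T)$ is poset-isomorphic to a certain collection of bounded $t$-structures on $\mathcal{D}^b(\Lambda_T)$; and by Br\"ustle--Yang that collection is poset-isomorphic to $(2\text{-smc}(\Lambda_T),<)$. The identification of the two extremal elements then follows directly from Remark~\ref{Remark:leftrightmutation}, which characterizes when $\mu_k^+$ or $\mu_k^-$ stays $2$-term. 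Your computation of the extremal elements via $\eta(\emptyset)$ and $\eta(\Seg(T))$ and Corollary~\ref{cor_kreweras_rgtree} is harmless but unnecessary; the circularity in the bulk of the argument is the fatal flaw. If you want an internal combinatorial proof along your lines, you would first have to establish surjectivity of $\theta$, or the noncrossing property of $\mathcal{SEG}(\mathcal{X})$, without appealing to connectivity of the mutation graph.
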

\begin{proof}
We will show that $(2\text{-smc}(\Lambda_T), <)$ is isomorphic to the lattice of torsion-free classes $\text{torsf}(\Lambda_T)$. The lattice $\text{torsf}(\Lambda_T)$ is finite since $\Lambda_T$ is representation-finite. %By \cite[Theorem 4.5]{garver2015lattice}, the lattice $\text{torsf}(\Lambda_T)$ is semidistributive since $\Lambda_T$ is representation finite (see Proposistion~\ref{lambdarepfin}). Additionally, $\text{torsf}(\Lambda_T)$ is finite since $\Lambda_T$ is representation finite.

By \cite[Theorem 3.1]{happel1996tilting} and \cite[Proposition 2.3]{woolf2010stability}, the poset $\text{torsf}(\Lambda_T)$ is isomorphic to the poset of \textbf{bounded $t$-structures} $(\mathcal{C}^{\le 0}_1, \mathcal{C}^{\ge 0}_1)$ on $\mathcal{D}^b(\Lambda_T)$ that satisfy $\mathcal{C}^{\le 0}[1] \subset \mathcal{C}^{\le0}_1 \subset \mathcal{C}^{\le0}$ or equivalently, $\mathcal{C}^{\ge 0}[1] \subset \mathcal{C}^{\ge0}_1 \subset \mathcal{C}^{\ge 0}$ where $$\mathcal{C}^{\le0} := \{X \in \mathcal{D}^b(\Lambda_T): \ H^i(X) = 0 \text{ for } i > 0\} \ \ \text{ and } \ \ \mathcal{C}^{\ge0} := \{X \in \mathcal{D}^b(\Lambda_T): \ H^i(X) = 0 \text{ for } i \le -1\}.$$ In the latter poset, bounded $t$-structures are partially ordered by inclusion: $$(\mathcal{C}_1^{\le0}, \mathcal{C}_1^{\ge0}) \le (\mathcal{C}_2^{\le0}, \mathcal{C}_2^{\ge0}) \ \text{ if and only if } \ \mathcal{C}_1^{\ge0} \subset \mathcal{C}_2^{\ge0}, \ \text{or equivalently, } \mathcal{C}_1^{\le0} \supset \mathcal{C}_2^{\le0}$$The isomorphism sends a torsion-free class $\mathcal{F}$ and its corresponding torsion class $\mathcal{T}$ to the bounded $t$-structure $(\mathcal{C}^{\prime\le 0}, \mathcal{C}^{\prime\ge 0})$ where $$\mathcal{C}^{\le0}_1 := \{X \in \mathcal{D}^b(\Lambda_T): \ H^i(X) = 0 \text{ for } i > 0, \ H^0(X) \in \mathcal{T}\}$$ and $$\mathcal{C}^{\ge0}_1 := \{X \in \mathcal{D}^b(\Lambda_T): \ H^i(X) = 0 \text{ for } i < -1, \ H^{-1}(X) \in \mathcal{F}\}.$$ Now, by \cite[Corollary 4.3]{by14} and the remarks following its proof, this poset of bounded $t$-structures is isomorphic to $(2\text{-smc}(\Lambda_T), <)$.

Remark~\ref{Remark:leftrightmutation} shows that the unique minimal (resp. maximal) element of $(2\text{-smc}(\Lambda_T), <)$ is $\{M(i): i \in (Q_T)_0\}$ (resp. $\{M(i)[1]: i \in (Q_T)_0\}$).
\end{proof}

\begin{proposition}\label{graphofSEG(X)}
Let $\mathcal{X} \in 2\text{-smc}(\Lambda_T)$. The graph $\mathcal{SEG}(\mathcal{X})$ is a \textbf{noncrossing} tree (i.e. any two admissible curves in $\mathcal{SEG}(\mathcal{X})$ are noncrossing in the sense of Lemma~\ref{Lemma:crossingcurves}). 
\end{proposition}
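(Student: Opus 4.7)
The plan is to proceed by induction on the lattice $(2\text{-smc}(\Lambda_T),<)$ established in Proposition~\ref{Prop:2smclatticeprop}. Since every element of this lattice is reachable from the minimum $\mathcal{X}_{\min}=\{M(i):i\in(Q_T)_0\}$ by a sequence of left mutations $\mu_k^+$, it suffices to prove the claim for $\mathcal{X}_{\min}$ and then to show that each such mutation preserves the property of $\mathcal{SEG}(\mathcal{X})$ being a noncrossing tree.

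For the base case, each $M(i)\in\mathcal{X}_{\min}$ is a simple module whose corresponding string is the single vertex $i\in(Q_T)_0$; by Corollary~\ref{stringsegmentbij} the associated segment is the interior edge of $T$ labelled by $i$. All objects sit in degree $0$, so every curve in $\mathcal{SEG}(\mathcal{X}_{\min})$ is green and can be drawn in an arbitrarily small tubular neighborhood of its underlying edge. These curves are trivially pairwise noncrossing, and the resulting graph is exactly the subtree of $T$ obtained by deleting all leaves, which is connected and has $|V^o|-1$ edges on $|V^o|$ vertices, hence a tree.

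For the inductive step, suppose $\mathcal{SEG}(\mathcal{X})$ is a noncrossing tree and set $\mathcal{X}'=\mu_k^+(\mathcal{X})$. By Remark~\ref{Remark:leftrightmutation}, $X_k$ is a degree-$0$ stalk $M(v^{(j)})$, and by Lemma~\ref{Cone_Lemma} the mutation changes $X_k$ into $X_k[1]$ (so the corresponding curve keeps the same underlying segment but changes color from green to red) and modifies at most two other objects $X_i$ according to the three cases of that lemma. These local modifications are precisely the moves pictured in Figure~\ref{Fig:typesofflip}. In each case, the new segments are obtained by concatenating or truncating the old ones along $s_k$, so every interior vertex touched by the removed curves is still touched by the new curves; combined with the fact that $|\mathcal{X}'|=|\mathcal{X}|$, this keeps the edge count correct and $\mathcal{SEG}(\mathcal{X}')$ remains a tree. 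Noncrossing-ness is then verified pair-by-pair using Lemma~\ref{Lemma:crossingcurves}: curves disjoint from the support of $s_k$ are unaffected, and the modified curves can be drawn inside a small neighborhood of the old configuration with the turning directions dictated by the structure of the new strings coming from Lemma~\ref{Cone_Lemma}.

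The main obstacle is the geometric case analysis in the inductive step. When the color of $s_k$ flips from green to red, the ``turns left'' and ``turns right'' conditions at the endpoints of $s_k$ in Lemma~\ref{Lemma:crossingcurves} also flip, so a priori a pair of curves that was noncrossing before the mutation could become crossing. Lemma~\ref{Cone_Lemma} is engineered precisely so that the neighboring curves whose noncrossing status is threatened by the color flip are exactly those that get modified, and the modification compensates for the change in turning direction at the shared endpoints. Verifying this compensation for each of the three types of flips---most delicately in the mixed-color case where a red and a green curve partially overlap and swap segments through $s_k$---is the heart of the proof, and is the reason this proposition is established by induction via mutation rather than by a direct Hom/Ext-vanishing argument.
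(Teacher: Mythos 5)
Your overall strategy matches the paper's exactly: both proceed by induction from the minimum element $\{M(i):i\in(Q_T)_0\}$ of $(2\text{-smc}(\Lambda_T),<)$ via left mutation, with the base case being trivially noncrossing and the inductive step tracking how $\mu_k^+$ affects $\mathcal{SEG}(\mathcal{X})$ through Lemma~\ref{Cone_Lemma}. Your treatment of the tree property (edge count, preservation under the local moves of Figure~\ref{Fig:typesofflip}) is fine. The problem is the noncrossing part of the inductive step, which you defer to a ``compensation'' heuristic that you do not carry out and which, as stated, is not correct.

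Specifically, you assert that ``Lemma~\ref{Cone_Lemma} is engineered precisely so that the neighboring curves whose noncrossing status is threatened by the color flip are exactly those that get modified,'' and that a direct Hom/Ext-vanishing argument is thereby avoided. Neither claim holds. First, an object $X_i$ is left unchanged by $\mu_k^+$ precisely when the relevant Hom or $\mathrm{Ext}^1$ into (or out of) $X_k$ vanishes; this does \emph{not} prevent the segment $s_i$ from sharing a subsegment with $s_k$, so the flip of $\gamma_k$ from green to red (which moves both endpoints of $\gamma_k$ across an edge of $T$ to the opposite corner) can, a priori, introduce a crossing with an \emph{unmodified} curve $\gamma_i$. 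These are exactly the pairs the paper spends the bulk of its proof on. Second, the paper's verification for such pairs is precisely a Hom/Ext-vanishing argument: it uses the fact that $\mathcal{X}_2=\mu_k^+(\mathcal{X}_1)$ is already known to be a simple-minded collection (independently of any geometry, via \cite{koenig2014silting}), so that $\mathrm{Hom}_{\Lambda_T}(M(w^{(\ell,+)}),M(w^{(i)}))=0$ and $\mathrm{Ext}^1_{\Lambda_T}(M(w^{(\ell,+)}),M(w^{(i)}))=0$ hold, and then translates these vanishings into turning-direction constraints via Theorem~\ref{crossingnonsplitext}, Lemma~\ref{endpointoverlapchar}, and the case analysis of Lemma~\ref{Lemma:crossingcurves}. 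Your proposal would need to replace this with an explicit geometric check covering modified-vs-unmodified pairs, not just the three flip pictures, and you have not supplied it. Until that check is done---or until you simply adopt the Hom/Ext argument you were trying to avoid---the inductive step is incomplete.
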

\begin{proof}
It is clear that $\mathcal{SEG}(\{M(i): i \in (Q_T)_0\})$ is a noncrossing tree. By Proposition~\ref{Prop:2smclatticeprop}, for any $\mathcal{X} \in 2\text{-smc}(\Lambda_T)$ there exists a sequence of left mutations such that $\mathcal{X} = \mu_{i_k}^+\circ \cdots \circ \mu_{i_1}^+(\{M(i): i \in (Q_T)_0\})$. By Lemma~\ref{Cone_Lemma}, we have that if $\mathcal{X}_2 = \mu_k^+(\mathcal{X}_1)$ and $\mathcal{SEG}(\mathcal{X}_1)$ is a tree, then $\mathcal{SEG}(\mathcal{X}_2)$ is a tree.

It remains to prove that if $\mathcal{X}_2 = \mu_k^+(\mathcal{X}_1)$ where $\mathcal{X}_1, \mathcal{X}_2 \in 2\text{-smc}(\Lambda_T)$ and $\mathcal{SEG}(\mathcal{X}_1) = \{\gamma_1, \ldots, \gamma_n\}$ is noncrossing, then $\mathcal{SEG}(\mathcal{X}_2) = \{\gamma_1^+, \ldots, \gamma_n^+\}$ is noncrossing. It is clear that the admissible curves in $\mathcal{SEG}(\mathcal{X}_1)\backslash\mathcal{SEG}(\mathcal{X}_2)$ are noncrossing. Write $\mathcal{X}_1 = \{X_1, \ldots, X_n\}$, $\text{Seg}(\mathcal{X}_1) = \{s_1, \ldots, s_n\}$, and $\text{Seg}(\mathcal{X}_2) = \{s_1^+, \ldots, s_n^+\}$. Without loss of generality, we can assume $k = 1$ and then $\mathcal{X}_2 =  \{X_1[1], X_{2}^+, \ldots, X_n^+\}$. By Lemma~\ref{Cone_Lemma}, $\mathcal{X}_2$ differs from $\mathcal{X}_1$ in at most three objects. This implies that, without loss of generality, $X_i^+ = X_i$ if $i \not \in \{1,2,3\}.$ Furthermore, the description of mutation in Lemma~\ref{Cone_Lemma} shows that the admissible curves in $\mathcal{SEG}(\mathcal{X}_2)\backslash\mathcal{SEG}(\mathcal{X}_1)$ are noncrossing. Thus it suffices to show any admissible curve from $\mathcal{SEG}(\mathcal{X}_2)\backslash\mathcal{SEG}(\mathcal{X}_1)$ and any admissible curve from $\mathcal{SEG}(\mathcal{X}_1)\backslash\mathcal{SEG}(\mathcal{X}_2)$ are noncrossing. Note that from our interpretation of mutation in terms of admissible curves (see Figure~\ref{Fig:typesofflip}), we see that there is no curve in $\mathcal{SEG}(\mathcal{X}_2)\backslash\mathcal{SEG}(\mathcal{X}_1)$ that crosses one from $\mathcal{SEG}(\mathcal{X}_1)\backslash\mathcal{SEG}(\mathcal{X}_2)$ in the sense that the two have a common endpoint $z(w,F)$ for some corner $(w,F)$ of $T$.

Next, we show that if $\gamma_\ell^+ \neq \gamma_\ell \in \mathcal{SEG}(\mathcal{X}_2)\backslash\mathcal{SEG}(\mathcal{X}_1)$, then $\gamma_\ell^+$ and any $\gamma_i^+ = \gamma_i \in \mathcal{SEG}(\mathcal{X}_1)\backslash\mathcal{SEG}(\mathcal{X}_2)$ are noncrossing. Write $s_\ell^+ = s_{w^{(\ell,+)}}$ and $s_i = s_{w^{(i)}}$ for some strings $w^{(\ell,+)}$ and $w^{(i)}$ in $\Lambda_T.$ Let $s_w = [a,b]$ be the unique maximal string along which $s_{w^{(\ell,+)}}$ and $s_{w^{(i)}}$ agree and orient $\gamma_\ell^+$ and $\gamma_i$ from $a$ to $b$. 

 Assume $s_{w^{(\ell,+)}}$ and $s_{w^{(i)}}$ share an endpoint and that $a$ is the shared endpoint. In this situation, one of $\gamma_\ell^+$ and $\gamma_i$ is red-admissible and the other is green-admissible. We assume $\gamma_\ell^+$ is green-admissible and $\gamma_i$ is red-admissible, and the following argument can be adapted to the case where $\gamma_\ell^+$ is red-admissible and $\gamma_i$ is green-admissible. Since $\mathcal{X}_2$ is a simple-minded collection, Definition~\ref{smcdefin} i) implies that $$\text{Hom}_{\Lambda_T}(M({w^{(\ell,+)}}),M({w^{(i)}})) = \text{Hom}_{\mathcal{D}^b(\Lambda_T)}(M({w^{(\ell,+)}}),M({w^{(i)}})[1][-1]) = 0$$ and so by Lemma~\ref{endpointoverlapchar}, there is a nonzero morphism $f: M({w^{(i)}}) \to M({w^{(\ell,+)}})$. Thus $w^{(i)} =w \rightarrow u^{(i)}$ and $w^{(\ell,+)} = w \leftarrow u^{(\ell)}$ for some strings $u^{(i)}$ and $u^{(\ell)}$ in $\Lambda_T$, one of which may be empty. This implies that $\gamma_\ell^+$ turns left at $b$ or $\gamma_i$ turns right at $b$. By Lemma~\ref{Lemma:crossingcurves} (c) (with $\gamma_\ell^+$ playing the role of $\gamma$), we have that $\gamma_\ell^+$ and $\gamma_i$ are noncrossing.
 
Now suppose that $s_{w^{(\ell,+)}}$ and $s_{w^{(i)}}$ do not share an endpoint. Assume that $\gamma_\ell^+$ is green-admissible and $\gamma_i$ is red-admissible. The following argument can be adapted to the case when $\gamma_\ell^+$ is red-admissible and $\gamma_i$ is green-admissible. Then since $\mathcal{X}_2$ is a simple-minded collection, Definition~\ref{smcdefin} ii) implies that $$\text{Ext}^1_{\Lambda_T}(M({w^{(\ell,+)}}),M({w^{(i)}})) = \text{Hom}_{\mathcal{D}^b(\Lambda_T)}(M({w^{(\ell,+)}}),M({w^{(i)}})[1]) = 0.$$ By Theorem~\ref{crossingnonsplitext} and the structure of $Q_T$, we have that one the following holds: 

\begin{itemize}
\item[a)] $w^{(i)} = u^{(i,1)} \leftarrow w \leftarrow u^{(i,2)}$ and $w^{(\ell,+)} = v^{(\ell,1)} \stackrel{\alpha}{\rightarrow} w \stackrel{\alpha}{\rightarrow} v^{(\ell,2)}$ where $u^{(i,1)}$ and $u^{(i,2)}$ are nonempty strings and $v^{(\ell,1)}$ and $v^{(\ell,2)}$ may be empty strings,
\item[b)] $w^{(i)} = u^{(i,1)} \leftarrow w$ and $w^{(\ell,+)} = v^{(\ell,1)} \stackrel{\alpha}{\rightarrow} w \stackrel{\beta}{\rightarrow} v^{(\ell,2)}$ where $u^{(i,1)}$ and $v^{(\ell,2)}$ are nonempty strings and $v^{(\ell,1)}$ may be an empty string,
\item[c)] $w^{(i)} = w \leftarrow u^{(i,2)}$ and $w^{(\ell,+)} = v^{(\ell,1)} \stackrel{\beta}{\rightarrow} w \stackrel{\alpha}{\rightarrow} v^{(\ell,2)}$ where $u^{(i,1)}$ and $v^{(\ell,1)}$ are nonempty strings and $v^{(\ell,2)}$ may be an empty string,
\item[$\text{a}^\prime$)] $w^{(i)} = u^{(i,1)} \rightarrow w \rightarrow u^{(i,2)}$ and $w^{(\ell,+)} = v^{(\ell,1)} \stackrel{\alpha}{\leftarrow} w \stackrel{\alpha}{\leftarrow} v^{(\ell,2)}$ where $u^{(i,1)}$ and $u^{(i,2)}$ are nonempty strings and $v^{(\ell,1)}$ and $v^{(\ell,2)}$ may be empty strings, 
\item[$\text{b}^\prime$)] $w^{(i)} = u^{(i,1)} \rightarrow w$ and $w^{(\ell,+)} = v^{(\ell,1)} \stackrel{\alpha}{\leftarrow} w \stackrel{\beta}{\leftarrow} v^{(\ell,2)}$ where $u^{(i,1)}$ and $v^{(\ell,2)}$ are nonempty strings and $v^{(\ell,1)}$ may be an empty string, or
\item[$\text{c}^\prime$)] $w^{(i)} = w \rightarrow u^{(i,2)}$ and $w^{(\ell,+)} = v^{(\ell,1)} \stackrel{\beta}{\leftarrow} w \stackrel{\alpha}{\leftarrow} v^{(\ell,2)}$ where $u^{(i,2)}$ and $v^{(\ell,1)}$ are nonempty strings and $v^{(\ell,2)}$ are may be an empty string.\end{itemize}
Here the orientation of the arrows labeled $\beta$ is determined by the fact that $\text{Ext}^1_{\Lambda_T}(M({w^{(\ell,+)}}),M({w^{(i)}})) = 0$, while the orientation of the arrows labeled $\alpha$ is determined by the structure of $Q_T$. Note that we cannot have $w^{(i)} = u^{(i,1)} \rightarrow w \leftarrow u^{(i,2)}$ for some nonempty strings $u^{(i,1)}$ and $u^{(i,2)}$, otherwise the structure of $Q_T$ implies that $$\text{Hom}_{\mathcal{D}^b(\Lambda_T)}(M(w^{(\ell,+)}), M(w^{(i)})[1][-1]) = \text{Hom}_{\Lambda_T}(M(w^{(\ell,+)}), M(w^{(i)})) \neq 0,$$ and this contradicts that $\mathcal{X}_2$ is a simple-minded collection. Using Lemma~\ref{Lemma:crossingcurves}, it is straightforward to verify that in each of these cases the admissible curves $\gamma_2^+$ and $\gamma_i$ are noncrossing.

Finally, assume $s_{w^{(2,+)}}$ and $s_{w^{(i)}}$ do not share an endpoint and $\gamma_2^+$ and $\gamma_i$ are of the same color. As $\mathcal{X}_2$ is a simple-minded collection, we know that $\text{Hom}_{\Lambda_T}(M(w^{(i)}), M(w^{(2,+)})) = 0.$ Thus $\text{Ext}^1_{\Lambda_T}(M(w^{(2,+)}), M(w^{(i)})) = 0$ by Theorem~\ref{crossingnonsplitext}. We obtain the same family of cases as in the previous paragraph and, as above, it is routine to verify from these that $\gamma_2^+$ and $\gamma_i$ are noncrossing.\end{proof}

\begin{proposition}\label{partitionassociatedtoX}
Let $\mathcal{X} \in 2\text{-smc}(\Lambda_T)$. There exists $\textbf{B}_{\mathcal{X}} = (B_1, \ldots, B_k) \in \text{NCP}(T)$ with Kreweras complement $\text{Kr}(\textbf{B}_{\mathcal{X}}) = (B^\prime_1, \ldots, B_\ell^\prime)$ such that 

\begin{itemize}
\item[i)] $\text{Seg}^{-1}(\mathcal{X}) = \bigsqcup_{i = 1}^k\text{Seg}(B_i)$ 
\item[ii)] $\text{Seg}^{0}(\mathcal{X}) = \bigsqcup_{i = 1}^\ell\text{Seg}(B_i^\prime).$
\end{itemize}
\end{proposition}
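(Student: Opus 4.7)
The plan is to reduce the statement to Proposition~\ref{graphofSEG(X)} together with the combinatorial correspondences from Section~\ref{Sec:NCP}. By Proposition~\ref{graphofSEG(X)}, the collection $\mathcal{SEG}(\mathcal{X})$ is a red-green tree in the sense of Section~\ref{subsec_redgreen_trees}, with set of red segments equal to $\text{Seg}^{-1}(\mathcal{X})$ and set of green segments equal to $\text{Seg}^{0}(\mathcal{X})$. Applying Theorem~\ref{thm_red_green_trees} then produces a facet $\mathcal{F}_\mathcal{X} \in \ora{FG}(T)$ whose set of lower covering labels $\{s \in \text{Seg}(T) : \exists\ \Fcal^{\pr}\stackrel{s}{\ra}\mathcal{F}_\mathcal{X}\}$ is exactly $\text{Seg}^{-1}(\mathcal{X})$ and whose set of upper covering labels $\{s \in \text{Seg}(T) : \exists\ \mathcal{F}_\mathcal{X}\stackrel{s}{\ra}\Fcal^{\pr}\}$ is exactly $\text{Seg}^{0}(\mathcal{X})$.

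Next I would set $\textbf{B}_\mathcal{X} := \rho(\mathcal{F}_\mathcal{X})$, where $\rho : \ora{FG}(T) \to \text{NCP}(T)$ is the bijection of Proposition~\ref{prop_rho_bij}. Directly from the construction of $\rho$ in Section~\ref{subsec_kreweras}, the set $\text{Seg}(\textbf{B}_\mathcal{X})$ coincides with the set of lower covering labels of $\mathcal{F}_\mathcal{X}$, and therefore equals $\text{Seg}^{-1}(\mathcal{X})$. Corollary~\ref{cor_kreweras_rgtree} then identifies $\text{Seg}(\text{Kr}(\textbf{B}_\mathcal{X}))$ with the set of upper covering labels of $\mathcal{F}_\mathcal{X}$, so that $\text{Seg}(\text{Kr}(\textbf{B}_\mathcal{X})) = \text{Seg}^{0}(\mathcal{X})$. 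Writing $\textbf{B}_\mathcal{X} = (B_1, \ldots, B_k)$ and $\text{Kr}(\textbf{B}_\mathcal{X}) = (B_1', \ldots, B_\ell')$, the required decompositions $\text{Seg}^{-1}(\mathcal{X}) = \bigsqcup_{i=1}^k \text{Seg}(B_i)$ and $\text{Seg}^{0}(\mathcal{X}) = \bigsqcup_{i=1}^\ell \text{Seg}(B_i')$ then follow; disjointness is automatic since each segment in $\text{Seg}(B)$ has both of its endpoints in $B$ and the blocks of a set partition are pairwise disjoint subsets of $V^o$.

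The main obstacle is already behind us in Proposition~\ref{graphofSEG(X)}, whose proof proceeds by induction on the lattice $2\text{-smc}(\Lambda_T)$ from the minimum element $\{M(i) : i \in (Q_T)_0\}$ and uses the explicit combinatorial description of mutation provided by Lemma~\ref{Cone_Lemma} (illustrated by the three flip types in Figure~\ref{Fig:typesofflip}) to verify that the noncrossing property is preserved under each mutation step. Given that input, the present proposition is a direct combinatorial consequence of the red-green tree versus noncrossing tree partition dictionary developed in Section~\ref{Sec:NCP} and requires no further representation-theoretic work.
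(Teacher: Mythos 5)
Your proposal is correct, and it reaches the conclusion by a shorter, more modular route than the paper's own proof, but the two arguments ultimately rest on the same ingredients. You invoke Proposition~\ref{graphofSEG(X)} to conclude that $\mathcal{SEG}(\mathcal{X})$ is a red-green tree, pass to a facet via the converse direction of Theorem~\ref{thm_red_green_trees}, and then read off $\textbf{B}_{\mathcal{X}}$ and its Kreweras complement from $\rho$ and Corollary~\ref{cor_kreweras_rgtree}. The paper instead works hands-on with the connected components of $\mathcal{SEG}^{-1}(\mathcal{X})$: it uses Lemma~\ref{endpointoverlapchar} and the noncrossing-tree property to argue that any two segments in a component meet only at a common endpoint of each, so that the components' vertex sets are segment-connected blocks with $\text{Seg}^{-1}_i(\mathcal{X}) = \text{Seg}(B_i)$, and only then invokes Proposition~\ref{graphofSEG(X)} and Corollary~\ref{cor_kreweras_rgtree} to finish. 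Your version buys brevity and a clean conceptual picture (the whole statement is just the red-green tree dictionary applied to $\mathcal{SEG}(\mathcal{X})$); the paper's more explicit version makes visible the combinatorial reasoning — in particular the argument that no segment in a component properly contains another — which in the proof of Theorem~\ref{thm_red_green_trees} is stated rather tersely in the line asserting that $\Tcal_r$ is the set of minimal segments of a noncrossing tree partition. If you want your proof to be fully self-contained rather than leaning on that assertion, it would be worth flagging that the "minimal segments" claim there is exactly what the paper's direct argument supplies.
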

\begin{proof}
i) Write $\mathcal{SEG}^{-1}(\mathcal{X}) = \bigsqcup_{i = 1}^k \mathcal{SEG}^{-1}_i(\mathcal{X})$ where each $\mathcal{SEG}^{-1}_i(\mathcal{X})$ is a connected component of $\mathcal{SEG}^{-1}(\mathcal{X})$. Also, let $\text{Seg}^{-1}_i(\mathcal{X})$ denote the set of segments defined by $\mathcal{SEG}^{-1}_i(\mathcal{X})$. 

We claim that any two segments in $\text{Seg}^{-1}_i(\mathcal{X})$ either have no common vertices or they agree only at an endpoint of each. Since $\text{Hom}_{\mathcal{D}^b(\Lambda_T)}(X_{s},X_t) = 0$ for any objects in $\mathcal{X}$ and since any $\mathcal{SEG}^{-1}_i(\mathcal{X})$ is connected, Lemma~\ref{endpointoverlapchar} implies that there are no segments in $\text{Seg}^{-1}_i(\mathcal{X})$ that share an endpoint and agree along a segment. 

Suppose that $s_1, s_2 \in \text{Seg}^{-1}_i(\mathcal{X})$ agree along a segment, but have no common endpoints. Let $\gamma_1$ and $\gamma_2$ be the edges of $\mathcal{SEG}^{-1}_i(\mathcal{X})$ whose segments are $s_1$ and $s_2$, respectively. Since $\mathcal{SEG}^{-1}_i(\mathcal{X})$ is a tree, let $(\gamma^{(1)}, \ldots, \gamma^{(r)})$ with $\gamma^{(j)} \in \mathcal{SEG}^{-1}_i(\mathcal{X})$ denote the unique sequence of edges connecting an endpoint of $s_1$ to an endpoint of $s_2$. Let $(s^{(1)}, \ldots, s^{(r)})$ with $s^{(j)} \in \text{Seg}^{-1}_i(\mathcal{X})$ denote the sequence of segments defined by $(\gamma^{(1)}, \ldots, \gamma^{(r)})$. We assume $s^{(1)}$ (resp. $s^{(r)}$) agrees with $s_1$ (resp. $s_2$) at an endpoint, and, by the previous paragraph, we can assume that $s^{(j)}$ and $s^{(j+1)}$ agree only at endpoints for each $j$. Now from the structure of $T$, we have that $s^{(1)}$ agrees with $s_1$ along a segment or $s^{(r)}$ agrees with $s_2$ along a segment. In either situation we reach a contradiction.

We now have that each $\text{Seg}^{-1}_i(\mathcal{X})$ is an inclusion-minimal set of segments. Since $\mathcal{SEG}^{-1}_i(\mathcal{X})$ is a connected component of $\mathcal{SEG}^{-1}(\mathcal{X})$, we observe that $\text{Seg}^{-1}_i(\mathcal{X})$ is segment-connected. Thus for each $i \in [k]$, we define $$B_i := \{ v \in T : \ \text{$v$ is an endpoint of some segment in $\text{Seg}^{-1}_i(\mathcal{X})$}\},$$ and we obtain that $\text{Seg}^{-1}_i(\mathcal{X}) = \text{Seg}(B_i).$ By Proposition~\ref{graphofSEG(X)}, this implies that $\textbf{B}_{\mathcal{X}} := (B_1, \ldots, B_k) \in \text{NCP}(T).$

The proof of ii) is similar so we omit it. We remark that the noncrossing tree partition corresponding to $\mathcal{SEG}^{0}(\mathcal{X}) = \bigsqcup_{i = 1}^\ell \mathcal{SEG}^{0}_i(\mathcal{X})$ is defined as $\textbf{B}^\prime := (B_1^\prime, \ldots, B_\ell^\prime)$ where $$B^\prime_i := \{ v \in T : \ \text{$v$ is an endpoint of some segment in $\text{Seg}^{0}_i(\mathcal{X})$}\}.$$

Lastly, we know that $\mathcal{SEG}(\mathcal{X})$ is a noncrossing tree by Proposition~\ref{graphofSEG(X)}. Furthermore, we have that the green segments in $\text{Seg}^{-1}(\mathcal{X}) = \bigsqcup_{i = 1}^k\text{Seg}(B_i)$ and the red segments in $\text{Seg}^{0}(\mathcal{X}) = \bigsqcup_{i = 1}^\ell\text{Seg}(B_i^\prime)$ define a red-green tree. Thus Corollary~\ref{cor_kreweras_rgtree} implies that $\textbf{B}^\prime = \text{Kr}(\textbf{B}_{\mathcal{X}})$. \end{proof}

\subsection{From noncrossing tree partitions to simple-minded collections} In this section, we present three lemmas whose combined result shows that the image of the map $\theta$, as defined in Theorem~\ref{Thm:ncpsmcbijection}, lies in $2\text{-smc}(\Lambda_T)$.

\begin{lemma}\label{Lemma:homvanishing}
Let $(\textbf{B}, \text{Kr}(\textbf{B})) \in \text{NCP}(T)^2$ and let $M(u)$ (resp. $M(v)$) be an indecomposable $\Lambda_T$-module whose corresponding segment appears in $\text{Seg}(B)$ for some block $B$ of $\textbf{B}$ (resp. of $\text{Kr}(\textbf{B})$). Then 

$\begin{array}{rcl}
(1) & \text{$\text{Hom}_{\mathcal{D}^b(\Lambda_T)}(M(u)[1], M(v)[k]) = 0$  for any $k \le 0$,}\\
(2) & \text{$\text{Hom}_{\mathcal{D}^b(\Lambda_T)}(M(v), M(u)[1][k]) = 0$  for any $k \le 0$.}
\end{array}$
\end{lemma}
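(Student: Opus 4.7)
The plan is to reduce both assertions (1) and (2) to a single genuinely nontrivial Ext-vanishing statement and then to exploit the red-green tree structure from Corollary~\ref{cor_kreweras_rgtree}. First I would translate the Hom groups in $\mathcal{D}^b(\Lambda_T)$ into Ext groups of $\Lambda_T$-modules via the standard identities $\text{Hom}_{\mathcal{D}^b(\Lambda_T)}(M(u)[1], M(v)[k]) = \text{Ext}^{k-1}_{\Lambda_T}(M(u),M(v))$ and $\text{Hom}_{\mathcal{D}^b(\Lambda_T)}(M(v), M(u)[1][k]) = \text{Ext}^{k+1}_{\Lambda_T}(M(v),M(u))$. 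For $k<0$ these are negative Ext groups, and hence vanish; for $k=0$ assertion (1) reduces to $\text{Ext}^{-1}=0$, another trivial vanishing. The only genuinely nontrivial case is $\text{Ext}^1_{\Lambda_T}(M(v),M(u)) = 0$ coming from (2) with $k=0$, and I focus the argument there.

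Next I would invoke Corollary~\ref{cor_kreweras_rgtree} to realize $(\textbf{B},\text{Kr}(\textbf{B}))$ as the red and green segments of a red-green tree $\mathcal{T}$. Thus $s_u$ is red, $s_v$ is green, and they are noncrossing as colored segments of $\mathcal{T}$. To prove $\text{Ext}^1_{\Lambda_T}(M(v),M(u)) = 0$, I would proceed by case analysis on the combinatorial position of $s_u$ and $s_v$, using the classification of nonsplit extensions from Section~\ref{Sec:stringalgtree}. If $s_u$ and $s_v$ have no common vertices, the vanishing is immediate from Proposition~\ref{separatesegext}. If they share an endpoint and agree along a segment, or share a vertex that is an endpoint of at most one of them, Proposition~\ref{Prop:commonendext} provides the vanishing.

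Two cases remain: $s_u$ and $s_v$ share a common endpoint $a$ of both with no further overlap, or their supports meet nontrivially while $s_u$ and $s_v$ have no common endpoints. In the first, Theorem~\ref{shareendptext} reduces the task to ruling out the existence of an arrow $\alpha$ making $u\stackrel{\alpha}{\leftarrow} v$ a string in $\Lambda_T$; such an arrow would force the first edge of $u$ at $a$ and the last edge of $v$ at $a$ to be immediately CCW-adjacent, so that the face $F_0$ between them is simultaneously the face right of the first edge of $u$ and the face left of the last edge of $v$. Then the red and green admissible curves of $s_u$ and $s_v$ would share the point $z(a,F_0)$, which by the paper's convention qualifies as a crossing and contradicts the red-green tree property. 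In the second case, Theorem~\ref{crossingnonsplitext} forces the configuration $u = u^{(1)}\leftarrow w\rightarrow u^{(2)}$ and $v = v^{(1)}\rightarrow w\leftarrow v^{(2)}$ along the maximal common string $w$. Reading off arrow directions in $u$ and $v$ at the two endpoints $x_0,x_r$ of $w$ via the construction of $Q_T$ translates into turning data for $s_u$ and $s_v$: $s_u$ turns left at $x_0$ and right at $x_r$, while $s_v$ turns right at $x_0$ and left at $x_r$. A local inspection of the red admissible curve of $s_u$ (on the right) and the green admissible curve of $s_v$ (on the left) near $x_0$ and $x_r$ shows that the four endpoints through which the two curves enter and exit the $\epsilon$-neighborhoods of $x_0$ and $x_r$ interleave on each boundary circle. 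A standard interleaving-chord argument then forces a crossing, again contradicting that $s_u$ and $s_v$ lie in a red-green tree.

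The hard part will be this last case. The delicate point is that a naive attempt to use Lemma~\ref{lem_crossing_condition_2} yields only $K_{s_v}\cap C_{s_u}=\emptyset$, whereas the configuration produced by Theorem~\ref{crossingnonsplitext} places $w$ in the ``opposite'' intersection $K_{s_u}\cap C_{s_v}$. I will therefore have to argue geometrically on the $\epsilon$-circles around $x_0$ and $x_r$ rather than by a direct application of the $C$/$K$ criterion, carefully tracking which face (the corner face versus the opposite side) each admissible curve threads through at the two endpoints of $w$ and showing that the resulting entry/exit data forces the red and green curves to interleave and cross.
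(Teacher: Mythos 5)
You have missed a nontrivial case. In (2), the shift $k$ ranges over all $k\le 0$, and the translation $\text{Hom}_{\mathcal{D}^b(\Lambda_T)}(M(v), M(u)[1][k]) = \text{Ext}^{k+1}_{\Lambda_T}(M(v),M(u))$ gives negative Ext only for $k\le -2$. At $k=-1$ you obtain $\text{Ext}^0_{\Lambda_T}(M(v),M(u)) = \text{Hom}_{\Lambda_T}(M(v),M(u))$, and at $k=0$ you obtain $\text{Ext}^1_{\Lambda_T}(M(v),M(u))$. Both are nontrivial. You declare only the $\text{Ext}^1$ statement to be ``genuinely nontrivial'' and silently drop the required vanishing of $\text{Hom}_{\Lambda_T}(M(v),M(u))$. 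The paper devotes roughly half of its proof of the lemma to exactly this Hom-vanishing, splitting into the case where $s_u$ and $s_v$ have no common endpoints (where it builds an auxiliary noncrossing tree partition with blocks $\{x_1,x_2\}$, $\{y_1,y_2\}$ and singletons, and invokes Lemma~\ref{HomLambdaFree}) and the case of a shared endpoint (where it invokes Lemma~\ref{Lemma:crossingcurves}(3) to pin down the arrow directions at the far endpoint of the overlap). This is not a bookkeeping detail: it requires both of those lemmas and is where the asymmetry between red and green curves is actually used. Your proposal as written is therefore incomplete.

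On the part you do treat, your handling of the first three subcases ($\text{supp}$ disjoint, overlap at an endpoint of at most one, shared endpoint with no overlap) matches the paper and is fine, including the nice observation that $u\stackrel{\alpha}{\leftarrow}v$ being a string would force the red and green curves to meet at the point $z(a,F_0)$. For the last subcase (overlap along $w$, no common endpoints) the paper simply invokes Lemma~\ref{Lemma:crossingcurves} after splitting into four cases according to which of $u^{(1)},u^{(2)},v^{(1)},v^{(2)}$ are nonempty. Your proposed interleaving argument on the $\epsilon$-circles around $x_0,x_r$ is a reasonable alternative, but it implicitly assumes both curves pass through both $\epsilon$-neighborhoods with two boundary points each; in cases iii) and iv) of the paper one of the two curves terminates at an endpoint of $w$, contributing only a single point on that circle, so your ``four endpoints interleave'' picture must be adapted. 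You correctly flag this subcase as the hard part, but the cleaner route is exactly what the paper does: deduce turning data from the noncrossing condition via Lemma~\ref{Lemma:crossingcurves}(1) or (2) depending on the case, and then observe that the resulting arrow orientations at the ends of $w$ are incompatible with the configuration $u=u^{(1)}\leftarrow w\rightarrow u^{(2)}$, $v=v^{(1)}\rightarrow w\leftarrow v^{(2)}$ required by Theorem~\ref{crossingnonsplitext}.
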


\begin{proof}For each part, we assume that $\textbf{B}$ is not the top or bottom element of $\text{NCP}(T)$, otherwise the statements hold vacuously. In each part, whenever we assume that $s_u = [y_1, y_2]$ and $s_v = [x_1, x_2]$ agree along a segment, we let $s_w = [a,b]$ denote the unique maximal segment along which they agree. Furthermore, we let $\gamma_u$ and $\gamma_v$ be admissible curves for $s_u$ and $s_v$, respectively, that witness the fact that $s_u \in \text{Seg}(B)$ for some block $B$ of $\textbf{B}$ and $s_v \in \text{Seg}(B^\prime)$ for some block $B^\prime$ of $\text{Kr}(\textbf{B})$, and orient this curves from $a$ to $b$.

(1) We have that $\text{Hom}_{\mathcal{D}^b(\Lambda_T)}(M(u)[1], M(v)[k]) = \text{Ext}_{\Lambda_T}^{k-1}(M(u), M(v)) = 0$, since $k - 1 \le -1.$

(2) Since $\text{Hom}_{\mathcal{D}^b(\Lambda_T)}(M(v), M(u)[1][k]) = \text{Ext}_{\Lambda_T}^{k+1}(M(v), M(u)) = 0$ for $k \le -2$, it is enough to show that $\text{Hom}_{\Lambda_T}(M(v), M(u)) = 0$ and $\text{Ext}^1_{\Lambda_T}(M(v), M(u)) = 0.$ 

We first show that $\text{Hom}_{\Lambda_T}(M(v), M(u)) = 0.$ Suppose that $s_v$ and $s_u$ have no common endpoints. We claim that $\nu :=\{\{x_1,x_2\}, \{y_1,y_2\}, \{i\}: \ i \in V^o\backslash\{x_1, x_2, y_1, y_2\} \}$ is a noncrossing tree partition. Since $\gamma_v$ and $\gamma_u$ do not cross and since $s_v$ and $s_u$ have no common endpoints, we can replace $\gamma_v$ with a red-admissible curve $\gamma_v^\prime$ representing $s_v$ that does not cross $\gamma_u$. Thus $\nu \in \text{NCP}(T)$.  Now by Lemma~\ref{HomLambdaFree}, we have that $\text{Hom}_{\Lambda_T}(M(v), M(u)) = 0.$

Now suppose the segments $s_u$ and $s_v$ share an endpoint. Since $s_u \in \text{Seg}(B)$ for some block $B$ of $\textbf{B}$ and $s_v \in \text{Seg}(B^\prime)$ for some block $B^\prime$ of $\text{Kr}(\textbf{B})$, they are distinct and thus share exactly one endpoint. We can assume that $s_u$ and $s_v$ agree along some segment, otherwise we are done. Since $s_u$ and $s_v$ agree along $s_w$, we must have that $v = v^\prime \leftrightarrow w$ and $u = u^\prime \leftrightarrow w$ for some strings $u^{\prime}$ and $v^{\prime}$ in $\Lambda_T$, at least one of which is nonempty. Assume $a$ is the common endpoint of $s_u$ and $s_v$. By Lemma~\ref{Lemma:crossingcurves} (3), with $s_w = [a,b]$ playing the role of $t$ and $\gamma_v$ playing the role of $\gamma$, we have that $\gamma_v$ either turns left at $b$ or $\gamma_u$ turns right at $b$. Thus either $v = v^\prime \rightarrow w$ and $u = w$ or $v = w$ or $u \leftarrow w.$ This implies that $\text{Hom}_{\Lambda_T}(M(v), M(u)) = 0.$

Lastly, we show that $\text{Ext}^1_{\Lambda_T}(M(v), M(u)) = 0$. By Proposition~\ref{separatesegext}, we can restrict to the situation where $s_u$ and $s_v$ have at least one common vertex of $T$.  By Proposition~\ref{Prop:commonendext}, we can assume that if  $s_u$ and $s_v$ have only one vertex in common, then that vertex is an endpoint of each. 

Assume $s_u$ and $s_v$ agree only at an endpoint. By Lemma~\ref{shareendptext}, $\text{Ext}^1_{\Lambda_T}(M(v), M(u)) \neq 0$ if and only if there exists an arrow $\alpha \in (Q_T)_1$ such that the string $(u \longleftrightarrow v) = (u \stackrel{\alpha}{\longleftarrow} v)$. Since $s_u \in B \in \textbf{B}$ and $s_v \in B^\prime \in \text{Kr}(\textbf{B})$, any admissible curve $\gamma_u$ (resp. $\gamma_v$) leaves its endpoints from their right (resp. left). Thus the existence of such an arrow $\alpha \in (Q_T)_1$ implies that $\gamma_u$ and $\gamma_v$ leave their common endpoint from a common corner of $T$, and such a configuration is not allowed.

Now assume $s_u$ and $s_v$ agree along a segment, but they have no common endpoints. Now we can write $u = u^{(1)}\leftrightarrow w \leftrightarrow u^{(2)}$ and $v = v^{(1)} \leftrightarrow w \leftrightarrow v^{(2)}$ for some strings $u^{(1)}, u^{(2)}, v^{(1)}$, and $v^{(2)}$ in $\Lambda_T$ where 

\begin{itemize}
\item[i)] $u^{(1)}$ and $u^{(2)}$ are nonempty or
\item[ii)] $v^{(1)}$ and $v^{(2)}$ are nonempty or
\item[iii)] $u^{(1)}$ and $v^{(2)}$ are nonempty and $u^{(2)}$ and $v^{(1)}$ are empty or
\item[iv)] $v^{(1)}$ and $u^{(2)}$ are nonempty and $u^{(1)}$ and $v^{(2)}$ are empty.
\end{itemize}

Suppose we are in case i). Since $s_u$ and $s_v$ are noncrossing and since $u^{(1)}$ and $u^{(2)}$ are nonempty, we have from Lemma~\ref{Lemma:crossingcurves} (1) (with $s_w$ playing the role of $t$) that $\gamma_u$ either turns left at $a$ and $b$ or turns right at $a$ and $b$. Thus $u = u^{(1)} \leftarrow w \leftarrow u^{(2)}$ or $u = u^{(1)} \rightarrow w \rightarrow u^{(2)}$. By Theorem~\ref{crossingnonsplitext}, we have that $\text{Ext}^1_{\Lambda_T}(M(v), M(u)) = 0$. In case ii), the analogous arguments shows that $v = v^{(1)} \leftarrow w \leftarrow v^{(2)}$ or $v = v^{(1)} \rightarrow w \rightarrow v^{(2)}$. Thus Theorem~\ref{crossingnonsplitext} implies that $\text{Ext}^1_{\Lambda_T}(M(v), M(u)) = 0$

Suppose we are in case iii). We have from Lemma~\ref{Lemma:crossingcurves} (2) (with $s_w$ playing the role of $t$, $\gamma_v$ playing the role of $\gamma$, and $\gamma_u$ playing the role of $\gamma^\prime$) that either $\gamma_v$ turns left at $b$ and $\gamma_u$ turns right at $a$ or $\gamma_v$ turns right at $b$ and $\gamma_u$ turns left at $a$. This implies that either $u = u^{(1)} \leftarrow w$ and $v = w \rightarrow v^{(2)}$ or $u = u^{(1)} \rightarrow w$ and $v = w \leftarrow v^{(2)}$. By Theorem~\ref{crossingnonsplitext}, we have that $\text{Ext}^1_{\Lambda_T}(M(v), M(u)) = 0$. The analogous argument can be used in case iv).\end{proof}

\begin{lemma}\label{generatesDb}
Let $(\textbf{B}, \text{Kr}(\textbf{B})) \in \text{NCP}(T)^2$. Then the objects $$\{M(u)[1]: s_u \in \text{Seg}(B) \text{ where } B \in \textbf{B}\} \sqcup \{M(v): s_v \in \text{Seg}(B^\prime) \text{ where } B^\prime \in \text{Kr}(\textbf{B})\} \subset \mathcal{D}^b(\Lambda_T)$$ form a thick subcategory of $\mathcal{D}^b(\Lambda_T)$.
\end{lemma}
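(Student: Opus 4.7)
Plan. The goal is to verify the generation condition (iii) of Definition~\ref{smcdefin} for $\mathcal{X} := \theta(\textbf{B},\text{Kr}(\textbf{B}))$, namely that the thick subcategory $\mathcal{T} := \text{thick}\langle \mathcal{X}\rangle$ equals $\mathcal{D}^b(\Lambda_T)$. Since $\mathcal{D}^b(\Lambda_T)$ is generated as a thick subcategory by the simple modules, it suffices to show that $M(i) \in \mathcal{T}$ for every $i \in (Q_T)_0$. I would begin with a count: by Corollary~\ref{cor_kreweras_rgtree}, $\Seg(\textbf{B}) \sqcup \Seg(\text{Kr}(\textbf{B}))$ is the edge set of a red-green tree on $V^o$, so it has exactly $|V^o|-1$ elements. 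This coincides with $n := |(Q_T)_0|$, because the subgraph of $T$ induced on $V^o$ is itself a tree, whose edges are precisely the interior edges of $T$ indexing the vertices of $Q_T$.

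Next, I would show that the classes $\{[X] : X \in \mathcal{X}\}$ form a $\mathbb{Z}$-basis of $K_0(\mathcal{D}^b(\Lambda_T)) \cong \mathbb{Z}^n$. For each $X \in \mathcal{X}$ one has $[X] = \pm \underline{\text{dim}}\,M(w)$ for the underlying indecomposable $M(w)$, and by Corollary~\ref{stringsegmentbij}, $\underline{\text{dim}}\,M(w)$ is the $0/1$-indicator on the interior edges of $T$ traversed by $s_w$. The basis claim thus reduces to the combinatorial assertion that the $n$ indicator vectors of the segments forming a red-green tree constitute a basis of $\mathbb{Z}^n$, where the coordinates are indexed by the interior edges of $T$. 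I would prove this by induction on $|V^o|$, repeatedly peeling off a leaf $v$ of the red-green tree whose incident segment $s$ possesses an interior edge of $T$ not shared with any other segment of the red-green tree; the existence of such a leaf is guaranteed by the noncrossing property of the segments (for instance, the leaf at the far end of a maximum-length segment of the red-green tree works, as any other segment continuing through $v$ along the same edge would force a crossing).

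With (i) and (ii) coming from Lemma~\ref{Lemma:homvanishing}, and the $K_0$-basis property just established, I would invoke the following standard principle from the theory of simple-minded collections (see, e.g., \cite{koenig2014silting}): for a finite-dimensional algebra $\Lambda$ with $n$ simples, any collection of $n$ objects of $\mathcal{D}^b(\Lambda)$ satisfying conditions (i) and (ii) of Definition~\ref{smcdefin} whose classes span $K_0(\Lambda)$ automatically satisfies (iii). The underlying reason is that (i)--(ii) imply that the extension closure of $\mathcal{X}$ inside $\mathcal{T}$ is the heart of a bounded $t$-structure on $\mathcal{T}$ with simples $X_1,\ldots,X_n$, so the embedding $\mathcal{T} \hookrightarrow \mathcal{D}^b(\Lambda_T)$ induces an injection on $K_0$ that is forced to be an isomorphism, yielding $\mathcal{T} = \mathcal{D}^b(\Lambda_T)$. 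The main obstacle is the combinatorial basis claim in the middle step: one must exploit the noncrossing structure of a red-green tree to locate a leaf segment whose exit edge is unshared, since naive leaf-and-contract choices can fail when a single interior edge of $T$ is traversed by several distinct segments of the red-green tree.
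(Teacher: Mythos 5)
Your approach is genuinely different from the paper's: the paper shows generation directly by proving, via an induction on ``admissible sequences of segments'' read off the red-green tree, that every string module $M(w)$ lies in the thick subcategory $\mathscr{T}$. You instead try to reduce generation to a $K_0$-spanning statement. This would be an appealing shortcut if it worked, but the key step does not hold.

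The critical gap is the inference from ``the inclusion $\mathcal{T}\hookrightarrow\mathcal{D}^b(\Lambda_T)$ induces an isomorphism on $K_0$'' to ``$\mathcal{T}=\mathcal{D}^b(\Lambda_T)$.'' This is not a valid deduction: $K_0$ does not detect thick subcategories, and a proper thick subcategory can have $K_0$ of full rank mapping isomorphically. The ``standard principle'' you invoke --- that a collection of $n$ objects satisfying (i) and (ii) of Definition~\ref{smcdefin} whose classes span $K_0$ automatically satisfies (iii) --- is not in \cite{koenig2014silting}, and I am not aware of it being a theorem anywhere; König--Yang's bijection takes condition (iii) as part of the hypothesis. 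The danger is especially real here because $\Lambda_T$ is typically of infinite global dimension (e.g.\ the $3$-cycle with all length-$2$ relations from Example~\ref{stringsA3}), so $\mathrm{Perf}(\Lambda_T)\subsetneq\mathcal{D}^b(\Lambda_T)$ already gives a nontrivial proper thick subcategory; ``number of objects equals $n$ and classes span $K_0$'' is simply too coarse a criterion to force equality. What is true is that (i)--(ii) make the extension closure of $\mathcal{X}$ the heart of a bounded $t$-structure \emph{on $\mathcal{T}$}; concluding anything about $\mathcal{D}^b(\Lambda_T)$ requires knowing $\mathcal{T}=\mathcal{D}^b(\Lambda_T)$ first, which is circular.

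A secondary issue: your justification of the combinatorial basis claim is also not airtight. You claim the leaf $u$ at the far end of a maximum-length segment $s=[u,v]$ of the red-green tree has an exit edge $e$ unshared with other tree segments, ``as any other segment continuing through $u$ along $e$ would force a crossing.'' But Lemma~\ref{Lemma:crossingcurves}(1) explicitly allows a segment $s'$ with $u$ in its interior to share a proper subsegment $[u,w]$ with $s$ without crossing, provided $s'$ turns the same way at both $u$ and $w$; maximality of $|s|$ rules out $s\subseteq s'$ but does not rule out this configuration. The basis claim is quite plausibly true, but the crossing argument as stated does not close it. The paper sidesteps the need for any $K_0$ statement entirely by working directly with triangles built from the extensions classified in Section~\ref{Sec:stringalgtree}, which is the approach I would recommend you adopt.
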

\begin{proof}
Let $\mathscr{T}$ denote the smallest triangulated category that contains the objects in the statement of the lemma and that is closed under taking summands of its objects. Note that $M(u) \in \mathscr{T}$ for each $u \in \text{Seg}(B)$ where $B \in \textbf{B}$ because $\mathscr{T}$ is closed under taking shifts of objects. Since $\{M(i): i \in (Q_T)_0\}$ is a simple-minded collection, it is enough to show that every indecomposable $\Lambda_T$-module belongs to $\mathscr{T}$. To do so, we use what we call admissible sequences of segments.

We say $(s_{u^{(1)}}, \ldots, s_{u^{(k)}})$ is an \textbf{admissible sequence} of segments for $s = [a,b]$ if the following hold:

\begin{itemize}
\item[i)] $M(u^{(i)}) \in \mathscr{T}$ for each $i \in [k]$,
\item[ii)] $s_{i-1}$ and $s_i$ are segments that share an endpoint,
\item[iii)] vertex $a$ (resp. $b$) is an endpoint of $s_1$ (resp. $s_k$).
\end{itemize}

\noindent Observe that every segment $s = [a,b]$ has an admissible sequence of segments $(s_{u^{(1)}}, \ldots, s_{u^{(k)}})$ of length at most $n$ given by the sequence of segments connecting $a$ and $b$ in the red-green tree defined by $(\textbf{B}, \text{Kr}(\textbf{B}))$. We also remark that since $s = [a,b]$ is a segment, we know that the vertices $a^\prime$ and $b^\prime$ of $T$ that are the endpoints shared by $s_{u^{(i)}}$ and $s_{u^{(i+1)}}$ and by $s_{u^{(j)}}$ and $s_{u^{(j+1)}}$, respectively, for $i > 1$, $j < k$, and $i < j$ define a segment $[a^\prime, b^\prime] \in \text{Seg}(T)$. We prove that if every $s_u \in \text{Seg}(T)$ with an admissible sequence $(s_{u^{(1)}}, \ldots, s_{u^{(k)}})$ has the property that $M(u) \in \mathscr{T}$, then every $s_v \in \text{Seg}(T)$ with an admissible sequence $(s_{v^{(1)}}, \ldots, s_{v^{(k+1)}})$ has $M(v) \in \mathscr{T}$. If $s_u \in \text{Seg}(T)$ has an admissible sequence $(s_{u^{(1)}})$, then $s_u = s_{u^{(1)}}$ and so $M(u) \in \mathscr{T}$.

Now assume that every $s_u \in \text{Seg}(T)$ with an admissible sequence $(s_{u^{(1)}}, \ldots,  s_{u^{(k)}})$ has the property that $M(u) \in \mathscr{T}.$ Let $s_v = [a,b] \in \text{Seg}(T)$ be any segment and let $(s_{v^{(1)}}, \ldots,  s_{v^{(k+1)}})$ be an admissible sequence for $s_v$. Observe that in $(s_{v^{(1)}}, \ldots,  s_{v^{(k+1)}})$ there exists $i \in [k]$ such that, without loss of generality, $s_{v^{(i)}}$ and $s_{v^{(i+1)}}$ are distinct segments that satisfy one of the following 

\begin{itemize}
\item $\text{supp}(M(v^{(i)}))\cap \text{supp}(M(v^{(i+1)})) = \emptyset$ or 
\item $\text{supp}(M(v^{(i)}))\cap \text{supp}(M(v^{(i+1)})) \neq \emptyset.$
\end{itemize}

Suppose that $\text{supp}(M(v^{(i)}))\cap \text{supp}(M(v^{(i+1)})) = \emptyset$. Note that $s_{v^{(i)}}$ and $s_{v^{(i+1)}}$ agree only at an endpoint. By the properties of admissible sequences, this implies that $s_{v^{(i)}} \circ s_{v^{(i+1)}} \in \text{Seg}(T).$ Now we have that up to reversing the roles of $v^{(i)}$ and $v^{(i+1)}$, there is a nonsplit extension $0 \to M(v^{(i)}) \to M(v^{(i)}\leftarrow v^{(i+1)}) \to M(v^{(i+1)}) \to 0$. This means there is a triangle in $\mathcal{D}^b(\Lambda_T)$ given by $M(v^{(i)}) \to M(v^{(i)}\leftarrow v^{(i+1)}) \to M(v^{(i+1)}) \to M(v^{(i)})[1]$ so $M(v^{(i)}\leftarrow v^{(i+1)}) \in \mathscr{T}$. We obtain an admissible sequence $(s_{v^{(1)}}, \ldots, s_{v^{(i-1)}}, s_{(v^{(i)}\leftarrow v^{(i+1)})}, s_{v^{(i+2)}}, \ldots, s_{v^{(k+1)}})$ for $s_v$ of length $k$. By induction, we obtain that $M(v) \in \mathscr{T}$.

Now suppose that $\text{supp}(M(v^{(i)}))\cap \text{supp}(M(v^{(i+1)})) \neq \emptyset.$ Since $s_{v^{(i)}}$ and $s_{v^{(i+1)}}$ share an endpoint, it is easy to see that there is nonzero morphism $f: M(v^{(i)}) \to M(v^{(i+1)})$ or a nonzero morphism $f: M(v^{(i+1)}) \to M(v^{(i)})$. Without loss of generality, we assume the former. We obtain a triangle in $\mathcal{D}^b(\Lambda_T)$ given by $M(v^{(i)}) \stackrel{f}{\to} M(v^{(i+1)}) \to \text{Cone}(f) \to M(v^{(i)})[1]$ whose long exact sequence reduces to the following exact sequence $$0 \longrightarrow \underbracket{H^{-1}(\text{Cone}(f))}_{\cong \text{ker}(f)} \longrightarrow H^0(M(v^{(i)})) \stackrel{f}{\longrightarrow} H^0(M(v^{(i+1)})) \longrightarrow \underbracket{H^0(\text{Cone}(f))}_{\cong \text{coker}(f)} \longrightarrow 0.$$ We now have that $\text{Cone}(f) = M(w^{(1)})[1] \oplus M(w^{(2)})$ where $\text{supp}(M(w^{(1)})) = \text{supp}(M(v^{(i)}))\backslash\text{supp}(M(v^{(i+1)}))$ and $\text{supp}(M(w^{(2)})) = \text{supp}(M(v^{(i+1)}))\backslash\text{supp}(M(v^{(i)}))$. If $\text{supp}(M(w^{(1)})) = \emptyset$ (resp. $\text{supp}(M(w^{(2)})) = \emptyset$), one checks that $(s_{v^{(1)}}, \ldots, s_{v^{(i-1)}}, s_{w^{(2)}}, s_{v^{(i+2)}}, \ldots, s_{v^{(k+1)}})$ (resp. $(s_{v^{(1)}}, \ldots, s_{v^{(i-1)}}, s_{w^{(1)}}, s_{v^{(i+2)}}, \ldots, s_{v^{(k+1)}})$) is an admissible sequence for $s_v$ of length $k$. By induction, we obtain that $M(v) \in \mathscr{T}$.

Finally, suppose that both $\text{supp}(M(w^{(1)})) \neq \emptyset$ and  $\text{supp}(M(w^{(2)})) \neq \emptyset$. Since $\mathscr{T}$ is closed under taking summands of its objects, we have that $M(w^{(1)}), M(w^{(2)}) \in \mathscr{T}.$ From the properties of admissible sequences, we have that the vertices $a^\prime$ and $b^\prime$ of $T$ that are the endpoints shared by $s_{v^{(i-1)}}$ and $s_{v^{(i)}}$ and by $s_{v^{(i+1)}}$ and $s_{u^{(i+2)}}$, respectively, define a segment $[a^\prime, b^\prime] \in \text{Seg}(T)$. This implies that $s_{w^{(1)}}\circ s_{w^{(2)}} \in \text{Seg}(T)$. Thus, up to reversing the roles of $w^{(1)}$ and $w^{(2)}$, there is a nonsplit extension $0 \to M(w^{(1)}) \to M(w^{(1)} \leftarrow w^{(2)}) \to M(w^{(2)}) \to 0$. This extension defines a triangle in $\mathcal{D}^b(\Lambda_T)$ given by $M(w^{(1)}) \to M(w^{(1)} \leftarrow w^{(2)}) \to M(w^{(2)}) \to M(w^{(1)})[1]$. Thus $M(w^{(1)} \leftarrow w^{(2)}) \in \mathscr{T}$. We obtain an admissible sequence $(s_{v^{(1)}}, \ldots, s_{v^{(i-1)}}, s_{w^{(1)} \leftarrow w^{(2)}}, s_{v^{(i+2)}}, \ldots, s_{v^{(k+1)}})$ for $s_v$ of length $k$. By induction, we obtain that $M(v) \in \mathscr{T}$.\end{proof}

\section{Classification of \textbf{c}-matrices}\label{App:cmat}
We now apply our work to obtain a combinatorial classification of the \textbf{c-matrices} of quivers $Q_T$ (see Section~\ref{Sec:oreg}) where the internal vertices of $T$ are all of degree 3. By \cite{nakanishi2012tropical}, the vertices of the oriented exchange graph of $Q_T$ index the clusters in the cluster algebra \cite{fomin2002cluster} defined by $Q_T$. The \textbf{c}-{matrices} \cite{fomin2007cluster} of a quiver $Q$  are related to noncrossing partitions of finite Coxeter groups \cite{reading2007clusters} and many important objects in representation theory \cite{by14}. In \cite{by14}, the \textbf{c}-matrices of quivers were interpreted representation theoretically as certain {simple-minded collections} in the bounded derived category of a finite dimensional algebra $\Lambda$. Our result is that \textbf{c}-matrices of $Q_T$ are classified by noncrossing tree partitions of $T$ paired with their Kreweras complement.

\begin{figure}[t]
$$\begin{array}{cc}
\raisebox{.1in}{\includegraphics[scale=1.1]{A2_cmat.pdf}} & \includegraphics[scale=1.0]{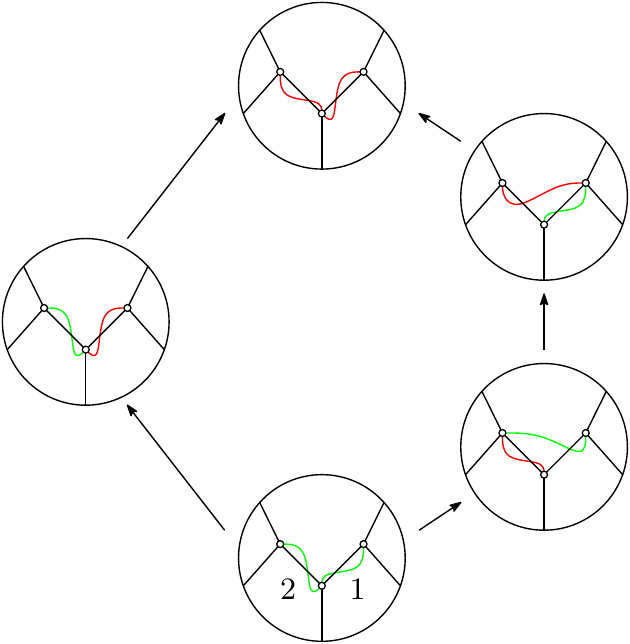}
\end{array}$$ %\raisebox{.75in}{$\cong$} %\raisebox{.75in}{$\overrightarrow{EG}(\widehat{Q})$} & \raisebox{.75in}{$=$}
\vspace{-.3in}
\caption{The $\textbf{c}$-matrices of $Q=2\leftarrow 1$ and the corresponding noncrossing tree partitions with their Kreweras complements.}
\label{oregA2}
\end{figure}

\begin{theorem}\label{Thm:cmatclassif}
Assume that $T$ is a tree whose internal vertices are of degree 3. 
\begin{enumerate}

\item The map $\varphi: \text{Seg}(T) \to \textbf{c}\text{-vec(}Q\text{)}^+$ defined by $s \mapsto (a_1,\ldots, a_n) \in \mathbb{Z}_{\ge 0}^{n}$, where $a_i := 1$ if the edge corresponding to vertex $i$ of $Q_T$ appears in $s$ and $a_i := 0$ otherwise, is a bijection.

%\vspace{-.1in}
\item The map $\{(\textbf{B}, \text{Kr}(\textbf{B}))\}_{\textbf{B} \in \text{NCP}(T)} \to \textbf{c}\text{-mat(}Q\text{)}$ defined by sending $(\textbf{B}, \text{Kr}(\textbf{B}))$ to the \textbf{c}-matrix $C$ whose negative \textbf{c}-vectors are $\{-\varphi(s): \ s \in \text{Seg}({B}) \text{ where } {B} \in \textbf{B}\}$ and whose positive \textbf{c}-vectors are $\{\varphi(s): \ s \in \text{Seg}(B^\prime) \text{ where } B^\prime \in \text{Kr}(\textbf{B})\}$ is a bijection (see Figure~\ref{oregA2}).%$$\begin{array}{rcl}  \end{array}$$
\end{enumerate}
\end{theorem}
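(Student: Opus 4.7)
The plan is to derive both parts from Theorem~\ref{Thm:ncpsmcbijection}, via the representation-theoretic interpretation of $\mathbf{c}$-matrices. The crucial input is the result of \cite{by14} (recalled in the introduction): when $\Lambda$ is the Jacobian algebra attached to a quiver $Q$ in the mutation class of a type $A$ Dynkin quiver, there is a natural bijection between $\mathbf{c}$-mat$(Q)$ and $2\text{-smc}(\Lambda)$, under which the positive $\mathbf{c}$-vectors in a $\mathbf{c}$-matrix $C$ are exactly the dimension vectors $\underline{\dim}\,M(v)$ of the indecomposables $M(v)$ appearing in degree $0$ of the corresponding simple-minded collection, and the negative $\mathbf{c}$-vectors are the $-\underline{\dim}\,M(u)$ for the $M(u)[1]$ appearing in degree $-1$. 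When every interior vertex of $T$ has degree $3$, Remark~\ref{Remark:triang} (equivalently Proposition~\ref{orexandorflip}) tells us that $\Lambda_T$ is precisely the Jacobian algebra of $Q_T$, which is of this type, so the bijection from \cite{by14} applies.

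For part (1), first note that by Corollary~\ref{stringsegmentbij} the indecomposable $\Lambda_T$-modules $M(w)$ are in bijection with segments $s_w\in\Seg(T)$, and each $M(w)$ has dimension vector with a $1$ in coordinate $i$ exactly when the edge of $T$ corresponding to vertex $i$ of $Q_T$ lies in $s_w$. Thus $\underline{\dim}\,M(w)=\varphi(s_w)$ in the notation of the theorem, and the map $\varphi\colon\Seg(T)\to\mathbb{Z}_{\ge 0}^n$ is injective. To see that its image is exactly $\mathbf{c}\text{-vec}(Q_T)^+$, one uses that a vector $v\in\mathbb{Z}_{\ge 0}^n$ is a positive $\mathbf{c}$-vector of $Q_T$ iff it is the dimension vector of some indecomposable summand of some 2-term simple-minded collection; by Lemma~\ref{stalks0and-1} every such summand in degree $0$ is an indecomposable $\Lambda_T$-module, and every indecomposable $M(w)$ actually does occur in some 2-term simple-minded collection (for instance one produced from the noncrossing tree partition whose only non-singleton block is the block segment-generated by $s_w$, via Theorem~\ref{Thm:ncpsmcbijection}). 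This gives the bijection $\varphi$.

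For part (2), the composite bijection I would use is
\[
\{(\mathbf{B},\Kr(\mathbf{B}))\}_{\mathbf{B}\in\NCP(T)}\ \xrightarrow{\ \theta\ }\ 2\text{-smc}(\Lambda_T)\ \xrightarrow{\ \cong\ }\ \mathbf{c}\text{-mat}(Q_T),
\]
where $\theta$ is the bijection of Theorem~\ref{Thm:ncpsmcbijection} and the second arrow is the \cite{by14} bijection that sends a 2-term simple-minded collection $\mathcal{X}$ to the $\mathbf{c}$-matrix whose rows are $\underline{\dim}\,M(v)$ for $M(v)\in\mathcal{X}$ concentrated in degree $0$, and $-\underline{\dim}\,M(u)$ for $M(u)[1]\in\mathcal{X}$ concentrated in degree $-1$. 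Unwinding the definition of $\theta$ from Theorem~\ref{Thm:ncpsmcbijection}, the objects of $\theta(\mathbf{B},\Kr(\mathbf{B}))$ in degree $0$ are indexed by $s_v\in\Seg(B')$ for blocks $B'\in\Kr(\mathbf{B})$, contributing positive $\mathbf{c}$-vectors $\varphi(s_v)$, while the degree $-1$ objects are indexed by $s_u\in\Seg(B)$ for $B\in\mathbf{B}$, contributing negative $\mathbf{c}$-vectors $-\varphi(s_u)$. This matches the map defined in the theorem statement, so this composite is the desired bijection.

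The only genuine step in the argument is verifying that the \cite{by14} correspondence is available in our setting, that is, that $\Lambda_T$ is indeed the Jacobian algebra of $Q_T$ when all interior vertices of $T$ have degree $3$; this was noted in Remark~\ref{Remark:triang}. Everything else is formal bookkeeping between dimension vectors, segments, and the explicit formula for $\theta$. I do not foresee a real obstacle: the content of the theorem is essentially a translation of Theorem~\ref{Thm:ncpsmcbijection} through the dictionary of \cite{by14}, together with the observation that in type $A$ the dimension vector of an indecomposable string module is exactly the $0/1$ incidence vector $\varphi(s_w)$ of its underlying segment.
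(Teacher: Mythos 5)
Your proof is correct, and for part (2) it follows the paper's own argument almost verbatim: compose the bijection $\theta$ of Theorem~\ref{Thm:ncpsmcbijection} with the signed-dimension-vector bijection between $2\text{-smc}(\Lambda_T)$ and $\mathbf{c}\text{-mat}(Q_T)$ from \cite{by14}, then unwind. For part (1) you diverge from the paper: where the paper directly invokes a theorem of Ch\'avez (\cite{chavez}) stating that $\underline{\dim}\colon\text{ind}(\Lambda_T)\to\mathbf{c}\text{-vec}(Q_T)^+$ is a bijection, you rederive this from the \cite{by14} correspondence together with Lemma~\ref{stalks0and-1} and Theorem~\ref{Thm:ncpsmcbijection}. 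Your route is more self-contained relative to the machinery already built in the paper (it removes one external citation at the cost of a short sign-coherence argument to flip from negative to positive $\mathbf{c}$-vectors, since the noncrossing tree partition you construct puts $M(w)$ in degree $-1$, i.e.\ contributes $-\varphi(s_w)$); the paper's citation to Ch\'avez is shorter but opaque. Both proofs ultimately rest on the same \cite{by14} bridge between $\mathbf{c}$-matrices and 2-term simple-minded collections, so the difference is local to how the positive-$\mathbf{c}$-vector classification is obtained.
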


\begin{proof}

(1) By Corollary~\ref{stringsegmentbij}, there is a bijection between segments of $T$ and the indecomposable modules of $\Lambda_T$. This bijection sends a segment $s$ to a string module $M(w)$ of $\Lambda_T$ where $w = w_1\leftrightarrow \cdots \leftrightarrow w_k$ has the property that each $w_i$ corresponds to an edge of $T$ whose vertices both appear in $s$. Now consider the map $\underline{\text{dim}}: \Lambda_T\text{-mod} \to \mathbb{Z}^n_{\ge 0}$. By \cite[Theorem 6]{chavez}, the restriction $\underline{\text{dim}}: \text{ind}(\Lambda_T) \to \textbf{c}\text{-vec}(Q)^+$ is a bijection. As the composition $s \mapsto \underline{\text{dim}}(M(w))$ agrees with the map in the assertion, this completes the proof. 

(2) By Theorem~\ref{Thm:ncpsmcbijection}, there is a bijective map $$(\textbf{B}, \text{Kr}(\textbf{B})) \stackrel{\theta}{\longmapsto}  \{M(u)[1]: s_u \in \text{Seg}(B) \text{ where } B \in \textbf{B}\} \sqcup \{M(v): s_v \in \text{Seg}(B^\prime) \text{ where } B^\prime \in \text{Kr}(\textbf{B})\}$$ where the latter belongs to $2\text{-smc}(\Lambda_T)$. Define a map $\Phi: 2\text{-smc}(\Lambda_T) \to \textbf{c}\text{-mat}(Q)$ by $$\{X_1,\ldots, X_n\} \mapsto \{\text{\underline{dim}}(X_1), \ldots, \text{\underline{dim}}(X_n)\}$$ where $\text{\underline{dim}}: \mathcal{D}^b(\Lambda_T) \to \mathbb{Z}^n$ is defined as $\text{\underline{dim}}(X_i) := \sum_{j \in \mathbb{Z}} (-1)^j\text{\underline{dim}}(X_i^j).$ The latter map was shown to be a bijection in \cite{by14}. Using the proof of (1), we see that $$(\textbf{B}, \text{Kr}(\textbf{B})) \stackrel{\Phi \circ \theta}{\longmapsto}  \{-\varphi(s_u): s_u \in \text{Seg}(B) \text{ where } B \in \textbf{B}\} \sqcup \{\varphi(s_v): s_v \in \text{Seg}(B^\prime) \text{ where } B^\prime \in \text{Kr}(\textbf{B})\}$$ and the result follows.
\end{proof}

\bibliographystyle{plain}
\bibliography{bib_treelattice}

\begin{thebibliography}{10}

\bibitem{amiot2016derived}
C.~Amiot and Y.~Grimeland.
\newblock Derived invariants for surface algebras.
\newblock {\em Journal of Pure and Applied Algebra}, 220(9):3133--3155, 2016.

\bibitem{ass06}
I.~Assem, D.~Simson, and A.~Skowro{{\'n}}ski.
\newblock {\em Elements of the representation theory of associative algebras.
  {V}ol. 1}, volume~65 of {\em London Mathematical Society Student Texts}.
\newblock Cambridge University Press, Cambridge, 2006.
\newblock Techniques of representation theory.

\bibitem{barnard}
E.~Barnard.
\newblock personal communication.

\bibitem{baryshnikov2001stokes}
Y.~Baryshnikov.
\newblock On stokes sets.
\newblock In {\em New Developments in Singularity Theory}, pages 65--86.
  Springer, 2001.

\bibitem{bjorner.wachs:shellable_2}
A.~Bj{\"o}rner and M.~L. Wachs.
\newblock Shellable nonpure complexes and posets. {I}{I}.
\newblock {\em Transactions of the American Mathematical Society},
  349:3945--3976, 1997.

\bibitem{bridgeland2007stability}
T.~Bridgeland.
\newblock Stability conditions on triangulated categories.
\newblock {\em Annals of Mathematics}, pages 317--345, 2007.

\bibitem{bdp}
T.~Br\"{u}stle, G.~Dupont, and M.~P\'{e}rotin.
\newblock On maximal green sequences.
\newblock {\em IMRN}, 2014(16):4547--4586, 2014.

\bibitem{brustle2015tagged}
T.~Br{\"u}stle and Y.~Qiu.
\newblock Tagged mapping class groups: {A}uslander--{R}eiten translation.
\newblock {\em Mathematische Zeitschrift}, 279(3-4):1103--1120, 2015.

\bibitem{by14}
T.~Br\"{u}stle and D.~Yang.
\newblock Ordered exchange graphs.
\newblock {\em Advances in Representation Theory of Algebras (ICRA Bielefeld
  2012)}, 2013.

\bibitem{brustle2011cluster}
Thomas Br{\"u}stle and Jie Zhang.
\newblock On the cluster category of a marked surface without punctures.
\newblock {\em Algebra \& Number Theory}, 5(4):529--566, 2011.

\bibitem{bmrrt}
A.~B. Buan, R.~Marsh, M.~Reineke, I.~Reiten, and G.~Todorov.
\newblock Tilting theory and cluster combinatorics.
\newblock {\em Advances in Mathematics}, 204(2):572--618, 2006.

\bibitem{ccs1}
P.~Caldero, F.~Chapoton, and R.~Schiffler.
\newblock Quivers with relations arising from clusters (${A}_n$ case).
\newblock {\em Transactions of the American Mathematical Society},
  358(3):1347--1364, 2006.

\bibitem{cs14}
I.~Canacki and S.~Schroll.
\newblock Extensions in {J}acobian algebras and cluster categories of marked
  surfaces.
\newblock {\em arXiv:1408.2074}, 2014.

\bibitem{caspard.poly-barbut:tamari}
N.~Caspard and C.~Le~Conte Poly-Barbut.
\newblock Tamari lattices are bounded: a new proof.
\newblock Technical report, Technical Report TR-2004-03, LACL, Universit{\'e}
  Paris XII, 2004.

\bibitem{chapoton:stokes}
F.~Chapoton.
\newblock Stokes posets and serpent nests.
\newblock {\em arXiv:1505.05990}, 2015.

\bibitem{chavez}
A.~N. Chavez.
\newblock c-vectors and dimension vectors for cluster-finite quivers.
\newblock {\em Bulletin of the London Mathematical Society}, 2013.

\bibitem{david2012algebras}
L.~David-Roesler and R.~Schiffler.
\newblock Algebras from surfaces without punctures.
\newblock {\em Journal of Algebra}, 350(1):218--244, 2012.

\bibitem{david2014derived}
Lucas David-Roesler.
\newblock Derived equivalence of surface algebras in genus 0 via graded
  equivalence.
\newblock {\em Algebras and Representation Theory}, 17(1):1--30, 2014.

\bibitem{day:congruence}
A.~Day.
\newblock Congruence normality: the characterization of the doubling class of
  convex sets.
\newblock {\em Algebra Universalis}, 31(3):397--406, 1994.

\bibitem{demonet2016algebras}
L.~Demonet.
\newblock Algebras of partial triangulations.
\newblock {\em arXiv:1602.01592}, 2016.

\bibitem{dwz1}
H.~Derksen, J.~Weyman, and A.~Zelevinsky.
\newblock Quivers with potentials and their representations {I}: Mutations.
\newblock {\em Selecta Mathematica}, 14(1):59--119, 2008.

\bibitem{dwz10}
H.~Derksen, J.~Weyman, and A.~Zelevinsky.
\newblock Quivers with potentials and their representations {II}: applications
  to cluster algebras.
\newblock {\em J. Amer. Math. Soc.}, 23(3):749--790, 2010.

\bibitem{dermenjain.hohlweg.pilaud:facial_weak}
A.~{Dermenjian}, C.~{Hohlweg}, and V.~{Pilaud}.
\newblock The facial weak order and its lattice quotients.
\newblock {\em arXiv:1602.03158}, 2016.

\bibitem{fomin2008cluster}
S.~Fomin, M.~Shapiro, and D.~Thurston.
\newblock Cluster algebras and triangulated surfaces. part {I}: Cluster
  complexes.
\newblock {\em Acta Mathematica}, 201(1):83--146, 2008.

\bibitem{fomin2012cluster}
S.~Fomin and D.~Thurston.
\newblock Cluster algebras and triangulated surfaces. part {I}{I}: Lambda
  lengths.
\newblock {\em arXiv:1210.5569}, 2012.

\bibitem{fomin2002cluster}
S.~Fomin and A.~Zelevinsky.
\newblock Cluster algebras {I}: foundations.
\newblock {\em J. Amer. Math. Soc.}, 15(2):497--529, 2002.

\bibitem{fomin2007cluster}
S.~Fomin and A.~Zelevinsky.
\newblock Cluster algebras {I}{V}: coefficients.
\newblock {\em Compositio Math.}, 143(01):112--164, 2007.

\bibitem{freese1995free}
R~Freese, J~Jezek, and JB~Nation.
\newblock Free lattices, mathematical surveys and monographs 42.
\newblock {\em Amer. Math. Soc., Providence, RI}, 1995.

\bibitem{garver2015combinatorics}
A.~Garver, K.~Igusa, J.~P. Matherne, and J.~Ostroff.
\newblock Combinatorics of exceptional sequences in type {A}.
\newblock {\em arXiv:1506.08927}, 2015.

\bibitem{garver2015lattice}
A.~Garver and T.~McConville.
\newblock Lattice properties of oriented exchange graphs and torsion classes.
\newblock {\em arXiv:1507.04268}, 2015.

\bibitem{geyer:intervallverdopplung}
W.~Geyer.
\newblock {\em Intervallverdopplung und verwandte Konstruktionen bei
  Verb{\"a}nden}.
\newblock PhD thesis, Fachbereich Mathematik der Technischen Hochschule
  Darmstadt, 1992.

\bibitem{happel1996tilting}
D.~Happel, I.~Reiten, and S.~O. Smal{\o}.
\newblock {\em Tilting in abelian categories and quasitilted algebras}, volume
  575.
\newblock American Mathematical Soc., 1996.

\bibitem{ingalls2009noncrossing}
C.~Ingalls and H.~Thomas.
\newblock Noncrossing partitions and representations of quivers.
\newblock {\em Compositio Math.}, 145(06):1533--1562, 2009.

\bibitem{iyama.reiten.thomas.todorov:latticestrtors}
O.~Iyama, I.~Reiten, H.~Thomas, and G.~Todorov.
\newblock Lattice structure of torsion classes for path algebras.
\newblock {\em arXiv:1312.3659}, 2013.

\bibitem{kashiwara2013sheaves}
M.~Kashiwara and P.~Schapira.
\newblock {\em Sheaves on Manifolds}, volume 292.
\newblock Springer Science \& Business Media, 2013.

\bibitem{koenig2014silting}
S.~Koenig and D.~Yang.
\newblock Silting objects, simple-minded collections, t-structures and
  co-t-structures for finite-dimensional algebras.
\newblock {\em Documenta Mathematica}, 19:403--438, 2014.

\bibitem{KS}
M.~Kontsevich and Y.~Soibelman.
\newblock Motivic {D}onaldson-{T}homas invariants and cluster transformation.
\newblock {\em arXiv:0811.2435}, 2008.

\bibitem{kreweras:partitions}
G.~Kreweras.
\newblock Sur les partitions non crois{\'e}es d'un cycle.
\newblock {\em Discrete Math.}, 1(4):333--350, 1972.

\bibitem{mcconville:grassmann}
T.~McConville.
\newblock Lattice structure of grid-tamari orders.
\newblock {\em arXiv:1504.05213}, 2015.

\bibitem{murphy2010derived}
G.~J. Murphy.
\newblock Derived equivalence classification of m-cluster tilted algebras of
  type ${A}_n$.
\newblock {\em Journal of Algebra}, 323(4):920--965, 2010.

\bibitem{nakanishi2012tropical}
T.~Nakanishi and A.~Zelevinsky.
\newblock On tropical dualities in cluster algebras.
\newblock {\em CCM}, 565:217--226, 2012.

\bibitem{pilaud:brick_lattice}
V.~Pilaud.
\newblock Brick polytopes, lattice quotients, and hopf algebras.
\newblock {\em arXiv:1505.07665}, 2015.

\bibitem{pons:lattice}
V.~Pons.
\newblock A lattice on decreasing trees: the metasylvester lattice.
\newblock {\em arXiv:1501.04868}, 2015.

\bibitem{qiu2013cluster}
Yu~Qiu and Yu~Zhou.
\newblock Cluster categories for marked surfaces: punctured case.
\newblock {\em arXiv preprint arXiv:1311.0010}, 2013.

\bibitem{reading:lattice}
N.~Reading.
\newblock Lattice and order properties of the poset of regions in a hyperplane
  arrangement.
\newblock {\em Algebra Universalis}, 50(2):179--205, 2003.

\bibitem{ReadingCamb}
N.~Reading.
\newblock Cambrian lattices.
\newblock {\em Advances in Mathematics}, 2(205):313--353, 2006.

\bibitem{reading2007clusters}
N.~Reading.
\newblock Clusters, {C}oxeter-sortable elements and noncrossing partitions.
\newblock {\em Transactions of the American Mathematical Society},
  359(12):5931--5958, 2007.

\bibitem{reading:noncrossing}
N.~Reading.
\newblock Noncrossing partitions and the shard intersection order.
\newblock {\em Journal of Algebraic Combinatorics}, 33(4):483--530, 2011.

\bibitem{ReadingPAB}
N.~Reading.
\newblock {Lattice theory of the poset of regions}.
\newblock In {\em {Lattice Theory: Selected Topics and Applications.
  Volume~2}}. Birkh{\"a}user/Springer Basel AG, Basel, forthcoming.

\bibitem{rickard2002equivalences}
J.~Rickard.
\newblock Equivalences of derived categories for symmetric algebras.
\newblock {\em Journal of Algebra}, 257(2):460--481, 2002.

\bibitem{schroll2015trivial}
S.~Schroll.
\newblock Trivial extensions of gentle algebras and brauer graph algebras.
\newblock {\em Journal of Algebra}, 444:183--200, 2015.

\bibitem{simoes2016endomorphism}
R.~Coelho Simoes and M.~J. Parsons.
\newblock Endomorphism algebras for a class of negative {C}alabi-{Y}au
  categories.
\newblock {\em arXiv:1602.02318}, 2016.

\bibitem{st13}
D.~Speyer and H.~Thomas.
\newblock Acyclic cluster algebras revisited.
\newblock In {\em Algebras, quivers and representations}, volume~8 of {\em Abel
  Symp.}, pages 275--298. Springer, Heidelberg, 2013.

\bibitem{wald1985tame}
B.~Wald and J.~Waschbusch.
\newblock Tame biserial algebras.
\newblock {\em Journal of Algebra}, 95(2):480--500, 1985.

\bibitem{woolf2010stability}
J.~Woolf.
\newblock Stability conditions, torsion theories and tilting.
\newblock {\em Journal of the London Mathematical Society}, 82(3):663--682,
  2010.

\end{thebibliography}

\end{document}